\documentclass[letterpaper]{amsart}

\usepackage[alphabetic,initials,nobysame]{amsrefs}
\usepackage{amssymb, mathtools, mathrsfs, enumitem, color, comment}
\usepackage{pgfplots}\pgfplotsset{compat=1.18}
\usetikzlibrary{decorations.markings}
\usepackage[bookmarksdepth=2]{hyperref} 

\BibSpec{collection.article}{%
	+{}  {\PrintAuthors}				{author}
	+{,} { \textit}                  	{title}
	+{.} { }                         	{part}
	+{:} { \textit}                  	{subtitle}
	+{,} { \PrintContributions}      	{contribution}
	+{,} { \PrintConference}         	{conference}
	+{}  {\PrintBook}                	{book}
	+{,} { }                         	{booktitle}
	+{,} { }						 	{series}
	+{,} { }						 	{publisher}
	+{,} { }						 	{address}
	+{,} { \PrintDateB}              	{date}
	+{,} { pp.~}                     	{pages}
	+{,} { }                         	{status}
	+{,} { \PrintDOI}                	{doi}
	+{,} { available at \eprint}     	{eprint}
	+{}  { \parenthesize}            	{language}
	+{}  { \PrintTranslation}        	{translation}
	+{;} { \PrintReprint}            	{reprint}
	+{.} { }                         	{note}
	+{.} {}                          	{transition}
	+{}  {\SentenceSpace \PrintReviews} {review}
}

\theoremstyle{plain}
\newtheorem{theorem}{Theorem}
\newtheorem{lemma}[theorem]{Lemma}
\newtheorem{proposition}[theorem]{Proposition}
\newtheorem{corollary}[theorem]{Corollary}

\theoremstyle{definition}

\theoremstyle{remark}
\newtheorem{remark}[theorem]{Remark}
\newtheorem{example}[theorem]{Example}

\numberwithin{equation}{section}
\numberwithin{theorem}{section}

\renewcommand{\bar}{\overline}
\newcommand{\R}{\mathbb R}

\newcommand{\C}{\mathbb C}
\newcommand{\D}{\mathbb D}
\newcommand{\Z}{\mathbb Z}
\newcommand{\N}{\mathbb N}

\newcommand{\UHP}{\mathbb H}

\DeclareMathOperator{\dist}{dist}
\DeclareMathOperator{\diam}{diam}
\DeclareMathOperator{\id}{id}

\DeclareMathOperator{\re}{Re}
\DeclareMathOperator{\im}{Im}

\DeclareMathOperator{\Mod}{Mod}

\DeclareMathOperator{\graph}{Graph}
\DeclareMathOperator{\loc}{loc}

\DeclareMathOperator{\arcsinh}{arcsinh}

\title{Characterization of tangent quasicircles and quasiannuli}
\author{Dimitrios Ntalampekos}
\address{Department of Mathematics, Aristotle University of Thessaloniki, Thessaloniki, 54152, Greece.}
\thanks{The author is partially supported by the ERC Starting Grant, Grant Agreement no.\ 101214615, GRComPaS}
\email{dntalam@math.auth.gr}
\date{\today}
\keywords{quasicircle, quasidisk, quasiannulus, tangent quasicircles, Schottky set, uniformization, quasiconformal, extension, relative hyperbolic distance}
\subjclass[2020]{Primary 30C62, 30C20; Secondary 30F10, 30F45}

\begin{document}

	\begin{abstract}
	We give a necessary and sufficient condition so that a pair of disjoint Jordan regions in the sphere can be quasiconformally mapped to a pair of disks. As a consequence, we obtain a simple characterization that involves Lipschitz functions for the case that one of the Jordan regions is a half-plane. We apply these results to prove that all polynomial cusps are quasiconformally equivalent and that a quasisymmetric embedding of the union of two disjoint disks  extends to a quasiconformal map of the sphere, quantitatively. Also, in combination with previous work of the author, we obtain a new characterization of compact sets that are quasiconformally equivalent to Schottky sets.
	\end{abstract}

\maketitle

\setcounter{tocdepth}{1}
\tableofcontents

\section{Introduction}

\subsection{Motivation}

In this paper we study the question whether two disjoint Jordan regions in the sphere $\widehat \C=\C\cup\{\infty\}$ can be mapped to a pair of disjoint disks with a quasiconformal homeomorphism of $\widehat \C$. The closures of the regions are allowed to intersect each other. Our goal is to provide a characterization of such regions that is quantitative in nature. Namely the quasiconformal dilatation should depend only on the geometric features of the Jordan regions and not on their relative position. 

Ahlfors \cite{Ahlfors:reflections} provided an excellent and deep characterization of curves that can be mapped to a circle with a quasiconformal homeomorphism of the plane. Specifically, a Jordan curve $J$ in the plane can be quasiconformally mapped to a circle if and only if there exists $L\geq 1$ such that for every pair of points $z,w\in J$, if $E$ is the arc between $z$ and $w$ with the smallest diameter, then
$$\diam (E) \leq L|z-w|.$$
Such a Jordan curve is called an $L$-quasicircle. The region bounded by a quasicircle is called a quasidisk. Geometrically, quasicircles do not have cusps. Moreover, in the theorem of Ahlfors the quasiconformal dilatation and the constant $L$ are quantitatively related. See Section \ref{section:quasidisks} for further details and definitions. Therefore, if we wish to map quasiconformally a pair of disjoint Jordan regions $U,V$ to a pair of disjoint disks, then the boundaries $\partial U,\partial V$ must be quasicircles. 

It is easy to see that this condition is not sufficient. For example, note that the boundary of each square is a $2$-quasicircle. However, we cannot map a pair of disjoint open squares $U,V$ with disjoint closures to a pair of disjoint disks with a $K$-quasiconformal map, where $K$ is bounded and does not depend on the relative position of the squares. Specifically, it is observed by the author in \cite{Ntalampekos:CarpetsThesis}*{pp.~174--176} that as the relative distance of the squares, defined by 
$$\Delta(U,V)=\frac{\dist(U,V)}{\min\{\diam (U),\diam (V)\}},$$
shrinks to $0$, the value of the quasiconformal dilatation $K$ must blow up to $\infty$. Therefore, it is not sufficient to assume that $\partial U,\partial V$ are uniform quasicircles, but in this case the relative distance also affects the quasiconformal dilatation.

Herron \cite{Herron:uniform} provided a sufficiency criterion for the quasiconformal transformation of two disjoint Jordan regions $U,V$ into two disks with quantitative control. Namely, if $\partial U,\partial V$ are $L$-quasicircles and $\Delta(\partial U,\partial V)\geq \delta>0$, then there exists a $K(L,\delta)$-quasiconformal map with the claimed properties. Note that the converse is not true. Indeed, the condition $\Delta(\partial U,\partial V)\geq \delta$ is quasiconformally quasi-invariant (see Lemma \ref{lemma:quasimobius:cross}) and two disks of radius $1$ that are very close to each other, or even tangent to each other, do not satisfy it.

In a subsequent work Herron and Koskela \cite{HerronKoskela:QEDcircledomains} proved that if $\Omega$ is a uniform domain (see Section \ref{section:uniform}), then it can be mapped with a quasiconformal map of the sphere to a \textit{circle domain}, i.e., a domain whose boundary components are points or circles. The boundary components of a uniform domain $\Omega$ can be infinite in number, but they are all points or uniform quasicircles that are \textit{uniformly relatively separated}; that is, their mutual relative distances are uniformly bounded from below. On the other hand, the definition of a uniform domain involves a technical requirement that is even stronger than these conditions. 

Two decades later, Bonk \cite{Bonk:uniformization} generalized the aforementioned results, relying on a powerful tool devised by Schramm \cite{Schramm:transboundary}, known as transboundary modulus. Bonk proved that if $\{U_i\}_{i\in I}$ is a collection of disjoint Jordan regions in the sphere such that $\partial U_i$ is an $L$-quasicircle for each $i\in I$ and $\Delta(\partial U_i,\partial U_j)\geq \delta>0$ for every $i\neq j$, then there exists a $K(L,\delta)$-quasiconformal map of the sphere that maps each $U_i$ to a disk.  This result implies the theorem of Herron and Koskela and shows that the existence of such a quasiconformal map should rely on the geometry of the regions $U_i$, $i\in I$, rather than on the intrinsic geometry of the set $\widehat \C\setminus \bigcup_{i\in I}\bar{U_i}$. 

Bonk's result was applied in the problem of quasiconformal uniformization of Sierpi\'nski carpets, in connection with the Kapovich--Kleiner conjecture from geometric group theory, in rigidity results related to Sierpi\'nski carpets \cites{BonkMerenkov:rigidity, BonkMerenkov:rigiditySpCarpets}, and in rigidity problems in complex dynamics \cite{BonkLyubichMerenkov:carpetJulia}. Moreover, it has been extended to non-planar carpets \cites{MerenkovWildrick:uniformization,Rehmert:thesis}.

Recent progress in uniformization problems in complex dynamics opened the way for uniformization results without the uniform relative separation assumption. Specifically, Luo and the author \cite{LuoNtalampekos:gasket} characterized, under some mild conditions, Julia sets of rational maps that can be quasiconformally mapped to round gaskets; these are sets whose complementary components are Jordan regions that can potentially touch each other at the boundary. In particular, there is no separation between the Jordan regions. The methods of \cite{LuoNtalampekos:gasket} were extended in \cite{LuoMjMukherjee:basilica}, where it is shown that certain Basilica Julia sets and limit sets of Kleinian groups can be quasiconformally mapped to round Basilicas. 

While these results are purely dynamical and do not extend to arbitrary gaskets, the author \cite{Ntalampekos:schottky} was able to obtain a general characterization of sets in the sphere that can be quasiconformally mapped to \textit{Schottky sets}; that is sets, whose complementary components are disks. The main result in \cite{Ntalampekos:schottky} (restated below in Theorem \ref{theorem:intro:schottky}) asserts that if $\{U_i\}_{i\in I}$ is a collection of disjoint Jordan regions in the sphere, then there exists a quasiconformal map of the sphere that maps each $U_i$ to a disk if and only if the collection $\{U_i\}_{i\in I}$ is \textit{uniformly quasiconformally pairwise circularizable}; that is, there exists $K\geq 1$ such that for every pair $U_i,U_j$, $i\neq j$, there exists a $K$-quasiconformal map of the sphere that maps $U_i$ and $U_j$ to disks. For instance, this condition is satisfied under the assumption $\Delta(\partial U_i,\partial U_j)\geq \delta$ thanks to the result of Herron \cite{Herron:uniform}. Therefore, Bonk's theorem follows as a consequence. 

Summarizing, the main result in \cite{Ntalampekos:schottky} allows the quasiconformal uniformization of a collection of disjoint Jordan regions by disks if every pair can be quasiconformally uniformized with controlled dilatation. Therefore, the last missing piece in this puzzle is to characterize pairs of disjoint Jordan regions that can be mapped quasiconformally to disks. This is precisely the main objective of the present work. 

\subsection{Main result}

Our main result provides a quantitative characterization of pairs of disjoint quasidisks that can be quasiconformally mapped to pairs of disks. There are two fundamental cases that we treat separately: either the boundaries of the two quasidisks intersect at a point or the quasidisks have disjoint closures. 

The main results are expressed in terms of the notion of the relative hyperbolic metric corresponding to a pair of quasidisks that we introduce in this paper.  If $U\subset \widehat\C$ is an open set we denote by $U^*$ the complement of $\bar U$. If $\partial U$ contains at least two points, we denote by $h_U$ the hyperbolic metric on $U$. 

Let $U,V\subset \widehat \C$ be disjoint Jordan regions such that $\bar U\cap \bar V$ is either empty or contains only one point; in particular, $\widehat \C\setminus (\bar U\cup \bar V)$ is connected. We define the \textit{relative hyperbolic metric} corresponding to the pair $(V,U)$ as
$$d_{V,U}(z,w)= \inf_\gamma\{\ell_{h_{U^*}}(\gamma)\},\quad z,w\in \partial V\setminus \bar U,$$
where the infimum is taken over rectifiable curves $\gamma\colon [a,b]\to U^*\setminus V$ such that $\gamma(a)=z$, $\gamma(b)=w$. It turns out that if $V$ is a quasidisk, then $d_{V,U}$ is a metric on $\partial V\setminus \bar U$. See Section \ref{section:relative:definition} for details. This metric can be explicitly computed when $U,V$ are disjoint half-planes or their boundaries are concentric circles; see Lemma \ref{lemma:parallel}. In Section \ref{section:relative:examples} we estimate the metric in several other useful cases.

Our first main result provides a characterization in the case that the quasidisks $U,V$ have a common point at the boundary. We may assume that this common point of ``tangency" is at $\infty$. Thus, in the next statement we use the topology of $\C$ and $|\cdot|$ denotes the Euclidean metric. 

\begin{theorem}[Tangent quasidisks]\label{theorem:intro:characterization_tangent}
Let $U,V\subset \C$ be unbounded quasidisks such that $\bar U\cap \bar V=\emptyset$. There exists a quasiconformal map $f\colon \C\to \C$ that maps $U$ and $V$ to half-planes if and only if the identity map
$$\id \colon (\partial V,d_{V,U})\to (\partial V,|\cdot|)$$
is quasisymmetric, quantitatively.  
\end{theorem}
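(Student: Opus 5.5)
The plan is to prove each direction separately, using that $d_{V,U}$ is quasi-invariant under quasiconformal maps and that for half-planes it is explicit. Note that the hypothesis $\bar U\cap\bar V=\emptyset$ in $\C$ means the closures in $\widehat\C$ meet exactly at $\infty$. The picture to keep in mind: when $U=\{\im z<0\}$ and $V=\{\im z>1\}$, the domain $U^*$ is the upper half-plane, and on the line $\partial V=\{\im z=1\}$ the hyperbolic length of the shortest path in the strip $0<\im z\le 1$ is comparable to the Euclidean distance for nearby points and grows logarithmically for far points. More precisely, $d_{V,U}(x+i,y+i)\simeq \log(1+|x-y|)$ for $|x-y|$ large and $\simeq |x-y|$ for $|x-y|$ small. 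Such a metric is quasisymmetrically equivalent to the Euclidean one by a direct check, since $t\mapsto \log(1+t)$ and $t$ are both increasing homeomorphisms of $[0,\infty)$ and a metric of the form $\phi(|x-y|)$ with $\phi$ "doubling-like" is quasisymmetric to $|\cdot|$. Lemma \ref{lemma:parallel} gives this explicit computation.

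\textbf{Necessity.} Suppose $f$ is $K$-quasiconformal with $f(U)$ and $f(V)$ half-planes. These must be parallel, since the closures meet only at $\infty$; after a similarity they are the pair above. We compare $d_{V,U}$ with $d_{f(V),f(U)}\circ f$.

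The key estimate is that $f|_{U^*}\colon U^*\to f(U)^*$ is $K$-quasiconformal between simply connected hyperbolic domains. By the Gehring--Osgood type estimate, it is therefore a rough quasi-isometry of hyperbolic metrics, $h'(f z,fw)\le C h(z,w)+C$, and symmetrically for the inverse. Since admissible curves for $d_{V,U}$ lie in $U^*\setminus V$, which is mapped onto $f(U)^*\setminus f(V)$, path lengths transfer in the same rough way. Hence $d_{V,U}$ and $d_{f(V),f(U)}\circ f$ are roughly quasi-isometric at large scales. At small scales (comparable to the distance to $\partial U$) both metrics are comparable to quasihyperbolic/Euclidean distance up to the quasisymmetric distortion of $f$.

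Combining the two regimes, $\id\colon(\partial V,d_{V,U})\to(\partial V,|\cdot|)$ factors as
\[
(\partial V,d_{V,U})\to(\partial f(V),d_{f(V),f(U)})\to(\partial f(V),|\cdot|)\to(\partial V,|\cdot|).
\]
The first map is quasisymmetric by the estimate, the second by the model computation, and the third because $f^{-1}$ is quasisymmetric.

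\textbf{Sufficiency.} Let $\phi\colon U^*\to\UHP$ be a conformal map with $\phi(\infty)=\infty$. Since $\partial U$ is a quasicircle, $\phi$ extends to a quasiconformal map of $\C$, using a reflection across $\partial U$ and Ahlfors' theorem. Conformal invariance of the hyperbolic metric gives $d_{V,U}(z,w)=d_{\phi(V),\LH}(\phi z,\phi w)$, writing $\LH$ informally for the lower half-plane. So we may assume $U$ is the lower half-plane. At the cost of quasisymmetric control, $V$ is still a quasidisk, because the extended $\phi$ is quasiconformal.

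Next we show that $\partial V$ is then the graph of a Lipschitz function $y=g(x)$ over $\R$, up to bi-Lipschitz change in the "hyperbolic coordinates" $(x,\log y)$. Indeed, quasisymmetry of $d_{V,U}$ against the Euclidean metric forces two things:
\begin{itemize}
\item The points of $\partial V$ cannot escape to large height relative to their horizontal spread, since otherwise $d_{V,U}$ would be small while the Euclidean distance is large.
\item Nor can they approach $\R$ too fast, since otherwise $d_{V,U}$ would blow up while the Euclidean distance is small.
\end{itemize}
Then we build $f$ explicitly: straighten the graph by a map of the form $(x,t)\mapsto(x,t/g(x))$ on the strip below $\partial V$, working in logarithmic coordinates where it is bi-Lipschitz. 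Combine this with the Ahlfors/Beurling--Ahlfors extension across $\partial V$ into $V$ to map $V$ to a half-plane.

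\textbf{Main obstacle.} The hardest step is converting the quasisymmetry hypothesis into a uniform Lipschitz (or quasiconformal-graph) description of $\partial V$ above the half-plane. The quasicircle $\partial V$ may backtrack, so it need not be a graph. My first attempt would be to replace $\partial V$ by a Whitney-type "hyperbolic projection", as follows. Cover $\partial V$ by Whitney squares of $\UHP$ and show that the quasisymmetry and the quasicircle condition force the number of squares met at each scale to be uniformly bounded. Then apply the Herron/Bonk separation theorem locally on each dyadic annulus around points of $\partial U$, and glue the local maps.
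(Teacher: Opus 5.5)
\textbf{Necessity.} Your factorization is the one the paper uses, but the justification of the first arrow does not work as written, and your model computation is wrong. For $U=\{\im z<0\}$ and $V=\{\im z>1\}$, admissible curves lie in $U^*\setminus V=\{0<\im z\le 1\}$, where $\rho_{U^*}=1/\im z\ge 1$. Hence $d_{V,U}(x+i,y+i)=|x-y|$ exactly (Lemma \ref{lemma:parallel}). The logarithmic growth you describe belongs to $h_{U^*}$, whose geodesics climb into $V$, and that is precisely what the relative metric forbids.

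Your claim that $\log(1+|x-y|)$ is quasisymmetric to $|x-y|$ is also false. For the points $0,N,N^2$ the Euclidean ratio is $1/N\to 0$, while the other ratio tends to $1/2$. With your model, necessity would already fail for half-planes.

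The same confusion affects your key step. Since $d_{V,U}$ and $h_{U^*}$ are not comparable at large scales, a rough quasi-isometry of $h_{U^*}$ says nothing directly about $d_{V,U}$. The sentence ``path lengths transfer in the same rough way'' is unproved: $f\circ\gamma$ need not be rectifiable, and any replacement curve must avoid $f(V)$ while staying inside $f(U)^*$.

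This can be repaired. Cut $\gamma$ into Whitney-scale pieces, chain the image of each piece by small balls, and join consecutive centres by quasiconvex curves in $\C\setminus f(V)$. Then finish with V\"ais\"al\"a's weak-quasisymmetry theorem on $U^*\setminus V$ equipped with the inner metric induced by $h_{U^*}$. But that is the real content, and it is missing.

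The paper instead uses reflection. Let $Z$ be bounded by $\partial U$ and a quasiconformal reflection of $\partial U$ across $\partial V$. In the tangent case $d_{V,U}$ is bi-Lipschitz to $k_Z$ (Lemma \ref{lemma:hyperbolic_Z_chi}), because pieces of a $Z$-curve lying in $V$ can be reflected back. The map $f$ carries $Z$ onto the analogous domain for $f(U),f(V)$, and Theorem \ref{theorem:qc_qs_qh} then gives Theorem \ref{theorem:relative_hyperbolic_qc}.

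\textbf{Sufficiency.} Here there is a genuine gap: the central step is never established, as you acknowledge. Your two bullets do not follow from quasisymmetry, which is scale invariant and controls only ratios, so ``$d_{V,U}$ small while $|\cdot|$ is large'' is no contradiction. The Whitney/local Herron--Bonk plan has no gluing mechanism, and the pair is not relatively separated.

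Even granting $\partial V=\{y=g(x)\}$, the map $(x,t)\mapsto(x,t/g(x))$ is not quasiconformal unless $g$ is bounded above and below. One needs $(x,t)\mapsto(G(x),t/g(x))$ with $G'=1/g$, and the quasisymmetry of $G$ is exactly what must be extracted from the hypothesis (compare Proposition \ref{theorem:graph:generalization}). In the paper, Theorem \ref{theorem:intro:graph:characterization} is deduced \emph{from} this theorem, not the other way round. Normalizing $U$ is legitimate, but it leaves the unknown geometry on the curve that carries the metric.

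The missing idea is to normalize $V$ to be the lower half-plane instead (allowed by Theorem \ref{theorem:relative_hyperbolic_qc}) and let the Riemann map do the work. Take the conformal map of $\C\setminus(\bar U\cup\bar V)$ onto $\{0<\im z<1\}$ (Lemma \ref{lemma:conformal_strip}) and Schwarz-reflect it to a conformal map $f\colon Z\to Z'=\{-1<\im z<1\}$. Then:
\begin{enumerate}
\item $\id\colon(\R,|\cdot|)\to(\R,d_{V,U})$ is quasisymmetric by hypothesis;
\item $\id\colon(\R,d_{V,U})\to(\R,k_Z)$ is bi-Lipschitz by Lemma \ref{lemma:hyperbolic_Z_chi};
\item $f\colon(\R,k_Z)\to(\R,k_{Z'})$ is bi-Lipschitz by Gehring--Osgood with $K=1$;
\item $k_{Z'}=|\cdot|$ on $\R$.
\end{enumerate}
So $f|_{\R}$ is an increasing quasisymmetry. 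Beurling--Ahlfors extends it into $V$, and Proposition \ref{proposition:extension_by_reflection} extends the resulting quasiconformal map of $U^*$ across the quasidisk $U$. In particular, the boundary behaviour on $\partial U$ never needs to be examined.
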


The proof is given in Section \ref{section:relative:tangent}. Next, we give the corresponding result for quasidisks with disjoint closures. Here we use the topology of the sphere and $\chi$ denotes the chordal metric.

\begin{theorem}[Quasiannulus]\label{theorem:intro:characterization_quasiannuli}
Let $U,V\subset \widehat \C$ be quasidisks such that $\bar U\cap \bar V=\emptyset$. There exists a quasiconformal map $f\colon \widehat\C\to \widehat \C$ that maps $U$ and $V$ to disks if and only if the identity map
$$\id \colon (\partial V,d_{V,U})\to (\partial V,\chi)$$
is quasi-M\"obius, quantitatively.  
\end{theorem}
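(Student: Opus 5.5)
The plan is to reduce both directions to a model computation on round annuli, using the quasi-invariance of the relative hyperbolic metric under quasiconformal maps. For \emph{necessity}, suppose $f\colon\widehat\C\to\widehat\C$ is $K$-quasiconformal with $f(U),f(V)$ disks. After a M\"obius normalization (which preserves hyperbolic metrics and changes $\chi$ by a M\"obius transformation, hence quasi-M\"obiusly with universal control), we may assume $f(U)=\widehat\C\setminus\bar{\D}$ and $f(V)=\{|z|<r\}$ for some $0<r<1$. By Lemma~\ref{lemma:parallel}, $d_{f(V),f(U)}$ on the circle $\{|z|=r\}$ is explicitly computable. It is comparable to the hyperbolic metric of the unit disk restricted to, and measured within, the closed annulus $\{|z|\ge r\}$, so it is a rescaled chordal metric on the circle, possibly truncated at scale comparable to the modulus. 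In particular the identity onto $(\{|z|=r\},\chi)$ is quasi-M\"obius with universal control, uniformly in $r$. The key step is then to show that $f$ distorts relative hyperbolic metrics in a controlled way. Concretely, $f|_{\partial V}\colon(\partial V,d_{V,U})\to(\partial f(V),d_{f(V),f(U)})$ should be quasi-M\"obius quantitatively. I would prove this by expressing $d_{V,U}$, up to quasi-isometry, via a Gromov-hyperbolic (quasihyperbolic) geometry of $U^*$ restricted to curves avoiding $V$. Since $f|_{U^*}$ is a quasi-isometry of hyperbolic metrics (standard for quasidisks/quasiconformal maps between them), its boundary-type behaviour is quasi-M\"obius. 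Meanwhile $f\colon(\partial V,\chi)\to(\partial f(V),\chi)$ is quasi-M\"obius because $f$ is quasiconformal on $\widehat\C$. Composing the three maps gives the claim.

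For \emph{sufficiency}, assume the identity $(\partial V,d_{V,U})\to(\partial V,\chi)$ is $\eta$-quasi-M\"obius. First map $U^*$ conformally, or quasiconformally using that $U$ is a quasidisk, onto $\D$ by a global $K(L)$-quasiconformal map $g$ with $g(U)=\widehat\C\setminus\bar\D$. Then $g(V)$ is a quasidisk compactly inside $\D$. The relative hyperbolic metric of the new pair is quasi-M\"obius equivalent to the old one by the invariance step above. Hence $(\partial g(V),d_{g(V),g(U)})\to(\partial g(V),\chi)$ is quasi-M\"obius. Next, normalize by a disk automorphism so that $g(V)$ is, in the hyperbolic sense, "centered at $0$". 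Two cases arise. If the hyperbolic diameter of $g(V)$ is bounded, then the relative distance $\Delta(\partial g(V),\partial\D)$ is bounded below and Herron's theorem \cite{Herron:uniform} finishes. If the hyperbolic size is large, $\partial g(V)$ is close to $\partial\D$, and $d_{g(V),g(U)}$ behaves like a "height-truncated" metric. We then compare with the annulus $\{r<|z|<1\}$ of the same conformal modulus. We construct a quasisymmetric map $\phi\colon\partial g(V)\to\{|z|=r\}$ that is quasi-M\"obius for both the chordal and the relative hyperbolic metrics. For instance, take the radial projection composed with the natural parametrization induced by $d$. We then extend it to a quasiconformal map of the annular region $\D\setminus\bar{g(V)}$ onto the round annulus, identity on $\partial\D$. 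This extension can be done by a Whitney/Beurling--Ahlfors-type construction across hyperbolic-scale "boxes": the condition on $d$ says exactly that the relative hyperbolic geometry seen from $\partial g(V)$ matches the Euclidean geometry, so the boxes can be matched with bounded distortion. Finally, extend inside $g(V)$ by the Ahlfors quasidisk extension.

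The main obstacle is this extension step in the thin case: building the quasiconformal map of the quasiannulus to a round annulus with dilatation depending only on $L$ and $\eta$, not on the modulus. I would try first to decompose along hyperbolic geodesics emanating from $\partial\D$ orthogonally. The distance along them to $\partial g(V)$ is, by definition, nearly the $d$-scale. I would then check that the quasi-M\"obius hypothesis forces these "heights" to vary in a doubling, comparable way, which makes the piecewise construction uniformly quasiconformal. Alternatively, one can reduce to Theorem~\ref{theorem:intro:characterization_tangent} locally, by blowing up near scales where the annulus is thin, and patch.
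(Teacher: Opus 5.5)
Your overall architecture (quasi-invariance of $d_{V,U}$ plus the round model of Lemma \ref{lemma:parallel}) matches the paper. However, the two steps that carry the real content are not supplied, and the route you sketch for the first one does not work.

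\textbf{Invariance step.} The Gromov-hyperbolic heuristic does not apply here.
\begin{enumerate}
\item Since $\bar U\cap\bar V=\emptyset$, the curve $\partial V$ lies in the interior of $U^*$, at finite hyperbolic distance. Any quasi-M\"obius behaviour ``at infinity'' of a quasi-isometry of $(U^*,h_{U^*})$ therefore concerns $\partial U$, not $\partial V$.
\item Working ``up to quasi-isometry'' loses all information at small scales; Theorem \ref{theorem:gehring_osgood} only gives $k\mapsto\max\{k,k^{\alpha}\}$ control. But $d_{V,U}$ is a length metric taking arbitrarily small values, and quasi-M\"obius control is needed at every scale.
\item Coarse arguments do not respect the constraint that competing curves avoid $V$: geodesics of $f(U^*)$ between image points may cross $f(V)$.
\end{enumerate}
The missing idea is the paper's doubling. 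Take a quasiconformal reflection $g$ along $\partial V$ and let $Z$ be the domain bounded by $\partial U$ and $g(\partial U)$. Points of $U^*\setminus V$ are, up to constants, no closer to $g(\partial U)$ than to $\partial U$. Pieces of curves that enter $V$ can be reflected back with controlled hyperbolic length. Together these give $d_{V,U}\simeq k_Z$ on $\partial V\setminus\bar U$ when $\Delta_\chi(\partial U,\partial V)\lesssim 1$ (Lemma \ref{lemma:hyperbolic_Z_chi}), and quasisymmetric equivalence in general (Corollary \ref{corollary:quasisymmetric_dk}). Since $f(Z)$ is the corresponding domain for $(f(U),f(V))$ with reflection $f\circ g\circ f^{-1}$, one is reduced to Theorem \ref{theorem:qc_qs_qh}: quasiconformal maps are quasisymmetric in the quasihyperbolic metrics. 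That theorem needs Gehring--Osgood at large scales and Euclidean quasisymmetry on Whitney balls at small scales, glued by V\"ais\"al\"a's theorem on weakly quasisymmetric maps of quasiconvex spaces.

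\textbf{Sufficiency.} You flag the thin-case extension as the main obstacle and leave it open. The radial-projection/box construction is not carried out, and it is unclear how to set it up when $\partial g(V)$ is not roughly concentric with $\partial\D$. Your normalization $U^*=\D$ is part of the problem: it leaves a non-round $\partial g(V)$ that you must straighten by hand.

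The paper needs no construction here. It normalizes $V=\D$ instead, and takes the conformal map $f$ of $\C\setminus(\bar U\cup\bar\D)$ onto a round annulus $\mathbb A(0;1,R)$ with $f(\partial\D)=\partial\D$. By Schwarz reflection in $\partial\D$, $f$ extends to a conformal map of $Z$ (bounded by $\partial U$ and its inversion in $\partial\D$) onto $Z'=\mathbb A(0;1/R,R)$. One then composes
\[
(\partial\D,|\cdot|)\to(\partial\D,d_{V,U})\to(\partial\D,k_Z)\to(\partial\D,k_{Z'})\to(\partial\D,|\cdot|).
\]
The four links come, in order, from the hypothesis, Corollary \ref{corollary:quasisymmetric_dk}, Theorem \ref{theorem:gehring_osgood} with $K=1$, and the fact that $k_{Z'}$ on $\partial\D$ is comparable to a constant multiple of the Euclidean metric. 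Hence $f|_{\partial\D}$ is quasi-M\"obius. Corollary \ref{corollary:beurling_ahlfors} then extends it into $\D$, and Proposition \ref{proposition:extension_by_reflection} extends it across $\partial U$. Because the annular part is conformal, independence of the modulus comes for free, and no case split such as your appeal to Herron is needed.
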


This result is proved in Section \ref{section:relative:quasiannuli}. Note that we need to switch here to quasi-M\"obius maps, which generalize quasisymmetric maps and were introduced by V\"ais\"al\"a \cite{Vaisala:quasimobius}. The reason is that the existence of a quasiconformal map $f\colon \widehat\C\to \widehat \C$ that maps $U$ and $V$ to disks is a quasiconformal invariant. Yet, quasiconformal maps of the sphere are not quasisymmetric in a quantitative way; in general, they are only quasi-M\"obius. This partially justifies the use of quasi-M\"obius maps in Theorem \ref{theorem:intro:characterization_quasiannuli}.

We may combine the two statements into a single one as follows. Again, we use the topology of the sphere and the proof is included in Section \ref{section:relative:quasiannuli}.

\begin{theorem}\label{theorem:intro:characterization_combined}
Let $U,V\subset \widehat \C$ be quasidisks such that $\bar U\cap \bar V$ contains at most one point. There exists a quasiconformal map $f\colon \widehat\C\to \widehat \C$ that maps $U$ and $V$ to disks if and only if the identity map
$$\id \colon (\partial V\setminus \bar U,d_{V,U})\to (\partial V\setminus \bar U,\chi)$$
is quasi-M\"obius, quantitatively.  
\end{theorem}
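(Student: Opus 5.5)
The plan is to split according to whether $\bar U\cap\bar V$ is empty or a single point. In each case we reduce to the corresponding earlier result: Theorem \ref{theorem:intro:characterization_quasiannuli} in the disjoint case, and Theorem \ref{theorem:intro:characterization_tangent} in the tangent case. Only the tangent case requires work, namely translating a Euclidean quasisymmetry statement in $\C$ into a chordal quasi-M\"obius statement in $\widehat\C$.

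\emph{Disjoint closures.} If $\bar U\cap\bar V=\emptyset$, then $\partial V\setminus\bar U=\partial V$, and the statement is literally Theorem \ref{theorem:intro:characterization_quasiannuli}.

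\emph{Tangent case, normalization.} Suppose $\bar U\cap\bar V=\{p\}$. Choose a M\"obius transformation $T$ with $T(p)=\infty$. Two facts make this reduction harmless:
\begin{itemize}
\item Hyperbolic metrics are conformally invariant. Hence $T$ is an isometry from $(\partial V\setminus\bar U,d_{V,U})$ onto $(\partial T(V)\setminus \bar{T(U)},d_{T(V),T(U)})$.
\item $T$ is a M\"obius map of $(\widehat\C,\chi)$, so it preserves chordal cross-ratios and in particular is quasi-M\"obius with an absolute distortion function.
\end{itemize}
Both the condition in the statement and the existence of a $K$-quasiconformal map sending the two regions to disks are invariant under $T$. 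So we may assume $p=\infty$: $U,V\subset\C$ are unbounded quasidisks, their closures in $\C$ are disjoint, and $\partial V\setminus\bar U=\partial V\cap\C$. Mapping $U,V$ to disks tangent at $\infty$ is the same as mapping them to disjoint half-planes. If a $K$-quasiconformal map sends $U,V$ to disks tangent at some point $q$, we compose with a M\"obius map sending $q$ to $\infty$ and obtain half-planes.

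\emph{Tangent case, metric comparison.} By Theorem \ref{theorem:intro:characterization_tangent}, it now suffices to show that
$$\id\colon(\partial V,d_{V,U})\to(\partial V,|\cdot|) \text{ is quasisymmetric}$$
if and only if
$$\id\colon(\partial V,d_{V,U})\to(\partial V,\chi) \text{ is quasi-M\"obius},$$
with quantitative dependence in both directions. The tool is V\"ais\"al\"a's theory \cite{Vaisala:quasimobius}:
\begin{itemize}
\item Quasisymmetric maps are quasi-M\"obius, quantitatively.
\item On $\C$, the identity $(\C,|\cdot|)\to(\C,\chi)$ is $1$-M\"obius, since chordal cross-ratios equal Euclidean ones.
\item Conversely, a quasi-M\"obius map between unbounded spaces is quasisymmetric quantitatively if it sends the point at infinity to the point at infinity.
\end{itemize}
Here both $(\partial V,d_{V,U})$ and $(\partial V,|\cdot|)$ are unbounded, and the map sends "$\infty$" to "$\infty$" in the sense that $d_{V,U}$-unbounded sequences are exactly the $|\cdot|$-unbounded ones. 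This identification uses properness of $d_{V,U}$ as $z\to\infty$ along $\partial V$, which should follow from the lemmas of Section \ref{section:relative:definition}.

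Formally we would pass to the one-point completions, where cross-ratios with a point at infinity are defined by limits. A quasi-M\"obius map fixing $\infty$ between spaces in which $\infty$ is the added point is $\eta$-quasisymmetric with $\eta$ depending only on the quasi-M\"obius distortion. This is V\"ais\"al\"a's Theorem 3.10 type result.

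\emph{Main obstacle.} The delicate step is justifying that $d_{V,U}$ is unbounded and proper towards $\infty$, so that the completion argument applies and the point at infinity is genuinely fixed. I expect the estimates of Section \ref{section:relative:examples} (comparison with half-planes or cusps) to give this. If not, one can instead use the quasisymmetry conclusion of Theorem \ref{theorem:intro:characterization_tangent} directly in the "only if" direction, and for the "if" direction run the quasi-M\"obius-to-quasisymmetric upgrade with three normalizing points chosen on $\partial V$.
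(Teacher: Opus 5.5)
Your proposal is correct and is essentially the paper's proof: you split off the disjoint case as Theorem \ref{theorem:intro:characterization_quasiannuli}, send the tangency point to $\infty$ by a M\"obius map, and pass between the Euclidean quasisymmetric condition of Theorem \ref{theorem:intro:characterization_tangent} and the chordal quasi-M\"obius condition using the equality of Euclidean and chordal cross ratios and property \ref{q:qm_qs}. The step you flag as the main obstacle does not need Section \ref{section:relative:examples}; it follows directly: every admissible curve lies in $U^*$, so $d_{V,U}(z,w)\geq h_{U^*}(z,w)$, and this tends to $\infty$ as $z\to\infty$ along $\partial V\cap\C$, because $\infty\in\partial U^*$ and the hyperbolic metric is complete. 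Only this direction is needed, since you apply \ref{q:qm_qs} to $\id\colon(\partial V\cap\C,|\cdot|)\to(\partial V\cap\C,d_{V,U})$.
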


We would like to emphasize the \textit{quantitative} nature of the results. For example, in Theorem \ref{theorem:intro:characterization_combined}, if $U,V$ are $L$-quasidisks and the displayed identity map is $\eta$-quasi-M\"obius, then the quasiconformal map $f$ is $K(L,\eta)$-quasiconformal. Conversely, if $f$ is $U,V$ are $L$-quasidisks and $f$ is $K$-quasiconformal, then the displayed identity map is $\eta$-quasi-M\"obius, where $\eta$ depends only on $K,L$. Note that in the above results we have given preference to the metric $d_{V,U}$ defined on $\partial V$. Of course the roles of $U,V$ are symmetric so one could instead use the metric $d_{U,V}$ on $\partial U$. 

In Section \ref{section:relative:examples} we establish the conditions of the above theorems in some special and useful cases. We include illustrations here and defer the precise statements to Section \ref{section:relative:examples}. We prove in Lemma \ref{lemma:quasicircle_parallel} that the condition of Theorem \ref{theorem:intro:characterization_tangent} holds quantitatively in the case that $U,V$ are unbounded quasidisks whose boundaries are close to parallel lines; see Figure \ref{figure:parallel}. More generally, in Lemma \ref{lemma:wormhole} we see that the condition holds if $U,V$ are separated by a chord-arc curve $J$ whose distance and Hausdorff distance to $\partial U,\partial V$ are comparable to $1$; see Figure \ref{figure:wormhole}. Moreover,  the condition of Theorem \ref{theorem:intro:characterization_quasiannuli} holds quantitatively if $\Delta(\partial U,\partial V)$ is bounded from below away from $0$ (see Lemma \ref{lemma:relatively_separated}) or if $U,V$ are disjoint quasidisks whose boundaries are close to concentric circles (see Lemma \ref{lemma:quasicircle_concentric}), as in Figure \ref{figure:concentric}. 

\begin{figure}
	\begin{tikzpicture}[scale=0.7]
		\draw[dashed] (-4,0)--(4,0);
		\draw[dashed] (-4,-1)--(4,-1);
		\draw[dashed] (-4,1)--(4,1);
		\draw[dashed] (-4,2)--(4,2);
		\draw plot[domain=-4:4, samples=50] (\x, 
		{-0.5
	   	+ 0.25*sin((3*\x + 0.7))
   		+ 0.14*sin((17*sqrt(2)*\x + 1.3))
   		+ 0.09*cos((41*pi*\x + 0.2))
   		+ 0.05*sin((97*e*\x + 2.1))
   		+ 0.03*cos((211*sqrt(5)*\x + 0.9))});
   		\draw plot[domain=-4:4, samples=50] (\x,
		{1.3
   		+ 0.22*cos((2.5*\x + 1.1))
   		+ 0.16*sin((13*sqrt(3)*\x + 0.4))
   		+ 0.10*cos((29*pi*\x + 1.9))
   		+ 0.06*sin((73*e*\x + 0.6))
   		+ 0.035*cos((181*sqrt(7)*\x + 2.4))
   		+ 0.02*sin((419*pi*\x + 0.3))
  		});
  		
  		\draw[<->, xshift=.2cm] (4,-0.1)--(4,-.9) node[pos=0.5,right] {$LK$};
  		\draw[<->, xshift=.2cm] (4,1.1)--(4,1.9) node[pos=0.5,right] {$LK$};
  		\draw[<->, xshift=.2cm] (4,0.1)--(4,0.9) node[pos=0.5,right] {$L$};
  		\node[left] at (-4,-0.7) {$\partial V$};
  		\node[left] at (-4,1.5) {$\partial U$};
	\end{tikzpicture}
	\caption{Two unbounded quasidisks $ U, V$ whose boundaries are close to parallel lines, as in Lemma \ref{lemma:quasicircle_parallel}. In this case, $d_{V,U}(z,w)\simeq_K \dist_e(\partial U,\partial V)^{-1}|z-w|$.}\label{figure:parallel}
\end{figure}
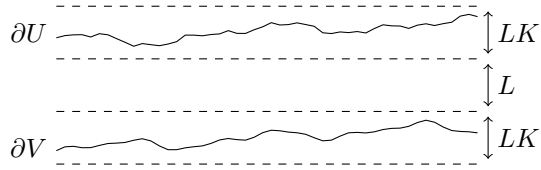

\begin{figure}
	\begin{tikzpicture}[scale=0.7]
		\draw plot[domain=-2:6.2, samples=100] 
		(\x,{sin(deg(\x))+ 0.20*sin(deg(3*\x + 0.7))
      	+ 0.12*sin(deg(7*\x + 1.4))
     	+ 0.08*cos(deg(13*\x + 0.3))
     	+ 0.05*sin(deg(23*\x + 2.1))
      	+ 0.03*cos(deg(41*\x + 0.9))}) node[right] {$\partial U$};
      
      \draw plot[domain=-2:6.2, samples=100]
		(\x, {-2+ sin(deg(\x))
    	+ 0.18*sin(deg(4*\x + 1.1))
    	+ 0.14*cos(deg(9*\x + 0.2))
    	+ 0.09*sin(deg(15*\x + 1.8))
    	+ 0.06*cos(deg(27*\x + 0.6))
    	+ 0.035*sin(deg(45*\x + 2.4))
  		}) node[right] {$\partial V$};
  		
  		\draw[dashed] plot[domain=-2:6.28, samples=100](\x, {-1+ sin(deg(\x))}) node[right] {$J$};
	\end{tikzpicture}
	\caption{Two unbounded quasidisks $U,V$ and a chord-arc curve $J$ whose distance and Hausdorff distance to each of $\partial U,\partial V$ are comparable to $1$, as in Lemma \ref{lemma:wormhole}. In this case, $d_{V,U}(z,w)\simeq_K |z-w|$.}\label{figure:wormhole}
\end{figure}
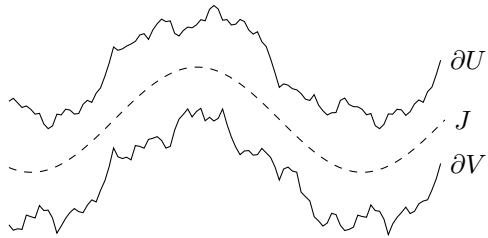

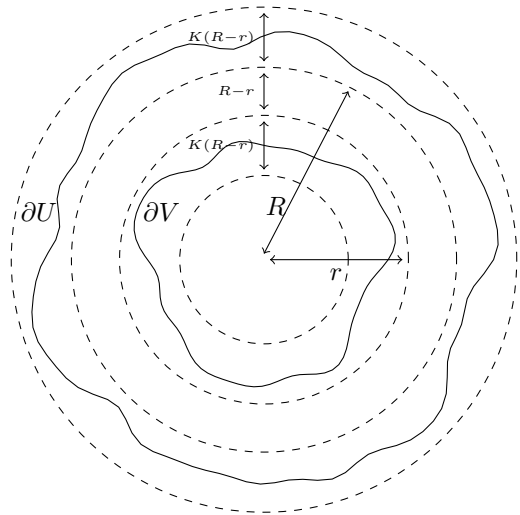
\begin{figure}
	\begin{tikzpicture}[scale=1.6]

		\draw[dashed](0,0) circle (1.2cm);
		\draw[dashed](0,0) circle (0.7cm);
		\draw[dashed](0,0) circle (1.6cm);
		\draw[dashed](0,0) circle (2.1cm);
		\draw plot[domain=0:360, samples=100] (
	  	{(1
    	+ 0.08*sin(3*\x + 0.5)
    	+ 0.05*sin(5*\x + 1.0)
    	+ 0.03*cos(7*\x + 0.3)
    	+ 0.02*sin(11*\x + 1.5)
   		)*cos(\x)},
  		{(1
    	+ 0.08*sin(3*\x + 0.5)
    	+ 0.05*sin(5*\x + 1.0)
    	+ 0.03*cos(7*\x + 0.3)
    	+ 0.02*sin(11*\x + 1.5)
  		)*sin(\x)}
		);
		\draw plot[domain=0:360, samples=100] (
  		{(1.85
    	+ 0.07*sin(4*\x + 1.2)
    	+ 0.06*sin(6*\x + 0.4)
    	+ 0.04*cos(9*\x + 0.9)
    	+ 0.025*sin(14*\x + 1.7)
    	+ 0.015*cos(20*\x + 0.5)
   		)*cos(\x)},
  		{(1.85
    	+ 0.07*sin(4*\x + 1.2)
    	+ 0.06*sin(6*\x + 0.4)
    	+ 0.04*cos(9*\x + 0.9)
    	+ 0.025*sin(14*\x + 1.7)
    	+ 0.015*cos(20*\x + 0.5)
   		)*sin(\x)}
		);
	
		\draw[<->] (0.05,0)--(1.15,0) node[pos=.5,below]{$r$};
		\draw[<->] (0,0.05)--(0.7,1.4);
		\node at (0.1,0.45) {$R$};
	
		\draw[<->] (0,0.75)--(0,1.15)node[pos=.5,left]{$\scriptscriptstyle K(R-r)$};
		\draw[<->] (0,1.25)--(0,1.55)node[pos=.5,left]{$\scriptscriptstyle R-r$};
		\draw[<->] (0,1.65)--(0,2.05)node[pos=.5,left]{$\scriptscriptstyle K(R-r)$};
		\node at (-0.85,0.4) {$\partial V$};
		\node at (-1.87,0.4) {$\partial U$};
	\end{tikzpicture}
	\caption{Two quasidisks $U,V$ whose boundaries are close to concentric circles, as in Lemma \ref{lemma:quasicircle_concentric}. In this case, $d_{V,U}(z,w)\simeq_K {\dist_e(\partial U,\partial V)}^{-1}{|z-w|}$.}\label{figure:concentric}
\end{figure}

One can ask whether there exists a simpler condition, than the one in Theorem \ref{theorem:intro:characterization_combined}, to characterize pairs of quasicircles that can be quasiconformally mapped to circles. For example, quasicircles can be characterized by means of cross ratios (see \eqref{quasicircle:crossratio}), so it would be ideal to have a similar characterization for pairs. However, it is highly doubtful that such a simple-minded criterion exists. Indeed, in Theorem \ref{theorem:intro:graph} below we give a neat characterization in the case that one of the quasicircles is a line and the other is the graph of a Lipschitz function; yet this characterization is still involved and is quite far from a geometric condition involving cross ratios.

The proofs of the main results rely on several tools from the theory of quasiconformal and quasisymmetric maps, but also from classical conformal and hyperbolic geometry. In addition, we use the notion of uniform domains, and distortion properties of the hyperbolic and quasihyperbolic metrics under quasiconformal maps. All the required preliminaries are included in Seciton \ref{section:preliminaries}. 

In order to establish the theorems, we need to show that the condition that 
$$\text{the map}\,\,\,\id\colon (\partial V,d_{V,U})\to (\partial V,\chi)\,\,\, \text{is quasi-M\"obius}$$
is quasiconformally invariant. The necessity part of the theorems then easily follows from the fact the condition holds in the case that $U,V$ are disks. The mentioned quasiconformal invariance follows from some special properties of the relative hyperbolic metric $d_{V,U}$ that we establish. One important ingredient is that if $f$ is a quasiconformal homeomorphism of the sphere mapping the quasidisks $U,V$ to $U',V'$, then the map 
$$f\colon (\partial V,d_{V,U})\to (\partial V',d_{V',U'})$$
is quasisymmetric. This is established in Theorem \ref{theorem:relative_hyperbolic_qc}. Another important tool that allows us to estimate $d_{V,U}$ is that it is quasisymmetrically equivalent to the quasihyperbolic distance in the domain $Z$ that is bounded by $\partial U$ and some quasiconformal reflection of $\partial U$ along $\partial V$; see Corollary \ref{corollary:quasisymmetric_dk} and Figure \ref{figure:z}. 

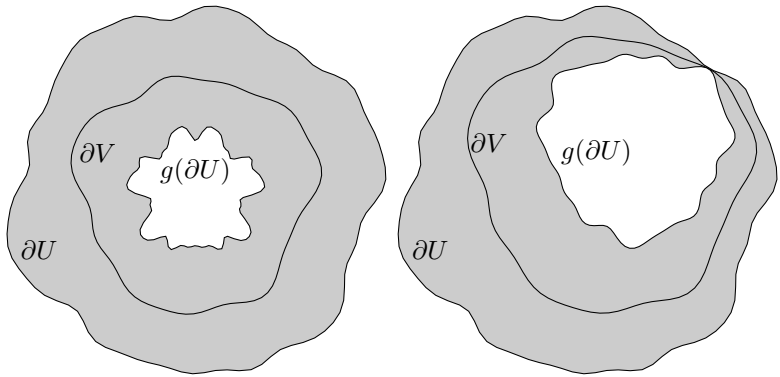
\begin{figure}
	\begin{tikzpicture}[scale=1.3]
	\begin{scope}
		\filldraw[fill=black!20] plot[domain=0:360, samples=100] (
  		{(1.85
    	+ 0.07*sin(4*\x + 1.2)
    	+ 0.06*sin(6*\x + 0.4)
    	+ 0.04*cos(9*\x + 0.9)
    	+ 0.025*sin(14*\x + 1.7)
    	+ 0.015*cos(20*\x + 0.5)
   		)*cos(\x)},
  		{(1.85
    	+ 0.07*sin(4*\x + 1.2)
    	+ 0.06*sin(6*\x + 0.4)
    	+ 0.04*cos(9*\x + 0.9)
    	+ 0.025*sin(14*\x + 1.7)
    	+ 0.015*cos(20*\x + 0.5)
   		)*sin(\x)}
		);
		\draw plot[domain=0:360, samples=100] (
	  	{(1.2
    	+ 0.08*sin(3*\x + 0.5)
    	+ 0.05*sin(5*\x + 1.0)
    	+ 0.03*cos(7*\x + 0.3)
    	+ 0.02*sin(11*\x + 1.5)
   		)*cos(\x)},
  		{(1.2
    	+ 0.08*sin(3*\x + 0.5)
    	+ 0.05*sin(5*\x + 1.0)
    	+ 0.03*cos(7*\x + 0.3)
    	+ 0.02*sin(11*\x + 1.5)
  		)*sin(\x)}
		);
		
		\filldraw[fill=white] plot[domain=0:360, samples=100] (
  		{(0.6
    	+ 0.065*sin(5*\x + 0.9)
    	+ 0.055*sin(7*\x + 0.6)
    	+ 0.045*cos(10*\x + 1.1)
    	+ 0.028*sin(15*\x + 1.3)
    	+ 0.018*cos(22*\x + 0.2)
   		)*cos(\x)},
  		{(0.6
    	+ 0.065*sin(5*\x + 0.9)
    	+ 0.055*sin(7*\x + 0.6)
   	 	+ 0.045*cos(10*\x + 1.1)
   	 	+ 0.028*sin(15*\x + 1.3)
    	+ 0.018*cos(22*\x + 0.2)
  		)*sin(\x)}
		);
	
		\node at (-1,0.4) {$\partial V$};
		\node at (-1.6,-0.6) {$\partial U$};
		\node at (0,0.2) {$g(\partial U)$};
	\end{scope}
	\begin{scope}[shift={(4,0)}]
	\filldraw[fill=black!20] plot[domain=0:360, samples=100] (
  		{(1.85
    	+ 0.07*sin(4*\x + 1.2)
    	+ 0.06*sin(6*\x + 0.4)
    	+ 0.04*cos(9*\x + 0.9)
    	+ 0.025*sin(14*\x + 1.7)
    	+ 0.015*cos(20*\x + 0.5)
   		)*cos(\x)},
  		{(1.85
    	+ 0.07*sin(4*\x + 1.2)
    	+ 0.06*sin(6*\x + 0.4)
    	+ 0.04*cos(9*\x + 0.9)
    	+ 0.025*sin(14*\x + 1.7)
    	+ 0.015*cos(20*\x + 0.5)
   		)*sin(\x)}
		);
		\draw plot[domain=0:360, samples=100] (
	  	{(1.4
    	+ 0.08*sin(3*\x + 0.5)
    	+ 0.05*sin(5*\x + 1.0)
    	+ 0.03*cos(7*\x + 0.3)
    	+ 0.02*sin(11*\x + 1.5)
   		)*cos(\x)+0.25},
  		{(1.4
    	+ 0.08*sin(3*\x + 0.5)
    	+ 0.05*sin(5*\x + 1.0)
    	+ 0.03*cos(7*\x + 0.3)
    	+ 0.02*sin(11*\x + 1.5)
  		)*sin(\x)+0.21}
		);
		\filldraw[fill=white]
		plot[domain=0:360, samples=140] (
  		{(1
    	+ 0.07*(sin(3*\x+1.1)-sin(1.1))
    	+ 0.035*(cos(8*\x+0.1)-cos(0.1))
    	+ 0.03*(sin(11*\x+0.4)-sin(0.4))
    	+ 0.02*(cos(15*\x+1.3)-cos(1.3))
   		)*cos(\x)+0.5},
  		{(1
    	+ 0.07*(sin(3*\x+1.1)-sin(1.1))
    	+ 0.035*(cos(8*\x+0.1)-cos(0.1))
    	+ 0.03*(sin(11*\x+0.4)-sin(0.4))
    	+ 0.02*(cos(15*\x+1.3)-cos(1.3))
   		)*sin(\x)+0.5}
		);
		\node at (-1,0.5) {$\partial V$};
		\node at (-1.6,-0.6) {$\partial U$};
		\node at (0.1,0.4) {$g(\partial U)$};
	\end{scope}
	\end{tikzpicture}
	\caption{Shown in gray is the region $Z$ bounded by $\partial U$ and a quasiconformal reflection $g(\partial U)$ of $\partial U$ along $\partial V$ when $\bar U\cap \bar V=\emptyset$ (left) and when $\bar U\cap \bar V$ is a singleton (right).}\label{figure:z}
\end{figure}

One of the difficulties in the proof of the main results is that several normalizations are required for the estimation of $d_{V,U}$. Ideally, we would like to have that $\partial U$ and $\partial V$ are close to concentric circles or parallel lines, but we can achieve this only after normalizations and certain transformations. Another issue is that in the setting of Theorem \ref{theorem:intro:characterization_quasiannuli} it is not true that we can map $\partial U,\partial V$ to \textit{concentric}  circles with a quasisymmetric map, but only with a quasiconformal or quasi-M\"obius map.  This makes estimates more delicate throughout the proofs in an effort to ensure that quantitative control is not lost. 

We now discuss some applications of the main theorems in various problems in geometric function theory. 

\subsection{Schottky sets}

We can incorporate Theorem \ref{theorem:intro:characterization_combined} into the main theorem of \cite{Ntalampekos:schottky} and obtain the following quasiconformal characterization of Schottky sets.

\begin{theorem}[Schottky sets]\label{theorem:intro:schottky}
Let $\{U_i\}_{i\in I}$ be a collection of at least two disjoint Jordan regions in $\widehat \C$ such that $\bar{U_i}\cap \bar{U_j}$ contains at most one point for $i\neq j$. The following are quantitatively equivalent.
\begin{enumerate}[label=\normalfont(\roman*)]
\item For each pair of distinct $i,j\in I$ the identity map 
$$\id\colon (\partial U_i\setminus \bar {U_j}, d_{U_i,U_j})\to (\partial U_i\setminus \bar {U_j},\chi)$$
is uniformly quasi-M\"obius.
\item For each $i,j\in I$ there exists a uniformly quasiconformal homeomorphism of $\widehat \C$ that maps $U_i$ and $U_j$ to disks.
\item There exists a quasiconformal homeomorphism $f$ of $\widehat \C$ that maps $U_i$ to a disk for each $i\in I$ and is $1$-quasiconformal on $S=\widehat \C\setminus \bigcup_{i\in I} U_i$. 
\end{enumerate}
In this case, the map $f|_S$ is unique up to postcomposition with a conformal automorphism of $\widehat \C$. 
\end{theorem}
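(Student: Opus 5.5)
The plan is to obtain Theorem \ref{theorem:intro:schottky} by combining Theorem \ref{theorem:intro:characterization_combined} with the main theorem of \cite{Ntalampekos:schottky}. That theorem says that for a collection of disjoint Jordan regions, uniform quasiconformal pairwise circularizability is quantitatively equivalent to the existence of a global quasiconformal map sending every $U_i$ to a disk. We will also use the refinement proved there: this map can be taken to be $1$-quasiconformal on the residual set $S$, and $f|_S$ is then unique up to Möbius transformations.

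\textbf{(i) $\Leftrightarrow$ (ii).} This is Theorem \ref{theorem:intro:characterization_combined} applied pair by pair. One needs each $\partial U_i$ to be a quasicircle with uniform constant, and we establish this first in either direction.
\begin{itemize}
\item Under (ii), fix any $j\neq i$ (possible since $|I|\ge 2$). Then $\partial U_i$ is the image of a circle under a uniformly quasiconformal map of $\widehat\C$, so it is a uniform quasicircle by Ahlfors' theorem.
\item Under (i), the hypothesis that $d_{U_i,U_j}$ is a metric comparable to $\chi$ via a quasi-Möbius identity should force $\partial U_i$ to be a quasicircle.
\end{itemize}

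The second bullet is the subtle point. If the paper's definition of $d_{V,U}$ only guarantees a metric when $V$ is a quasidisk, then (i) should be read as including the standing assumption that the $U_i$ are uniform quasidisks. Alternatively, one deduces it from the quasi-Möbius property, by showing that cross-ratio control along $\partial U_i\setminus\bar U_j$ gives the Ahlfors three-point condition. I would first check how the paper phrases this; most likely the quasidisk property is built into (i) through the well-definedness of $d_{U_i,U_j}$.

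Once uniform quasicircles are in hand, the quantitative form of Theorem \ref{theorem:intro:characterization_combined} gives both directions:
\begin{itemize}
\item $\eta$-quasi-Möbius identities produce $K(L,\eta)$-quasiconformal pair maps;
\item $K$-quasiconformal pair maps produce $\eta(K,L)$-quasi-Möbius identities.
\end{itemize}

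\textbf{(iii) $\Rightarrow$ (ii).} This is immediate: $f$ itself maps every $U_i$, in particular each pair, to disks with a fixed dilatation.

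\textbf{(ii) $\Rightarrow$ (iii).} This is exactly the content of the main theorem of \cite{Ntalampekos:schottky}, including the $1$-quasiconformality on $S$ and the uniqueness statement. The dilatation of $f$ depends only on the uniform pairwise constant $K$.

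The only genuine obstacle is therefore the quasidisk issue in (i) $\Rightarrow$ (ii). Everything else is bookkeeping of the constants through the two cited results.
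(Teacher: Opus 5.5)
Your decomposition is the paper's. The paper derives Theorem~\ref{theorem:intro:schottky} in one line: Theorem~\ref{theorem:intro:characterization_combined}, applied pair by pair, gives (i)$\Leftrightarrow$(ii), and the main theorem of \cite{Ntalampekos:schottky} gives (ii)$\Leftrightarrow$(iii) and the uniqueness of $f|_S$. Your handling of (ii)$\Rightarrow$(i), (ii)$\Leftrightarrow$(iii) and uniqueness matches this.

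\textbf{The open point in (i)$\Rightarrow$(ii).} The subtlety you isolated is real, but only your first resolution works. Condition (i) does \emph{not} force $\partial U_i$ to be a quasicircle. Nor is the quasidisk property encoded in the well-definedness of $d_{U_i,U_j}$: being a quasidisk is sufficient for $d_{V,U}$ to be a finite metric, not necessary.

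\textbf{Counterexample.} Let $S=\{re^{i\theta}:\tfrac12\le r\le 1,\ |\theta|\le (r-\tfrac12)^2\}$, $U_1=\D\setminus S$ and $U_2=\{|z|>10\}\cup\{\infty\}$.
\begin{itemize}
\item $U_1$ is a Jordan region with a cusp at $\tfrac12$. The points $re^{\pm i(r-1/2)^2}$ are $\simeq (r-\tfrac12)^2$ apart, while both boundary arcs joining them have diameter $\gtrsim r-\tfrac12$. So $U_1$ is not a quasidisk, and (ii) fails.
\item On $\partial U_1$: the density $\rho_{\D(0,10)}$ is at least $\tfrac15$ everywhere and at most $1$ on $\bar{\D}(0,2)$. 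The set $\bar{\D}(0,2)\setminus U_1=\{1\le |z|\le 2\}\cup S$ is quasiconvex with an absolute constant. Hence $d_{U_1,U_2}(z,w)\simeq |z-w|\simeq \chi(z,w)$ on $\partial U_1$, and in particular $d_{U_1,U_2}$ is a finite metric.
\item On $\mathbb S(0,10)$: since $\{|z|>1\}\subset U_1^*\subset\{|z|>\tfrac12\}$, comparison of hyperbolic densities gives $\rho_{U_1^*}\ge \tfrac1{100}$ on $U_1^*\setminus U_2$ and $\rho_{U_1^*}\le \tfrac{2}{99}$ on $\mathbb S(0,10)$. Hence $d_{U_2,U_1}(z,w)\simeq |z-w|\simeq\chi(z,w)$ on $\mathbb S(0,10)$.
\end{itemize}
Both identity maps in (i) are therefore bi-Lipschitz up to an absolute factor, hence $\eta$-quasi-M\"obius with $\eta(t)=Ct$. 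Yet (ii) fails.

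\textbf{Consequence.} (i)$\Rightarrow$(ii) requires the standing hypothesis that all $U_i$ are $L$-quasidisks with a common $L$. This hypothesis is implicit in the paper's one-line derivation, since the constants in Theorem~\ref{theorem:intro:characterization_combined} are $K(L,\eta)$. The statement as printed, for arbitrary Jordan regions, is inaccurate without it. With this hypothesis stated explicitly, your argument is complete. Drop the alternative of deducing the quasicircle property from (i), because it cannot succeed.
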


\subsection{Graphs and cusps}

We apply Theorem \ref{theorem:intro:characterization_tangent} to characterize graphs of Lipschitz functions and quasicircles in the upper half-plane $\mathbb H$ that can be mapped to a line with a quasiconformal homeomorphism of $\mathbb H$. The proofs of the results in this section are given in Section \ref{section:graph}. Remarkably, the next two results convey an interesting interplay between Lipschitz and quasiconformal geometry. 

\begin{theorem}[Graphs]\label{theorem:intro:graph}
Let $f\colon \R\to (0,\infty)$ be a Lipschitz function. There exists a quasiconformal homeomorphism of $\C$ that preserves the real line and maps the graph of $f$ onto a line and only if an antiderivative of $1/f$ is quasisymmetric. The statement is quantitative.
\end{theorem}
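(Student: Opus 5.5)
The plan is to apply Theorem \ref{theorem:intro:characterization_tangent} with $U$ the lower half-plane $\{\im z<0\}$ and $V=\{y>f(x)\}$ the region above the graph $\Gamma$ of $f$. Since $f$ is Lipschitz, $\partial V=\Gamma$ is an $L$-quasicircle through $\infty$ with $L$ depending only on $\mathrm{Lip}(f)$, and $\bar U\cap\bar V=\emptyset$ in $\C$ because $f>0$. A quasiconformal homeomorphism of $\C$ preserving $\R$ and sending $\Gamma$ onto a line also maps $U$ and $V$ onto half-planes. Conversely, if $\phi$ maps $U,V$ to half-planes, then after an affine normalization we may assume $\phi(U)$ is the lower half-plane. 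Reflecting $\phi$ across $\R$ on the lower side and replacing it there by a quasiconformal self-map of the lower half-plane that extends $\phi|_\R$ (Beurling--Ahlfors) gives a map preserving $\R$. So the theorem reduces to the following claim: the identity $(\Gamma,d_{V,U})\to(\Gamma,|\cdot|)$ is quasisymmetric exactly when $F'=1/f$ has a quasisymmetric antiderivative $F$.

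The main step is to estimate $d_{V,U}$ for Lipschitz graphs. The domain $U^*$ is the upper half-plane, so $h_{U^*}$ has density comparable to $1/\im z$. The curves in the infimum run in $\UHP\setminus V=\{0<y\le f(x)\}$. I claim
\[
d_{V,U}\bigl((x,f(x)),(x',f(x'))\bigr)\simeq \Bigl|\int_x^{x'}\frac{dt}{f(t)}\Bigr|=|F(x')-F(x)|,
\]
with constants depending only on $\mathrm{Lip}(f)$. For the upper bound, run along $\Gamma$ itself. Its hyperbolic length is $\int\sqrt{1+f'^2}/f\,dt\lesssim|F(x')-F(x)|$.

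For the lower bound, take any curve $\gamma$ in the region $0<y\le f(x)$. Since $|dz|\ge|dx|$ and $y\le f(x)$, its hyperbolic length is at least $\int|dx|/f(x)\ge|F(x')-F(x)|$, because $\gamma$ must sweep the $x$-interval. This is a clean lower bound even with constant $1$. Hence $(\Gamma,d_{V,U})$ is bi-Lipschitz equivalent, via the parametrization $x\mapsto(x,f(x))$, to $(\R,|F(\cdot)-F(\cdot)|)$, that is, to $F(\R)\subset\R$ with the Euclidean metric. Meanwhile $(\Gamma,|\cdot|)$ is bi-Lipschitz to $(\R,|\cdot|)$ via the same parametrization, with constant $\sqrt{1+\mathrm{Lip}(f)^2}$.

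Combining, the identity map is quasisymmetric if and only if $F^{-1}\colon F(\R)\to\R$ is quasisymmetric, equivalently $F\colon\R\to F(\R)$ is quasisymmetric (inverses of quasisymmetries are quasisymmetric, quantitatively). One subtlety remains: a quasisymmetric $F$ is onto $\R$, whereas $F(\R)$ might a priori be a bounded interval. Here the tangency at $\infty$ is relevant. The metric $d_{V,U}$ is unbounded near $\infty$ on the quasidisk side, which should follow from the general results that it is a metric comparable to quasihyperbolic distance (Corollary \ref{corollary:quasisymmetric_dk}). Directly, $f(t)\le f(0)+\mathrm{Lip}(f)|t|$ gives $\int^\infty dt/f=\infty$, so $F(\R)=\R$ automatically.

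I expect the only real obstacle to be the lower bound on $d_{V,U}$ and the handling of admissible curves. I would first verify that the relative metric is defined with curves in $U^*\setminus V$, which includes $\Gamma$ but not $V$, so the projection argument applies, and that the hyperbolic density of $\UHP$ is exactly $1/y$. Quantitative dependence follows since all constants depend only on $\mathrm{Lip}(f)$ and the quasisymmetry data.
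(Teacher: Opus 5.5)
Your argument is correct, but it runs the key estimate with the roles of the two regions swapped relative to the paper.

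The paper, via Proposition \ref{theorem:graph:generalization}, takes $V$ to be the lower half-plane and $U=\{y>f(x)\}$. It therefore computes $d_{V,U}$ on the real line using the hyperbolic density of the subgraph domain $U^*=\{y<f(x)\}$. That density is only available up to constants, through Theorem \ref{theorem:hyperbolic_distance} as $\rho_{U^*}\simeq 1/\dist_e(\cdot,\partial U)$. So the paper must also prove $\dist_e((t,0),\Gamma)\simeq f(t)$ from the Lipschitz bound, and it gets the lower bound through a partition of $[x,y]$ into pieces on which $f$ is roughly constant.

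By taking $U$ to be the lower half-plane and measuring on $\Gamma=\partial V$, you get $U^*=\UHP$ with exact density $1/y$. The lower bound $d_{V,U}\geq |F(x')-F(x)|$ then follows at once from $|dz|\geq|dx|$, $y\leq f(x)$ and projection onto the $x$-axis. The upper bound comes from integrating along $\Gamma$. This is shorter and gives explicit constants, $1$ and $\sqrt{1+\mathrm{Lip}(f)^2}$. Your check that $F(\R)=\R$ also makes explicit a point the paper leaves implicit.

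What the paper's choice buys is generality. It keeps the reference curve $\R$ fixed and lets the other boundary be an arbitrary $K$-quasicircle in the band $a^{-1}f\leq y\leq af$ that meets every vertical line. Its estimate therefore never needs $\partial U$ to be a graph, and this is exactly what is used later for Theorem \ref{theorem:intro:graph:characterization}. Your parametrization $x\mapsto(x,f(x))$ of the boundary on which the metric lives does not carry over directly to that setting.

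One minor remark on the converse direction. Once an affine normalization makes $\phi(U)$ the lower half-plane, $\phi$ already maps $\R=\partial U$ onto $\R$. The reflection and Beurling--Ahlfors modification you describe is therefore unnecessary, though harmless.
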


For example, the graph of the function $f(x)=(|x|+1)^{-p}$, $p>-1$, and the line $y=0$ can be quasiconformally mapped to a pair of parallel lines. On the other hand, this is impossible for $f(x)=e^{-|x|}$. See Example \ref{example:function} for details. Next, we provide a practical characterization of unbounded Jordan curves in the upper half-plane that can be mapped to a line with a quasiconformal map of that preserves the real axis. Here an \textit{unbounded Jordan curve} is a set $J\subset \C$ such that $J\cup \{\infty\}$ is a Jordan curve in the topology of the sphere. Each complementary component of an unbounded Jordan curve is an \textit{unbounded Jordan region}.

\begin{theorem}\label{theorem:intro:graph:characterization}
Let $U\subset \UHP$ be an unbounded Jordan region. There exists a quasiconformal homeomorphism of $\C$ that preserves the real line and maps $\partial U$ onto a line if and only if
\begin{enumerate}[label=\normalfont(\arabic*)]
\item\label{theorem:graph:characterization:1} $U$ is a quasidisk,
\item\label{theorem:graph:characterization:3} there exists a $1$-Lipschitz function $f\colon \R\to (0,\infty)$ and $a\geq 1$ such that $\partial U\subset \{ (x,y)\in \R^2: a^{-1}f(x)\leq y\leq af(x)\}$, and
\item\label{theorem:graph:characterization:4} an antiderivative of $1/f$ is quasisymmetric.
\end{enumerate}
The statement is quantitative.
\end{theorem}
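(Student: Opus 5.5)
The plan is to reduce Theorem \ref{theorem:intro:graph:characterization} to Theorem \ref{theorem:intro:graph} by comparing $\partial U$ with the graph of the Lipschitz function $f$. A quasiconformal homeomorphism of $\C$ preserving $\R$ may be assumed, after composing with complex conjugation on the lower half-plane, to preserve $\UHP$. It therefore maps the pair (lower half-plane, $U$) to a pair of half-planes. This is exactly the situation of Theorem \ref{theorem:intro:characterization_tangent}, with $V=\{\im z<0\}$ and $\partial V=\R$. In the other direction, a quasiconformal map of $\C$ sending the lower half-plane and $U$ to half-planes can be normalized by a similarity so that it preserves $\R$. So throughout I will work with the criterion that $\id\colon(\R,d_{V,U})\to(\R,|\cdot|)$ is quasisymmetric.

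\emph{Sufficiency.} Assume \ref{theorem:graph:characterization:1}--\ref{theorem:graph:characterization:4}. By Theorem \ref{theorem:intro:graph}, applied to $f$, there is a quasiconformal map $\phi$ of $\C$ preserving $\R$ that sends $\Gamma=\graph(f)$ to a line; we may take the line to be $\{\im z=1\}$. The key step is to show that $\phi(\partial U)$ lies in a strip $\{c^{-1}\le \im z\le c\}$ with $c=c(a,K)$. For this I will use that $\phi$ restricted to the region $\{0<y<af(x)\}$ is quasisymmetric in the relative sense. Concretely, the points $(x,y)$ with $a^{-1}f(x)\le y\le af(x)$ are at bounded quasihyperbolic distance from $(x,f(x))$ in $\UHP$, since $f$ is $1$-Lipschitz. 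Quasiconformal maps of $\UHP$ quasi-preserve quasihyperbolic distance (the Gehring--Osgood distortion), so the images are at bounded quasihyperbolic distance from the line $\im z=1$. This gives exactly the strip condition.

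Once $\phi(\partial U)$ is an unbounded quasicircle (its constant controlled by that of $\partial U$ and by $K$) trapped in a strip of bounded width and at bounded distance from $\R$, Lemma \ref{lemma:quasicircle_parallel} applies. It gives $d_{V',U'}(z,w)\simeq|z-w|$ on $\R$, so Theorem \ref{theorem:intro:characterization_tangent} yields a quasiconformal map straightening $\phi(U)$ and $\R$ simultaneously. Composing with $\phi$ gives the required map, with all constants quantitative.

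\emph{Necessity.} Let $F$ be quasiconformal, preserving $\R$ (hence, after the reduction above, $\UHP$), with $F(\partial U)=\{\im z=1\}$. Condition \ref{theorem:graph:characterization:1} is immediate, since $\partial U$ is the image of a line under a quasiconformal map. For \ref{theorem:graph:characterization:3}, define $f(x)=\dist(x,\partial U)$. This is $1$-Lipschitz and positive, because $\partial U$ is a closed set avoiding $\R$. Clearly $y\ge f(x)$ for every $(x,y)\in\partial U$. The upper bound $y\le af(x)$ is again a quasihyperbolic statement. The line $\im z=1$ lies within bounded quasihyperbolic distance of the whole region $\{\im z\ge 1\}$ "near" it, in the sense that for each $x$ the vertical segment above $x$ meets $\partial U$ only at points comparable in height. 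This should follow from quasisymmetry of $F^{-1}$ on $\UHP$: the points of $\partial U$ above $x$ map into a set of bounded diameter relative to its height, which prevents $\partial U$ from reaching heights much larger than its distance to $x$. Having \ref{theorem:graph:characterization:3}, the sufficiency argument run in reverse shows that $F$ maps $\graph(f)$ into a strip, and Lemma \ref{lemma:quasicircle_parallel} with Theorem \ref{theorem:intro:characterization_tangent} straighten it. Then Theorem \ref{theorem:intro:graph} gives \ref{theorem:graph:characterization:4}.

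I expect the main obstacle to be the necessity bound $y\le af(x)$, that is, ruling out that $\partial U$ makes tall excursions over a point $x$ where it also dips close to $\R$. Here the quasicircle property of $\partial U$ (bounded turning) has to be combined carefully with the quasihyperbolic distortion of $F$.
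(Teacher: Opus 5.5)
\textbf{Sufficiency.} Your argument here is correct, and it differs from the paper's.

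Both proofs start from Theorem~\ref{theorem:intro:graph}. The paper then applies Proposition~\ref{theorem:graph:generalization} to $U$ itself. For this it must check that every vertical line meets $\partial U$, which it does with Lemma~\ref{lemma:quasicircle_strip} after straightening $\graph(af)$.

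You instead use Theorem~\ref{theorem:gehring_osgood} to trap $\phi(\partial U)$ in a strip $\{c^{-1}\le \im z\le c\}$. This works because:
\begin{itemize}
\item the vertical segment from $(x,y)$ to $(x,f(x))$ has hyperbolic length at most $\log a$ (the Lipschitz property plays no role here);
\item points at hyperbolic distance at most $M$ from $\{\im z=1\}$ have height in $[e^{-M},e^{M}]$.
\end{itemize}
You then finish with Lemma~\ref{lemma:quasicircle_parallel} and Theorem~\ref{theorem:characterization_tangent}. This route bypasses the vertical-line check and is clean.

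A minor point on your opening reduction: to make a map preserving $\R$ also preserve $\UHP$, compose with $z\mapsto -z$. Conjugation reverses orientation, so it is not the right tool.

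\textbf{Necessity: the gap.} The gap is exactly where you expected it. For your choice $f(x)=\dist((x,0),\partial U)$, the bound $y\le a f(x)$ for $(x,y)\in\partial U$ is the entire content of condition (2). Your sentence that the points of $\partial U$ above $x$ ``map into a set of bounded diameter'' restates this bound rather than proving it.

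Neither bounded turning nor quasihyperbolic distortion is needed: two three-point quasisymmetry estimates suffice. Let $G=F^{-1}$ be $\eta$-quasisymmetric (property \ref{q:qc_qs}). Fix $p=(x,y)\in\partial U$ and write $F(x,0)=(s,0)$, $F(p)=(t,1)$ and $G(t,0)=(x',0)$. Since $(x,0)$ is the point of $\R$ nearest to $p$,
\[
1\le \frac{|G(t,1)-G(t,0)|}{|G(t,1)-G(s,0)|}\le \eta\Big(\frac{1}{|(t,1)-(s,0)|}\Big),
\]
so $|(t,1)-(s,0)|\le c:=1/\eta^{-1}(1)$.

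Next, $(s,1)$ is the point of $\{\im z=1\}$ nearest to $(s,0)$. Hence $|G(s,1)-(x,0)|\le \eta(1)\dist((x,0),\partial U)=\eta(1)f(x)$. Combining the two estimates,
\[
y=|G(t,1)-G(s,0)|\le \eta(c)\,|G(s,1)-G(s,0)|\le \eta(c)\,\eta(1)\,f(x).
\]

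\textbf{Necessity: a second, smaller gap.} The step ``the sufficiency argument run in reverse'' is not symmetric. To place $F(\graph f)$ in a strip, every point of $\graph(f)$ must lie within bounded hyperbolic distance of $\partial U$. Condition (2) only gives the opposite inclusion.

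You can supply this with the point $q=G(s,1)\in\partial U$. By the nearest-point estimate above, $|q-(x,0)|\le \eta(1) f(x)$. Also $(s,0)$ is the point of $\R$ nearest to $(s,1)$, which gives $\im q\ge \eta(1)^{-1}|q-(x,0)|\ge \eta(1)^{-1}f(x)$. Together these show $h_{\UHP}(q,(x,f(x)))\lesssim_\eta 1$.

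Alternatively, note that $\im F(x,y)\ge \eta(1)^{-1}|F(x,y)-F(x,0)|\to\infty$, so every vertical line meets $\partial U$. Then apply the ``only if'' direction of Proposition~\ref{theorem:graph:generalization} directly.

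With these repairs, your necessity proof is shorter than the paper's. The paper builds $f$ piecewise linearly through the points $G(nL,1)$, and so must prove the slow-variation estimate \eqref{graph:claim} and the inclusion \eqref{graph:final_claim}. With $f=\dist((\cdot,0),\partial U)$, the Lipschitz property and the lower bound in (2) are automatic, and the upper bound is the short argument above.
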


The proof relies on Theorem \ref{theorem:intro:characterization_tangent} is one of the most technically involved arguments in the paper.

As an application of Theorem \ref{theorem:intro:graph}, we prove the quasiconformal equivalence of \textit{polynomial cusps}  $C_\alpha=\{(x,y)\in \R^2: |y|=x^\alpha, \, 0\leq x\leq 1\}$, where $\alpha>1$.

\begin{corollary}[Cusps]\label{corollary:intro:cusps}
All polynomial cusps are quasiconformally equivalent.
\end{corollary}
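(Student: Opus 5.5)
The plan is to push the tip of the cusp to $\infty$ by an inversion, so that each half of the cusp becomes a curve asymptotic to the real axis, and then to apply Theorem \ref{theorem:intro:graph} (or Theorem \ref{theorem:intro:graph:characterization}) to straighten it. Fix $\alpha,\beta>1$. It suffices to build, for each $\alpha>1$, a quasiconformal map $\Phi_\alpha$ of $\widehat\C$ sending $C_\alpha$ onto a fixed model set that does not depend on $\alpha$. Then $\Phi_\beta^{-1}\circ\Phi_\alpha$ maps $C_\alpha$ onto $C_\beta$.

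\textbf{Inversion and the model function.} Extend the upper branch to the unbounded arc $\Gamma_\alpha^+=\{x+ix^\alpha: x\ge 0\}$, let $\Gamma_\alpha^-$ be its complex conjugate, and apply $T(z)=-1/z$. For small $x>0$,
$$T(x+ix^\alpha)=-\frac1x\bigl(1+O(x^{2\alpha-2})\bigr)+i x^{\alpha-2}\bigl(1+O(x^{2\alpha-2})\bigr).$$
So with $u=-1/x\to-\infty$, the image of the upper branch near the tip is, up to bounded multiplicative error, the graph $v\simeq |u|^{2-\alpha}$ over $u\to-\infty$. This is exactly the function $f(u)=(|u|+1)^{-p}$ with $p=\alpha-2>-1$ of Example \ref{example:function}. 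The far part of $\Gamma^+_\alpha$ (large $x$) is mapped near $0$, and I will modify it freely, since only the germ at the tip matters.

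So I replace $T(\Gamma_\alpha^+)$ by an unbounded Jordan curve $J_\alpha\subset\UHP$ that agrees with $T(\Gamma_\alpha^+)$ on a neighbourhood of $u=-\infty$, is a symmetric copy of it near $u=+\infty$, and is smooth in between. Let $U_\alpha$ be the region above $J_\alpha$. The hypotheses of Theorem \ref{theorem:intro:graph:characterization} are checked as follows.
\begin{enumerate}[label=\normalfont(\arabic*)]
\item $U_\alpha$ is a quasidisk: $J_\alpha$ is a smooth graph with bounded slope near $\infty$, so it is locally bi-Lipschitz equivalent to a line away from a compact set.
\item $J_\alpha$ lies within a factor $a$ of the graph of $f(u)=(|u|+1)^{2-\alpha}$, and this $f$ is Lipschitz since $|f'|\lesssim(|u|+1)^{1-\alpha}$ is bounded because $\alpha>1$.
\item An antiderivative of $1/f$ is $\simeq \operatorname{sign}(u)|u|^{\alpha-1}$, a power map with positive exponent, hence quasisymmetric.
\end{enumerate}
This yields a quasiconformal $\phi_\alpha\colon\C\to\C$ preserving $\R$ and mapping $J_\alpha$ onto a line, which after a similarity I take to be $\{\im z=1\}$.

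\textbf{Symmetrization and assembly.} Since $\phi_\alpha$ preserves $\R$, I can replace it on the lower half-plane by $z\mapsto\overline{\phi_\alpha(\bar z)}$. The resulting map is still quasiconformal, because $\R$ is removable, and it sends $J_\alpha\cup\overline{J_\alpha}$ to the lines $\im z=\pm1$. Conjugating back by $T$, the set $T^{-1}(J_\alpha\cup\overline{J_\alpha})$ contains a neighbourhood of the tip of $C_\alpha$ in $C_\alpha$, and it is sent onto the fixed pair of circles $T^{-1}(\{\im z=\pm1\})$, which are tangent at $0$.

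The image of $C_\alpha$ is then a pair of closed subarcs of these two circles, both starting at the tangency point, with endpoints depending on $\alpha$. A piecewise smooth, hence bi-Lipschitz away from the tangency point, homeomorphism supported away from $0$ slides the endpoints to fixed positions; near $0$ it is the identity. This defines $\Phi_\alpha$.

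\textbf{Main obstacle.} I expect the main difficulty to be this last gluing step, together with verifying hypothesis (2) rigorously from the asymptotic expansion. One has to make sure that the local modifications of $J_\alpha$ away from the tip, and the endpoint-sliding map, do not spoil quasiconformality near the tangency point. I would handle this by making all modifications on sets at positive distance from $0$ in the model picture, where everything is smooth and transversal, so that quasiconformality follows from local bi-Lipschitz estimates.
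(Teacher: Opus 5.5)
Your proposal is correct and is essentially the paper's proof. The paper also sends the tip to $\infty$ by an inversion ($z\mapsto 1/\bar z$ rather than $-1/z$), shows that the image of the branch $\{x+ix^\alpha\}$ is the graph of a Lipschitz function comparable to $(1+|t|)^{2-\alpha}$, extends that function symmetrically across $t=0$, applies Example~\ref{example:function} together with Theorem~\ref{theorem:intro:graph}, and then reflects across $\R$ and conjugates back. Two small clarifications. First, make $J_\alpha$ coincide with the image of the entire upper branch of $C_\alpha$, which is already a graph over $u\le -1/2$; your endpoint-sliding map can then be replaced by a real translation along $\{\im z=1\}$, just as the paper normalizes $F$. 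Second, no quasiconformality issue arises at the tangency point, because the straightening map is a global quasiconformal map.
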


It is plausible that an explicit quasiconformal map can be found. A recent related result of Chrontsios--Tyson \cite{ChrontsiosTyson:assouad} provides a quasiconformal classification of \textit{polynomial spirals} $S_\alpha=\{x^{-\alpha} e^{ix}: x>0\}$, $\alpha>0$. Specifically, it is shown that for $\alpha>\beta>0$, $S_\alpha$ can be mapped to $S_\beta$ with a $K$-quasiconformal map if and only if $K\geq \alpha/\beta$. It would be interesting to prove an analogous result for cusps. 

Also, it is worth mentioning that a cusp cannot be straightened to a line segment with a quasiconformal map of $\C$, since such maps quasi-preserve angles. Yet, it can be straightened with \textit{mappings with finite distortion}; this class generalizes quasiconformal maps, which have bounded distortion instead. This problem is studied in the series of papers \cites{KoskelaTakkinen:cusp1, KoskelaTakkinen:cusp2, KoskelaTakkinen:cusp3, KoskelaTakkinen:cusp2note, GuoKoskelaTakkinen:quasidisks, IwaniecOnninenZhu:quasidisks}.

\subsection{The quasiconformal extension problem}

We present another application of Theorem \ref{theorem:intro:characterization_combined} in the problem of quasiconformally extending quasisymmetric or quasi-M\"obius maps, quantitatively. It is a classical fact, due to Ahlfors and Beurling \cites{BeurlingAhlfors:extension, Ahlfors:reflections}, that every quasisymmetric embedding of the unit circle into the plane extends to a quasiconformal homeomorphism of the plane, quantitatively (see Corollary \ref{corollary:beurling_ahlfors_embedding}). More generally, the same is true for every quasisymmetric embedding of a quasicircle. For which other classes of sets is such a result true? 

The question has been studied by Vellis \cite{Vellis:qs_extension_long} in the case of relatively connected compact sets whose complementary domains are uniform; we direct the reader to the referenced paper for definitions. Thus, a satisfactory resolution to the extension problem in the case of sets with controlled geometry has been provided. For example, every quasisymmetric embedding of the middle-thirds Cantor set extends to a quasiconformal homeomorphism of the plane, quantitatively \cite{MacManus:cantor}.

As an application of the main theorems, we resolve the extension problem for embeddings of pairs of circles. Note that the complementary domain of two tangent disks is not uniform. Also, if two disks have disjoint closures, the complementary domain is uniform, but the uniformity constant degenerates as the disks tend to be tangent to each other. Hence, the previous results do not apply in these cases. 

\begin{theorem}[Tangent disks]\label{theorem:intro:extension}
Let $U,V\subset \widehat \C$ be disjoint tangent disks and $f\colon \partial U\cup \partial  V\to \widehat \C$ be a quasi-M\"obius embedding. Then there exists an extension of $f$ to a (possibly orientation-reversing) quasiconformal homeomorphism of $\widehat \C$, quantitatively.
\end{theorem}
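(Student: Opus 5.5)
We reduce to a configuration where Theorem \ref{theorem:intro:characterization_tangent} applies, and then glue quasiconformal extensions across the two curves. By Möbius normalization we place the point of tangency at $\infty$, so that $U=\{\im z>1\}$ and $V=\{\im z<0\}$. Likewise we postcompose $f$ with a Möbius map so that $f(\infty)=\infty$. Since $f$ is a quasi-M\"obius embedding of $\partial U\cup\partial V\cup\{\infty\}$ fixing $\infty$, it is quasisymmetric on $\partial U\cup \partial V\subset\C$ with respect to the Euclidean metric, quantitatively. In particular, $J_1=f(\partial U)$ and $J_2=f(\partial V)$ are unbounded quasicircles in $\C$, and they are disjoint. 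Let $U'$ and $V'$ be the complementary components of $J_1$ and $J_2$ that are disjoint from each other. (If the orientation does not match, we use a reflection; this is why the extension may reverse orientation.) Then $U',V'$ are unbounded quasidisks with $\bar{U'}\cap\bar{V'}=\emptyset$ in $\C$.

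\textbf{Key step.} We show that $U',V'$ can be mapped to half-planes by a $K$-quasiconformal map $g$ of $\C$, using Theorem \ref{theorem:intro:characterization_tangent}. For this we must check that $\id\colon(J_2,d_{V',U'})\to(J_2,|\cdot|)$ is quasisymmetric. For the half-planes we have $d_{V,U}\simeq|\cdot|$ by Lemma \ref{lemma:parallel}. It therefore suffices to compare $d_{V',U'}(f(x),f(y))$ with $|f(x)-f(y)|$ in a quasisymmetric fashion. Here I would use Corollary \ref{corollary:quasisymmetric_dk}: $d_{V',U'}$ is quasisymmetrically equivalent to the quasihyperbolic distance in the domain $Z'$ bounded by $J_1$ and a quasiconformal reflection of $J_1$ across $J_2$. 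I would then argue that the quasihyperbolic geometry of $Z'$ near scale $|f(x)-f(y)|$ is governed by the distance between $J_1$ and $J_2$ at that location. This distance is controlled by the quasisymmetry of $f$ on $\partial U\cup\partial V$: the ratio $\dist(f(x),J_1)/|f(x)-f(y)|$ is comparable to $\eta$ evaluated at the corresponding ratio for $x,y$.

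An alternative route, which avoids estimating $d_{V',U'}$ directly, is to first extend $f|_{\partial U}$ to a quasiconformal map $F$ of $\widehat\C$ via Corollary \ref{corollary:beurling_ahlfors_embedding}. The question is whether $F$ can be chosen so that $F(\partial V)$ stays quasisymmetrically comparable to $J_2$. If so, Theorem \ref{theorem:relative_hyperbolic_qc} transfers the condition from $(\partial V,d_{V,U})$ to $(J_2,d_{V',U'})$, and the quasi-invariance handles the rest. I expect this key step—verifying the hypothesis of Theorem \ref{theorem:intro:characterization_tangent} for the image pair with constants depending only on the quasi-M\"obius data—to be the main obstacle. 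It requires a careful normalization at each scale, choosing points on $J_2$ and $J_1$ at comparable positions, so that no quantitative control is lost.

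\textbf{Gluing.} Once $g$ exists, we replace $f$ by $g\circ f$. After an affine change of coordinates we get a quasisymmetric map, quantitatively, from $\partial U\cup\partial V$ (the two lines $\im z=1$ and $\im z=0$) onto the two lines $\im z=1$ and $\im z=0$, preserving each line and fixing $\infty$. By the Beurling--Ahlfors extension, the restrictions to each line extend to quasiconformal maps of the half-planes $U$ and $V$. On the strip $S=\{0<\im z<1\}$ we need a quasiconformal extension whose boundary values are two homeomorphisms $\phi_0,\phi_1$ of $\R$. Quasisymmetry of the combined map on both lines, applied to pairs of points taken one on each line, forces three properties: $\phi_0$ and $\phi_1$ are quasisymmetric, $|\phi_0(x)-\phi_1(x)|$ is bounded, and both maps satisfy $|\phi_i(x)-\phi_i(y)|\simeq 1$ when $|x-y|=1$. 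With these, the explicit interpolation $(x,t)\mapsto (1-t)\Phi_0(x,t)+t\Phi_1(x,t)$, built from Beurling--Ahlfors-type averages over intervals of length comparable to $1$, is quasiconformal on $S$. Gluing the three pieces along the lines, which are removable for quasiconformality, yields the desired extension. All constants depend only on the quasi-M\"obius distortion function of $f$.
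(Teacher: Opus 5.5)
\textbf{There is a genuine gap.} Your normalization and overall plan match the paper: straighten the image pair with Theorem \ref{theorem:intro:characterization_tangent}, then extend between two pairs of parallel lines. However, the central step is never carried out. You flag the verification of the hypothesis of Theorem \ref{theorem:intro:characterization_tangent} for $(U',V')$ as the main obstacle, and you offer only heuristics for it.

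Route A would require turning local bounds on $\dist(f(x),J_1)$ into a global quasisymmetric comparison of the path metric $d_{V',U'}$ (or $k_{Z'}$) with $|\cdot|$ on $J_2$, for two arbitrary quasicircles. Nothing in your sketch does this. Route B does not close the argument either. If $F$ extends $f|_{\partial U}$, then Theorem \ref{theorem:relative_hyperbolic_qc} gives information about the pair $(F(V),U')$, not $(V',U')$. The condition ``$F(\partial V)$ quasisymmetrically comparable to $J_2$'' is never made precise or proved.

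The missing idea is to pull back instead of pushing forward.
\begin{enumerate}
\item Extend $f|_{\partial V}$ (not $f|_{\partial U}$) by Corollary \ref{corollary:beurling_ahlfors_embedding} to a quasiconformal, hence quasisymmetric, map $F$ of $\C$.
\item Replace $f$ by $g=F^{-1}\circ f$. This map is quasisymmetric, quantitatively, and is the identity on $\R\times\{0\}$. After a reflection, $g(\R\times\{1\})\subset\UHP$.
\item Apply quasisymmetry twice. First compare $|g(x,1)-g(x,0)|$ with $|g(x,0)-g(x+1,0)|=1$. Then, writing $g(x,1)=(z,w)$, compare $w=|g(x,1)-(z,0)|$ with $|(z,0)-(z+1,0)|=1$, using that the preimages satisfy $|(x,1)-(z,0)|\geq 1$. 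This gives $g(\R\times\{1\})\subset\R\times[\eta(1)^{-1},\eta(1)]$, which is Lemma \ref{lemma:strip_bounds}.
\item The pair $(V,g(U))$ is now exactly in the setting of Lemma \ref{lemma:quasicircle_parallel}. So $d_{V,g(U)}\simeq|\cdot|$ on $\R$, and Theorem \ref{theorem:characterization_tangent} applies directly, with no comparison to $J_2$ needed.
\item Composing gives a quasisymmetric map $h$ of $\R\times\{0,1\}$ onto itself that preserves each line. The paper then invokes Corollary \ref{corollary:extension_strip}.
\end{enumerate}

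On that last step, your interpolation $(1-t)\Phi_0+t\Phi_1$ is also only asserted. Injectivity and a uniform dilatation bound are exactly what must be checked there. The paper obtains them from the triangulation extension of Theorem \ref{theorem:qc_extension}, together with a bi-Lipschitz straightening of the trapezoids with vertices $h(n,0),h(n+1,0),h(n,1),h(n+1,1)$.
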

Note that we do not impose the assumption that $f$ admits an extension to a homeomorphism of the sphere; this follows from the assumptions that $f$ is quasi-M\"obius and $\partial U\cap \partial V\neq \emptyset$. However, in the next result it is obviously necessary to impose the existence of a homeomorphic extension.

\begin{theorem}[Annulus]\label{theorem:extension:disjoint}
Let $U,V\subset \widehat \C$ be disks with disjoint closures and $f\colon \partial U\cup \partial  V\to \widehat \C$ be a quasi-M\"obius embedding that extends to an orientation-preserving homeomorphism of $\widehat \C$. Denote by $\Gamma$ (resp.\ $\Gamma'$) the family of curves separating $\partial U$ and $\partial V$ (resp.\ $f(\partial U)$ and $f(\partial V)$). Suppose that one of the following conditions holds.
\begin{enumerate}[label=\normalfont(\arabic*)] 
\item $\displaystyle K_0^{-1}\leq  \frac{\Mod\Gamma'}{\Mod\Gamma}\leq K_0$ for some $K_0\geq 1$.\label{theorem:extension:disjoint:1}
\medskip
\item $\displaystyle \Mod\Gamma\leq M$ for some $M>0$. \label{theorem:extension:disjoint:2}
\end{enumerate}
Then there exists an extension of $f$ to a quasiconformal homeomorphism of $\widehat \C$, quantitatively.
\end{theorem}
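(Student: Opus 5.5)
Set $U'$ and $V'$ to be the Jordan regions bounded by $f(\partial U)$ and $f(\partial V)$ that correspond to $U$ and $V$ under the given orientation-preserving homeomorphic extension of $f$. Then $\bar{U'}\cap\bar{V'}=\emptyset$. Since $f$ is quasi-M\"obius on each circle, $f(\partial U)$ and $f(\partial V)$ are $L$-quasicircles with $L$ depending only on the distortion function of $f$. The plan is to apply Theorem \ref{theorem:intro:characterization_quasiannuli} to the pair $(U',V')$. This gives a $K$-quasiconformal map $\phi$ of $\widehat\C$ with $\phi(U')$ and $\phi(V')$ disks, provided we check that $\id\colon(\partial V',d_{V',U'})\to(\partial V',\chi)$ is quasi-M\"obius with controlled distortion. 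Once $\phi$ is available, $\phi\circ f$ is a quasi-M\"obius embedding of $\partial U\cup\partial V$ onto the union of two disjoint circles. The problem then reduces to the round case. Normalize both pairs by M\"obius maps to concentric circles $\{|z|=1\}\cup\{|z|=R\}$ and $\{|z|=1\}\cup\{|z|=R'\}$, where $\log R$ and $\log R'$ are $2\pi$ times the moduli of the respective separating families. In logarithmic coordinates this becomes a map between boundaries of two straight strips, periodic in the imaginary direction. On each boundary line it is a periodic quasisymmetric map, because the normalized quasi-M\"obius map is quasisymmetric at scales comparable to the width. A Beurling--Ahlfors type extension, or simply linear interpolation in the real direction of the periodic boundary maps, gives a quasiconformal extension across the strip. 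Its dilatation is controlled once $\log R'/\log R$ is bounded above and below.

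The first key step is to control $\Mod\Gamma'/\Mod\Gamma$. Under hypothesis \ref{theorem:extension:disjoint:1} this is assumed. Under hypothesis \ref{theorem:extension:disjoint:2} I would derive it, which is presumably why that case is separate. When $\Mod\Gamma\le M$ the disks are relatively separated: $\Delta(\partial U,\partial V)\ge\delta(M)$, equivalently the cross-ratios of four points, two on each circle, are bounded. Because $f$ is quasi-M\"obius on the whole union $\partial U\cup\partial V$, these cross-ratios are quasi-preserved. Hence $\Delta(f(\partial U),f(\partial V))\ge\delta'$, in the spirit of Lemma \ref{lemma:quasimobius:cross}. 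Then Herron's theorem, or Lemma \ref{lemma:relatively_separated} combined with Theorem \ref{theorem:intro:characterization_quasiannuli}, gives the map $\phi$ directly. It also bounds $\Mod\Gamma'$ above and below in terms of $\delta'$ and $L$, so the comparability holds in case \ref{theorem:extension:disjoint:2}. The lower bound on $\Mod\Gamma$ needed for the ratio comes from the quasicircle and separation bounds in the same way.

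The second and main step is case \ref{theorem:extension:disjoint:1} when $\Mod\Gamma$ is large. Here the circles are nearly tangent and we must verify the quasi-M\"obius condition for $d_{V',U'}$. I would use the quasi-M\"obius property of $f$ between the pair of circles. First compute $d_{V,U}$ on the round configuration (Lemma \ref{lemma:parallel}): after normalization to concentric circles it is the Euclidean arclength on $\partial V$ scaled by a factor comparable to $(\log R)^{-1}$. Then I would show that $f\colon(\partial V,d_{V,U})\to(\partial V',d_{V',U'})$ is quasisymmetric. The approach is to compare $d_{V',U'}$ with the quasihyperbolic distance in the domain $Z'$ bounded by $\partial U'$ and a quasiconformal reflection of $\partial U'$ across $\partial V'$, as in Corollary \ref{corollary:quasisymmetric_dk}. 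The quasihyperbolic distance between two points of $\partial V'$ is governed by the ratio of their separation to the local gap between $\partial U'$ and $\partial V'$. The local gap can be read off from cross-ratios of points on both quasicircles, and these are quasi-preserved by $f$. The modulus comparability fixes the global scale, namely the total $d$-length of the circle, which is comparable to $(\Mod\Gamma)^{-1}$. This is exactly what ties the normalization of $d_{V,U}$ to that of $d_{V',U'}$. Composing, $\id$ on $\partial V'$ becomes a composition of quasi-M\"obius maps and is therefore quasi-M\"obius, quantitatively.

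I expect the main obstacle to be making this comparison of local gaps rigorous when the configuration is extremely close to tangent. In that regime the gap varies over many scales and the image is only quasi-M\"obius, not quasisymmetric. I would try to first normalize $f$ by M\"obius maps so that the points of closest approach correspond, and then work in logarithmic strip coordinates.
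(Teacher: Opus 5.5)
\textbf{The small- and large-modulus regimes are reversed, and this breaks the proof of case \ref{theorem:extension:disjoint:2}.} By your own normalization, $\log R=2\pi\Mod\Gamma$, where $\Gamma$ is the family of curves \emph{separating} the circles. So $\Mod\Gamma\le M$ only bounds $R$ from above. It allows $R\to 1$, i.e.\ nearly tangent circles with $\Delta(\partial U,\partial V)\to 0$. Relative separation corresponds to a \emph{lower} bound on $\Mod\Gamma$, and large $\Mod\Gamma$ is the well-separated regime, not the nearly tangent one.

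Your use of relative separation, Lemma \ref{lemma:quasimobius:cross}, and Herron's theorem (or Lemma \ref{lemma:relatively_separated}) does produce $\phi$, but only for large modulus. Case \ref{theorem:extension:disjoint:2} contains exactly the configurations it cannot handle.

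Your claim that relative separation bounds $\Mod\Gamma'$ above and below is also false. The example after the statement (identity on $\mathbb S(0,1)$, scaling $\mathbb S(0,e^{-n})$ onto $\mathbb S(0,1/n)$) is uniformly quasi-M\"obius, and both pairs are uniformly separated. Yet the moduli ratio is $\log n/n\to 0$. That failure at large modulus is why hypothesis \ref{theorem:extension:disjoint:1} is needed there. At small modulus the comparability is automatic from quasisymmetry; this is the estimate $L-1\simeq L'-1$ in the proof of Proposition \ref{corollary:extension_annulus}.

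\textbf{The near-tangent case is left unproved.} Once the regimes are corrected, this is the case that needs the quasi-M\"obius condition for $d_{V',U'}$. There your argument is only a heuristic, as you acknowledge: local gaps are ``read off from cross-ratios'' and are quasi-preserved. Making this rigorous would require a Whitney-type chain argument that controls every curve in $U'^*\setminus V'$, and you do not supply it.

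The paper avoids gap comparisons entirely:
\begin{enumerate}
\item A three-point normalization makes $f$ quasisymmetric.
\item Corollary \ref{corollary:beurling_ahlfors_embedding} extends $f|_{\partial V}$ to a global quasiconformal, hence quasisymmetric, map $F$. Replace $f$ by $g=F^{-1}\circ f$, which is the identity on $\partial V=\mathbb S(0,1)$.
\item Because $g$ is quasisymmetric and fixes $\mathbb S(0,1)$ pointwise, $g(\partial U)$ lies in a concentric annulus $\{L_1\le |z|\le L_1+K(L_1-1)\}$ (Lemma \ref{lemma:annuli_identity}).
\item Lemma \ref{lemma:quasicircle_concentric} and Corollary \ref{corollary:characterization_quasiannuli_qs} then straighten $g(\partial U)$ quasisymmetrically to a concentric circle. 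This works uniformly in all regimes.
\end{enumerate}

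\textbf{The round-to-round extension is also underspecified.} Linear interpolation of merely quasisymmetric boundary maps is not quasiconformal in general. In the thin case the two boundary lifts must also agree up to the width of the annulus (Lemma \ref{lemma:lift_close} and the second hypothesis of Proposition \ref{theorem:extension_annulus}). That coupling comes from quasisymmetry on the union, not from the modulus ratio. The fat case needs the collar-plus-radial-stretch construction of Proposition \ref{corollary:extension_annulus:large}.
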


Since quasiconformal maps quasi-preserve modulus, it is immediate that the modulus bound in \ref{theorem:extension:disjoint:1} is a necessary condition for an extension with quantitative control. When $\Mod\Gamma$ is small, as in \ref{theorem:extension:disjoint:2}, condition \ref{theorem:extension:disjoint:1} follows from the assumption that the embedding $f$ is quasi-M\"obius. In general, \ref{theorem:extension:disjoint:1} does not follow from the assumption that $f$ is quasi-M\"obius and cannot be dropped. Indeed, consider the map $f$ that is the identity map on the unit circle $\mathbb S(0,1)$ and scales the circle $\mathbb S(0,e^{-n})$ to $\mathbb S(0,\frac{1}{n})$, where $n\geq 2$. This map is $\eta$-quasisymmetric and $\eta$-quasi-M\"obius (see property \ref{q:qs_qm} below) for some distortion function $\eta$ that does not depend on $n$. However, \ref{theorem:extension:disjoint:1} is not true with a constant $K_0$ independent of $n$.

Theorem \ref{theorem:intro:extension} and Theorem \ref{theorem:extension:disjoint} are proved in Section \ref{section:extension_disks} and are based on several quasiconformal extension results from Section \ref{section:extension_general}. For example we prove that a quasisymmetric map between two pairs of parallel lines or two pairs of concentric circles extends to a quasiconformal map of the plane. While results in this spirit seem very natural and have been extensively used in complex dynamics \cite{BrannerFagella:surgery}*{Section 2.3}, we have not been able to locate in the literature \textit{quantitative proofs}. Namely, the quasiconformal dilatation of the extension in Theorem \ref{theorem:intro:extension} depends only on the distortion function of the quasi-M\"obius embedding $f$. Even more delicate is the proof of Theorem \ref{theorem:extension:disjoint}, where the dilatation depends also on the constant appearing in condition \ref{theorem:extension:disjoint:1} or \ref{theorem:extension:disjoint:2}.

\section{Preliminaries}\label{section:preliminaries}

\subsection{Notation}

If $a$ is a parameter we use the notation $C(a),L(a)$, etc.\ for positive constants that depend only on the parameter $a$. For quantities $A$ and $B$ we write $A\lesssim B$ if there exists a constant $c>0$ such that $A\leq cB$. If the constant $c$ depends on another quantity $H$ that we wish to emphasize, then we write instead $A\leq c(H)B$ or $A\lesssim_H B$. Moreover, we use the notation $A\simeq B$ if $A\lesssim B$ and $B\lesssim A$. As previously, we write $A\simeq_H B$ to emphasize the dependence of the implicit constants on the quantity $H$. All constants in the statements are assumed to be positive even if this is not stated explicitly and the same letter may be used in different statements to denote a different constant. We say that a statement is \textit{quantitative} if the constants or parameters appearing in the conclusions depend only on the constants or parameters in the assumptions.

The chordal metric on $\widehat \C$ is defined for $z\in \C$ by 
$$\chi(z,w)= \frac{2|z-w|}{\sqrt{1+|z|^2}\sqrt{1+|w|^2}}  \,\,\, \text{when $w\in \C$ and}\,\,\, \chi(z,\infty)= \frac{2}{\sqrt{1+|z|^2}}.$$
Let $(X,d)$ be a metric space. The open ball of radius $r>0$ centered at a point $x\in X$ is denoted by $B_d(x,r)$ and the closed ball is denoted by $\bar B_d(x,r)$. The diameter of a set $E\subset X$ is denoted by $\diam (E)$ or $\diam_d(E)$ if the metric needs to be emphasized. The length of a curve $\gamma\colon [a,b]\to X$ is denoted by $\ell_d(\gamma)$. The trace $\gamma$ is the set $|\gamma|=\gamma([a,b])$. For the Euclidean metric in $\R^n$ we use the subscript $e$ when necessary. For example, $B_e(z,r)=\{w\in \R^n: |z-w|<r\}$. Moreover, especially for the Euclidean metric in the complex plane we also use the notation $\D(z,r)$ for the ball $B_e(z,r)$, $\mathbb S(z,r)$ for the circle $\partial B_e(z,r)$, and $\mathbb A(z;r,R)$ for the annulus $\{w\in \C: r<|z-w|<R\}$, where $R>r>0$. 

\subsection{Quasiconformal maps}

Let $K\geq 1$. An orientation-preserving homeomorphism $f\colon U\to V$ between open subsets of $\C$ is \textit{quasiconformal} if $f$ lies in the Sobolev space $W^{1,2}_{\loc}(U)$ and
$$\|Df(z)\|^2\leq KJ_f(z)$$ 
for a.e.\ $z\in \C$; here $\|\cdot \|$ denotes the operator norm of the matrix $Df$ and $J_f$ is its Jacobian determinant. In this case we say that $f$ is $K$-quasiconformal. A homeomorphism $f\colon U\to V$ between open subsets of $\widehat \C$ is quasiconformal if $f|_{U\setminus \{\infty, f^{-1}(\infty)\}}$ is quasiconformal in the above sense. We will freely use the fundamental facts that the inverse of a quasiconformal map is quasiconformal, that the composition of two quasiconformal maps is quasiconformal, and that $1$-quasiconformal maps are conformal. We direct the reader to \cite{LehtoVirtanen:quasiconformal} for further background and to \cite{Vaisala:quasiconformal} for the theory of quasiconformal maps in higher dimensions. 

Unless otherwise specified, quasiconformal are considered to be orienta\-tion-pre\-serving, as in the above definition. We also define \textit{orientation-reversing quasiconformal maps} in the obvious manner. 

We will use the notion of modulus of a curve family $\Gamma$, which we denote by $\Mod\Gamma$; see \cite{LehtoVirtanen:quasiconformal}*{Chapter I} for the definition and details. We record here some fundamental facts. Let $\Gamma$ be the family of curves separating the boundary components of an annulus $\mathbb A(z;r,R)$, where $0<r<R<\infty$. We have
$$\Mod \Gamma =\frac{1}{2\pi} \log \left(\frac{R}{r}\right).$$ 
We note that modulus is conformally invariant and quasiconformally quasi-inva\-riant. That is, if $f\colon U\to V$ is a $K$-quasiconformal map between open subsets of $\widehat \C$ and $\Gamma$ is a family of curves in $U$, then
$$K^{-1}\Mod \Gamma\leq \Mod f(\Gamma) \leq K\Mod \Gamma.$$

\subsection{Quasi-M\"obius and quasisymmetric maps}

Let $(X,d_X)$ be a metric space. We define the cross ratio of a quadruple of distinct points $a,b,c,d\in X$ to be
$$[a,b,c,d]= \frac{d_X(a,c)d_X(b,d)}{d_X(a,d)d_X(b,c)}.$$
Observe that if $X=\widehat \C$, equipped with the chordal metric, and $a,b,c,d\neq \infty$, then 
$$[a,b,c,d]= \frac{|a-c||b-d|}{|a-d||b-c|}.$$
The factors containing the point $\infty$ are omitted. For example, 
$$[a,b,c,\infty]= \frac{|a-c|}{|b-c|}.$$

A homeomorphism $\eta\colon [0,\infty)\to [0,\infty)$ is called a \textit{distortion function}. A homeomorphism $f\colon (X,d_X)\to (Y,d_Y)$ between metric spaces is \textit{quasi-M\"obius} if there exists a distortion function $\eta$ such that 
$$[f(a),f(b),f(c),f(d)] \leq \eta ([a,b,c,d])$$
for every quadruple of distinct points $a,b,c,d\in X$. In this case, we say that $f$ is $\eta$-quasi-M\"obius. Suppose that $X,Y\subset \widehat \C$ and $f\colon X\to Y$ is $\eta$-quasi-M\"obius. Then the cross ratios in $X$ and $Y$ may be computed with the chordal or the Euclidean distance interchangeably (in case the points involved are different from $\infty$). Thus, we do not need to specify which of the two metrics is used in $X$ and $Y$. Also, since cross ratios are invariant under M\"obius transformations, the composition of $f$ with a M\"obius transformation is $\eta$-quasi-M\"obius as well.  

A homeomorphism $f\colon (X,d_X)\to (Y,d_Y)$ between metric spaces is \textit{quasisymmetric} if there exists a distortion function $\eta$ such that
$$ \frac{d_Y(f(a),f(b))}{d_Y(f(a),f(c))} \leq \eta\left(\frac{d_X(a,b)}{d_X(a,c)}\right)$$
for every triple of distinct points $a,b,c\in X$. In this case we say that $f$ is $\eta$-quasisymmetric. An elementary consequence of the definition (see \cite{Heinonen:metric}*{Proposition 10.8}) is that if $A\subset B\subset X$ and $0<\diam (A)\leq \diam (B)<\infty$, then $\diam (f(B))<\infty$ and 
\begin{align}\label{definition:qs}
\frac{1}{2\eta\left( \frac{\diam (B)}{\diam (A)}\right)} \leq \frac{\diam (f(A))}{\diam (f(B))}\leq \eta\left( \frac{2\diam (A)}{\diam (B)}\right).
\end{align}

A homeomorphism $f\colon (X,d_X)\to (Y,d_Y)$ between metric spaces is \textit{bi-Lipschitz} if there exists $L\geq 1$ such that for all $x,y\in X$ we have
$$L^{-1}d_X(x,y)\leq d_Y(f(x),f(y))\leq Ld_X(x,y).$$
In this case we say that $f$ is $L$-bi-Lipschitz. 

We list some fundamental properties of the above types of maps. 

\begin{enumerate}[label=\normalfont (Q-\arabic*)]
	\item\label{q:inverse} If $f$ is $\eta$-quasisymmetric (resp.\ quasi-M\"obius), then $f^{-1}$ is $\eta'$-quasi\-sym\-metric (resp.\ quasi-M\"obius) for $\eta'(t)=1/\eta^{-1}(t^{-1})$, $t>0$. Also, if $f$ is $\eta_1$-quasisymmetric (resp.\ quasi-M\"obius) and $g$ is $\eta_2$-quasisymmetric (resp.\ quasi-M\"obius), then $g\circ f$ is $(\eta_2\circ \eta_1)$-quasisymmetric (resp.\ quasi-M\"obius).
	
	\item\label{q:completion}Suppose that $X,Y\subset \C$ (resp.\ are bounded) and $f\colon X\to Y$ is $\eta$-quasi\-symmetric (resp.\ $\eta$-quasi-M\"obius). Then $f$ extends to an $\eta$-quasisymmetric (resp.\ $\eta$-quasi-M\"obius) homeomorphism from $\bar X$ onto $\bar Y$.  
	
	\item\label{q:qs_qm} Quasisymmetric maps are quasi-M\"obius, quantitatively \cite{Vaisala:quasimobius}*{Theorem 3.2}. Conversely, quasi-M\"obius maps between bounded metric spaces are quasisymmetric \cite{Vaisala:quasimobius}*{Theorem 3.12}. Specifically, if $f\colon X\to Y$ is quasi-M\"obius and there exists $\lambda\geq 1$ and points $x_1,x_2,x_3\in X$ with 
	$$d_X(x_i,x_j) \geq \lambda^{-1}\diam (X) \,\,\, \text{and}\,\,\, d_Y(f(x_i),f(x_j))\geq \lambda^{-1}\diam (Y)\,\,\, \text{for $i\neq j$},$$
	then $f$ is quasisymmetric, quantitatively.	
	
	\item\label{q:qm_qc} Quasi-M\"obius maps between open subsets of $\widehat \C$ are quasiconformal, quantitatively \cite{Vaisala:quasimobius}*{Theorem 5.2}. Conversely, quasiconformal self-homeo\-morphisms of the disk $\D$ or of the sphere $\widehat \C$ are quasi-M\"obius, quantitatively \cite{Vaisala:quasimobius}*{Theorem 5.4}.

	\item\label{q:qm_qs} Suppose that $X$ is an unbounded metric space, $Y$ is a metric space and $f\colon X\to Y$ is $\eta$-quasi-M\"obius. If $f(x)\to \infty$ as $x\to\infty$, then $f$ is $\eta$-quasisymmetric \cite{Vaisala:quasimobius}*{Theorem 3.10}.
	
	\item\label{q:qc_qs}Let $f\colon U\to V$ be a $K$-quasiconformal map between open subsets of $\R^n$. For each $a>1$ there exists a distortion function $\eta$, depending only on $n,K$, and $a$, such that if $B_e(z,r)\subset B_e(z,ar)\subset U$, then $f$ is $\eta$-quasisymmetric in $B_e(z,r)$ \cite{Heinonen:metric}*{Theorem 11.4}. 
	
	\item\label{q:bilip}If $f\colon X\to Y$ is an $L$-bi-Lipschitz map between metric spaces, then $f$ is $\eta$-quasi\-sym\-metric for $\eta(t)=L^2t$. If $X,Y$ are domains in $\C$, then $f$ is $L^2$-quasi\-con\-formal \cite{Vaisala:quasiconformal}*{Theorem 34.1}.
	\end{enumerate}

A \textit{continuum} $E$ is a compact and connected metric space. If $E$ contains more than one points, we say that $E$ is a \textit{non-degenerate continuum}. We define the \textit{relative distance} of two disjoint compact subsets $E,F$ of a metric space that contain more than one points as 
$$\Delta(E,F)=\frac{\dist(E,F)}{\min\{\diam(E),\diam(F)\}}.$$
As before we use the notation $\Delta_d(E,F)$ to emphasize the metric that we use. 

\begin{lemma}\label{lemma:quasisymmetric_relative_distance}
Let $f\colon (X,d_X)\to (Y,d_Y)$ be a quasisymmetric homeomorphism between metric spaces. Let $E,F\subset X$ be sets with positive diameters. Then 
$$ \frac{\dist(f(E),f(F))}{\diam (f(E)) }\leq \eta\left(2\frac{\dist(E,F)}{\diam (E)}\right).$$
\end{lemma}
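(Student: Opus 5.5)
The plan is a direct comparison of distances via the definition of $\eta$-quasisymmetry; no earlier machinery beyond \eqref{definition:qs}-type reasoning is needed. The idea is to pick points that nearly realize $\dist(E,F)$ and $\diam(E)$, and then compare the corresponding triples of images.

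First, fix $\epsilon>0$ and choose $x\in E$, $y\in F$ with $d_X(x,y)\le \dist(E,F)+\epsilon$. If $f(x)=f(y)$, then $\dist(f(E),f(F))=0$ and there is nothing to prove, so assume $x\neq y$.

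Next, we need a second point of $E$ far from $x$. Since $\diam(E)>0$, for any $z\in E$ the triangle inequality gives $\sup_{z\in E} d_X(x,z)\ge \diam(E)/2$. Hence we may choose $z\in E$ with $d_X(x,z)\ge \diam(E)/2-\epsilon$.

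Now estimate the images. For every $w\in E$ with $w\neq x$, applying quasisymmetry to the triple $x,y,w$ gives
$$d_Y(f(x),f(y))\le \eta\left(\frac{d_X(x,y)}{d_X(x,w)}\right)d_Y(f(x),f(w)).$$
The triple may fail to be distinct, for instance if $w=y$ when $E\cap F\neq\emptyset$. In that case $\dist(E,F)=0$ and the left-hand side can be taken to be $0$ by choosing $y=x$, so this case is trivial. Taking $w=z$, and using $\dist(f(E),f(F))\le d_Y(f(x),f(y))$ together with $d_Y(f(x),f(z))\le \diam f(E)$, we get
$$\frac{\dist(f(E),f(F))}{\diam f(E)}\le \eta\left(\frac{\dist(E,F)+\epsilon}{\diam(E)/2-\epsilon}\right).$$

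Finally, let $\epsilon\to0$; continuity and monotonicity of $\eta$ give the bound $\eta(2\dist(E,F)/\diam(E))$. If $\diam E=\infty$, the argument still works: choose $z$ with $d_X(x,z)$ arbitrarily large, so the right-hand side tends to $\eta(0)=0$, consistent with the claim. The only mildly delicate step is the choice of $z$ with $d_X(x,z)\gtrsim \diam(E)/2$, which is exactly where the factor $2$ comes from; beyond that, the proof is routine.
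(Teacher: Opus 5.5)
Your proof is correct and is essentially the paper's argument: pick a point of $E$ at distance roughly $\diam(E)/2$ from $x$, apply quasisymmetry to the resulting triple, and pass to the limit. The paper uses a multiplicative factor $(2+\varepsilon)$ and infimizes over arbitrary $x\in E$, $z\in F$ at the end instead of fixing a near-minimizing pair first, and it leaves implicit the degenerate cases (coincident points, which force $\dist(f(E),f(F))=0$) that you treat explicitly.
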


\begin{proof}
Let $x\in E$, $z\in F$ be arbitrary points and let $\varepsilon>0$. Then there exists a point $y\in E$ such that $d(x,y)\geq \frac{1}{2+\varepsilon}\diam (E)$. We have
\begin{align*}
 \frac{\dist(f(E),f(F))}{\diam (f(E)) }&\leq \frac{d(f(x), f(z))}{d(f(x), f(y))}\leq \eta\left(\frac{d(x,z)}{d(x,y)}\right)\leq \eta \left((2+\varepsilon) \frac{d(x,z)}{\diam (E)}\right).
\end{align*}
Infimizing over $x,z$ and letting $\varepsilon\to 0$ gives the conclusion.
\end{proof}

\begin{lemma}[\cite{Ntalampekos:schottky}*{Lemma 2.4}]\label{lemma:quasimobius:cross}
Let $f\colon (X,d_X)\to (Y,d_Y)$ be an $\eta$-quasi-M\"obius homeomorphism between metric spaces. Then there exists a distortion function $\widetilde \eta$ that depends only on $\eta$ such that for every pair of disjoint non-degenerate continua $E,F\subset X$ we have
$$\Delta(f(E),f(F)) \leq \widetilde \eta ( \Delta(E,F)).$$
\end{lemma}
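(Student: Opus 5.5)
The plan is to reduce the statement to the basic quasi-M\"obius inequality by a cross-ratio computation. Recall that $\Delta$ is symmetric in $E,F$ and involves the minimum of the two diameters, so we must control both $\dist/\diam(f(E))$ and $\dist/\diam(f(F))$.

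Fix disjoint non-degenerate continua $E,F\subset X$ and set $\delta=\Delta(E,F)$. By symmetry of the definition we may assume $\diam(E)\le\diam(F)$, so $\dist(E,F)=\delta\diam(E)$. Choose points $x_1,x_2\in E$ and $y_1,y_2\in F$ as follows.
\begin{itemize}
\item $x_1\in E$ and $y_1\in F$ nearly realize $\dist(E,F)$.
\item $x_2\in E$ satisfies $d(x_1,x_2)\ge \diam(E)/3$.
\item $y_2\in F$ satisfies $d(y_1,y_2)\ge\diam(F)/3\ge\diam(E)/3$.
\end{itemize}
In $X$ we then have
$$d(x_1,y_1)\approx \delta\diam(E),\qquad d(x_2,y_2)\le \diam(E)+d(x_1,y_1)+\diam(F),\qquad d(x_1,y_2)\ge \dist(E,F),\qquad d(x_2,y_1)\ge\dist(E,F).$$
These bounds are awkward because $\diam(F)$ may be much larger than $\diam(E)$. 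To avoid this, I will use the cross ratio $[x_1,x_2,y_1,y_2]$ only when it is helpful, and otherwise exploit connectedness.

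The cleaner route uses connectedness. Since $F$ is a continuum with $\diam(F)\ge\diam(E)$, there is a point $y_2\in F$ with $d(y_1,y_2)$ comparable to $\diam(E)$: the function $d(y_1,\cdot)$ on the connected set $F$ takes all values in $[0,\sup)$, and $\sup\ge\diam(F)/2\ge \diam(E)/2$. Now every mutual distance among $x_1,x_2,y_1,y_2$ is either about $\dist(E,F)$ or comparable to $\diam(E)(1+\delta)$.

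Let $y_1'=f(y_1)$, $x_1'=f(x_1)$, and so on. The quasi-M\"obius bounds I will use are
$$\frac{d(x_1',y_1')\,d(x_2',y_2')}{d(x_1',x_2')\,d(y_1',y_2')}\le \eta\left(\frac{d(x_1,y_1)d(x_2,y_2)}{d(x_1,x_2)d(y_1,y_2)}\right)\lesssim \eta(C\delta(1+\delta)),$$
together with analogous inequalities for other pairings. Their purpose is to bound $d(x_1',y_1')$ by $\min\{d(x_1',x_2'),d(y_1',y_2')\}$. From these I will extract
$$\dist(f(E),f(F))\le d(x_1',y_1')\le \widetilde\eta(\delta)\min\{\diam f(E),\diam f(F)\}.$$

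The main obstacle is the factor $d(x_2',y_2')$, which may be small and so break the bound. I will handle it by a case split using the triangle inequality.
\begin{itemize}
\item If $d(x_2',y_2')\ge \tfrac12\max\{d(x_1',x_2'),d(y_1',y_2')\}$, the displayed inequality directly gives $d(x_1',y_1')\lesssim\eta(\cdot)\min\{d(x_1',x_2'),d(y_1',y_2')\}$.
\item Otherwise, apply the quasi-M\"obius inequality to the cross ratio $[x_1,y_2,y_1,x_2]$. In $X$ its value is bounded below by a function of $\delta$, so the inverse bound from \ref{q:inverse} controls the images.
\end{itemize}

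The function $\widetilde\eta$ obtained this way is increasing and tends to $0$ as $\delta\to0$, and it depends only on $\eta$. It can be replaced by a homeomorphism dominating it, which gives the distortion function in the statement.
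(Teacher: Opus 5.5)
There is a genuine gap. Your first case is fine. However, the overall target
\[
\dist(f(E),f(F))\le d(x_1',y_1')\le \widetilde\eta(\delta)\min\{\diam f(E),\diam f(F)\}
\]
is false in general, because the image of a nearly closest pair need not be a close pair. So no argument in your second case can rescue it.

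Here is a counterexample.
\begin{itemize}
\item Let $X=\C$ with the Euclidean metric, $E=[-1,0]$ and $F=[\delta,1+\delta]$, so $\Delta(E,F)=\delta$. Your points can be taken to be exactly $x_1=0$, $y_1=\delta$, $x_2=-1$, $y_2=1+\delta$.
\item For small $s>0$ pick $t>0$ with $t/(s(1+s+t))=\delta(2+\delta)$. Let $f$ be the M\"obius transformation with $f(0)=0$, $f(\delta)=1$, $f(-1)=1+s+t$, viewed as a map into $(\widehat\C,\chi)$. It is $\eta$-quasi-M\"obius with $\eta(u)=u$.
\item Preservation of cross ratios forces $f(1+\delta)=1+s$, hence $f(F)=[1,1+s]$.
\end{itemize}
Then $\diam_\chi f(F)\le s$, while $\chi(x_1',y_1')=\sqrt 2$, and we may let $s\to 0$ with $\delta$ fixed. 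This configuration falls exactly into your second case. The lemma itself survives only because the \emph{other} cross pair is short: $\chi(x_2',y_2')\le t\simeq \delta(2+\delta)s$.

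The tool you propose for that case cannot help either. With the convention $[a,b,c,d]=\frac{d(a,c)d(b,d)}{d(a,d)d(b,c)}$, the cross ratio $[x_1,y_2,y_1,x_2]$ is precisely the quantity in your display. A lower bound on it, transported by \ref{q:inverse}, bounds $d(x_1',y_1')$ from below, not from above.

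The missing idea is to treat the two cross distances $d(x_1',y_1')$ and $d(x_2',y_2')$ symmetrically. Both are at least $\dist(f(E),f(F))$, so you never need to know which one is short.

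Precisely, let $E',F'$ be disjoint compact sets with $\diam E'\le\diam F'$, and let $a,b\in E'$, $c,d\in F'$ with $a\neq b$, $c\neq d$. Split according to whether $d(c,d)\ge 2(d(a,c)+d(a,b))$; in that case $d(b,d)\ge d(c,d)/2$ by the triangle inequality. Using only $d(a,c),d(b,d)\ge \dist(E',F')$ and $d(a,b)\le \diam E'$, one gets
\[
\frac{d(a,c)\,d(b,d)}{d(a,b)\,d(c,d)}\ge \psi(\Delta(E',F')),\qquad \psi(u)=\frac{u^2}{2(1+u)}.
\]
Apply this in $Y$ to $x_1',x_2'\in f(E)$ and $y_1',y_2'\in f(F)$, swapping the roles of $f(E)$ and $f(F)$ if needed. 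Combined with your displayed upper bound, this gives $\Delta(f(E),f(F))\le \psi^{-1}(\eta(C\delta(1+\delta)))$, which proves the lemma.

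Incidentally, connectedness is not needed in the domain. Any $y_2\in F$ with $d(y_1,y_2)\ge\diam(F)/3$ works, since a large $d(y_1,y_2)$ enters both $d(x_2,y_2)$ and the denominator.
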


\begin{remark}\label{remark:quasimobius:cross}
We record a simple consequence of Lemma \ref{lemma:quasimobius:cross}. The identity map $\id\colon (\C,\chi)\to (\C,|\cdot|)$ preserves cross ratios so it is $\eta$-quasi-M\"obius for $\eta(t)=t$. If $E,F\subset \C$, $a>0$, and $\Delta_\chi(E,F)\leq a$ (resp.\ $\geq a$), then Lemma \ref{lemma:quasimobius:cross} implies that $\Delta_e(E,F)\leq \widetilde \eta(a)$ (resp.\ $\geq (\widetilde \eta)^{-1}(a)$). Conversely, if $\Delta_e(E,F)$ is bounded above or below, one obtains bounds for $\Delta_\chi(E,F)$.
\end{remark}

\subsection{Quasidisks and quasicircles}\label{section:quasidisks}
A set $U\subset \widehat \C$ is a \textit{quasidisk} if it is the image of the unit disk $\D$ (or equivalently of any other disk) under a quasiconformal homeomorphism $f\colon \widehat \C\to \widehat \C$. If $f$ is $K$-quasiconformal for some $K\geq1$, we say that $U$ is a $K$-quasidisk. Note that the complement of a quasidisk is also a quasidisk. We say that a set $U\subset \C$ is an \textit{unbounded quasidisk} if $U$ is unbounded as a subset of $\C$ and it is a quasidisk as a subset of $\widehat \C$.

Let $J\subset \widehat \C$ be a Jordan curve. We say that $J$ is a \textit{quasicircle} if there exists a constant $L\geq 1$ such that for every pair of points $z,w\in J$ there exists an arc $E\subset J$ with endpoints $z,w$ such that 
$$\diam_\chi (E)\leq L\chi(z,w).$$
In this case we say that $J$ is an $L$-quasicircle. We note that $J$ is a quasicircle if and only if there exists a constant $L'\geq 1$ such that for every pair of points $z,w\in J\setminus \{\infty\}$ there exists an arc $E\subset J\setminus \{\infty\}$ with endpoints $z,w$ such that
$$\diam_e(E)\leq L'|z-w|.$$
This equivalence is quantitative and follows from \cite{Bonk:uniformization}*{Proposition 4.4}; in fact it is shown that both of the above conditions are quantitatively equivalent to the condition that there exists $\delta>0 $ such that for each quadruple of distinct points $a,b,c,d\in J$ in cyclic order we have
\begin{align}\label{quasicircle:crossratio}
[a,b,c,d]\geq \delta>0.
\end{align}
If $J$ is a quasicircle and $\infty\in J$, we call the set $J\setminus \{\infty\}\subset \C$ an \textit{unbounded quasicircle}. The next result is elementary and follows from the argument in the proof of \cite{Ntalampekos:schottky}*{Lemma 3.2}.
\begin{lemma}\label{lemma:quasicircle_strip}
Let $J$ be an unbounded quasicircle that is contained in a strip $\{z\in \C: a<\im(z)<b\}$. Then $J$ separates the boundary components of the strip.
\end{lemma}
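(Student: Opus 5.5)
The plan is to argue by contradiction, using the quasicircle property together with the fact that $J$ is a closed set in $\C$ whose closure in $\widehat\C$ adds only the point $\infty$. Let $S=\{z: a<\im(z)<b\}$, with boundary lines $\ell_a=\{\im z=a\}$ and $\ell_b=\{\im z=b\}$. Since $J\cup\{\infty\}$ is a Jordan curve in $\widehat\C$, the set $\widehat\C\setminus (J\cup\{\infty\})$ has exactly two components $\Omega_1,\Omega_2$, both unbounded with common boundary $J\cup\{\infty\}$. The lines $\ell_a,\ell_b$ are connected and disjoint from $J$, so each lies in a single component. Our goal is to show that they lie in different components.

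Suppose instead that $\ell_a,\ell_b\subset\Omega_1$. Parametrize $J$ as $\gamma\colon\R\to\C$ with $\gamma(t)\to\infty$ as $t\to\pm\infty$. Because $J\subset S$, the real parts satisfy $\re\gamma(t)\to\pm\infty$ along each end. First we rule out that both ends go to the same side, say $\re\gamma(t)\to+\infty$ as $t\to\pm\infty$. Pick a point $z_0\in J$. Take points $z=\gamma(-t)$ and $w=\gamma(t)$ with $t$ large. Their distance is comparable to $|\re z-\re w|$ plus a bounded amount (at most $b-a$), and we may choose the two points so that their real parts coincide and are huge, giving $|z-w|\le b-a$. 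Each arc of $J\setminus\{\infty\}$ joining them either passes through $z_0$ or goes through $\infty$. The unbounded one is excluded in the Euclidean version of the quasicircle condition, and the bounded one has diameter at least $\re z-\re z_0\to\infty$. This contradicts $\diam_e(E)\le L'|z-w|$.

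Hence one end goes to $-\infty$ and the other to $+\infty$. Then $J$ is a closed connected set in $\overline S$ which, together with $\infty$, meets both ends of the strip. A vertical segment $\sigma_x=\{x+iy: a\le y\le b\}$ joins $\ell_a$ to $\ell_b$. By the assumption $\ell_a,\ell_b\subset\Omega_1$, we can join them by a path $\beta$ in $\Omega_1$. The concatenation of $\beta$ with vertical rays going to infinity in the complement of $\overline S$ forms a curve. Here the clean version of the argument is needed: the region of $\C$ between $\ell_a$ and $\ell_b$ is split by $J$. The standard way is to note that $J\cup\{\infty\}$ must intersect every crosscut of $\widehat\C$ that separates the two ends $\pm\infty$ inside $S$. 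Concretely, consider $A=\overline{\{\im z\le a\}}\cup\overline{\{\im z\ge b\}}\cup\{\infty\}$, which is connected in $\widehat\C$ and disjoint from $J$ except at $\infty$.

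The main obstacle is to conclude rigorously. The plan is to use a Janiszewski/$\theta$-curve argument. Choose $x$ so large that the tails $\gamma((-\infty,-T])$ and $\gamma([T,\infty))$ lie in $\{\re z<-x\}$ and $\{\re z>x\}$ respectively. Then the vertical segments $\sigma_{\pm x}$ together with $\ell_a,\ell_b$ bound a rectangle $R$, and $J\cap R$ contains the compact arc $\gamma([-T,T])$, which connects the two vertical sides of $R$ while avoiding its horizontal sides. By the elementary planar fact that an arc in a rectangle joining the left and right sides separates the top side from the bottom side within the rectangle, combined with the tails, which keep $J$ away from the exterior regions, any path from $\ell_a$ to $\ell_b$ must cross $J$. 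This contradicts $\ell_a,\ell_b\subset\Omega_1$. If an argument in the spirit of \cite{Ntalampekos:schottky}*{Lemma 3.2} is available, it can replace this topological step.
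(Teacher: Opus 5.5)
Your argument is essentially right, and since the paper gives no proof of its own here (it only says the lemma follows from the argument in \cite{Ntalampekos:schottky}*{Lemma 3.2}), it can serve as a self-contained proof once the last step is repaired.

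The first step carries the real content, and it is correct. Suppose both ends of $J$ went to $+\infty$. Take points $z=\gamma(-s)$ and $w=\gamma(t)$ with $s,t>0$ and $\re z=\re w=x$. Then $|z-w|<b-a$. On the other hand, the bounded arc $\gamma([-s,t])$ contains $z_0=\gamma(0)$, so its diameter is at least $x-\re z_0$. Letting $x\to\infty$ contradicts the bounded turning condition. This is exactly where the quasicircle hypothesis enters, and it cannot be dropped. For example, the boundary of a half-strip $\{\re z>0,\ a'<\im z<b'\}$ with $a<a'<b'<b$ is an unbounded Jordan curve in the strip that does not separate.

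The final topological step does not work as written.
\begin{itemize}
\item You fix the rectangle $R=[-x,x]\times[a,b]$ before choosing the path. A path from $\ell_a$ to $\ell_b$ may then cross the strip outside $R$.
\item The appeal to ``the tails'' does not control such crossings. The tail $\gamma([T,\infty))$ starts at an interior point of the half-strip $\{\re z>x\}\cap S$, so by itself it need not block crossings there.
\item The quantifiers are reversed: given $x$, one chooses $T$, not the other way round.
\item The arc $\gamma([-T,T])$ is not contained in $R$; its endpoints lie outside.
\end{itemize}

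The repair is to choose $R$ after the path.
\begin{enumerate}
\item Take a path $\beta$ in $\Omega_1$ from $\ell_a$ to $\ell_b$. Since $\infty\notin\Omega_1$, it is compact in $\C$, so $|\re\beta|<X$ for some $X$.
\item Let $s_2$ be the first time $\beta$ meets $\{\im z\ge b\}$, and $s_1$ the last time before $s_2$ that it meets $\{\im z\le a\}$. Restricting $\beta$ to $[s_1,s_2]$ gives a path in $R$ joining the bottom and top sides, for every $x>X$.
\item Choose $T$ with $\re\gamma(t)<-x$ for $t\le -T$ and $\re\gamma(t)>x$ for $t\ge T$. Set $t_1=\sup\{t\le T:\re\gamma(t)\le -x\}$ and $t_2=\inf\{t\ge t_1:\re\gamma(t)\ge x\}$. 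The arc $\gamma([t_1,t_2])$ lies in $[-x,x]\times(a,b)$ and joins the two vertical sides.
\item The crossing lemma for the rectangle now gives $\beta\cap J\neq\emptyset$, which is the desired contradiction.
\end{enumerate}

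The digression about the set $A$ and the vertical rays is never used and can be deleted.
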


The following deep result of Ahlfors provides the link between quasidisks and quasicircles. 

\begin{theorem}[Ahlfors \cite{Ahlfors:reflections}]\label{prop:quasidisk_quasicircle}
Let $U\subset \widehat \C$ be a Jordan region and $J=\partial U$. Then $U$ is a quasidisk if and only if $J$ is a quasicircle, quantitatively.
\end{theorem}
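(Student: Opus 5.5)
The statement just before the cut is Ahlfors' theorem (Theorem \ref{prop:quasidisk_quasicircle}): a Jordan region $U$ is a quasidisk if and only if $J=\partial U$ is a quasicircle, quantitatively. The plan treats the two directions separately, working throughout with the cross-ratio characterization \eqref{quasicircle:crossratio}. That characterization is Möbius invariant, so we may normalize freely.

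\emph{Necessity.} Suppose $U=f(\D)$ with $f\colon\widehat\C\to\widehat\C$ $K$-quasiconformal. By \ref{q:qm_qc}, $f$ is $\eta$-quasi-Möbius with $\eta$ depending only on $K$, and the same holds for $f^{-1}$ by \ref{q:inverse}. Four points $a,b,c,d$ in cyclic order on the unit circle satisfy $[a,b,c,d]\geq 1$, which is the classical cross-ratio inequality for concyclic points. For cyclically ordered points $f(a),f(b),f(c),f(d)\in J$, apply the quasi-Möbius bound for $f^{-1}$ to the reciprocal cross ratio. This gives
$$[f(a),f(b),f(c),f(d)]\geq \delta(K)>0.$$
Hence $J$ is a quasicircle, with constant controlled by the equivalence following \eqref{quasicircle:crossratio}.

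\emph{Sufficiency.} This is the substantial direction, and I would follow the Beurling--Ahlfors strategy.

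\textbf{Step 1 (normalize).} Apply a Möbius map so that $\infty\in J$. Then $J\setminus\{\infty\}$ is an unbounded $L'$-quasicircle in $\C$.

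\textbf{Step 2 (conformal maps).} Let $\phi\colon\UHP\to U$ and $\psi\colon -\UHP\to U^*$ be Riemann maps, each fixing $\infty$. The welding map $h=\psi^{-1}\circ\phi|_\R$ is then a homeomorphism of $\R$.

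\textbf{Step 3 (the welding map is quasisymmetric).} The key claim is that $h$ is $\rho$-quasisymmetric with $\rho=\rho(L)$. Take $x-t,\,x,\,x+t\in\R$ and compare the moduli of the curve families in $\UHP$ joining $[x-t,x]$ to $[x+t,\infty]$ and $[x,x+t]$ to $[-\infty,x-t]$. Conformal invariance transports these families to $U$. There, the three-point condition on $J$ (which is what the cross-ratio bound amounts to) lets us compare them, via standard modulus estimates using Loewner/Teichmüller bounds, with the corresponding families in $U^*$. This yields bounds on the quasisymmetry ratio $|h(x+t)-h(x)|/|h(x)-h(x-t)|$ in terms of $L$. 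I expect this modulus comparison to be the main obstacle.

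\textbf{Step 4 (extend and glue).} Extend $h$ to a $K(\rho)$-quasiconformal map $H\colon\UHP\to\UHP$ using the Beurling--Ahlfors extension. Define $F=\phi\circ H^{-1}$ on $\UHP$, composed with reflection so that it is defined on the reflected half-plane, and $F=\psi$ on the lower half-plane. Equivalently, and more simply: $F:=\phi$ on $\UHP$, and $F:=\psi\circ \bar{H}$ on $-\UHP$, where $\bar H(z)=\overline{H(\bar z)}$.

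\textbf{Step 5 (conclude).} The two pieces agree on $\R$ because $\psi\circ h=\phi$ there. So $F$ is a homeomorphism of $\widehat\C$ that is quasiconformal off $\R$. Since $\R$ is removable for quasiconformality of homeomorphisms, $F$ is $K$-quasiconformal with $K=K(L)$. It maps $\UHP$ onto $U$, as required.
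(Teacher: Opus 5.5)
The paper does not prove this statement. It defers to Ahlfors and to the textbook proofs cited right after it, and your outline is precisely that classical argument: necessity via quasi-M\"obius invariance of the cross-ratio bound together with Ptolemy's inequality $[a,b,c,d]\ge 1$ for concyclic points, and sufficiency via conformal welding, the Beurling--Ahlfors extension and removability of $\R$. Necessity and Steps 1, 2, 4, 5 are fine, up to three small slips. First, the lower bound comes from applying the bound for $f$ to $[a,b,d,c]=[a,b,c,d]^{-1}\le 1$ (or the bound for $f^{-1}$ to $[a,b,c,d]$ itself), not from $f^{-1}$ applied to the reciprocal. Second, the first version of Step 4 needs no reflection, since $\phi\circ H^{-1}$ on $\UHP$ and $\psi$ on $-\UHP$ already agree on $\R$. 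Third, you should record that $h$ is increasing, as Beurling--Ahlfors requires; this holds because $\phi$ and $\psi$ both traverse $J$ with $U$ on the left.

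The genuine gap is Step 3, which carries the whole content of the theorem and which you only assert. The tools you name also point the wrong way. Loewner- or Teichm\"uller-type estimates bound planar moduli from below, and such bounds do not pass to curve families confined to $U^*$, since confinement only decreases modulus. The missing idea is that only \emph{upper} bounds on planar moduli are needed, combined with the reciprocity of the two conjugate moduli of a quadrilateral.

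Concretely, let $z_1=\phi(x-t)$, $z_2=\phi(x)$, $z_3=\phi(x+t)$. Let $\alpha,\gamma,\beta,\delta$ be the arcs of $J$ from $z_1$ to $z_2$, from $z_2$ to $z_3$, from $z_3$ to $\infty$, and from $\infty$ to $z_1$. Let $\Gamma_D(E,F)$ denote the curves in $D$ joining $E$ and $F$, and set $s=|z_2-z_3|/|z_1-z_2|$.

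The three-point condition with constant $L'$ gives $\alpha\subset\bar\D(z_2,L'|z_1-z_2|)$. It also gives $\dist(\alpha,\beta)\ge |z_2-z_3|/L'$, because the bounded arc between a point of $\alpha$ and a point of $\beta$ contains $z_2$ and $z_3$. Hence $\Mod\Gamma_\C(\alpha,\beta)\le \Phi_{L'}(s)$ for some $\Phi_{L'}$ that is finite on $(0,\infty)$ and tends to $0$ at $\infty$. To see this, use the separating annulus $\mathbb A(z_2;L'|z_1-z_2|,|z_2-z_3|/L')$ when $s>L'^2$, and in general the density $1/\dist(\alpha,\beta)$ on the $\dist(\alpha,\beta)$-neighborhood of $\alpha$. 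Symmetrically, $\Mod\Gamma_\C(\gamma,\delta)\le\Phi_{L'}(1/s)$.

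By conformal invariance $\Mod\Gamma_U(\alpha,\beta)=\Mod\Gamma_U(\gamma,\delta)=1$, and $\Gamma_U\subset\Gamma_\C$, so this forces $C(L')^{-1}\le s\le C(L')$. Consequently $\Mod\Gamma_{U^*}(\alpha,\beta)\le\Mod\Gamma_\C(\alpha,\beta)\le C'(L')$, and likewise for $(\gamma,\delta)$. Since $\Mod\Gamma_{U^*}(\alpha,\beta)\cdot \Mod\Gamma_{U^*}(\gamma,\delta)=1$, both lie in $[C'(L')^{-1},C'(L')]$. Pulling back by $\psi$, the quadrilateral $-\UHP$ with vertices $h(x-t),h(x),h(x+t),\infty$ has modulus in this range. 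That is equivalent to $\rho^{-1}\le (h(x+t)-h(x))/(h(x)-h(x-t))\le \rho$ with $\rho=\rho(L')$, which is what Step 3 needs.
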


A proof can be found in \cite{Ahlfors:qc}*{Section IV.D} or \cite{LehtoVirtanen:quasiconformal}*{Theorem II.8.6} or \cite{Pommerenke:conformal}*{Section 5}. Quasicircles are quasiconformally removable in the sense that if a homeomorphism of $\widehat \C$ is quasiconformal in the complement of a quasicircle $J$ then it is quasiconformal on all of $\widehat \C$; see \cite{LehtoVirtanen:quasiconformal}*{Theorem I.8.3}. 

A \textit{quasiconformal reflection} along a Jordan curve $J\subset \widehat \C$ is an orientation-reversing quasiconformal map $f\colon \widehat \C\to \widehat \C$ such that $f(z)=z$ for each $z\in J$ and $f$ interchanges the complementary components of $J$. Each quasidisk admits a quasiconformal reflection; see the discussion in \cite{Ahlfors:qc}*{Section IV.D}.

\begin{proposition}\label{proposition:extension_by_reflection}
Let $U,V\subset \widehat \C$ be quasidisks and $f\colon U\to V$ be a quasiconformal homeomorphism. Then there exists an extension of $f$ to a quasiconformal homeomorphism of $\widehat \C$, quantitatively.
\end{proposition}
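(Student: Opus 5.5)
The plan is to extend $f$ by gluing it to a quasiconformal reflection on the complementary side. Since $U$ and $V$ are quasidisks, there are quantitatively quasiconformal maps $\phi,\psi\colon \widehat\C\to\widehat\C$ with $\phi(\D)=U$ and $\psi(\D)=V$. Set
$$g=\psi^{-1}\circ f\circ \phi|_{\D}\colon \D\to\D.$$
This is a quasiconformal self-map of the disk. Its dilatation depends only on those of $f,\phi,\psi$. If we extend $g$ to a quasiconformal map $G$ of $\widehat\C$ with controlled dilatation, then
$$F=\psi\circ G\circ\phi^{-1}$$
is the desired extension of $f$, quantitatively. This reduces the problem to $U=V=\D$.

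For the disk we argue in four steps.
\begin{enumerate}[label=\normalfont(\arabic*)]
\item Extend $g$ to $\bar\D$. By property \ref{q:qm_qc} (or the classical boundary extension of quasiconformal self-maps of $\D$), $g$ is $\eta$-quasi-M\"obius with $\eta$ depending only on its dilatation. By \ref{q:completion} it extends to an $\eta$-quasi-M\"obius homeomorphism of $\bar\D$. In particular $g|_{\partial\D}$ is a homeomorphism of the unit circle.
\item Define $G$ outside the disk by reflection: $G(z)=1/\overline{g(1/\bar z)}$ for $|z|>1$, with $G=g$ on $\bar\D$. This map is continuous on $\widehat\C$ because both definitions agree on $\partial\D$, where $1/\bar z=z$. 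It is a homeomorphism because it maps $\D$ and $\widehat\C\setminus\bar\D$ bijectively onto themselves and is bijective on $\partial\D$.
\item Check the dilatation. On $\widehat\C\setminus\bar\D$ the map $G$ is a composition of $g$ with two anticonformal reflections, so it is orientation-preserving and has the same dilatation $K$ as $g$.
\item Apply removability. The unit circle is a quasicircle, so it is quasiconformally removable (\cite{LehtoVirtanen:quasiconformal}*{Theorem I.8.3}). Hence $G$ is $K$-quasiconformal on all of $\widehat\C$.
\end{enumerate}

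The only step needing real care is the boundary extension in step (1). Both the existence of a homeomorphic extension to $\partial\D$ and the quantitative control have to be justified. Quantitative control is not actually needed for this step, since the dilatation of $G$ equals that of $g$ regardless. Existence follows from the classical fact that quasiconformal maps between Jordan domains extend homeomorphically to the closures; the quasi-M\"obius route via \ref{q:qm_qc} and \ref{q:completion} gives the same conclusion. Everything else is bookkeeping of dilatations under composition.
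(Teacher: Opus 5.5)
Your proof is correct and is essentially the paper's argument. Unwinding your conjugation, the extension on $\widehat\C\setminus\bar U$ is $h\circ f\circ g$ with the quasiconformal reflections $g=\phi\circ\iota\circ\phi^{-1}$ and $h=\psi\circ\iota\circ\psi^{-1}$, where $\iota(z)=1/\bar z$; this is glued to the boundary extension of $f$ on $\bar U$ and justified by removability, exactly as in the paper, and reducing to the disk first only lets you use the explicit inversion and removability of the round circle instead of general quasiconformal reflections and removability of quasicircles.
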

\begin{proof}
The map $f$ extends to a homeomorphism of the closures $\bar U,\bar V$; see \cite{LehtoVirtanen:quasiconformal}*{Theorem I.8.2}. The quasidisks $U,V$ admit quasiconformal reflections $g,h$, respectively. The map $F$ defined by $f$ on $\bar U$ and by $h\circ f\circ g$ on $\widehat \C\setminus \bar U$ is a homeomorphism of $\widehat \C$ that is quasiconformal in $U$ and in $\widehat \C\setminus \bar U$. The quasiconformal removability of quasicircles implies that $F$ is quasiconformal on $\widehat \C$.  
\end{proof}

We recall the celebrated Beurling--Ahlfors extension theorem.

\begin{theorem}[Beurling--Ahlfors \cite{BeurlingAhlfors:extension}]\label{theorem:beurling_ahlfors}
Let $f\colon \R\to \R$ be an increasing quasisymmetric homeomorphism. Then there exists an extension of $f$ to a quasiconformal homeomorphism of the upper half-plane $\UHP$, quantitatively.
\end{theorem}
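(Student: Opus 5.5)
The Beurling--Ahlfors theorem is a classical result; in the paper it is quoted, not proved. A proof would go by the explicit Beurling--Ahlfors extension. Let $\rho$ be the distortion function of $f$. For $x\in\R$ and $y>0$ define
$$F(x+iy)=\frac{1}{2}\int_0^1\big(f(x+ty)+f(x-ty)\big)\,dt+\frac{i}{2}\int_0^1\big(f(x+ty)-f(x-ty)\big)\,dt.$$
Equivalently, $F=U+iV$, where $U(x,y)=\frac{1}{2y}\int_{x-y}^{x+y} f$ and $V(x,y)=\frac{1}{2y}\big(\int_x^{x+y}f-\int_{x-y}^x f\big)$. Both $U$ and $V$ are $C^1$ in $\UHP$. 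Since $f$ is increasing, $V>0$. Also $F(x+iy)\to f(x_0)$ as $x+iy\to x_0\in\R$, so $F$ extends continuously to $\bar\UHP$ with $F|_\R=f$.

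\textbf{Homeomorphism.} Next I would show that $F$ is a homeomorphism of $\bar\UHP$ onto itself.
\begin{itemize}
\item Compute the partial derivatives: $U_x=\frac{f(x+y)-f(x-y)}{2y}$, and $U_y,V_x,V_y$ are given by similar difference-quotient and average expressions.
\item The Jacobian is $J_F=U_xV_y-U_yV_x>0$, using monotonicity of $f$, so $F$ is a local homeomorphism.
\item $F$ is proper: $|F(z)|\to\infty$ as $z\to\infty$, since $f$ is an increasing homeomorphism of $\R$.
\item By monodromy, a proper local homeomorphism of the simply connected region is a homeomorphism. Alternatively, on each horizontal line $U$ is strictly increasing in $x$ and $V$ increases in $y$.
\end{itemize}

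\textbf{Dilatation bound; main obstacle.} The key step is to bound the dilatation $\frac{|F_z|^2}{J_F}$ uniformly, i.e.\ $\|DF\|^2\le K J_F$ with $K=K(\rho)$. The maps $z\mapsto az+b$ with $a>0$ and $b\in\R$ preserve $\UHP$ and commute with the construction. So after pre- and post-composing with such maps I can normalize to $x=0$, $y=1$, $f(0)=0$, and $f(1)-f(-1)=2$, or a similar scale.

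With this normalization, quasisymmetry gives
$$\rho^{-1}\le\frac{f(t)-f(0)}{f(0)-f(-t)}\le \rho\qquad\text{for all } t>0.$$
Hence the quantities $\int_0^1 f$, $-\int_{-1}^0 f$, $f(1)$ and $-f(-1)$ are all comparable with constants depending only on $\rho$. For $\int_0^1 f$ against $f(1)$, apply the quasisymmetry bound on subintervals, e.g.\ at $t=1/2$, to get a lower bound.

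It remains to express each entry of $DF$ in terms of these four quantities and check that the entries are bounded while $J_F$ is bounded below. This is the delicate algebraic estimate in Beurling--Ahlfors, and a clean version of the computation is in Lehto--Virtanen. Once the bound is established, $F$ is a $C^1$ homeomorphism with bounded distortion, so $F\in W^{1,2}_{\loc}$ and $F$ is $K(\rho)$-quasiconformal.
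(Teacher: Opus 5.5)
The paper does not prove this theorem; it quotes it from Beurling--Ahlfors. Your outline is the classical Beurling--Ahlfors construction, and most of the set-up is fine:
\begin{itemize}
\item the boundary values, the $C^1$ regularity and $V>0$;
\item the equivariance under real affine maps;
\item the properness-plus-covering argument for injectivity.
\end{itemize}
The ``alternatively'' remark (monotonicity of $U$ in $x$ and of $V$ in $y$) does not by itself give injectivity, so rely on the covering argument. The genuine gap is in the one step with real content, the dilatation bound. The estimates you isolate do not control $J_F$ from below.

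Concretely, in your normalization ($x=0$, $y=1$, $f(0)=0$) set $a=f(1)$, $b=-f(-1)$, $p=\int_0^1(f(1)-f(t))\,dt$ and $q=\int_{-1}^0(f(t)-f(-1))\,dt$. Then $U_x=\frac12(a+b)$, $V_x=\frac12(a-b)$, $U_y=\frac12(p-q)$, $V_y=\frac12(p+q)$, and $J_F=\frac12(aq+bp)$. Since $0<p\le a$ and $0<q\le b$, boundedness of the entries is automatic. Everything hinges on $p\gtrsim a$ and $q\gtrsim b$, that is, on the average of $f$ over $[0,1]$ staying a definite fraction \emph{below} $f(1)$ (and similarly on the left).

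Comparability of $\int_0^1 f$, $-\int_{-1}^0 f$, $f(1)$ and $-f(-1)$ only gives $c f(1)\le \int_0^1 f\le f(1)$. The upper end is exactly where $J_F$ degenerates. Take an odd increasing homeomorphism with $f(1)=1$ that rises almost to $1$ on a tiny interval near $0$. It satisfies your displayed inequality with constant $1$ for every $t>0$, and it keeps all four quantities comparable. Yet $p,q\to 0$, so $J_F(i)\to 0$ while $\|DF(i)\|\ge U_x=1$. So the check you propose cannot succeed with the information you list: quasisymmetry must be used at a point other than $0$.

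The fix is short. Quasisymmetry applied to the triple $1/2,0,1$ gives $f(1/2)-f(0)\le \rho(1)\,(f(1)-f(1/2))$. Hence
$$p\ge\int_0^{1/2}(f(1)-f(t))\,dt\ge\tfrac12\big(f(1)-f(1/2)\big)\ge \frac{a}{2(1+\rho(1))},$$
and likewise $q\ge\frac{b}{2(1+\rho(1))}$. Combined with $\rho(1)^{-1}\le a/b\le\rho(1)$, this yields
$$\|DF\|^2\le \tfrac12(a^2+b^2+p^2+q^2)\le (a+b)^2\le 2(1+\rho(1))^3 J_F .$$
With this estimate in place of the comparability claim (rather than a deferral to Lehto--Virtanen), your sketch becomes a complete proof, with dilatation depending only on $\rho(1)$.
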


This result readily yields an extension result for quasisymmetric maps of the unit circle $\partial \D$. 

\begin{corollary}\label{corollary:beurling_ahlfors}
Let $f\colon \partial \D\to \partial \D$ be an orientation-preserving homeomorphism.
\begin{enumerate}[label=\normalfont(\arabic*)]
\item\label{corollary:beurling_ahlfors:1} If $f$ is quasi-M\"obius, then there exists an extension of $f$ to a quasiconformal homeomorphism of $\D$, quantitatively.
\item\label{corollary:beurling_ahlfors:2} If $f$ quasisymmetric, then there exists an extension of $f$ to a quasisymmetric homeomorphism of $\D$ that fixes the point $0$, quantitatively.
\end{enumerate}
\end{corollary}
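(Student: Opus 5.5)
Both parts reduce to the real line by conjugating with a Möbius transformation $T$ that maps $\UHP$ onto $\D$ and $\R\cup\{\infty\}$ onto $\partial\D$. The map $\phi=T^{-1}\circ f\circ T$ is then an orientation-preserving homeomorphism of $\R\cup\{\infty\}$.

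For part \ref{corollary:beurling_ahlfors:1}, cross ratios are Möbius invariant, so $\phi$ is $\eta$-quasi-M\"obius. Postcomposing with a Möbius transformation preserving $\UHP$ (a real Möbius map) does not change the distortion function, so we may assume $\phi(\infty)=\infty$. Then $\phi|_{\R}\colon\R\to\R$ is an increasing homeomorphism with $\phi(x)\to\infty$ as $x\to\infty$, and by \ref{q:qm_qs} it is $\eta$-quasisymmetric. Theorem \ref{theorem:beurling_ahlfors} gives a $K(\eta)$-quasiconformal extension $\Phi$ of $\phi$ to $\UHP$. Conjugating back, and undoing the normalizing real Möbius map, yields $T\circ \Phi\circ T^{-1}$, which is a $K(\eta)$-quasiconformal self-map of $\D$ extending $f$, since Möbius maps are conformal.

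For part \ref{corollary:beurling_ahlfors:2}, by \ref{q:qs_qm} $f$ is $\eta'$-quasi-M\"obius, so part \ref{corollary:beurling_ahlfors:1} gives a $K$-quasiconformal extension $F_0\colon\D\to\D$. Two issues remain: we need $F(0)=0$, and we need quasisymmetry rather than quasiconformality.

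Step 1: bound $|F_0(0)|$. Take three points $x_1,x_2,x_3\in\partial\D$ at mutual distance $\sqrt3$. Quasisymmetry of $f$ gives mutual distances of $f(x_i)$ comparable to $1$ with constants depending on $\eta$. Compare moduli of curve families: in $\D$, the family joining the arc $[x_1,x_2]$ to $[x_3,x_1]$ while separating $0$... A simpler route is this. The harmonic measure at $0$ of each arc between consecutive $x_i$ is $1/3$. Quasiconformal maps quasi-preserve extremal distance, and the images of these arcs have diameters comparable to $1$. Together these give $\dist(F_0(0),\partial\D)\ge c(\eta)>0$.

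Step 2: normalize. Let $M$ be the disk automorphism with $M(F_0(0))=0$. Then $M$ is bi-Lipschitz on $\bar\D$ with constant depending only on $c(\eta)$. Set $F=M\circ F_0$. This is $K$-quasiconformal on $\D$ with $F(0)=0$, but it no longer extends $f$; it extends $M\circ f$.

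To repair this, instead correct on the boundary first. Choose $F=F_1$, the extension from part \ref{corollary:beurling_ahlfors:1} applied to $M\circ f$, which is again quasisymmetric with controlled data. Then modify by a self-map of $\D$ that is the identity on $\partial\D$, is bi-Lipschitz, and moves $F_1(0)$ to $0$; such a map exists with constants depending only on $\dist(F_1(0),\partial\D)\ge c$. An explicit radial-interpolation map supported in $\D(0,1-c/2)$ will do.

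Step 3: pass from a quasiconformal self-map of $\D$ to a quasisymmetric one. Extend by reflection $z\mapsto 1/\bar z$ to a $K$-quasiconformal map of $\widehat\C$. By \ref{q:qm_qc} this map is quasi-M\"obius. It fixes $0$ and $\infty$, and maps the three points $x_i$ to points of $\partial\D$ at mutual distance comparable to $1$. Restricted to the bounded set $\bar\D$, \ref{q:qs_qm} then makes it quasisymmetric, quantitatively.

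The main obstacle is Step 1, the quantitative bound keeping $F_0(0)$ away from $\partial\D$. I would handle it through quasi-M\"obius distortion of the reflected extension. Apply Lemma \ref{lemma:quasimobius:cross} to the continua $\{0\}\cup$ (a small disk) and an arc of $\partial\D$. Alternatively, use cross ratios $[0,\infty,x_i,x_j]$, which are fixed, and observe that $F(\infty)=1/\overline{F(0)}$. This forces $|F(0)|$ to be bounded away from $1$.
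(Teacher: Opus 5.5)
Your part \ref{corollary:beurling_ahlfors:1} is essentially the paper's argument. Part \ref{corollary:beurling_ahlfors:2}, however, has a genuine gap at the step you flag as the main obstacle: the bound $1-|F_0(0)|\ge c(\eta)$ is never established.

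\begin{itemize}
\item The harmonic-measure sentence is only a heuristic. You would need a quantitative statement that $K$-quasiconformal self-maps of $\D$ quasi-preserve harmonic measure at corresponding points, and you do not say which extremal distance gives this.
\item Lemma \ref{lemma:quasimobius:cross} controls only the scale-invariant quantity $\Delta$. A lower bound on $\Delta(F_0(E),f(J))$ does not keep $F_0(E)$ away from $\partial\D$ without separate control of $\diam F_0(E)$, which you do not provide.
\item The cross-ratio route you name gives nothing. For $w=F(0)$ and $u\in\partial\D$ one has $|1/\bar w-u|=|u-w|/|w|$, so
\[
[w,1/\bar w,f(x_i),f(x_j)]=1=[0,\infty,x_i,x_j]
\]
whatever $w$ is. The reordered ratio $[0,x_i,\infty,x_j]$ would carry information, since its image is $(1-|w|^2)|f(x_i)-f(x_j)|/(|w-f(x_i)||w-f(x_j)|)$, but that is not what you wrote.
\end{itemize}

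Step 2 is also wrong as written. If $F_1$ extends $M\circ f$, then postcomposing with a map that is the identity on $\partial\D$ still extends $M\circ f$, not $f$. The automorphism $M$ should simply be dropped.

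The missing observation is that your Step 3 never uses $F(0)=0$, so it can be done first.
\begin{enumerate}
\item By \ref{q:qm_qc} and \ref{q:completion}, the extension $F_0$ from part \ref{corollary:beurling_ahlfors:1} is quasi-M\"obius on $\bar\D$. The points $x_i$, together with the quasisymmetry of $f$, give the normalization in \ref{q:qs_qm}. Hence $F_0$ is already $\eta_0$-quasisymmetric on $\bar\D$, with $\eta_0$ depending only on $\eta$.
\item The bound on $F_0(0)$ then follows in one line from \eqref{definition:qs}. Let $A$ be the radial segment from $F_0(0)$ to $\partial\D$. Then $F_0^{-1}(A)$ joins $0$ to $\partial\D$, so $\diam F_0^{-1}(A)\ge \frac12\diam\D$. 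Therefore $1-|F_0(0)|=\diam(A)\ge \eta_0(2)^{-1}$.
\item Postcompose $F_0$ with a bi-Lipschitz map that sends $F_0(0)$ to $0$ and is the identity near $\partial\D$. By \ref{q:bilip} and \ref{q:inverse}, the result is a quasisymmetric extension of $f$ fixing $0$.
\end{enumerate}
This is exactly the paper's proof.
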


\begin{proof}
For \ref{corollary:beurling_ahlfors:1} suppose that $f$ is $\eta$-quasi-M\"obius. By composing $f$ with a rotation, we may assume that it fixes the point $1$. Consider a M\"obius transformation $\phi\colon \bar\D\to \bar \UHP\cup \{\infty\}$ that maps $1$ to $\infty$ and $\partial \D\setminus \{1\}$ onto $\R$. The map $\phi\circ f\circ \phi^{-1}\colon \R\to \R$ is an increasing $\eta$-quasi-M\"obius homeomorphism. In fact, it is $\eta$-quasisymmetric by property \ref{q:qm_qs}. Therefore, by Theorem \ref{theorem:beurling_ahlfors} this map extends to a quasiconformal homeomorphism $F$ of $\UHP$, quantitatively. The map $G=\phi^{-1}\circ F\circ \phi$ is the quasiconformal extension requested in part \ref{corollary:beurling_ahlfors:1}. 

If $f\colon \partial \D\to \partial \D$ is $\eta$-quasisymmetric as in \ref{corollary:beurling_ahlfors:2}, then for the points $z_1=1$, $z_2=i$, $z_3=-1$ we have $|f(z_i)-f(z_j)|\simeq_\eta 1$ for $i\neq j$ by \eqref{definition:qs}. Consider the extension $G$ given by part \ref{corollary:beurling_ahlfors:1}. By \ref{q:qm_qc} and \ref{q:completion} the map $G$ is quasi-M\"obius in $\bar \D$. Therefore, by property \ref{q:qs_qm} the extension $G$ is quasisymmetric, quantitatively.  Let $A\subset \D$ be the radial segment from $G(0)$ to $\partial \D$. Since 
$$\frac{1}{2}\leq \frac{\diam(G^{-1}(A))}{\diam (\D)}\leq 1,$$
by \eqref{definition:qs} we conclude that $\diam(A)\simeq_\eta 1$. Thus,  $|G(0)|\leq r(\eta)<1$. We may now postcompose $G$ with an $L(\eta)$-bi-Lipschitz map of $\C$ that maps $G(0)$ to $0$ and is the identity map for $|z|\geq  \frac{1+r(\eta)}{2}$. By property \ref{q:bilip}, the composition remains quasisymmetric, quantitatively.
\end{proof}

\begin{corollary}\label{corollary:beurling_ahlfors_embedding}
Let $f\colon A\to \C$ be a quasisymmetric embedding, where either $A=\R$ or $A=\partial \D$ and $f$ is orientation-preserving. Then there exists an extension of $f$ to a quasiconformal homeomorphism of $\C$, quantitatively. 
\end{corollary}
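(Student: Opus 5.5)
The plan is to prove that $J=f(A)$ is a quasicircle and that $f\colon A\to J$ extends to a quasiconformal map of one complementary region of $A$ onto the corresponding region of $J$. Then reflection, as in Proposition \ref{proposition:extension_by_reflection}, extends it to the whole sphere. We treat $A=\R$ first; the case $A=\partial\D$ reduces to it as in the proof of Corollary \ref{corollary:beurling_ahlfors}.

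\emph{Step 1: $J$ is a quasicircle.} Suppose $f$ is $\eta$-quasisymmetric. Since $A$ is an $1$-quasicircle (Euclidean version, with the arc between $z,w$ being the segment or the shorter circular arc), for $z,w\in A$ let $E\subset A$ be that arc. Then \eqref{definition:qs} applied with $B=E$ and $A'=\{z,w\}$ gives $\diam f(E)\le 2\eta(1)|f(z)-f(w)|$. Hence $J$ satisfies the Euclidean arc condition with $L'=2\eta(1)$. When $A=\R$, quasisymmetry forces $f(x)\to\infty$ as $x\to\infty$, so $J\cup\{\infty\}$ is an (unbounded) quasicircle. By the equivalence stated before \eqref{quasicircle:crossratio} and Theorem \ref{prop:quasidisk_quasicircle}, the complementary components of $J$ are $K(\eta)$-quasidisks.

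\emph{Step 2: extension to one side.} Let $W$ be the component of $\C\setminus J$ lying to the left of $J$, so that orientation is preserved; this uses the orientation hypothesis in the circle case. Take a $K(\eta)$-quasiconformal map $\phi$ of $\widehat\C$ with $\phi(\UHP)=W$, normalized so that $\phi(\infty)=\infty$ when $A=\R$. Then $g=\phi^{-1}\circ f\colon\R\to\R$ is increasing. By \ref{q:qm_qc}, $\phi$ is quasi-M\"obius quantitatively, so $g$ is quasi-M\"obius. Since $g(\infty)=\infty$, property \ref{q:qm_qs} shows that $g$ is quasisymmetric, quantitatively. Theorem \ref{theorem:beurling_ahlfors} then extends $g$ to a quasiconformal self-map $G$ of $\UHP$, and $F=\phi\circ G$ extends $f$ quasiconformally from $\UHP$ onto $W$. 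Proposition \ref{proposition:extension_by_reflection}, or directly reflecting across $\R$ and across $J$, then extends $F$ to a quasiconformal map of $\widehat\C$ fixing $\infty$, hence of $\C$.

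\emph{The case $A=\partial\D$.} Precompose with a M\"obius map $\psi$ sending $\R\cup\{\infty\}$ to $\partial\D$, and postcompose with a M\"obius map $\tau$ sending the image point $f(\psi(\infty))$ to $\infty$. Then $\tau\circ f\circ\psi$ is quasi-M\"obius, by \ref{q:qs_qm} and the M\"obius invariance of cross ratios, and it sends $\infty$ to $\infty$. By \ref{q:qm_qs} it is quasisymmetric, so the real-line case applies. Conjugating back gives a quasiconformal map of $\widehat\C$ extending $f$. It sends $\infty$ to some point $p$ outside the bounded region $f(\D)$.

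\emph{Main obstacle.} The delicate point is the last step: ensuring the final map fixes $\infty$, so that it is a quasiconformal map of $\C$, while keeping quantitative control. The approach is to only use the extension inside $\D$, which maps onto the bounded quasidisk bounded by $J$. Then we reflect, using quasiconformal reflections across $\partial\D$ and across $J$ that are chosen to fix $\infty$.

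Such reflections exist quantitatively. Normalize by a similarity so that $J$ has diameter comparable to $1$ and contains $0$ in its interior region at distance comparable to $1$ from $J$; this is possible by \eqref{definition:qs}. After this normalization, a standard reflection across $J$, for instance one obtained by conjugating circle inversion by the global quasiconformal map from Step 1, can be adjusted by a bi-Lipschitz correction, as at the end of Corollary \ref{corollary:beurling_ahlfors}, so that it swaps $0$ and $\infty$. The resulting extension fixes $\infty$, with the bounds controlled by \ref{q:bilip}.
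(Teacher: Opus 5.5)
For $A=\R$ your argument is essentially what the paper has in mind (it only says this case is similar). You normalize the quasidisk bounded by $f(A)$ by a global quasiconformal map fixing $\infty$, apply Beurling--Ahlfors, and reflect. The result fixes $\infty$ automatically because $\infty\in\partial\UHP$.

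For $A=\partial\D$ your route is longer, and its last step is underspecified in a way that matters. Conjugating by M\"obius maps to reach the line case destroys the normalization at $\infty$. You then need a reflection $r_J$ across $J$ such that the map $E$, defined as $H$ on $\bar{\D}$ and as $r_J\circ H\circ(z\mapsto 1/\bar z)$ outside, fixes $\infty$.
\begin{itemize}
\item Since $E(\infty)=r_J(H(0))$, the point $r_J$ must send to $\infty$ is $H(0)$ itself, not an arbitrary point deep inside $J$. Also, a reflection across $J$ cannot ``fix $\infty$''.
\item For the bi-Lipschitz correction to be quantitative you need $\dist(H(0),J)\gtrsim \diam J$. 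This does not follow from \eqref{definition:qs} applied to $f$ alone. You first need that $H|_{\bar{\D}}$ is quasisymmetric, which holds by \ref{q:qm_qc} and \ref{q:qs_qm}, since $f$ sends three equally spaced points of $\partial\D$ to points at mutual distance $\simeq\diam J$.
\end{itemize}
With this in hand, the required $r_J$ can be built as follows. Let $\Omega$ be the interior of $J$ and take $\Psi$ quasiconformal on $\widehat\C$ with $\Psi(\D)=\Omega$ and $\Psi(\infty)=\infty$. Correct $\Psi$ by a bi-Lipschitz map that is the identity near $\partial\D$ and moves $\Psi^{-1}(H(0))$ to $0$. Conjugating $z\mapsto 1/\bar z$ by the corrected map gives $r_J$. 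Note that Step 1 produces no global quasiconformal map; you mean $\phi$ from Step 2 or such a $\Psi$.

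The paper avoids all of this by normalizing the target first. It takes $\phi$ mapping the quasidisk bounded by $f(\partial\D)$ onto $\D$ with $\phi(\infty)=\infty$, arranged by postcomposing with a M\"obius automorphism of $\D$. Then $\phi$ is quasisymmetric by \ref{q:qc_qs}, so $\phi\circ f\colon\partial\D\to\partial\D$ is quasisymmetric. Corollary \ref{corollary:beurling_ahlfors}\ref{corollary:beurling_ahlfors:2} gives an extension $F$ to $\D$ with $F(0)=0$, whose reflection across $\partial\D$ fixes $\infty$, and $\phi^{-1}\circ F$ is the desired map. The bi-Lipschitz correction you need is already done there, once, on the model disk with the explicit inversion. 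So neither the reduction to the line nor a special reflection across $J$ is needed; your repair amounts to redoing that corollary's argument transported to $J$.
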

\begin{proof}
We prove the result in the case that $A=\partial \D$. The other case is similar. The assumption that $f$ is quasisymmetric implies that $f(\partial \D)$ is a quasicircle (see \cite{Pommerenke:conformal}*{Proposition 5.10}). This quasicircle bounds a quasidisk $U\subset \C$ by Theorem \ref{prop:quasidisk_quasicircle}. Let $\phi$ be a quasiconformal map of $\widehat \C$ that maps $U$ onto $\D$. By postcomposing with a M\"obius transformation of $\D$ we assume that $\phi$ fixes $\infty$. Thus, $\phi\colon \C\to \C$ is quasiconformal and quasisymmetric by \ref{q:qc_qs}. The map $\phi\circ f\colon \partial \D\to \partial \D$ is quasisymmetric. By Corollary \ref{corollary:beurling_ahlfors}, the map $\phi\circ f$ extends to a quasiconformal homeomorphism $F$ of $\D$ that fixes $0$. By reflection (see \cite{LehtoVirtanen:quasiconformal}*{Section I.8.4}), we may obtain a quasiconformal homeomorphism $F$ of $\C$. Thus, $\phi^{-1}\circ F$ is the desired quasiconformal extension of $f$. 
\end{proof}

\subsection{Uniform domains}\label{section:uniform}

A set $A\subset \R^n$ is \textit{quasiconvex} if there exists $L\geq 1$ such that for every $z,w\in A$ there exists a curve $\gamma\colon [a,b]\to A$ with $\gamma(a)=z$, $\gamma(b)=w$, and $\ell_e(\gamma)\leq L|z-w|$. In this case we say that $A$ is $L$-quasiconvex. By the lower semicontinuity of length, observe that $\bar A$ is also $L$-quasiconvex.

A domain $U\subset \R^n$ is called \textit{uniform} if there exists a constant $L\geq 1 $ such that for every pair of points $z,w\in U$ there exists a curve $\gamma\colon [a,b]\to U$ with $\gamma(a)=z$, $\gamma(b)=w$, $\ell_e(\gamma)\leq L|z-w|$, and 
$$\min\{\ell_e(\gamma|_{[a,t]}),\ell_e(\gamma|_{[t,b]})\}\leq L \dist_e(\gamma(t), \partial U)\,\,\, \text{for each $t\in [a,b]$}.$$
In this case we say that $U$ is $L$-uniform. By definition, uniform domains are quasiconvex. This is the only property that we will need in this work.

The space $\R^n$, each ball in $\R^n$, and the exterior of each ball in $\R^n$ are uniform domains. It is a consequence of \cite{GehringOsgood:uniform}*{Corollary 3} uniformity is preserved under quasiconformal maps of $\R^n$. In particular, each quasidisk in $\C$ and its exterior are uniform domains. More generally, the following result is true. 

\begin{theorem}[\cite{Vaisala:quasimobius}*{Theorem 5.6}]\label{theorem:qm_uniform}
Let $U\subset \R^n$ be a uniform domain and $f\colon U\to \R^n$ be a quasi-M\"obius embedding. Then $f(U)$ is a uniform domain, quantitatively. 
\end{theorem}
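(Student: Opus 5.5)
The statement is Väisälä's theorem, so the paper may simply cite it; here is how I would prove it. The plan is to work with a \emph{diameter} form of uniformity, reduce the quasi-M\"obius case to the quasisymmetric case by M\"obius normalizations, and then transfer the cigar condition through the map using the distortion estimate \eqref{definition:qs} and Lemma \ref{lemma:quasisymmetric_relative_distance}.

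First I would use the standard fact that uniformity is quantitatively equivalent to a diameter cigar condition. This condition asks that any $z,w\in U$ can be joined by a curve $\gamma\colon[a,b]\to U$ with $\diam(|\gamma|)\leq L|z-w|$ and
$$\min\{\diam(\gamma([a,t])),\diam(\gamma([t,b]))\}\leq L\dist(\gamma(t),\partial U)$$
for every $t$. That lengths may be replaced by diameters (after modifying the curve) is the Martio--Sarvas/V\"ais\"al\"a observation, which I would quote. The advantage is that the diameter condition involves only distances and diameters of sets, which quasisymmetric maps control.

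Second comes the normalization. Uniformity, in the diameter sense, is quantitatively invariant under M\"obius transformations, since these are quasisymmetric on bounded pieces away from the pole. Fix $z,w\in f(U)$ and the uniform curve $\gamma$ in $U$ between their preimages. I would precompose and postcompose $f$ with M\"obius maps so that three properties hold simultaneously: the relevant pieces of $U$ and of $f(U)$ are bounded; there are three points of $\bar U$ (for instance $x$, $y$, and a boundary point or the point $\infty$) that are mutually $\lambda^{-1}\diam$-separated; and their images are separated in the same way. Property \ref{q:completion} extends $f$ to the closures, so that boundary points of $f(U)$ are images of boundary points of $U$. Property \ref{q:qs_qm} then makes the normalized map $\eta'$-quasisymmetric with $\eta'$ depending only on $\eta$ and $n$.

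Third, I transfer the cigar condition. Take $f\circ\gamma$ as the candidate curve. By \eqref{definition:qs} applied with $A=\{x,y\}$ and $B=|\gamma|$, together with $\diam|\gamma|\leq L|x-y|$, we get $\diam(f(|\gamma|))\lesssim |f(x)-f(y)|$. Fix a point $\gamma(t)$. Let $E$ be the shorter subarc, so that $\diam(E)\leq L\dist(\gamma(t),\partial U)$, and let $F$ be the set of boundary points. Lemma \ref{lemma:quasisymmetric_relative_distance} applied to the inverse map, in the form $\dist(E,F)/\diam(E)\gtrsim 1$ (using that $\gamma(t)\in E$), yields $\dist(f(\gamma(t)),\partial f(U))\gtrsim \diam(f(E))$. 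Finally I undo the normalizations.

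The main obstacle is the second step. The normalizing M\"obius maps have to be chosen depending on the pair $x,y$ and on the scale of $\gamma$, while the constants must remain uniform; one must also handle the case $\infty\in\partial U$ or $\infty\in f(U)$. I would first try the following choice: send a boundary point $q$ of $U$ that is nearest to $x$ to $\infty$, and similarly send the image point $f(q)$ to $\infty$. Then property \ref{q:qm_qs} directly gives a quasisymmetric map, $\eta$-quantitatively, avoiding the three-point separation argument. It then remains to check that inversions preserve the diameter cigar condition up to constants, which follows from the same relative-distance estimates.
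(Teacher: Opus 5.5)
The paper gives no proof of this statement; it is quoted from V\"ais\"al\"a. Your third step is essentially the right argument for a quasisymmetric map. The genuine gap is at the step you yourself flag: the reduction to the quasisymmetric case.

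\textbf{Why the reduction fails as written.} For it to work, the M\"obius maps $\phi$ (pole $q\notin U$) and $\psi^{-1}$ (pole $\psi(\infty)\notin\psi(f(U))$) must carry the chosen cigar curve to a cigar curve, with constants depending only on $L$. But a M\"obius map $T$ with $T(U)\subset\R^n$ is itself a quasi-M\"obius embedding with $\eta(t)=t$. So this is a special case of the theorem, not a reduction of it, and neither justification you offer applies.
\begin{itemize}
\item A M\"obius map with pole on $\partial U$ is in general not quasisymmetric on $U$ for any $\eta$. For $U=\UHP$ and $T(z)=-1/z$, the points $i,i\varepsilon,iR$ have ratio about $1/R$, while their images have ratio about $1/\varepsilon$. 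Hence \eqref{definition:qs} and Lemma \ref{lemma:quasisymmetric_relative_distance} are unavailable.
\item You cannot restrict to a bounded piece away from the pole. For $\gamma(t)$ near $x$, $\dist(\gamma(t),\partial U)$ is attained at boundary points near $q$, i.e.\ near the pole itself. Meanwhile the distant parts of $\gamma$ and $\partial U$ are squeezed towards $\phi(\infty)$.
\item The three-point normalization via \ref{q:qs_qm} has the same defect. It presupposes that $U$ and $f(U)$ were first moved to bounded domains by M\"obius maps without losing uniformity.
\end{itemize}

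\textbf{How to close the gap.} M\"obius invariance of the cigar condition must be supplied. You may cite it: it is classical, e.g.\ a consequence of the Gehring--Osgood characterization $k_U\le c\,j_U+d$ together with the M\"obius quasi-invariance of $k_U$ and $j_U$. Or you can prove it directly, using the following facts for $T(z)=p+(z-p)/|z-p|^2$ with $p\in\R^n\setminus U$:
\begin{itemize}
\item $|T(z)-T(w)|=|z-w|/(|z-p||w-p|)$;
\item since $\dist(z,\partial U)\le |z-p|$, also $\dist(T(z),\partial T(U))\ge \dist(z,\partial U)/(2|z-p|^2)$;
\item the cigar condition gives $|\gamma(t)-p|\ge (|x-p|+|\gamma(t)-x|)/(3L)$ whenever $\gamma([a,t])$ is the shorter subarc, and symmetrically for $y$.
\end{itemize}
From these, a case analysis yields the cigar condition for $T\circ\gamma$. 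Its key point is that points of $\gamma$ much closer to $p$ than $\gamma(t)$ can occur on only one of the two subarcs $\gamma([a,t])$, $\gamma([t,b])$. Otherwise both $x$ and $y$ would be close to $p$, contradicting $\diam|\gamma|\le L|x-y|$.

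\textbf{A smaller slip in your third step.} Lemma \ref{lemma:quasisymmetric_relative_distance} controls $\dist(E,\partial U)$. This can be far smaller than $\dist(\gamma(t),\partial U)$, because $E$ contains $x$, so the bound $\dist(E,\partial U)\gtrsim\diam(E)$ is false in general. Instead, apply the quasisymmetry inequality directly to the triple $\gamma(t)$, $e$, $b$, where $e\in E$ satisfies $|f(e)-f(\gamma(t))|\ge\frac12\diam f(E)$ and $b\in\partial U$ is chosen with $f(b)$ nearest to $f(\gamma(t))$. Using the boundary extension, this gives $\diam f(E)\le 2\eta(L)\dist(f(\gamma(t)),\partial f(U))$.
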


A consequence of the quasiconvexity of quasidisks is that any point $z$ in the boundary of a quasidisk $U$ is \textit{rectifiably accessible}; that is, there exists a rectifiable curve $\gamma\colon [a,b]\to \bar U$ such that $\gamma([a,b))\subset U$ and $\gamma(b)=z$.

\begin{corollary}\label{cor:quasidisk:rectifiably_accessible}
Each point in the boundary of a quasidisk is rectifiably accessible. 
\end{corollary}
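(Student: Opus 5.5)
The idea is to use the quasiconvexity of quasidisks, recorded just above in Section~\ref{section:uniform}, and then cover the case where the boundary point is $\infty$ with a M\"obius change of coordinates. Let $U\subset\widehat\C$ be a quasidisk and $z\in\partial U$.

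First suppose $U\subset\C$ and $z\in\C$. Then $U$ is a uniform domain, since uniformity is preserved under quasiconformal maps of $\C$ and disks are uniform. In particular $U$ is $L$-quasiconvex for some $L\geq1$, and as noted above, $\bar U$ is $L$-quasiconvex too.

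Pick a point $w_0\in U$ and a sequence $w_n\in U$ with $w_n\to z$, chosen so that $\sum_n|w_n-w_{n+1}|<\infty$ (for instance $|w_n-z|\le 2^{-n}$). Quasiconvexity of $U$ gives curves $\gamma_n\subset U$ from $w_n$ to $w_{n+1}$ with $\ell_e(\gamma_n)\le L|w_n-w_{n+1}|\le 2L\cdot2^{-n}$. Concatenate the $\gamma_n$ and reparametrize on $[a,b)$ so that $\gamma_n$ occupies an interval of length proportional to $2^{-n}$. The total length is finite, and every point of $\gamma_n$ lies within $\ell_e(\gamma_n)+|w_n-z|\lesssim L2^{-n}$ of $z$. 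Hence the curve extends continuously to $b$ with $\gamma(b)=z$, it is rectifiable, and $\gamma([a,b))\subset U$.

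Next, the general position. If $\infty\in U$, or $\infty\in\partial U$, or $z=\infty$, compose with a M\"obius transformation $\phi$ that sends a point outside $\bar U$ to $\infty$. This makes $\phi(U)$ a bounded quasidisk in $\C$ and $\phi(z)\in\C$. Apply the previous step to get a rectifiable curve $\gamma'$ in $\phi(U)$ ending at $\phi(z)$, and set $\gamma=\phi^{-1}\circ\gamma'$.

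The only point needing care is in what sense this curve is rectifiable. On the compact set $\bar{\phi(U)}$, the map $\phi^{-1}$ is Lipschitz from the Euclidean metric to the chordal metric, so $\gamma$ is chordally rectifiable. If $z\neq\infty$ and $\bar U\subset\C$, then $\phi^{-1}$ is Euclidean-Lipschitz on the bounded set $\bar{\phi(U)}$, so $\gamma$ is also Euclidean-rectifiable.

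When $z\in\C$ but $U$ is unbounded, Euclidean rectifiability is handled more directly. Shrink to a small ball around $z$, or simply note that the constructed path stays within a bounded neighborhood of $z$ once $w_0$ is chosen close to $z$. The Euclidean and chordal metrics are comparable there, so rectifiability in either sense follows.

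I expect no real obstacle here. The only delicate part is this bookkeeping at $\infty$, which the M\"obius normalization above handles.
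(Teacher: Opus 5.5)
Your proposal is correct and takes essentially the paper's approach. The paper records the corollary as a direct consequence of the quasiconvexity of quasidisks, which are uniform domains; your chaining argument with $\sum_n|w_n-w_{n+1}|<\infty$ supplies the detail that keeps $\gamma([a,b))\subset U$, and the M\"obius normalization handles $\infty$ (one small imprecision: an unbounded quasidisk in $\C$ is the quasiconformal image of a half-plane rather than a disk, but half-planes are also uniform, so nothing changes).
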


\subsection{Quasihyperbolic metric}

Let $U\subsetneq \R^n$ be a domain. We define the \textit{quasihyperbolic metric} on $U$ as
$$k_U(z,w) =\inf_\gamma \int_\gamma \frac{1}{\dist_e(\cdot,\partial U)}\,ds, $$
where the infimum is over all rectifiable curves $\gamma$ in $U$ with endpoints $z,w$. Each pair $z,w\in U$ can be joined with a \textit{quasihyperbolic geodesic} $\gamma$ in $U$, that is, 
$$k_U(z,w)=\int_\gamma \frac{1}{\dist_e(, \partial U)}\, ds= \ell_{k_U}(\gamma).$$
This implies that $(U,k_U)$ is $1$-quasiconvex; see \cite{GehringOsgood:uniform}*{Lemma 1}. The next lemma gives a convenient estimate of the quasihyperbolic metric. 

\begin{lemma}\label{lemma:qh_estimate}
Let $K\geq 1$. Let $U\subsetneq \R^n$ be a domain and $B\subset U$ be a $K$-quasiconvex set such that $\diam_e (B)\leq K{\dist_e (B,\partial U)}$. Then for every $z,w\in B$ we have
$$ C(K)^{-1} \frac{|z-w|}{\dist_e(B,\partial U)}\leq k_U(z,w) \leq C(K) \frac{|z-w|}{\dist_e(B,\partial U)}.$$
\end{lemma}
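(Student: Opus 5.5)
Write $d=\dist_e(B,\partial U)$. The plan is to prove the two inequalities separately; both rest on the fact that every point of $B$, and every point near $B$, is at distance comparable to $d$ from $\partial U$.

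\emph{Upper bound.} Fix $z,w\in B$. By $K$-quasiconvexity there is a curve $\gamma$ in $B$ from $z$ to $w$ with $\ell_e(\gamma)\le K|z-w|$. Every point $x\in|\gamma|\subset B$ satisfies $\dist_e(x,\partial U)\ge d$, so
$$k_U(z,w)\le \int_\gamma \frac{ds}{\dist_e(\cdot,\partial U)}\le \frac{K|z-w|}{d}.$$
This step is immediate.

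\emph{Lower bound.} Note first that for $x\in B$ we have $\dist_e(x,\partial U)\le d+\diam_e(B)\le (1+K)d$; in particular $d>0$. Let $\gamma$ be any rectifiable curve in $U$ from $z$ to $w$, and split into two cases.
\begin{enumerate}[label=\normalfont(\alph*)]
\item If $\gamma$ stays in the ball $B_e(z,d)$, then every point $x$ of $\gamma$ satisfies $\dist_e(x,\partial U)\le \dist_e(z,\partial U)+d\le (2+K)d$. Hence
$$\ell_{k_U}(\gamma)\ge \frac{\ell_e(\gamma)}{(2+K)d}\ge \frac{|z-w|}{(2+K)d}.$$
\item If $\gamma$ leaves $B_e(z,d)$, consider its initial subarc up to the first exit point, which lies in $\bar B_e(z,d)$. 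On this subarc the same bound $\dist_e(\cdot,\partial U)\le (2+K)d$ holds, and the subarc has Euclidean length at least $d$. Hence
$$\ell_{k_U}(\gamma)\ge \frac{d}{(2+K)d}=\frac{1}{2+K}\ge \frac{|z-w|}{K(2+K)d},$$
where the last step uses $|z-w|\le \diam_e(B)\le Kd$.
\end{enumerate}
Taking the infimum over $\gamma$ gives the lower bound with $C(K)=K(2+K)$.

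Neither step is a genuine obstacle. The only point needing care is to use the hypothesis $\diam_e(B)\le K\dist_e(B,\partial U)$ in two places: to bound the distance to $\partial U$ from above near $B$, and to compare the constant $1$ with $|z-w|/d$ in case (b).
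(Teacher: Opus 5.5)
Your proof is correct and follows essentially the same route as the paper. Both get the upper bound from a $K$-quasiconvex curve inside $B$. Both get the lower bound by splitting according to whether $\gamma$ stays in a ball around $z$, using $|z-w|\le \diam_e(B)\le K\dist_e(B,\partial U)$ in the exit case. The only difference is that the paper takes the ball of radius $\frac{1}{2}\dist_e(B,\partial U)$ rather than $\dist_e(B,\partial U)$, which changes only the constants.
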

\begin{proof}
Let $z,w\in B$. By the quasiconvexity assumption, there exists a curve $\gamma$ in $B$ with endpoints $z,w$ such that $\ell_e(\gamma)\leq K |z-w|$. Thus, 
$$k_U(z,w) \leq \int_\gamma \frac{1}{\dist_e(\cdot,\partial U)}\, ds \leq \frac{\ell_e(\gamma)}{\dist_e(B,\partial U)} \leq K\frac{|z-w|}{\dist_e(B,\partial U)}.$$
For the reverse inequality, consider the ball $B_e(z,R)$, where $R=2^{-1}\dist_e(B,\partial U)$. Let $\gamma$ be a curve in $U$ that connects $z$ and $w$. If $\gamma$ is not contained in $B(z,R)$, then
$$\int_\gamma \frac{1}{\dist_e(\cdot,\partial U)} \, ds \geq \frac{R}{\dist_e(z,\partial U)+R} \geq \frac{R}{\diam_e (B)+\dist_e(B,\partial U)+R}\geq \frac{1}{2K+3}.$$ 
On the other hand, 
$$\frac{|z-w|}{\dist_e(B,\partial U)} \leq \frac{\diam_e(B)}{\dist_e(B,\partial U)}\leq K$$
so we have
\begin{align}\label{lemma:qh_estimate:1}
\int_\gamma \frac{1}{\dist(\cdot,\partial U)} \, ds \geq C(K)^{-1} \frac{|z-w|}{\dist_e(B,\partial U)}.
\end{align}
Now, suppose that $\gamma$ is contained in $B_e(z,R)$. Then
\begin{align*}
\int_\gamma \frac{1}{\dist_e(\cdot,\partial U)} \, ds &\geq  \frac{|z-w|}{\dist_e(z,\partial U)+R} \geq \frac{|z-w|}{\diam_e(B)+\dist_e (B,\partial U)+R}\\
&\geq \frac{2|z-w|}{(2K+3)\dist_e(B,\partial U)}.
\end{align*}
Therefore, \eqref{lemma:qh_estimate:1} holds in this case too with a different constant. Infimizing over all curves $\gamma$ connecting $z$ and $w$ gives the desired inequality.
\end{proof}

Gehring and Osgood \cite{GehringOsgood:uniform}*{Theorem 3} studied the distortion of the quasihyperbolic metric under quasiconformal maps.

\begin{theorem}[Gehring--Osgood]\label{theorem:gehring_osgood}
Let $K\geq 1$. Let $f\colon U\to V$ be a $K$-quasi\-con\-for\-mal homeomorphism between domains $U,V\subsetneq \R^n$. If $\alpha=K^{1/(1-n)}$, we have
$$k_{V}(f(z),f(w))\leq C(n,K) \max \{ k_U(z,w), k_U(z,w)^\alpha\}\,\,\, \text{for all $z,w\in U$}.$$
\end{theorem}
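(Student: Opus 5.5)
Since this is the cited theorem of Gehring and Osgood, I would prove it in the standard way. First I obtain a local estimate valid on Whitney-type balls, with the Hölder exponent $\alpha=K^{1/(1-n)}$. Then I chain this estimate along a quasihyperbolic geodesic to get the linear bound at large scales.

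\emph{Local step.} Fix $z\in U$ and set $d=\dist_e(z,\partial U)$ and $d'=\dist_e(f(z),\partial V)$. Let $w\in U$ with $|z-w|=r\leq d/2$. Consider the ring $A=B_e(z,d)\setminus \bar B_e(z,r)$. Its modulus in $\R^n$ equals $c_n\log(d/r)$ for the separating family, up to the usual normalization. The image ring $f(A)$ separates $\{f(z),f(w)\}$ from $\partial V$. By $K$-quasi-invariance of modulus, $\Mod f(A)\geq K^{-1}\Mod A$, up to the appropriate exponent convention for $n$-modulus.

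Next I use the extremal property of the Teichmüller ring. A ring separating the points $f(z),f(w)$ from a point of $\partial V$ at distance $d'$ from $f(z)$ has modulus at most $\Psi(|f(z)-f(w)|/d')$. Here $\Psi(t)\approx \log(C_n/t)^{\,1-n}$-type behaviour, which inverts as $t\lesssim (r/d)^{\alpha}$. This yields the quasisymmetric/Hölder bound
$$\frac{|f(z)-f(w)|}{d'}\leq C(n,K)\left(\frac{|z-w|}{d}\right)^{\alpha}.$$
For $r\leq d/2$ with $d$ replaced by a small multiple $\epsilon d$, this also forces $|f(z)-f(w)|\leq d'/2$. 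Applying Lemma \ref{lemma:qh_estimate} in $V$ to the set $B=\bar B_e(f(z),d'/2)$, which is $1$-quasiconvex, gives
$$k_V(f(z),f(w))\leq C\frac{|f(z)-f(w)|}{d'}\leq C(n,K)\left(\frac{|z-w|}{d}\right)^{\alpha}\leq C(n,K)\,k_U(z,w)^{\alpha}.$$
In the last inequality I use the lower bound $k_U(z,w)\geq \log(1+|z-w|/d)\geq c\,|z-w|/d$ for $|z-w|\leq d$. That lower bound follows by integrating $1/\dist_e(\cdot,\partial U)\geq 1/(d+|\cdot-z|)$ along any curve.

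\emph{Global step.} Suppose first that $k_U(z,w)\leq c_0$ for a small constant $c_0=c_0(n)$. The same logarithmic lower bound gives $|z-w|\leq \epsilon d$, so the local step applies directly and yields $C k_U^{\alpha}$. Otherwise, take a quasihyperbolic geodesic $\gamma$ from $z$ to $w$. Subdivide it into $N\leq 1+k_U(z,w)/c_0\leq 2k_U(z,w)/c_0$ consecutive arcs, each of quasihyperbolic length at most $c_0$. Each pair of consecutive points satisfies the local hypothesis, so the triangle inequality for $k_V$ gives $k_V(f(z),f(w))\leq N\, C c_0^{\alpha}\leq C(n,K)\,k_U(z,w)$. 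Combining the two cases gives the maximum of $k_U$ and $k_U^{\alpha}$.

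\emph{Main obstacle.} The main obstacle is the local Hölder estimate with the \emph{sharp} exponent $\alpha=K^{1/(1-n)}$. Using only Q-6 would give some exponent depending on $\eta$, but not the precise one. So I would carry out the ring-modulus comparison carefully: the spherical ring in $U$ versus the Teichmüller extremal ring in $V$, together with the asymptotics of the Teichmüller capacity function. I would also need to ensure that the constants remain uniform when $d'$ is measured relative to $\partial V$, which may be unbounded or degenerate.
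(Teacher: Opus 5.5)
The paper does not prove this theorem; it quotes it from Gehring--Osgood (Theorem 3 of their paper on uniform domains). Your outline is essentially their original argument, and it is correct: a local estimate $|f(z)-f(w)|/d'\le C(n,K)\,(|z-w|/d)^{\alpha}$, obtained by comparing the spherical ring $B_e(z,d)\setminus \bar B_e(z,r)$ with the Teichm\"uller extremal ring, followed by chaining along a quasihyperbolic geodesic.

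Two small points need tightening.

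\emph{The Teichm\"uller comparison.} The bound requires the ring $f(A)$ to separate $\{f(z),f(w)\}$ from \emph{both} $b$ and $\infty$, where $b\in\partial V$ satisfies $|f(z)-b|=d'$. Separation from $b$ alone gives no bound. This holds here, because $b$ and $\infty$ both lie in the complement of $f(B_e(z,d))$ in $\R^n\cup\{\infty\}$, which is the outer complementary component of $f(A)$. So your worry about $\partial V$ being unbounded or degenerate is unfounded: only the single point $b$ and $\infty$ enter.

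\emph{The dependence of $c_0$.} The threshold $\epsilon$ that forces $|f(z)-f(w)|\le d'/2$ comes out of the Teichm\"uller bound as roughly $C_n^{-1/\alpha}$. It genuinely depends on $K$: for the radial stretch $z\mapsto z|z|^{K-1}$ on $\C\setminus\{0\}$, with $z=1$, one needs $\epsilon\lesssim 1/K$. Hence $c_0$ is $c_0(n,K)$, not $c_0(n)$. This is harmless, since the final constant may depend on $n$ and $K$.
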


Combining the above theorem with the fact that the inverse of a quasiconformal map is quasiconformal, we conclude that in large scale the map $f$ is bi-Lipschitz in the respective quasihyperbolic metrics. We prove that if one takes into account small scales as well, then $f$ is quasisymmetric in the quasihyperbolic metrics. 

\begin{theorem}\label{theorem:qc_qs_qh}
Let $K\geq 1$. Let $f\colon U\to V$ be a $K$-quasi\-con\-for\-mal homeomorphism between domains $U,V\subsetneq \R^n$. Then the map
$$f\colon (U,k_U)\to (V,k_V)$$
is quasisymmetric, quantitatively, depending only on $n,K$.
\end{theorem}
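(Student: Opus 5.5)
Since $f^{-1}$ is also $K$-quasiconformal, it suffices to show that for distinct $a,b,c\in U$ the ratio
$$t=\frac{k_U(a,b)}{k_U(a,c)}$$
controls
$$\frac{k_V(f(a),f(b))}{k_V(f(a),f(c))}$$
from above by some $\eta(t)$ depending only on $n,K$. The argument splits into two regimes, according to whether the relevant quasihyperbolic distances are large or small.

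\emph{Large scales.} Theorem \ref{theorem:gehring_osgood}, applied to both $f$ and $f^{-1}$, gives
$$C^{-1}\min\{k_U,k_U^{1/\alpha}\}\leq k_V\circ f \leq C\max\{k_U,k_U^{\alpha}\},$$
with $\alpha=K^{1/(1-n)}\le 1$. If both $k_U(a,b)$ and $k_U(a,c)$ are at least $1$, then $f$ is bi-Lipschitz up to the constant $C$, and the ratio is bounded by $C^2t$. If $k_U(a,b)\le 1\le k_U(a,c)$, the numerator is at most $C$ and the denominator is at least $C^{-1}$. Since $t\le 1$ in this case, the ratio is bounded by $C^2$. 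To get the correct behaviour as $t\to 0$, I would argue as in the small-scale case below, using a point on a geodesic from $a$ to $c$ at quasihyperbolic distance comparable to $1$ from $a$. If $k_U(a,b)\ge 1\ge k_U(a,c)$, the ratio is at most
$$\frac{C\,k_U(a,b)}{C^{-1}k_U(a,c)^{1/\alpha}},$$
and this may blow up as $k_U(a,c)\to 0$. Since $k_U(a,b)=t\,k_U(a,c)\ge 1$, we have $k_U(a,c)\ge 1/t$. So the ratio is bounded by a power of $t$ times $C^2$, which is fine.

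\emph{Small scales.} This is the main obstacle. When $k_U(a,b)$ and $k_U(a,c)$ are both small, Gehring--Osgood only gives Hölder-type bounds that are not scale-invariant, so I would localize instead. Set $d=\dist_e(a,\partial U)$ and consider the ball $B=B_e(a,d/2)$. If $k_U(a,x)\le c_0$ for a small absolute constant $c_0$, then $x\in B_e(a,d/4)$. On $B$, Lemma \ref{lemma:qh_estimate} (with $B$ convex and $\diam_e B\le 2\dist_e(B,\partial U)$) gives
$$k_U(x,y)\simeq \frac{|x-y|}{d}.$$
By \ref{q:qc_qs}, applied with $a=2$ to $B_e(a,d/4)\subset B_e(a,d/2)\subset U$, the map $f$ is $\eta_0$-quasisymmetric on $B_e(a,d/4)$. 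The image $f(B_e(a,d/4))$ contains a ball around $f(a)$ whose radius is comparable to $\dist_e(f(a),\partial V)$, and it is contained in a ball of comparable radius whose distance to $\partial V$ is also comparable. This follows from the quasisymmetry of $f$ on the larger ball together with the standard fact that $K$-quasiconformal maps distort $\dist(\cdot,\partial)$ in a controlled way on Whitney-type balls (equivalently, from Gehring--Osgood at scale $\sim 1$). Hence Lemma \ref{lemma:qh_estimate} also applies in $V$, and on these sets $k_V(f(x),f(y))\simeq |f(x)-f(y)|/\dist_e(f(a),\partial V)$. So in the small-scale regime the quasihyperbolic ratios are comparable to Euclidean ratios, and $\eta_0$-quasisymmetry transfers them.

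\emph{Mixed case.} Finally, suppose $k_U(a,b)\le c_0$ but $k_U(a,c)>c_0$ (or the reverse). Pick a point $c'$ on a quasihyperbolic geodesic from $a$ to $c$ with $k_U(a,c')=c_0$. By the large-scale step and monotonicity along geodesics, $k_V(f(a),f(c))\gtrsim k_V(f(a),f(c'))$. Apply the small-scale estimate to the triple $a,b,c'$. In the reverse situation, use an analogous point on the geodesic from $a$ to $b$ and the large-scale bound. Chaining these estimates yields a single distortion function $\eta$ depending only on $n$ and $K$.
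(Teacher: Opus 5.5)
\emph{The flawed step.} Your plan works, but one claim in the small-scale step is false as written. You assert that $f(B_e(a,d/4))$ lies in a ball of radius $\simeq d'$ whose distance to $\partial V$ is $\simeq d'$, where $d'=\dist_e(f(a),\partial V)$. A counterexample: take $U=V=\C\setminus\{0\}$, the spiral map $f(z)=z|z|^{i\beta}$ (quasiconformal with $K$ depending only on $\beta$), and $a=1$. The image of $(3/4,5/4)$ stays in $\{3/4<|z|<5/4\}$ but turns through the angle $\beta\log(5/3)$ about $0$. Once $\beta>\pi/\log(5/3)$, every convex set containing $f(B_e(1,1/4))$ contains $0\in\partial V$.

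What Gehring--Osgood at unit scale actually gives is $\diam_e f(B_e(a,d/4))\lesssim d'$ and $\dist_e(f(B_e(a,d/4)),\partial V)\gtrsim d'$. That suffices for the lower bound in Lemma \ref{lemma:qh_estimate}, whose proof does not use quasiconvexity of the set. The upper bound, however, needs short curves inside a set that stays away from $\partial V$. The paper obtains these by noting that $f(B)$ is a uniform, hence quasiconvex, domain (Theorem \ref{theorem:qm_uniform}).

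In your setting a cheaper repair exists. All your triples have base point $a$, so you only need $k_V(f(a),f(x))\lesssim |f(a)-f(x)|/d'$. This follows from the segment $[f(a),f(x)]$ when $|f(a)-f(x)|\le d'/2$, and from $k_V(f(a),f(x))\le C(n,K)$ otherwise. Alternatively, shrink the ball to $B_e(a,\lambda d)$ with $\lambda=\lambda(n,K)$ so that its image lies in $B_e(f(a),d'/2)$, and decrease $c_0$ accordingly.

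\emph{Comparison with the paper.} With this fix your argument is correct and takes a genuinely different route. The paper never constructs $\eta$. Since $(U,k_U)$ is geodesic, it invokes V\"ais\"al\"a's theorem that weakly quasisymmetric maps between quasiconvex spaces are quasisymmetric. It then only checks that $k_U(z_1,z_2)\le k_U(z_1,z_3)$ implies $k_V(z_1',z_2')\le H\,k_V(z_1',z_3')$, in three cases, with no control of $t\to0$ or $t\to\infty$ required.

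Your direct route avoids that upgrade theorem at the cost of asymptotic bookkeeping, and the bookkeeping is lighter than you make it. Whenever one of $k_U(a,b)$, $k_U(a,c)$ exceeds $c_0$, the two-sided power bounds of Theorem \ref{theorem:gehring_osgood} already give a bound of the form $C(n,K)\max\{t^{\alpha},t^{1/\alpha}\}$. For example, if $k_U(a,b)\le 1\le k_U(a,c)$, then $k_V(f(a),f(c))\gtrsim k_U(a,c)$ and $k_U(a,c)^{\alpha-1}\le 1$, which yields $t^{\alpha}$. So the intermediate geodesic points are unnecessary, and the localization is needed only when both distances are at most $c_0$.
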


This result generalizes \cite{AckermanFletcher:qc_qs}*{Theorem 1.3} and \cite{FletcherHahn:uqc_homogeneous}*{Theorem 1.2}, where additional assumptions are imposed on the domain $U$.

\begin{proof}
Suppose that $f$ is $K$-quasiconformal for some $K\geq 1$. A result of V\"ais\"al\"a \cite{Vaisala:free_quasiworld}*{Theorem 6.6} implies that every weakly quasisymmetric map between quasiconvex spaces is quasisymmetric. Therefore, it suffices to show that the map $f\colon (U,k_U)\to (V,k_V)$ is \textit{weakly quasisymmetric}; that is, there exists a constant $H=H(n,K)>0$ such that for every triple of points $z_1,z_2,z_3\in U$ with $k_U(z_1,z_2)\leq k_U(z_1,z_3)$ we have
\begin{align*}
k_V(z_1',z_2')\leq H k_V(z_1',z_3'),
\end{align*}
where we use the notation $z'=f(z)$ for $z\in U$.

Suppose that $k_U(z_1,z_2)\leq k_U(z_1,z_3)$. If $k_U(z_1,z_2)\geq 1/3$, then $k_U(z_1,z_3)\geq 1/3$ and by Theorem \ref{theorem:gehring_osgood} (applied to $f^{-1}$) we have 
$$k_{V}(z_1',z_2')\gtrsim_{n,K} 1\,\,\, \text{and}\,\,\,k_{V}(z_1',z_3')\gtrsim_{n,K} 1.$$
Thus, by Theorem \ref{theorem:gehring_osgood} we conclude that 
$$k_{V}(z_1',z_2')\simeq_{n,K} k_U(z_1,z_2)\,\,\, \text{and}\,\,\,k_{V}(z_1',z_3')\simeq_{n,K} k_U(z_1,z_3).$$
Therefore, $k_V(z_1',z_2')\lesssim_{n,K} k_V(z_1',z_3')$. Next, suppose that $k_U(z_1,z_2)< 1/3\leq k_U(z_1,z_3)$. By Theorem \ref{theorem:gehring_osgood} we obtain $k_V(z_1',z_2')\lesssim_{n,K} 1$ and as above we have $k_V(z_1',z_3')\gtrsim_{n,K}1$. Therefore, $k_V(z_1',z_2')\lesssim_{n,K} k_V(z_1',z_3')$.

Finally, we suppose that $k_U(z_1,z_2)\leq k_U(z_1,z_3)< 1/3$ and let $R=2^{-1}\cdot \dist_e(z_1,\partial U)$. If $z\in U$ and $|z-z_1|\geq R$, then for each curve $\gamma$ in $U$ that connects $z_1,z$ we have
$$ \int_{\gamma} \frac{1}{\dist_e(\cdot,\partial U)}\, ds \geq  \frac{R}{\dist_e(z_1,\partial U)+R}=\frac{1}{3}$$
so $k_U(z_1,z) \geq 1/3$. We conclude that $z_2,z_3\in B\coloneqq B_e(z_1,R)$. By Lemma \ref{lemma:qh_estimate},
$$k_U(z_1,z_i) \simeq \frac{|z_1-z_i|}{\dist_e(B,\partial U)}$$
for $i=2,3$. We conclude that $|z_1-z_2|\lesssim |z_1-z_3|$.

By property \ref{q:qc_qs}, the map $f|_{B}$ is quasisymmetric in the Euclidean metric, quantitatively, depending on $n,K$. In particular, $|z_1'-z_2'|\lesssim_{n,K} |z_1'-z_3'|$. Moreover, the image $B'=f(B)$ is a uniform domain by Theorem \ref{theorem:qm_uniform}, so it is quasiconvex, quantitatively. Finally, we have $\diam_e( B')\lesssim_{n,K} \dist_e(B',\partial V)$; see \cite{Heinonen:metric}*{Exercise 11.18} or \cite{Gehring:Rings}*{Theorem 11}. By Lemma \ref{lemma:qh_estimate}, we have
$$k_V(z_1',z_i') \simeq_{n,K} \frac{|z_1'-z_i'|}{\dist_e(B',\partial V)}$$
for $i=2,3$. Therefore, $k_{V}(z_1',z_2')\lesssim_{n,K} k_{V}(z_1',z_3')$. This completes the proof.
\end{proof}

\subsection{Hyperbolic metric}

Let $\Omega\subset \widehat{\C}$ be a domain whose boundary contains at least two points. Let $\rho_\Omega$ be the hyperbolic density in $\Omega$; see \cite{BeardonMinda:hyperbolic} for the definition. The \textit{hyperbolic distance} of two points $z,w\in \Omega$ is defined as 
$$h_\Omega(z,w)=\inf_\gamma \left\{ \int_\gamma \rho_\Omega(\zeta)\,|d\zeta| \right\},$$
where the infimum is taken over all rectifiable curves $\gamma$ in $\Omega$ with endpoints $z,w$.   If $\Omega$ is a half-plane then  
$$\rho_\Omega(z)= \frac{1}{\dist_e(z,\partial \Omega)},\,\,\,z\in \Omega$$
and if $\Omega=\D(0,r)$ for some $r>0$, then
$$\rho_\Omega(z)= \frac{2r}{r^2-|z|^2}\,\,\, \text{for}\,\,\,z\in \Omega\quad \text{and}\quad \rho_{\Omega^*}(z)= \frac{2r}{|z|^2-|r|^2}\,\,\, \text{for}\,\,\,z\in \Omega^*.$$

\begin{theorem}\label{theorem:hyperbolic_distance}
Let $\Omega\subset \C$ be a domain.
\begin{enumerate}[label=\normalfont(\arabic*)]
\item For every $z\in \Omega$ we have $\rho_\Omega(z) \leq 2^{-1}{\dist_e(z,\partial \Omega)}$.
\item If $\Omega$ is simply connected, then for every $z\in \Omega$ we have 
$$\frac{1}{2\dist_e(z,\partial \Omega)} \leq \rho_\Omega(z)\leq  \frac{2}{\dist_e(z,\partial \Omega)}.$$
\item If $\C\setminus \Omega$ has exactly two components $E,F$ one of which is unbounded and $\Delta_e(E,F)\leq K$ for some $K\geq 1$, then for every $z\in \Omega$ we have
$$\frac{1}{C(K)\dist_e(z,\partial \Omega)} \leq \rho_\Omega(z) \leq  \frac{2}{\dist_e(z,\partial \Omega)}.$$
\end{enumerate}
\end{theorem}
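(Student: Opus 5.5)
The plan is to prove each part by comparing $\rho_\Omega$ with the hyperbolic density of a simpler domain that contains or is contained in $\Omega$. The tool for this is the monotonicity of hyperbolic density: if $\Omega\subset\Omega'$, then $\rho_{\Omega'}\leq \rho_\Omega$ on $\Omega$, which is a consequence of the Schwarz--Pick lemma. Throughout, fix $z\in\Omega$ and set $d=\dist_e(z,\partial\Omega)$.

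\emph{Part (1).} The bound as printed, $2^{-1}\dist_e(z,\partial\Omega)$, appears to be a typo; the intended upper bound is $2/\dist_e(z,\partial\Omega)$, which is what part (2) and part (3) restate. To prove it, note that the disk $\D(z,d)$ is contained in $\Omega$. By monotonicity,
$$\rho_\Omega(z)\leq \rho_{\D(z,d)}(z)=\frac{2d}{d^2-0}=\frac{2}{d}.$$
This also gives the upper bounds claimed in parts (2) and (3).

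\emph{Part (2).} Here $\Omega\subset\C$ is simply connected, and by part (1) we may assume $\Omega\neq\C$. Let $f\colon\D\to\Omega$ be a Riemann map with $f(0)=z$. Conformal invariance of the hyperbolic density gives $\rho_\Omega(z)|f'(0)|=\rho_\D(0)=2$. The Koebe $1/4$-theorem gives $\D(z,|f'(0)|/4)\subset\Omega$, so $d\geq |f'(0)|/4$. Combining,
$$\rho_\Omega(z)=\frac{2}{|f'(0)|}\geq \frac{1}{2d}.$$

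\emph{Part (3).} For the lower bound I would invoke the Beardon--Pommerenke estimate from \cite{BeardonMinda:hyperbolic}:
$$\rho_\Omega(z)\geq \frac{1}{C\,d\,(1+\beta_\Omega(z))},\qquad \beta_\Omega(z)=\inf\left\{\left|\log\frac{|z-a|}{|a-b|}\right| : a,b\in\partial\Omega,\ |z-a|=d\right\}.$$
Here $C$ is an absolute constant. It then suffices to show $\beta_\Omega(z)\leq \log(K+1)$, i.e., to find $a,b\in\partial\Omega$ with $|z-a|=d$ and $d\leq |a-b|\leq (K+1)d$.

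Choose $a\in\partial\Omega$ with $|z-a|=d$. The point $a$ lies in one of the two components, say $a\in E$ (the case $a\in F$ is symmetric). I split into two cases.
\begin{enumerate}[label=\normalfont(\alph*)]
\item If $\diam(E)\geq d$, then $E$ is a continuum containing $a$ that is not contained in $\D(a,d)$. By connectedness it meets the circle $\mathbb S(a,d)$, so we can take $b\in E$ with $|a-b|=d$, giving $\beta_\Omega(z)=0$.
\item If $\diam(E)<d$, then $E$ is the bounded component and $F$ is unbounded. The hypothesis $\Delta_e(E,F)\leq K$ gives $\dist_e(E,F)\leq K\diam(E)<Kd$. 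Hence there is $b_0\in F$ with $|a-b_0|\leq \diam(E)+\dist_e(E,F)\leq (K+1)d$. If $|a-b_0|\geq d$, take $b=b_0$. Otherwise $F$ meets $\D(a,d)$ at $b_0$ and, being unbounded and connected, also leaves it, so $F$ meets $\mathbb S(a,d)$; take $b$ there.
\end{enumerate}
In both cases $d\leq |a-b|\leq (K+1)d$, so $\beta_\Omega(z)\leq\log(K+1)$ and $\rho_\Omega(z)\geq 1/(C(K)d)$.

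The main obstacle is this lower bound in part (3). If citing Beardon--Pommerenke is not acceptable, the fallback is to compare with $\C\setminus\{a,b\}$ directly, using monotonicity $\rho_\Omega\geq\rho_{\C\setminus\{a,b\}}$. After normalizing by a similarity so that $a=0$ and $|b|=1$, the point $z$ satisfies $|z|=d/|a-b|\in[(K+1)^{-1},1]$, and the density of the twice-punctured plane is bounded below by a constant on this compact range. The only care needed is to check that this bound is uniform over the admissible positions of $z$ and $b$.
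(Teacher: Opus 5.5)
Parts (1) and (2) are correct; the paper only cites them. You are also right that the bound printed in (1) should read $2/\dist_e(z,\partial\Omega)$. For part (3) your route is the paper's: the Beardon--Pommerenke lower bound plus a bound on $\beta_\Omega(z)$. That estimate is in \cite{BeardonPommerenke:hyperbolic}, not \cite{BeardonMinda:hyperbolic}.

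\textbf{The gap is in case (a).} The step "$\diam(E)\geq d$ implies $E\not\subset \D(a,d)$" is false, because a subset of $\D(a,d)$ can have diameter close to $2d$. Concretely, take $z=a+id$, let $E$ be the horizontal segment of length $\tfrac32 d$ centered at $a$, and let $F=\C\setminus \D(a,10d)$. Then $a$ is the nearest point of $\partial\Omega$ to $z$, $\Delta_e(E,F)<7$ and $\diam(E)\geq d$. Yet $E$ does not meet $\mathbb S(a,d)$, so no $b\in E$ with $|a-b|=d$ exists.

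The repair is easy, in either of two ways.
\begin{enumerate}
\item[(i)] Split at $2d$ instead of $d$. Case (b) then gives $d\leq |a-b|\leq 2(K+1)d$, hence $\beta_\Omega(z)\leq \log(2K+2)$, which is exactly the paper's constant.
\item[(ii)] Keep the split at $d$, but in case (a) use $\sup_{b\in E}|a-b|\geq \diam(E)/2\geq d/2$ and connectedness to pick $b\in E$ with $|a-b|=d/2$. This gives $\beta_\Omega(z)\leq \log 2$.
\end{enumerate}

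\textbf{A smaller point.} The definition of $\beta_\Omega$ requires $b\in\partial\Omega$, while your $b$ is only known to lie in $E$ or $F$. The paper's first case has the same slip. It is harmless:
\begin{enumerate}
\item[(i)] When $|a-b|=d$, the circle $\mathbb S(a,d)$ passes through $z\in\Omega$ and through a point of $\C\setminus\Omega$, so it meets $\partial\Omega$.
\item[(ii)] For $b_0$, take a nearest point of $F$ to $E$. It lies in $\partial F\subset\partial\Omega$.
\end{enumerate}

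Your fallback avoids this issue altogether, since $\rho_\Omega\geq \rho_{\C\setminus\{a,b\}}$ only needs $a,b\notin\Omega$. The uniformity you were worried about does hold. Normalize so that $a=0$ and $b=1$, and let $w$ be the image of $z$. Then $|w|=d/|a-b|$ and $|w-1|=|z-b|/|a-b|\geq d/|a-b|$, the latter because $b\notin\Omega$. So $w$ ranges over a compact subset of $\C\setminus\{0,1\}$ determined by $K$. This gives a self-contained proof of (3) that does not need Beardon--Pommerenke at all.

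By contrast, the paper splits according to whether $a$ lies in the unbounded or the bounded component. In the unbounded case $\beta=0$ by connectedness. In the bounded case it shows that $\partial\Omega$ must meet $\mathbb A(a;r,(2K+2)r)$; otherwise this annulus would separate $E$ from $F$ and force $\Delta_e(E,F)>K$.
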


\begin{proof}
The first two parts are discussed in \cite{BeardonMinda:hyperbolic}*{Section 8} and in \cite{BeardonPommerenke:hyperbolic}*{(2.1), (2.2)}. The left inequality in the third part follows from \cite{BeardonPommerenke:hyperbolic}*{Theorem 1}, as soon as we prove that the quantity
$$\beta(z)=\inf \left\{\left|\log\frac{|z-a|}{|a-b|}\right|: a,b\in \partial \Omega, \, a\neq b,\, |z-a|=\dist_e(a,\partial \Omega)\right\} $$
is uniformly bounded in $\Omega$, depending only on $K$.  Suppose that $E$ is an unbounded component of $\C\setminus \Omega$. Let $z\in \Omega$ and suppose that $\dist_e(z,\partial \Omega)=\dist_e(z,E)$. Let $a\in E$ such that $|z-a|=\dist_e(z,E)$. Since $E$ is connected and it is not contained in $\D(a,|z-a|)$, we can find a point $b\in  E$ such that $|a-b|=|z-a|$. Thus, $\beta(z)=0$. 

Now, suppose that $\dist_e(z,\partial \Omega)=\dist_e(z,F)$ and let $a\in F$ such that $|z-a|=\dist_e(z,F)\eqqcolon r$. Consider the annulus $A=\mathbb A(a;r, (2K+2)r)$. Suppose that $\partial \Omega$ does not intersect the annulus $A$, so $F\subset \bar \D(a,r)$. Since $E$ is unbounded, the annulus $A$ separates $E$ and $F$. Thus, 
$$\Delta_e(E,F) \geq \frac{(2K+2)r-r}{2r}= \frac{2K+1}{2}>K,$$
a contradiction. Thus, there exists a point $b\in \partial \Omega\cap A$, so
$$\frac{1}{2K+2}\leq \frac{|z-a|}{|a-b|}\leq 1.$$
This implies that $\beta(z)\leq \log(2K+2)$.
\end{proof}

\section{Relative hyperbolic metric}

\subsection{Definition}\label{section:relative:definition}
Recall that if $U\subset \widehat\C$ is an open set, we set $U^*=\widehat{\C}\setminus \bar  U$. Let $U,V\subset \widehat \C$ be disjoint Jordan regions such that $\bar U\cap \bar V$ is either empty or contains only one point; in particular, $\widehat \C\setminus (\bar U\cup \bar V)$ is connected. The {relative hyperbolic metric} corresponding to the pair $(V,U)$ is defined as
$$d_{V,U}(z,w)= \inf_\gamma\{\ell_{h_{U^*}}(\gamma)\},\quad z,w\in \partial V\setminus \bar U, $$
where the infimum is over all rectifiable curves $\gamma\colon [a,b]\to U^*\setminus V$ such that $\gamma(a)=z$, $\gamma(b)=w$. We remark that in this definition we adhere to the hyperbolic metric rather than the quasihyperbolic one, since we are working with subsets of the sphere.

If $V$ is a quasidisk, then $\widehat \C\setminus \bar V$ is also a quasidisk. By Corollary \ref{cor:quasidisk:rectifiably_accessible}, every pair of points $z,w\in \partial V\setminus \bar U$ can be connected by a rectifiable curve $\gamma$ as in the definition of $d_{V,U}(z,w)$. Therefore, $d_{V,U}(z,w)<\infty$ for every pair $z,w\in \partial V\setminus \bar U$, which implies that $d_{V,U}$ is a metric on $\partial V\setminus \bar U$. 

Observe that if $\phi\colon U^*\to \widehat \C$ is a conformal embedding that extends to a homeomorphism of the sphere,  then 
$$d_{\phi(V),\phi(U)}(\phi(z),\phi(w))= d_{V,U}(z,w) \,\,\, \text{for every $z,w\in \partial V\setminus \bar U$.}$$

\subsection{Basic examples}\label{section:relative:examples}
We estimate the metric $d_{V,U}$ in some important cases.

\begin{lemma}[Parallel lines and concentric circles]\label{lemma:parallel}
Let $U,V\subset \widehat \C$ be disjoint disks. 
\begin{enumerate}[label=\normalfont(\arabic*)]
\item\label{lemma:parallel:1}  If $U,V\subset \C$ are half-planes with $\bar U\cap \bar V\cap \C=\emptyset$, then, for every $z,w\in \partial V\cap \C$,
$$d_{V,U}(z,w)=\frac{|z-w|}{\dist_e(\partial V,\partial U)}.$$
\item\label{lemma:parallel:2} If $U^*,V\subset \C$ are concentric Euclidean disks, then, for every $z,w\in \partial V$,
$$\frac{|z-w|}{\dist_e(\partial V,\partial U)}\leq d_{V,U}(z,w) \leq \pi \frac{|z-w|}{\dist_e(\partial V,\partial U)}$$
and for every $z,w\in \partial U$,
$$\frac{\diam_e(\partial V)|z-w|}{\diam_e(\partial U)\dist_e(\partial V,\partial U)}\leq d_{U,V}(z,w) \leq  \frac{\pi \diam_e(\partial V)|z-w|}{\diam_e(\partial U)\dist_e(\partial V,\partial U)}.$$
\end{enumerate}
\end{lemma}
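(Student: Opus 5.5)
For part \ref{lemma:parallel:1}, I would normalize by a rigid motion (which preserves all quantities involved) so that $U=\{\im z>d\}$ and $V=\{\im z<0\}$, where $d=\dist_e(\partial V,\partial U)$. Then $U^*=\{\im z<d\}$ and $\rho_{U^*}(\zeta)=1/(d-\im\zeta)$. Admissible curves lie in the strip $0\le \im\zeta<d$, where $\rho_{U^*}\ge 1/d$, with equality exactly on $\partial V=\R$. This gives the lower bound $\ell_{h_{U^*}}(\gamma)\ge \ell_e(\gamma)/d\ge |z-w|/d$. The segment $[z,w]\subset\R$ is admissible, since it lies in $U^*\setminus V$, and it attains this value. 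Hence equality holds.

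For part \ref{lemma:parallel:2}, I would translate so that $V=\D(0,r)$ and $U^*=\D(0,R)$ with $R>r$; thus $d=R-r$. Here $\rho_{U^*}(\zeta)=2R/(R^2-|\zeta|^2)$, and admissible curves lie in $r\le|\zeta|<R$. On this region the density is increasing in $|\zeta|$, so
$$\rho_{U^*}(\zeta)\geq \frac{2R}{R^2-r^2}=\frac{2R}{(R+r)d}\ge \frac1d.$$
For the lower bound I take $\ell_{h_{U^*}}(\gamma)\ge \ell_e(\gamma)/d\ge |z-w|/d$. For the upper bound I use the shorter arc of $\partial V$ from $z$ to $w$. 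Its Euclidean length is at most $\frac{\pi}{2}|z-w|$, and on it the density is at most $2R/((R+r)d)\le 2/d$. This gives the bound $\pi|z-w|/d$.

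For $d_{U,V}$ on $\partial U$, the roles swap: curves now lie in $V^*\setminus U$, i.e.\ in $r<|\zeta|\le R$, and $\rho_{V^*}(\zeta)=2r/(|\zeta|^2-r^2)$, which is decreasing in $|\zeta|$. Its minimum is at $|\zeta|=R$, namely
$$\frac{2r}{R^2-r^2}=\frac{2r}{(R+r)d}\ge \frac{r}{Rd}=\frac{\diam_e(\partial V)}{\diam_e(\partial U)\,d}.$$
Hence every admissible $\gamma$ satisfies $\ell_{h_{V^*}}(\gamma)\ge \frac{r}{Rd}|z-w|$. Along the shorter arc of $\partial U$ the density is at most $2r/((R+r)d)\le 2r/(Rd)$. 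Combined with the arc-length bound $\frac{\pi}{2}|z-w|$, this gives the upper bound $\pi r|z-w|/(Rd)$.

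The one point needing care, which I expect to be the only real obstacle, is the lower bound. It requires a pointwise lower bound on the density over the whole admissible region, not just on the boundary circle. This holds here because the density is monotone in $|\zeta|$ and attains its minimum on the boundary being measured: on $\partial V$ for $\rho_{U^*}$ and on $\partial U$ for $\rho_{V^*}$. I would verify this monotonicity explicitly in each case.
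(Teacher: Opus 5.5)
Your proof is correct. Part (1) and the bounds for $d_{V,U}$ in part (2) follow the paper's argument. In both, the density is bounded below pointwise over the whole admissible region, and the segment or the shorter arc serves as the competitor for the upper bound. The paper in fact extracts the exact value $d_{V,U}(z,w)=\frac{2R}{R+r}\frac{\ell_e([z,w])}{R-r}$. It uses that any curve in $\{|\zeta|\geq r\}$ joining $z,w$ is at least as long as the shorter arc $[z,w]$. You only use $\ell_e(\gamma)\geq |z-w|$, which suffices for the stated inequalities.

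The genuine difference is in the bounds for $d_{U,V}$. The paper does not compute with $\rho_{V^*}$ directly. Instead it applies the inversion $z\mapsto 1/z$ and uses the conformal invariance of the relative hyperbolic metric to reduce to the case already treated. It then controls the arc-length distortion of the inversion via $|z-w|/R^2\leq \ell_e([z^{-1},w^{-1}])\leq \frac{\pi}{2}|z-w|/R^2$. Your route uses the explicit density $\rho_{V^*}(\zeta)=2r/(|\zeta|^2-r^2)$ and its monotonicity in $|\zeta|$. It is shorter and lands on exactly the same intermediate bounds
$$\frac{2r|z-w|}{R^2-r^2}\leq d_{U,V}(z,w)\leq \frac{\pi r|z-w|}{R^2-r^2}.$$
The paper's version has the mild advantage of reusing the first computation and exercising the conformal invariance of $d_{V,U}$. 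Otherwise nothing essential is gained either way.
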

\begin{proof}
In \ref{lemma:parallel:1}, for any curve $\gamma$ in $U^*\setminus V$ connecting $z,w\in \partial V\cap \C$ we have
$$\ell_{h_{U^*}}(\gamma)= \int_{\gamma} \frac{|d\zeta| }{\dist_e(\zeta,\partial U)} \geq \frac{\ell_e(\gamma)}{\dist_e(\partial U,\partial V)} \geq \frac{|z-w|}{\dist_e(\partial U,\partial V)}= \ell_{h_{U^*}}([z,w]),$$
so $d_{V,U}(z,w)=\ell_{h_{U^*}}([z,w])$, as desired. 

For \ref{lemma:parallel:2}, let $U=\widehat \C\setminus \D(0,R)$ and $V=\D(0,r)$, $r<R$, so then $U^*=\D(0,R)$. For any curve $\gamma$ in $U^*\setminus V$ connecting $z,w\in \partial V$ we have
$$\ell_{h_{U^*}}(\gamma) = \int_\gamma \frac{2R|d\zeta|}{R^2-|\zeta|^2}\geq \frac{2R\ell_e(\gamma)}{R^2-r^2}\geq \frac{2R\ell_e([z,w])}{R^2-r^2}=\ell_{h_{U^*}}([z,w]),$$
where $[z,w]$ is the shortest arc of $\partial V$ with endpoints $z,w$. Thus,
$$d_{V,U}(z,w)=\ell_{h_{h^*}}([z,w])=\frac{2R}{R+r} \frac{\ell_e([z,w])}{\dist_e(\partial U,\partial V)}.$$
Finally, observe that $|z-w|\leq \ell_e([z,w])\leq \frac{\pi}{2} |z-w|$. This proves the inequalities for $d_{V,U}$. 

We now discuss the inequality for $d_{U,V}$. Consider the conformal map $\phi(z)=1/z$ in $\widehat \C$. For $z,w\in \partial U$, by the above we have
\begin{align*}
d_{U,V}(z,w)=d_{\phi(U),\phi(V)}(z^{-1},w^{-1})=\frac{2r^{-1}\ell_e([z^{-1},w^{-1}])}{r^{-2}-R^{-2}},
\end{align*}
where $[z^{-1},w^{-1}]$ is the shortest arc of $\phi(\partial U)$ that connects $z^{-1},w^{-1}$. Note that 
$$\frac{|z-w|}{R^2}= \frac{|z-w|}{|zw|}=\left|\frac{1}{z}-\frac{1}{w}\right|\leq \ell_e([z^{-1},w^{-1}]) \leq \frac{\pi}{2} \frac{|z-w|}{R^2}.$$
Thus,
$$\frac{2r|z-w|}{R^2-r^2}\leq d_{U,V}(z,w) \leq \pi \frac{r|z-w|}{R^2-r^2}.$$
From this the conclusion follows.
\end{proof}

\begin{lemma}[Quasicircles near parallel lines]\label{lemma:quasicircle_parallel}
Let $K\geq 1$ and $L>0$. Let $U,V\subset \C$ be disjoint unbounded $K$-quasidisks such that $\partial V\cap \C\subset \{z\in \C: -LK\leq  \im(z)\leq 0 \}$ and $\partial U\cap \C \subset \{z\in \C: L\leq \im(z)\leq  L+LK\}$. Then for every $z,w\in \partial V\cap \C$ we have
$$ C(K)^{-1}\frac{|z-w|}{\dist_e(\partial U,\partial V)}\leq d_{V,U}(z,w)\leq C(K)\frac{|z-w|}{\dist_e(\partial U,\partial V)}.$$
\end{lemma}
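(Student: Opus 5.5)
Set $D=\dist_e(\partial U,\partial V)$, so $L\le D\le L+2LK$, i.e.\ $D\simeq_K L$. By scaling, which preserves $d_{V,U}$, we may assume $L=1$. We separately compare the hyperbolic density $\rho_{U^*}$ on the region $\Omega=U^*\setminus V$ with $1/D$, and the lengths of admissible curves with $|z-w|$.

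\emph{Density bounds.} By Theorem \ref{theorem:hyperbolic_distance}(2), $\rho_{U^*}(\zeta)\simeq 1/\dist_e(\zeta,\partial U)$, since $U^*$ is simply connected. Every $\zeta\in \Omega$ satisfies $\im\zeta\le 1+K$: by Lemma \ref{lemma:quasicircle_strip}, $\partial U$ separates the boundary lines of the strip, so the half-plane above $\im=1+K$ lies in $U$. Similarly, the half-plane below $\im=-K$ lies in $V$. Hence $\Omega$ is contained in the strip $-K\le \im\le 1+K$. The vertical segment from $\zeta$ upward meets $\partial U$ within height $\le 1+K-\im\zeta$, which gives $\dist_e(\zeta,\partial U)\le 2K+1$. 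We therefore get $\rho_{U^*}\gtrsim_K 1$ on $\Omega$. For the upper bound we only need points near $\partial V$. Points with $\im\zeta\le 1/2$ satisfy $\dist_e(\zeta,\partial U)\ge 1/2$, so $\rho_{U^*}\le 4$ there.

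\emph{Lower bound.} Any curve $\gamma$ in $\Omega$ from $z$ to $w$ has $\ell_{h_{U^*}}(\gamma)\gtrsim_K \ell_e(\gamma)\ge |z-w|$. Since $D\simeq_K 1$, this gives $d_{V,U}(z,w)\gtrsim_K |z-w|/D$.

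\emph{Upper bound.} Here we need a curve from $z$ to $w$ inside $\Omega\cap\{\im\le 1/2\}$ of Euclidean length $\lesssim_K |z-w|$.
\begin{itemize}
\item If $|z-w|\ge 1$: go vertically up from $z$ to the line $\im=1/2$, along that line, then down to $w$. The vertical pieces are legitimate after stopping at the last hit of $\partial V$. More precisely, let $z_0$ be the last point of $\partial V$ on the vertical ray from $z$ below height $1/2$, and connect $z$ to $z_0$ inside $V^*$. By quasiconvexity of the uniform domain $V^*$ and removal of the closed boundary piece, this costs length $\lesssim_K |z-z_0|\le K+1$. The total length is $\lesssim_K 1+|z-w|\lesssim_K |z-w|$.
\item If $|z-w|<1$: use the quasiconvexity of $\bar{V^*}$ (uniform domains are quasiconvex, constant depending on $K$). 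This gives a curve in $\bar{V^*}$ of length $\le C(K)|z-w|$, which we push slightly into $V^*$. Such a curve may reach height up to $C(K)$, which can exceed $1/2$. This is the main obstacle.
\end{itemize}

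To handle the obstacle, I would avoid needing $\im\le 1/2$ and instead bound $\rho_{U^*}$ along the curve using the curve's distance from $\partial U$. If $C(K)|z-w|\le 1/2$, the curve stays within distance $1/2$ of $\partial V$, hence at distance $\ge 1/2$ from $\partial U$, and the density is bounded. Otherwise $|z-w|\gtrsim_K 1$, and the first construction applies with length $\lesssim_K 1\lesssim_K |z-w|$. Combining the cases yields $d_{V,U}(z,w)\lesssim_K |z-w|\simeq_K |z-w|/D$.

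One detail must be checked: the quasiconvex curve in $V^*$ must avoid $\bar U$. This holds because it stays within $1/2$ of $\partial V$, while $\bar U$ lies above height $1$ and $\partial V$ lies below height $0$.
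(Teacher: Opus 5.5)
\textbf{There is a gap in your upper bound.} Your lower bound is correct and is the paper's argument. The upper bound, however, is not closed by your case analysis.

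In the case $|z-w|\ge 1$, you join $z$ to $z_0$ by a curve obtained from the quasiconvexity of $\C\setminus V$, with length $\lesssim_K |z-z_0|\le K$. Nothing controls where this curve goes. It can rise to height of order $C(K)K$, well above $1/2$, so it can come arbitrarily close to $\partial U$ or even enter $\bar U$. It is therefore neither guaranteed to be admissible for $d_{V,U}$ (a curve in $U^*\setminus V$) nor to carry bounded density $\rho_{U^*}$. This is exactly the obstacle you flagged in your second bullet. Since you handle the subcase $C(K)|z-w|>1/2$ of that bullet by reducing it to the first construction, the argument is circular: the obstacle has been moved, not removed. Your small-scale subcase $C(K)|z-w|\le 1/2$ is fine.

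\textbf{The missing step.} The half-plane $\{\im>0\}$ is connected and disjoint from $\partial V$, and it contains $\partial U$, which does not meet $V$. Hence $\{\im>0\}\subset V^*$, and so $\R\subset \C\setminus V$. Consequently the $1$-Lipschitz projection $\pi(x+iy)=x+i\min\{y,0\}$ maps $\C\setminus V$ into $(\C\setminus V)\cap\{\im\le 0\}$ and fixes $z,w$.

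Take any quasiconvex curve $\gamma$ in $\C\setminus V$ from $z$ to $w$ with $\ell_e(\gamma)\le C(K)|z-w|$. Then $\pi\circ\gamma$ has length at most $C(K)|z-w|$ and lies in $\{\im\le 0\}\setminus V\subset U^*\setminus V$. This is equivalent to the paper's step of replacing the excursions of $\gamma$ into the upper half-plane by segments of the real line. On that set $\dist_e(\cdot,\partial U)\ge 1$ after your normalization $L=1$, so $\rho_{U^*}\le 2$ there. This gives $d_{V,U}(z,w)\le 2C(K)|z-w|$ for all $z,w$ at once, with no case distinction. Combined with $D\simeq_K 1$, it yields the upper bound.
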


See Figure \ref{figure:parallel} for an illustration.
\begin{proof}
Since $U$ is an unbounded quasidisk whose boundary is contained in a horizontal strip, the boundary $\partial U$ separates the half-plane $\im(z)>L+LK$ from the half-plane $\im(z)<L$; this follows from Lemma \ref{lemma:quasicircle_strip}. Similarly the boundary $\partial V$ separates the half-plane $\im(z)>0$ from the half-plane $\im(z)<-LK$. The quasidisks $U$ and $V$ are disjoint, so the half-plane $\im(z)<-LK$ is contained in $V$ and the half-plane $\im(z)>L+LK$ is contained in $U$. Thus, 
$$\C\setminus (U\cup V)\subset\{z\in \C: -LK\leq \im(z)\leq L+LK\}$$
and each vertical line intersects $\partial U$ and $\partial V$. As a consequence, if $z\in U^*\setminus V$, then
\begin{align}\label{lemma:quasicircle_parallel:1}
\dist_e(z,\partial U)\leq (2K+1)L \quad \text{and} \quad \dist_e(\partial U,\partial V)\leq (2K+1)L.
\end{align} 
Let $\gamma$ be an arbitrary curve in $U^*\setminus V$ that connects $z,w\in \partial V$. By Theorem \ref{theorem:hyperbolic_distance} and \eqref{lemma:quasicircle_parallel:1} we have
$$\int_{\gamma} \rho_{U^*}(\zeta)|d\zeta| \geq \frac{1}{2}\int_{\gamma} \frac{|d\zeta|}{\dist_e(\zeta,\partial U)}\geq \frac{\ell_e(\gamma)}{2(2K+1)L}  \geq \frac{|z-w|}{2(2K+1)L} .$$
Given that $\dist_e(\partial U,\partial V)\geq L$, we have
$$d_{V,U}(z,w)\geq \frac{|z-w|}{2(2K+1)L}\geq \frac{|z-w|}{2(2K+1)\dist_e(\partial U,\partial V)}.$$
On the other hand, by the quasiconvexity of $\C\setminus  V$, there exists a curve $\gamma$ in $\C\setminus V$ that connects $z,w\in \partial V$ and $\ell_e(\gamma)\leq C(K)|z-w|$. By replacing the parts of $\gamma$ that lie in $\mathbb H$ with segments on the real line, we may assume, in addition, that $\gamma$ is contained in  $\{\zeta\in \C\setminus V: \im(\zeta)\leq 0\}$. Each point of $\gamma$ has distance at least $L$ from $\partial U$. By Theorem \ref{theorem:hyperbolic_distance} we have
$$d_{V,U}(z,w) \leq \int_{\gamma} \rho_{U^*}(\zeta)|d\zeta|\leq \int_{\gamma}\frac{2|d\zeta|}{\dist_e(\zeta,\partial U)}\leq \frac{2C(K)|z-w|}{L}.$$
Since $\dist_e(\partial U,\partial V)\leq (2K+1)L$, we obtain the desired inequality.
\end{proof}

\begin{lemma}[Wormhole]\label{lemma:wormhole}
Let $K\geq 1$. Let $U,V\subset \C$ be unbounded $K$-quasidisks. Suppose that there exists a $K$-chord-arc curve $J\subset \C\setminus (\bar U\cup \bar V)$ that separates $\partial U$ from $\partial V$ and satisfies 
$$N_{1/K}(J)\subset \C\setminus (\bar U\cup \bar V)\subset N_K(J).$$
Then for every $z,w\in \partial V\cap \C$ we have
$$C(K)^{-1}|z-w|\leq d_{V,U}(z,w)\leq C(K)|z-w|.$$
\end{lemma}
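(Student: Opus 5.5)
The plan mirrors the proof of Lemma \ref{lemma:quasicircle_parallel}: a lower bound from a uniform upper bound on $\dist_e(\cdot,\partial U)$ along curves in $U^*\setminus V$, and an upper bound from a short curve staying a definite distance from $\partial U$. Both bounds rest on Theorem \ref{theorem:hyperbolic_distance}, which gives $\rho_{U^*}(\zeta)\simeq \dist_e(\zeta,\partial U)^{-1}$ since $U^*$ is simply connected.

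\emph{Lower bound.} Take any point $\zeta\in U^*\setminus V=\C\setminus(U\cup V)$; this set is contained in the closure of $\C\setminus(\bar U\cup\bar V)$, hence in $\bar N_K(J)$. Choose $\xi\in J$ with $|\zeta-\xi|\le K$. The domain $U^*$ is separated by $J$: one side contains $\partial V$ and the other contains $\partial U$. I claim $\dist_e(\xi,\partial U)\le 2K$ (or a similar constant); this needs some care. Suppose all of $\bar B(\xi, 2K)$ missed $\partial U$. Points just on the $U$-side of $J$ near $\xi$ lie in $\C\setminus(\bar U\cup\bar V)\subset N_K(J)$, and one can march from $\xi$ across toward $\partial U$. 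Concretely, the segment from $\xi$ to the nearest point of $\partial U$ must first leave the region $\C\setminus(\bar U\cup \bar V)$ through $\partial U$, not through $\partial V$, since $J$ separates. One then needs that this exit happens within distance $\lesssim K$. The cleanest route is to use the Hausdorff-distance reading of the hypothesis: $\C\setminus(\bar U\cup\bar V)\subset N_K(J)$ together with the separation.

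Here is the concrete argument. The component of $\C\setminus J$ containing $\partial U$ meets $\C\setminus(\bar U\cup \bar V)$ only within $N_K(J)$. Take the point $\xi+ t\nu$, where $\nu$ points into that component. Because $J$ is $K$-chord-arc, hence a quasicircle, this component contains a disk $B(\xi',cr)$ with $|\xi'-\xi|\le r$ at every scale $r$ (the corkscrew property of quasidisks). Choosing $r\simeq K^2$, the disk is not inside $N_K(J)$, so it meets $\bar U$. It follows that $\dist_e(\xi,\partial U)\lesssim_K 1$. Therefore $\dist_e(\zeta,\partial U)\le C(K)$, and
$$\ell_{h_{U^*}}(\gamma)\ge \tfrac12\int_\gamma\frac{|d\zeta|}{C(K)}\ge \frac{|z-w|}{2C(K)}.$$

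\emph{Upper bound.} For $z,w\in\partial V$, first connect $z$ to a point $z'\in J$ with $|z-z'|\lesssim_K 1$. Take the point of $J$ nearest to $z$, which lies within $K$ since $z\in\overline{N_K(J)}$, and use the segment. Do the same for $w$, obtaining $w'$. Joining $z'$ to $w'$ along $J$ gives length $\le K|z'-w'|$, and this arc stays at distance $\ge 1/K$ from $\partial U$ because $N_{1/K}(J)\cap \bar U=\emptyset$.

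\emph{Main obstacle.} The segments from $z$ to $z'$ may come close to $\partial U$, or even cross it, and they give an additive rather than multiplicative bound. The fix is to split into two cases.

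If $|z-w|\ge 1$, it suffices to show $d_{V,U}(z,z')\lesssim_K 1$. For this, use a path in $\bar{V^*}$ that is quasiconvex (Corollary \ref{cor:quasidisk:rectifiably_accessible} and uniformity), pushed off to avoid $\bar U$. The cleanest approach: on the $V$-side of $J$, every point is at distance $\ge 1/K$ from $\partial U$, because $\partial U$ lies on the other side of $J$ and any segment to it crosses $J$, inheriting the $1/K$ buffer. So $\rho_{U^*}\le 2K$ throughout the closed $V$-side region $W$ between $J$ and $\partial V$. Then quasiconvexity of $W$ gives $d_{V,U}(z,w)\le 2K\,\ell_e\lesssim_K |z-w|$ directly. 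The key step, and the one I expect to be hardest, is the quasiconvexity of $W$ with constants depending only on $K$. I would obtain it from quasiconvexity of $\C\setminus V$ together with projecting the portions of a curve that cross $J$ onto arcs of $J$, using the chord-arc property to control lengths.

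If $|z-w|<1$, the same argument in $W$ applies directly. Combining the two bounds gives $d_{V,U}(z,w)\simeq_K|z-w|$.
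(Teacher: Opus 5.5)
Your proof is correct, but it follows a different route from the paper. The paper gives a one-line reduction. By Jerison--Kenig, there is a bi-Lipschitz homeomorphism of $\C$, with constant depending only on $K$, that maps $J$ onto a line. Under this map the neighborhood hypotheses become the strip hypotheses of Lemma~\ref{lemma:quasicircle_parallel}, and the images of $U,V$ are still quasidisks. Since $\rho_{U^*}\simeq \dist_e(\cdot,\partial U)^{-1}$, and a bi-Lipschitz map distorts lengths and distances to $\partial U$ by bounded factors, both $d_{V,U}$ and $|\cdot|$ change only by bounded factors.

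You instead redo the proof of Lemma~\ref{lemma:quasicircle_parallel} directly in the curved setting, and both halves work.
\begin{itemize}
\item For the lower bound, you use the corkscrew property of the unbounded quasidisk $\Omega_U$ bounded by $J$. It produces a point of $\Omega_U\setminus N_K(J)$, hence of $\bar U$, within distance $C(K)$ of any point of $J$. This gives $\dist_e(\cdot,\partial U)\le C(K)$ on $U^*\setminus V$.
\item For the upper bound, you use the buffer $\dist_e(\cdot,\partial U)\ge 1/K$ on the closed $V$-side $W=\bar\Omega_V\setminus V$, which follows because every segment to $\partial U$ crosses $J$. You combine this with quasiconvexity of $W$. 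That comes from replacing each excursion into $\Omega_U$ of a $C(K)$-quasiconvex curve in $\C\setminus V$ by the arc of $J$ between its endpoints. This costs at most a factor $K$ in length, and the modified curve is continuous because the replaced arcs have diameters at most $K$ times the lengths of the excursions.
\end{itemize}

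Your route uses only the chord-arc inequality itself and the corkscrew property, so it avoids the considerably deeper bi-Lipschitz flattening theorem, at the cost of a longer argument. The paper's route is shorter given that black box and reuses the earlier lemma.

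A few points to tidy up:
\begin{itemize}
\item The detour through segments from $z,w$ to $J$, and the case split $|z-w|\ge 1$ versus $|z-w|<1$, are unnecessary. The $W$-argument handles all pairs at once.
\item In the corkscrew step, the scale should be chosen with $c(K)r>K$, not specifically $r\simeq K^2$.
\item Say explicitly that $\bar V$ is connected, misses $J$ and contains $\partial V$, so $\bar V\subset\Omega_V$. This is what guarantees that the point found in $\Omega_U\setminus N_K(J)$ lies in $\bar U$.
\item Finally, $U^*\setminus V=\C\setminus(\bar U\cup V)$, not $\C\setminus(U\cup V)$, although this does not affect the argument.
\end{itemize}
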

See Figure \ref{figure:wormhole} for an illustration. Here an unbounded Jordan curve $J$ in $\C$ is a \textit{chord-arc curve} if there exists $K\geq 1$ such that for every $z,w\in J$ the length of the arc of $J$ between $z$ and $w$ is bounded above by $K|z-w|$. Also, for $r>0$ and $E\subset \C$ we denote by $N_r(E)$ the Euclidean open $r$-neighborhood of the set $E$. The lemma can be reduced to Lemma \ref{lemma:quasicircle_parallel} by using the fact that a chord-arc curve can be mapped to a line with a bi-Lipschitz map of $\C$  \cite{JerisonKenig:chordarc}*{Proposition 1.13}.

\begin{lemma}[Quasicircles near concentric circles]\label{lemma:quasicircle_concentric}
Let $K\geq 1$ and $R>r>0$. Let $V$ be a $K$-quasidisk such that $0\in V$ and 
$$\partial V\subset \{z\in \C: \max\{r-K(R-r),K^{-1}r\} \leq |z|\leq r\}$$
and let $U\subset\widehat \C$ be a $K$-quasidisk such that $\infty \in U$ and $\partial U \subset \{z\in \C: R\leq |z|\leq R+K(R-r)\}$. Then for every $z,w\in \partial V$ we have
$$C(K)^{-1}\frac{|z-w|}{\dist_e(\partial U,\partial V)} \leq d_{V,U}(z,w) \leq  C(K) \frac{|z-w|}{\dist_e(\partial U,\partial V)},$$
and for every $z,w\in \partial U$ we have
$$C(K)^{-1}\frac{\diam_e(\partial V)|z-w|}{\diam_e(\partial U)\dist_e(\partial U,\partial V)} \leq d_{U,V}(z,w) \leq  C(K) \frac{\diam_e(\partial V)|z-w|}{\diam_e(\partial U)\dist_e(\partial U,\partial V)}.$$
\end{lemma}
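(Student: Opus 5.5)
We follow the proof of Lemma \ref{lemma:quasicircle_parallel}, with radial distances taking the place of vertical ones. Set $\delta=R-r$.

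\emph{Geometry.} First we locate the regions. The curve $\partial V$ is a quasicircle lying in the annulus $\{\max\{r-K\delta,K^{-1}r\}\le |z|\le r\}$. Since $0\in V$, it separates $0$ from $\infty$ (use the circular analogue of Lemma \ref{lemma:quasicircle_strip}, or simply the fact that $0\in V$ lies inside the annulus's inner boundary). Hence $V\supset \D(0,\max\{r-K\delta,K^{-1}r\})$ and $V\subset \D(0,r)$. Similarly $U\supset\{|z|>R+K\delta\}$ and $U^*\subset\D(0,R+K\delta)$. It follows that $\dist_e(\partial U,\partial V)\ge \delta$, and that $\dist_e(\partial U,\partial V)\le (2K+1)\delta$, because any ray from $0$ meets both curves within radii spanning at most $(2K+1)\delta$. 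In the same way, every $\zeta\in U^*\setminus V$ satisfies $\dist_e(\zeta,\partial U)\le (2K+1)\delta$.

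\emph{Estimates for $d_{V,U}$.} The set $U^*$ is simply connected, so Theorem \ref{theorem:hyperbolic_distance}(2) gives $\rho_{U^*}(\zeta)\simeq \dist_e(\zeta,\partial U)^{-1}$.
\begin{itemize}
\item[] \emph{Lower bound.} For any curve $\gamma$ in $U^*\setminus V$ joining $z,w\in\partial V$ we get $\ell_{h_{U^*}}(\gamma)\ge \ell_e(\gamma)/(2(2K+1)\delta)\ge |z-w|/(2(2K+1)\delta)$.
\item[] \emph{Upper bound.} The domain $\widehat\C\setminus\bar V$ is a $K$-quasidisk, so its closure is $C(K)$-quasiconvex (Section \ref{section:uniform}). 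This gives a curve $\gamma$ in $\widehat\C\setminus V$ joining $z,w$ with $\ell_e(\gamma)\le C(K)|z-w|$. Compose $\gamma$ with the radial projection onto $\bar\D(0,r)$, i.e. $\zeta\mapsto r\zeta/|\zeta|$ for $|\zeta|>r$. This map is $1$-Lipschitz, fixes $\bar\D(0,r)$, and keeps the curve outside $V$, since $V\subset \D(0,r)$ is starlike-unrelated but its complement contains the circle $|\zeta|=r$ (the projected points lie on $|\zeta|=r$, which is outside $V$). The new curve lies in $\bar\D(0,r)\setminus V$, so every point on it has distance at least $\delta$ from $\partial U$. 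Hence $d_{V,U}(z,w)\le 2C(K)|z-w|/\delta$.
\end{itemize}
Combining these with $\delta\simeq_K \dist_e(\partial U,\partial V)$ yields the first pair of inequalities.

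\emph{Estimates for $d_{U,V}$.} Invert via $\phi(z)=1/z$, using the conformal invariance of the relative hyperbolic metric, exactly as in Lemma \ref{lemma:parallel}\ref{lemma:parallel:2}. This maps $U$ to a quasidisk $\phi(U)$ with $0\in\phi(U)$, whose boundary lies between the radii $(R+K\delta)^{-1}$ and $R^{-1}$. It maps $V$ to a quasidisk containing $\infty$, whose boundary lies between the radii $r^{-1}$ and $\min\{(r-K\delta)^{-1},K r^{-1}\}$. The key point is that these images are again quasicircles near concentric circles with comparable gaps. We set $r'=R^{-1}$, $R'=r^{-1}$, $\delta'=\delta/(rR)$ and must check three things:
\begin{itemize}
\item[] the outer band has width at most $C(K)\delta'$;
\item[] the inner band has width at most $C(K)\delta'$;
\item[] the inner band stays above $C(K)^{-1}r'$.
\end{itemize}
The case distinction on $K^{-1}r$ versus $r-K\delta$ is exactly what handles the cases $\delta\ll r$ and $\delta\gtrsim r$: when $\delta\gtrsim r$, the outer band radius $Kr^{-1}$ is comparable to $R'$ and the width is $\lesssim_K R'\lesssim_K\delta'$. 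So the first part, with constant $C(K)$ in place of $K$, applies (its proof tolerates any constant depending on $K$). It gives
$$d_{U,V}(z,w)\simeq_K |z^{-1}-w^{-1}|/\dist_e(\phi(\partial U),\phi(\partial V)).$$
Finally, note that $|z^{-1}-w^{-1}|\simeq_K |z-w|/R^2$ on $\partial U$, that $\dist_e(\phi\partial U,\phi\partial V)\simeq_K \delta/(rR)$, and that $\diam\partial V\simeq_K r$ and $\diam\partial U\simeq_K R$. Together these give the second pair of inequalities.

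The main obstacle is this bookkeeping after inversion, in the regime where $\delta$ is comparable to or larger than $r$, where the hypothesis $|z|\ge K^{-1}r$ is essential. I would handle it by splitting into the cases $\delta\le r/(2K)$ and $\delta> r/(2K)$.
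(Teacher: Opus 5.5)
Your proof is correct. For the $d_{V,U}$ estimates it is essentially the paper's argument; your explicit $1$-Lipschitz radial projection onto $\bar\D(0,r)$ just spells out how the paper obtains a short curve in $\bar\D(0,r)\setminus V$.

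One small point in the geometry step. The fact that $0\in V$ does not by itself force $\partial V$ to separate $0$ from $\infty$: the exterior of a small circle lying inside the annulus contains both $0$ and $\infty$. The ``circular analogue'' of Lemma \ref{lemma:quasicircle_strip} fails for the same reason. What you actually need is $\infty\notin V$, which follows from $U\cap V=\emptyset$ and $\infty\in U$. The paper uses this implicitly too.

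For $d_{U,V}$ you take a genuinely different route. The paper does not invert. It works on $V^*$, which contains $\infty$, so the bound of Theorem \ref{theorem:hyperbolic_distance} is not available there. Instead it uses the comparison principle for hyperbolic densities. With $r_1=\dist_e(0,\partial V)$ one has $\D(0,r_1)\subset V\subset \D(0,r)$, hence $\rho_{\D(0,r)^*}\geq \rho_{V^*}\geq \rho_{\D(0,r_1)^*}$. Integrating the explicit density $2s/(|\zeta|^2-s^2)$ along curves near $\{R\leq |\zeta|\leq R+K(R-r)\}$ gives both bounds directly. The hypothesis $|z|\geq K^{-1}r$ on $\partial V$ enters only through $r_1\geq K^{-1}r\gtrsim \diam_e(\partial V)$.

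Your inversion reuses the first half and needs only the simply connected density estimate. The price is re-verifying the band hypotheses, and that bookkeeping does go through with $K'=C(K)$. Write $\delta=R-r$ and $\delta'=\delta/(rR)$.
\begin{itemize}
\item[] If $\delta\leq r/(2K)$, the inverted outer band has width $K\delta/(r(r-K\delta))\leq 3K\delta'$.
\item[] If $\delta>r/(2K)$, its width is at most $(K-1)/r\leq (K-1)(2K+1)\delta'$.
\item[] The inner band $[(R+K\delta)^{-1},R^{-1}]$ satisfies both lower-band conditions with $K'=K+1$.
\end{itemize}
So in your route the role of $K^{-1}r$ is to keep the inverted outer band thin when $\delta$ is large compared with $r$. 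The paper's route is shorter and avoids the case split. Yours has the appeal of reducing the second estimate to the first by symmetry.
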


See Figure \ref{figure:concentric} for an illustration.

\begin{proof}
Since $\infty\in  U$, we have $\D(0,R+K(R-r))^*\subset U$ and $\D(0,R)$ lies outside $U$. Similarly, since $0\in V$, we have $\{z\in \C: |z|<r-K(R-r)\}\subset V$ and $\D(0,r)^*$ lies outside $V$. We conclude that
$$\C\setminus (U\cup V)\subset \{z\in \C: r-K(R-r)\leq |z| \leq R+K(R-r)\}$$
and each line through $0$ intersects $\partial U$ and $\partial V$. Let $\gamma$ be an arbitrary curve in $U^*\setminus V$ that connects $z,w\in \partial V$. Each point of $\gamma$ has distance at most $(2K+1)(R-r)$ from $\partial U$.  By Theorem \ref{theorem:hyperbolic_distance},
$$\int_{\gamma}\rho_{U^*}(\zeta)|d\zeta| \geq \frac{1}{2}\int_{\gamma} \frac{|d\zeta|}{\dist_e(\zeta,\partial U)}\geq \frac{\ell_e(\gamma)}{(2K+1)(R-r)} \geq  \frac{|z-w|}{(2K+1)(R-r)}.$$
Given that  $\dist_e(\partial U,\partial V)\geq R-r$, we have
$$d_{V,U}(z,w)\geq \frac{|z-w|}{(2K+1)(R-r)}\geq \frac{|z-w|}{(2K+1)\dist_e(\partial U,\partial V)}.$$
On the other hand, by the quasiconvexity of $\C\setminus V$, there exists a curve $\gamma$ in $\bar \D(0,r)\setminus V$ that connects $z,w\in \partial V$ and satisfies $\ell_e(\gamma)\leq C(K)|z-w|$. Each point of this curve has distance at least $R-r$ from $\partial U$. Thus, by Theorem \ref{theorem:hyperbolic_distance}
$$d_{V,U}(z,w)\leq \int_{\gamma}\rho_{U^*}(\zeta)|d\zeta|\leq \int_{\gamma} \frac{2|d\zeta|}{\dist_e(\zeta,\partial U)}\leq\frac{2C(K)|z-w|}{R-r}.$$
Since $\dist_e(\partial U,\partial V)\leq (2K+1)(R-r)$, we obtain the claimed inequalities for $d_{V,U}$.

Now we prove the inequalities for $d_{U,V}$. Let $r_1=\dist_e(0,\partial V)>0$ and let $K_1>0$ such that $r_1=r-K_1(R-r)$. Note that $r_1\geq r-K(R-r)$, so $K_1\leq K$.  By the comparison principle for hyperbolic metrics \cite{BeardonMinda:hyperbolic}*{Theorem 8.1}, we have 
$$\rho_{V^*}(z)\geq \rho_{\D(0,r_1)^*}(z)= \frac{2r_1}{|z|^2-r_1^2}, \quad z\in V^*.$$ 
Let $\gamma$ be an arbitrary curve in $V^*\setminus U$ that connects points $z,w\in \partial U$. We have
\begin{align*}
\int_{\gamma}\rho_{V^*}(\zeta)\, |d\zeta| &\geq \int_\gamma \frac{2r_1|d\zeta|}{|\zeta|^2-r_1^2}\geq \frac{2r_1|z-w|}{(R+K(R-r))^2-r_1^2}\\
&= \frac{2r_1|z-w|}{(R-r_1+K(R-r))(R+r_1+K(R-r))}\\
&\geq \frac{2r_1|z-w|}{(K+K_1+1)(R-r)(R+R+KR)}\\
&\geq \frac{2r_1|z-w|}{(2K+1)(K+2)R(R-r)}.
\end{align*}
Note that $r_1\geq \frac{1}{K}r\geq \frac{1}{2K}\diam_e(\partial V)$, $R\leq \frac{1}{2}\diam_e(\partial U)$, and $R-r\leq \dist_e(\partial U,\partial V)$. This proves that 
$$d_{U,V}(z,w) \gtrsim_K \frac{\diam_e(\partial V)|z-w|}{\diam_e(\partial U)\dist_e(\partial U,\partial V)}.$$
On the other hand, by the quasiconvexity of $\C\setminus U$, there exists a curve $\gamma$ in $(\C\setminus \D(0,R))\setminus U$ that connects $z,w\in \partial U$ and satisfies $\ell_e(\gamma)\leq C(K)|z-w|$. By the comparison principle for hyperbolic metrics, we have
\begin{align*}
d_{U,V}(z,w)&\leq \int_{\gamma}\rho_{V^*}(\zeta)\, |d\zeta| \leq \int_\gamma \rho_{\D(0,r)^*}(\zeta)\, |d\zeta|= \int_\gamma \frac{2r|d\zeta|}{|\zeta|^2-r^2}\\
&\leq \frac{2r\ell_e(\gamma)}{R^2-r^2}\leq  \frac{2rC(K)|z-w|}{R(R-r)}.
\end{align*}
To complete the proof, note that $r\leq K r_1 \leq \frac{K}{2}\diam_e(\partial V)$, $\diam_e(\partial U) \leq 2(R+K(R-r))\leq 2(K+1)R$, and $\dist_e(\partial U,\partial V)\leq (2K+1)(R-r)$.
\end{proof}

\begin{lemma}[Relatively separated quasidisks]\label{lemma:relatively_separated}
Let $K\geq 1$. Let $U,V\subset \widehat \C$ be $K$-quasidisks such that $\bar U\cap \bar V=\emptyset$ and $\infty\in  U$. If $\Delta_{e}(\partial U,\partial V)>K^{-1}$, then for every $z,w\in \partial V$ we have
$$C(K)^{-1}\frac{|z-w|}{\dist_e(\partial U,\partial V)}\leq  d_{V,U}(z,w)\leq C(K)\frac{|z-w|}{\dist_e(\partial U,\partial V)}.$$ 
\end{lemma}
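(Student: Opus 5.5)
The plan is to prove both inequalities directly from Theorem \ref{theorem:hyperbolic_distance} and the quasiconvexity of $\C\setminus V$. The only extra input is an a priori comparison of $d=\dist_e(\partial U,\partial V)$ with $D=\diam_e(\partial V)$. Since $\infty\in U$ and $\bar U\cap\bar V=\emptyset$, the bounded set $V$ lies in $U^*$. Hence $\diam_e(\partial V)\le\diam_e(\partial U)$, so $\min\{\diam_e(\partial U),\diam_e(\partial V)\}=D$ and the hypothesis gives $D<Kd$. In particular, for every $z\in\partial V$ we get $\dist_e(z,\partial U)\le d+D\le (K+1)d$. Also $U^*\subset\C$ is simply connected, so part (2) of Theorem \ref{theorem:hyperbolic_distance} gives $\rho_{U^*}\simeq 1/\dist_e(\cdot,\partial U)$.

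\emph{Lower bound.} Let $\gamma$ be a curve in $U^*\setminus V$ joining $z,w\in\partial V$, and set $B=\D(z,(K+1)d)$. There are two cases.
\begin{itemize}
\item If $\gamma\subset B$, then every point of $\gamma$ is within $2(K+1)d$ of $\partial U$. Hence $\ell_{h_{U^*}}(\gamma)\ge \ell_e(\gamma)/(4(K+1)d)\ge |z-w|/(4(K+1)d)$.
\item If $\gamma$ leaves $B$, then $\dist_e(\zeta,\partial U)\le |\zeta-z|+(K+1)d$ along $\gamma$. Integrating in the radial variable $|\zeta-z|$ from $0$ to $(K+1)d$ gives $\ell_{h_{U^*}}(\gamma)\ge \tfrac12\log 2$. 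Since $|z-w|/d\le D/d\le K$, this is again $\gtrsim_K |z-w|/d$.
\end{itemize}
Infimizing over $\gamma$ gives the lower bound.

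\emph{Upper bound, small scales.} The domain $\C\setminus\bar V$ is the exterior of a $K$-quasidisk, so it is uniform and hence $C_0(K)$-quasiconvex; we apply this to its closure $\C\setminus V$. Suppose $|z-w|\le d/(2C_0)$. Choose a curve $\gamma$ in $\C\setminus V$ from $z$ to $w$ with $\ell_e(\gamma)\le C_0|z-w|\le d/2$. Then $\gamma\subset\bar\D(z,d/2)$, so $\gamma$ stays at distance at least $d/2$ from $\partial U$. In particular $\gamma$ lies in $U^*\setminus V$. By Theorem \ref{theorem:hyperbolic_distance}, $d_{V,U}(z,w)\le 2\ell_e(\gamma)/(d/2)\le 4C_0|z-w|/d$.

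\emph{Upper bound, large scales.} Now suppose $|z-w|>d/(2C_0)$. Then $d\lesssim_K |z-w|\le D< Kd$, so it suffices to show $d_{V,U}(z,w)\lesssim_K 1$. Let $E\subset\partial V$ be an arc joining $z$ and $w$. Cover $E$ by disks of radius $d/(4C_0)$ centred at points of $E$. Because $E$ has diameter at most $D<Kd$, the doubling property of the plane bounds the number of disks needed by some $N(K)$. Since $E$ is connected, we can extract a chain of these disks from $z$ to $w$ in which consecutive disks intersect. Their centres $z=p_0,p_1,\dots,p_m=w$ then lie on $\partial V$, satisfy $|p_i-p_{i+1}|\le d/(2C_0)$, and $m\le N(K)$. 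Applying the small-scale case to each consecutive pair and using the triangle inequality for $d_{V,U}$ gives $d_{V,U}(z,w)\le 2N(K)$. This is $\lesssim_K |z-w|/d$.

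The main obstacle is this last step. Near-geodesic Euclidean curves at scale comparable to $D$ may approach $\partial U$, so the single-curve estimate breaks down and must be replaced by the bounded chaining argument. The choice of scale $d/(2C_0)$, forced by the quasiconvexity constant of $\C\setminus V$, is what makes the number of links depend only on $K$.
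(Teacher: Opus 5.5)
Your proof is correct and follows essentially the same route as the paper. The lower bound is a direct two-case estimate, which the paper packages as Lemma \ref{lemma:qh_estimate} applied to $B=\bar V$ together with $h_{U^*}\geq \frac12 k_{U^*}$; the upper bound uses quasiconvexity of $\C\setminus V$ at scales $\lesssim_K \dist_e(\partial U,\partial V)$, and a chain of at most $N(K)$ links at larger scales. The only cosmetic fix is that the chain must begin at $z$ and end at $w$ rather than at centres of covering disks, which you get by appending $z,w$ (they lie within $d/(4C_0)$ of the first and last centres) or by including disks centred at $z$ and $w$ in the cover.
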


\begin{proof}
Since $\infty \in U$, we have $\diam_e(\partial V)<\diam_e(\partial U)$. 
Note that $\bar V\subset U^*$, $\bar V$ is quasiconvex, and $\diam_e(\bar V)< K\dist_e(\bar V,\partial U^*)$. Thus, Lemma \ref{lemma:qh_estimate} can be applied and we have
$$k_{U^*}(z,w) \geq C(K)^{-1}\frac{|z-w|}{\dist_e(\partial U,\partial V)}$$
for $z,w\in \partial V$. Combining this with Theorem \ref{theorem:hyperbolic_distance}, for $z,w\in \partial V$ we have
$$d_{V,U}(z,w)\geq h_{U^*}(z,w) \geq \frac{1}{2} k_{U^*}(z,w)\geq \frac{1}{2C(K)}\frac{|z-w|}{\dist_e(\partial U,\partial V)}.$$

Next, we show an inequality in the reverse direction. The set $\C\setminus V$ is $C_1(K)$-quasiconvex. Let $a\in \partial V$ and consider the ball $\D(a, r(a))$, where 
$$r(a)=\frac{\dist_e(a,\partial U)}{2(2C_1(K)+1)}=C_2(K) \dist_e(a,\partial U).$$
Suppose that $z,w\in \D(a,r(a))\cap \partial V$. Then there exists a curve $\gamma$ of length at most $C_1(K)|z-w|$ that connects $z,w$ and lies in $\D(a,2^{-1}\dist_e(a,\partial U))\setminus V$. Thus, each point $\zeta$ on the curve $\gamma$ satisfies
$$\dist_e(\zeta,\partial U)\geq 2^{-1}\dist_e(a,\partial U)$$
and we have
$$d_{V,U}(z,w)\leq \int_\gamma \rho_{U^*}(\zeta)\,|d\zeta|\leq \int_\gamma \frac{2|d\zeta|}{\dist_e(\zeta,\partial U)}\leq  \frac{2C_1(K)|z-w|}{2^{-1}\dist_e(a,\partial U)}\leq  \frac{C_3(K)|z-w|}{\dist_e(\partial U,\partial V)}.$$

Note that for each $a\in \partial V$ we have
$$r(a)=C_2(K)\dist_e(a,\partial U) \geq C_2(K)\dist_e(\partial U,\partial V) \geq K^{-1}C_2(K)\diam_e(\partial V).$$
Hence, $\partial V$ can be covered by $N\leq N(K)$ balls $\D(a_i,r(a_i))$, $a_i\in \partial V$, $i\in \{1,\dots,N\}$. Suppose that $z,w\in \partial V$ and $z,w\notin \D(a,r(a))$ for any $a\in \partial V$. Then 
$$|z-w|\geq r(z)\geq K^{-1}C_2(K)\diam_e(\partial V).$$
Consider a chain of points $z=z_0,z_1,\dots,z_m=w$ in $\partial V$, where $m\leq N$, such that consecutive points $z_{i-1},z_i$ lie in the same ball $\D(a_j,r(a_j))$. By the previous case,
\begin{align*}
d_{V,U}(z,w)&\leq \sum_{i=1}^m d_{V,U}(z_{i-1},z_i)\leq \frac{N(K)C_3(K)\diam_e(\partial V)}{\dist_e(\partial U,\partial V)}\\
&\leq \frac{N(K)C_3(K)KC_2(K)^{-1}|z-w|}{\dist_e(\partial U,\partial V)}.
\end{align*}
This completes the proof.
\end{proof}

We observe that in the above special cases the relative hyperbolic metric $d_{V,U}$ is quasisymmetric to the Euclidean metric via the identity map, so it is also quasi-M\"obius to the chordal metric.

\subsection{Distortion under quasiconformal maps}\label{section:relative:distortion}
We discuss the distortion of the relative hyperbolic metric under quasiconformal maps of the sphere. We state the main result of the section.

\begin{theorem}\label{theorem:relative_hyperbolic_qc}
Let $U,V\subset \widehat \C$ be quasidisks such that $\bar U\cap \bar V$ contains at most one point. Let  $f\colon  \widehat \C\to \widehat \C$ be a quasiconformal map. Then the map
$$f\colon (\partial V\setminus \bar U, d_{V,U})\to (f(\partial V) \setminus f(\bar U),d_{f(V),f(U)})$$
is quasisymmetric, quantitatively.
\end{theorem}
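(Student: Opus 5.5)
We will reduce the statement to Theorem \ref{theorem:qc_qs_qh} by showing that $d_{V,U}$ is quasisymmetrically comparable, via the identity, to a quasihyperbolic distance on a domain that behaves naturally under $f$. Let $g$ be a quasiconformal reflection along $\partial V$, chosen quantitatively, and let $Z$ be the domain bounded by $\partial U$ and $g(\partial U)$, as in Figure \ref{figure:z}. Thus $Z$ is the union of $U^*\setminus \bar V$, the part of $\partial V\setminus\bar U$, and $g(U^*\setminus \bar V)$. After a M\"obius normalization we may assume $\infty\in U$, or, in the tangent case, that the tangency point is $\infty$; then $Z\subset \C$.

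\textbf{Step 1.} We show that the identity $(\partial V\setminus \bar U,d_{V,U})\to(\partial V\setminus\bar U,k_Z)$ is quasisymmetric, quantitatively. Any curve in $U^*\setminus V$ joining $z,w$ lies in $\bar Z$. By Theorem \ref{theorem:hyperbolic_distance}, $\rho_{U^*}\simeq 1/\dist_e(\cdot,\partial U)$, since $U^*$ is simply connected. For points of $U^*\setminus V$ we have $\dist_e(\cdot,\partial Z)\le \dist_e(\cdot,\partial U)$. Conversely, because the reflection $g$ is quasisymmetric near $\partial V$ by \ref{q:qc_qs}, $\dist_e(\cdot,g(\partial U))$ is at least a constant times $\dist_e(\cdot,\partial U)$ on $\partial V$. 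So on $\partial V$ the two densities are comparable. A $k_Z$-geodesic can be folded by $g$ into $\bar U^*\setminus V$, and Theorem \ref{theorem:qc_qs_qh} applied to $g$ (orientation-reversing, but the argument is identical) controls the folded length. This should give a comparison of $d_{V,U}$ and $k_Z$ on $\partial V$ up to a quasisymmetric distortion, and at least a bi-Lipschitz one at large scales.

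\textbf{Step 2.} Set $U'=f(U)$ and $V'=f(V)$, and let $g'$ be a reflection along $f(\partial V)$. The map $F=g'\circ f\circ g$ on $g(U^*\setminus\bar V)$, glued with $f$ on $U^*\setminus V$, is a homeomorphism from $Z$ onto the corresponding domain $Z'$. It is quasiconformal off the quasicircle $\partial V$, hence quasiconformal on $Z$ by removability. Theorem \ref{theorem:qc_qs_qh} then gives that $F\colon(Z,k_Z)\to(Z',k_{Z'})$ is quasisymmetric, and $F=f$ on $\partial V\setminus\bar U$.

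\textbf{Step 3.} We compose: $d_{V,U}\to k_Z$ (Step 1), then $k_Z\to k_{Z'}$ via $f$ (Step 2), then $k_{Z'}\to d_{V',U'}$ (Step 1 inverse, using \ref{q:inverse}). The composition is quasisymmetric.

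\textbf{Main obstacle.} The main obstacle is Step 1, in particular the quantitative choice of reflection. We need the distance from $\partial V$ to $g(\partial U)$ to be comparable to its distance to $\partial U$ at every scale, including near the tangency point. For that we will use a reflection that is quasisymmetric in the Euclidean metric, with a normalization making $\partial V$ pass through $\infty$ when possible. We will also use Lemma \ref{lemma:qh_estimate} on Whitney-type pieces to compare the densities at small scales. In the disjoint-closures case, the chordal versus Euclidean normalization will need the care of Remark \ref{remark:quasimobius:cross}.
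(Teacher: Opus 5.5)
Your architecture is the paper's. Step 1 is its Corollary~\ref{corollary:quasisymmetric_dk}, Step 2 is Theorem~\ref{theorem:qc_qs_qh}, and the composition in Step 3 is exactly the paper's proof. Step 2 is simpler if you take $g'=f\circ g\circ f^{-1}$, so that $Z'=f(Z)$ and $F=f$; as written, $F$ should be $g'\circ f\circ g^{-1}$ unless $g$ is first made an involution.

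\textbf{The gap is in Step 1, which carries all the content.} Your mechanism compares the densities $\rho_{U^*}\simeq 1/\dist_e(\cdot,\partial U)$ and $1/\dist_e(\cdot,\partial Z)$ pointwise, using $\dist_e(\cdot,g(\partial U))\gtrsim_K\dist_e(\cdot,\partial U)$. If that worked, the identity $(\partial V\setminus\bar U,d_{V,U})\to(\partial V\setminus\bar U,k_Z)$ would be uniformly bi-Lipschitz. That is false for relatively separated pairs:
\begin{itemize}
\item With $\infty\in U$ and $\bar U\cap\bar V=\emptyset$, every reflection puts $g(\partial U)$ inside $V$. So for $v\in\partial V$ you have $\dist_e(v,g(\partial U))\le\diam_e(\partial V)$, while $\dist_e(v,\partial U)\ge\Delta_e(\partial U,\partial V)\diam_e(\partial V)$. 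The inequality you need therefore forces $\Delta_e(\partial U,\partial V)\lesssim_K 1$.
\item Concretely, take $V=\D$ and $U=\widehat\C\setminus\bar{\D}(0,R)$ with $R$ large. Then $Z=\mathbb A(0;1/R,R)$, and on $\partial\D$ you have $k_Z(z,w)\simeq|z-w|$ but $d_{V,U}(z,w)\simeq|z-w|/R$.
\item $d_{V,U}$ is M\"obius invariant and $k_Z$ changes only by a bounded factor under M\"obius maps (Theorem~\ref{theorem:gehring_osgood}). So neither a normalization nor a different reflection removes the factor $R$.
\end{itemize}
The ``main obstacle'' as you formulate it cannot be overcome.

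\textbf{The missing idea is a case split on the relative distance.} The paper proves the bi-Lipschitz comparison only when $\Delta_\chi(\partial U,\partial V)\le K$ (Lemma~\ref{lemma:hyperbolic_Z_chi}). It first normalizes $U^*=\D$ so that $\diam_\chi(\partial V)\gtrsim_K 1$, which is exactly what makes the chordal reflection estimates of Lemma~\ref{lemma:reflection_distance_chi} available. In the separated case it shows directly, via Lemmas~\ref{lemma:relatively_separated} and~\ref{lemma:kz_estimate}, that
\begin{itemize}
\item $d_{V,U}(z,w)\simeq_K|z-w|/\dist_e(\partial U,\partial V)$, and
\item $k_Z(z,w)\simeq_K|z-w|/\diam_e(\partial V)$.
\end{itemize}
The identity is then a bi-Lipschitz map composed with a dilation by $\Delta_e(\partial U,\partial V)$. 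It is quasisymmetric with linear distortion, but has no uniform bi-Lipschitz constant.

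\textbf{Repairs needed even in the tangent or close regime.}
\begin{itemize}
\item The distance comparison is needed on all of $U^*\setminus V$, not only on $\partial V$. You can get it from the $\partial V$ case by following the segment from a point to its nearest point of $g(\partial U)$, which must cross $\partial V$.
\item Property \ref{q:qc_qs} gives quasisymmetry of $g$ only on balls well inside its domain. When $\infty\notin\partial V$ you need a global chordal version, via \ref{q:qm_qc}, \ref{q:qs_qm} and a normalization like the one above.
\item Quasisymmetry of $g$ in the $k_Z$ metric does not control the lengths of folded curves, which need not even be rectifiable. You must cut the curve into Whitney pieces and replace each reflected piece by a short curve in the quasiconvex set $g(B)$, as in the proof of Lemma~\ref{lemma:hyperbolic_Z_chi}.
\end{itemize}
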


We first establish some preliminary statements.

\begin{lemma}\label{lemma:reflection_distance_chi}
Let $K\geq 1$. Let $V\subset \widehat \C$ be a $K$-quasidisk with $\diam_{\chi}(\partial V)\geq K^{-1}$ and let $g\colon  \widehat \C\to  \widehat \C$ be a $K$-quasiconformal reflection along $\partial V$. 
\begin{enumerate}[label=\normalfont(\arabic*)]
\item\label{lemma:reflection_distance:1_chi} For every pair of sets $E,F\subset \bar V$ we have
\begin{align*}
\frac{\dist_\chi(E,F)}{\dist_\chi(E,g(F))} \leq C(K).
\end{align*}
\item\label{lemma:reflection_distance:2_chi} For each $z\in \partial V$ and $w\in \widehat \C$ we have
$$C(K)^{-1}\leq \frac{\chi(z,w)}{\chi(z,g(w))} \leq C(K).$$
\item\label{lemma:reflection_distance:3_chi} For each set $E\subset \widehat \C$ such that $E\cap \partial V\neq \emptyset$ we have
$$C(K)^{-1}\leq  \frac{\diam_\chi (E)}{\diam_\chi (g(E))}\leq C(K).$$
\end{enumerate}
\end{lemma}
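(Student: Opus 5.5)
The key tool will be that a $K$-quasiconformal self-map of $\widehat\C$ is $\eta$-quasi-M\"obius with $\eta=\eta(K)$ (property \ref{q:qm_qc}). The reflection $g$ is orientation-reversing, but composing with complex conjugation, which is a chordal isometry, shows that $g$ is also $\eta$-quasi-M\"obius in the chordal metric. We upgrade this to quasisymmetry using \ref{q:qs_qm}: $(\widehat\C,\chi)$ is bounded with diameter $2$. Since $\diam_\chi(\partial V)\geq K^{-1}$, we can choose three points $x_1,x_2,x_3\in\partial V$ with mutual chordal distances $\gtrsim_K 1$. Indeed, take $x_1,x_2$ realizing (almost) the diameter, and take $x_3$ on an arc between them at distance $\gtrsim$ from both, by connectedness. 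These points are fixed by $g$, so the hypothesis of \ref{q:qs_qm} holds with $\lambda=\lambda(K)$. Hence $g$ is $\eta'$-quasisymmetric in $(\widehat\C,\chi)$ with $\eta'=\eta'(K)$, and the same holds for $g^{-1}=g$.

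For \ref{lemma:reflection_distance:2_chi} we use the fixed point: if $z\in\partial V$ then $g(z)=z$. Pick some $y\in\partial V$ with $\chi(z,y)\geq (2K)^{-1}$, which exists by the diameter bound. Quasisymmetry applied to the triple $z,w,y$ gives
$$\frac{\chi(z,g(w))}{\chi(z,y)}\leq \eta'\!\left(\frac{\chi(z,w)}{\chi(z,y)}\right).$$
Since $\chi(z,y)\simeq_K 1$ and all distances are at most $2$, this controls $\chi(z,g(w))$ by a function of $\chi(z,w)$. That alone is not linear, so instead we compare with a scale-invariant form. By the symmetric three-point condition, $\chi(z,w)\le t\,\chi(z,y)$ implies $\chi(z,g(w))\le \eta'(t)\chi(z,y)$. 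This only yields a distortion-function bound, which is not what we need. The better route is to apply quasi-M\"obius with the fixed point structure. We take a point $w'$ on $\partial V$ with $\chi(z,w')=\chi(z,w)$, which exists whenever $\chi(z,w)\le (2K)^{-1}$, by connectedness of $\partial V$ and continuity. Then the triple $(z,w',w)$ has comparable distances from $z$ with ratio $1$, so quasisymmetry gives $\chi(z,g(w))\le\eta'(1)\chi(z,g(w'))=\eta'(1)\chi(z,w')=\eta'(1)\chi(z,w)$. The reverse inequality follows by symmetry, since $g=g^{-1}$. When $\chi(z,w)>(2K)^{-1}$, both quantities are $\simeq_K 1$ by the earlier estimate, using $\eta'$ at bounded arguments. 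This linear comparability is the main point, and the intermediate-value choice of $w'$ is the step I expect to require care.

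Part \ref{lemma:reflection_distance:3_chi} follows similarly. Take $z\in E\cap\partial V$. Then $\diam_\chi(E)\simeq\sup_{w\in E}\chi(z,w)$ up to a factor $2$, and likewise $\diam_\chi(g(E))\simeq\sup_{w\in E}\chi(z,g(w))$ since $g(z)=z$. So \ref{lemma:reflection_distance:2_chi} gives the claim. For \ref{lemma:reflection_distance:1_chi}, take $e\in E$ and $f\in F$ with $\chi(e,g(f))$ nearly equal to $\dist_\chi(E,g(F))$. Since $e\in\bar V$ and $g(f)\in\widehat\C\setminus V$, the chordal geodesic or any arc joining them meets $\partial V$; more simply, a continuum from $e$ to $g(f)$ of diameter $\lesssim\chi(e,g(f))$ (a spherical geodesic arc) contains a point $z\in\partial V$ with $\chi(z,e),\chi(z,g(f))\le\chi(e,g(f))$. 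By \ref{lemma:reflection_distance:2_chi}, $\chi(z,f)\le C\chi(z,g(f))$. Then
$$\dist_\chi(E,F)\le\chi(e,f)\le\chi(e,z)+\chi(z,f)\le (1+C)\chi(e,g(f)),$$
which proves \ref{lemma:reflection_distance:1_chi}.
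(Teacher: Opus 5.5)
Your proof is correct and uses the same ingredients as the paper's proof. These are the quasisymmetry of $g$ on $(\widehat\C,\chi)$ via (Q-3), using three points of $\partial V$ fixed by $g$; the intermediate-value choice of a point $w'\in\partial V$ with $\chi(z,w')=\chi(z,w)$; and a great-circle arc from a point of $\bar V$ to a point of $g(F)$, which must cross $\partial V$. The only difference is the order. You prove (2) directly and deduce (1) and (3) from it, whereas the paper first proves the case $E=\{v\}\subset\partial V$ of (1) and then obtains (1), (2), (3) in turn.

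The one false assertion is $g=g^{-1}$, which a general quasiconformal reflection need not satisfy. You do not need it, though. Since $\chi(z,w')=\chi(z,w)$, applying the quasisymmetry of $g$ to the triples $(z,w,w')$ and $(z,w',w)$ gives both $\chi(z,g(w))\le\eta'(1)\chi(z,w)$ and $\chi(z,w)\le\eta'(1)\chi(z,g(w))$. Alternatively, $g^{-1}$ is itself a $K$-quasiconformal reflection along $\partial V$, so the same argument applies to it.
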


\begin{proof}
In \ref{lemma:reflection_distance:1_chi}, without loss of generality, suppose that $E,F$ are closed sets.  By property \ref{q:qm_qc} the map $g$ is quasi-M\"obius and by \ref{q:qs_qm} it is $\eta$-quasisymmetric for some distortion function $\eta$ that depends only on $K$. 

We first consider the special case that $E$ consists of a single point $v\in \partial V$. Let $u\in F$  be an arbitrary point. If $\chi(v,u)\leq \diam_\chi (\partial V)$, we consider a point $u'\in \partial V$ such that $\chi(v,u)=\chi(v,u')$. If $\chi(v,u)>\diam_\chi(\partial V)\geq K^{-1}$, we consider a point $u'\in \partial V$ such that $\chi(v,u')=\diam_\chi(\partial V)\geq K^{-1} \geq  2^{-1}K^{-1} \chi(v,u)$. In both cases, 
$$\chi(v,u)\leq 2K \chi (v,u')\leq 2K\chi(v,u).$$
We have
\begin{align*}
\dist_\chi(v,F)\leq \chi(v,u)&\leq 2K\chi(v,u')=2K\chi(g(v),g(u'))\\
&\leq 2K\eta(1) \chi(g(v),g(u))=2K\eta(1)\chi(v,g(u)).
\end{align*}
Since $u\in F$ is arbitrary, we conclude that
\begin{align}\label{lemma:reflection_distance:ineq_chi}
\dist_\chi(v,F)\leq 2K\eta(1) \dist_\chi(v,g(F)).
\end{align}

Now we treat the general case. Let $z\in \bar V$ and $w$ be the point of $g(F)$ that is closest to $z$. Consider the great circle of the sphere $\widehat \C$ that passes through $z$ and $w$ and denote by $J$ the shortest of the two arcs that connect $z$ and $w$. Since $z\in \bar V$ and $w\in \C\setminus V$, the arc $J$ intersects $\partial V$ at a point $v$ (which could be equal to $z$ or $w$). Note that the point of $g(F)$ that is closest to $v$ is also $w$. (To see this, suppose that $z=0$ by composing with an isometry of the sphere. Then the arc $J$ is a radius of a circle $C$ centered at $z=0$ that connects $z=0$ and $w\in C$. Thus, $C=\partial B_\chi(0,\chi(z,w))$. The point of $\widehat \C\setminus B_\chi(0,\chi(z,w))$ that is closest to the point $v\in J$ in the chordal metric is $w$.) Finally, let $u$ be the point of $F$ that is closest to $v$. See Figure \ref{figure:lemma:reflection} for an illustration. By \eqref{lemma:reflection_distance:ineq_chi}, we have
\begin{align*}
\dist_\chi(z, F)&\leq \chi(z,u) \leq \chi(z,v)+\chi(v,u)= \chi(z,v)+\dist_\chi(v,F)\\
&\leq \chi(z,v)+2K\eta(1)\dist_\chi(v,g(F))= \chi(z,v)+2K\eta(1) \chi(v,w)\\
&\leq \chi (z,w) + 2K\eta(1)\chi(z,w)=(1+2K\eta(1))\dist_\chi(z,g( F)). 
\end{align*}
Therefore,
$$\dist_\chi(E,F)\leq (1+2K\eta(1)) \dist_\chi(z,g(F))$$
for each $z\in E$, which completes the proof of \ref{lemma:reflection_distance:1_chi}.

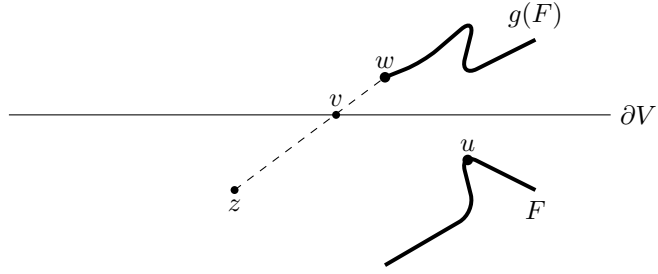
\begin{figure}
	\begin{tikzpicture}
		\draw (-4,0)--(4,0) node[right] {$\partial V$};	
		\fill (-1,-1) circle (1.5pt) node[below] {$z$};
		\draw[rounded corners=7pt, line width=1.5pt]  (1,-2)--(2.2,-1.3)--(2,-0.5)--(3,-1) node[below]{$F$};
		\draw[rounded corners=7pt, line width=1.5pt]  (1,0.5)--(1.5,0.7)--(2.2,1.3)--(2,0.5)--(3,1) node[above]{$g(F)$};
		\fill (1,0.5) circle (2pt) node[above] {$w$};
		\draw[dashed] (-1,-1)--(1,0.5);
		\fill (0.35,0) circle (1.5pt) node[above]{$v$};
		\fill (2.1,-0.6) circle (2pt) node[above]{$u$};
	\end{tikzpicture}
	\caption{The configuration in the proof of Lemma \ref{lemma:reflection_distance_chi} \ref{lemma:reflection_distance:1_chi}.}\label{figure:lemma:reflection}
\end{figure}

For \ref{lemma:reflection_distance:2_chi}, note that if $z\in \partial V$ and $w\in \bar V$, then part \ref{lemma:reflection_distance:1_chi} gives $\chi(z,w)\leq C(K)\chi(z,g(w))$. If instead $w\in \widehat\C\setminus \bar V$, we apply part \ref{lemma:reflection_distance:1_chi} to the quasidisk $\widehat\C\setminus \bar V$ and obtain the same inequality. Applying part \ref{lemma:reflection_distance:1_chi} to the quasiconformal reflection $g^{-1}$ gives $\chi(z,g(w))\leq C(K)\chi(z,w)$. 

For \ref{lemma:reflection_distance:3_chi}, we may assume that $E$ is closed.  Let $z\in E\cap \partial V$ and $w\in E$ such that $\chi(z,w)\geq \frac{1}{2}\diam_\chi (E)$. By part \ref{lemma:reflection_distance:2_chi}, we have
$$ \diam_\chi (E) \leq 2\chi(z,w) \leq 2C(K) \chi(z,g(w)) \leq 2C(K) \diam_\chi (g(E)).$$
To obtain an inequality in the reverse direction, we apply the same argument to $g^{-1}$.
\end{proof}

Next, we discuss the relation between the relative hyperbolic metric corresponding to the pair $(V,U)$ and the quasihyperbolic metric of a certain domain.

\begin{lemma}\label{lemma:hyperbolic_Z_chi}
Let $K\geq 1$. Let $U,V\subset \widehat \C$ be $K$-quasidisks such that $\bar U\cap \bar V$ contains at most one point and $\infty \in \bar U$. Let $Z$ be the domain bounded by $\partial U$ and some $K$-quasiconformal reflection of $\partial U$ along $\partial V$. If $\Delta_{\chi}(\partial U,\partial V)\leq K$, then 
$$C(K)^{-1}k_Z(z,w)\leq  d_{V,U}(z,w)\leq C(K)k_{Z}(z,w) \quad \text{for all $z,w\in \partial V\setminus \bar U$}.$$
\end{lemma}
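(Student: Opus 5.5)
The strategy is to compare the two length densities on the part of $Z$ outside $V$, and then show that $k_Z$-geodesics may be pushed out of $V$ at bounded multiplicative cost. First record the topology. Since $g$ fixes $\partial V$ and swaps its complementary components, and $U\subset \widehat\C\setminus \bar V$, we get $g(U)\subset V$. Hence $Z=U^*\setminus \overline{g(U)}$, and
$$U^*\setminus V\subset Z\cup(\partial V\cap \partial U),\qquad g(Z)=Z,$$
because $g$ interchanges $\partial U$ and $g(\partial U)$. Also $\partial V\setminus\bar U\subset Z$. Since $\infty\in\bar U$, we have $U^*\subset\C$ and Euclidean quantities make sense.

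\textbf{Lower bound $k_Z\lesssim d_{V,U}$.} This direction is soft. Take any rectifiable $\gamma$ in $U^*\setminus V$ joining $z,w\in\partial V\setminus\bar U$. Since $\partial U\subset\partial Z$, we have $\dist_e(\zeta,\partial Z)\le\dist_e(\zeta,\partial U)$. By Theorem \ref{theorem:hyperbolic_distance}(2) applied to the simply connected domain $U^*$, $\rho_{U^*}(\zeta)\ge (2\dist_e(\zeta,\partial U))^{-1}$. Therefore
$$k_Z(z,w)\le\int_\gamma\frac{|d\zeta|}{\dist_e(\zeta,\partial Z)}\le 2\ell_{h_{U^*}}(\gamma).$$
Infimizing over $\gamma$ gives $k_Z\le 2d_{V,U}$.

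\textbf{Upper bound, step 1: comparable densities on $Z\setminus V$.} This is where $\Delta_\chi(\partial U,\partial V)\le K$ is used. After a rotation of the sphere (an isometry of $\chi$) and a normalization keeping $\infty\in\bar U$, I expect this hypothesis together with Remark \ref{remark:quasimobius:cross} to let us assume $\diam_\chi(\partial V)\gtrsim_K 1$. Then Lemma \ref{lemma:reflection_distance_chi}\ref{lemma:reflection_distance:1_chi}, applied to $\widehat\C\setminus\bar V$ with $F=\partial U$ and a point $E=\{\zeta\}$ outside $V$, gives
$$\dist_\chi(\zeta,\partial U)\lesssim_K\dist_\chi(\zeta,g(\partial U)).$$
Converting chordal to Euclidean distances near $\infty\in\bar U$ via cross ratios yields $\dist_e(\zeta,\partial U)\lesssim_K\dist_e(\zeta,\partial Z)$ for $\zeta\in Z\setminus V$. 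Combined with $\rho_{U^*}\le 2/\dist_e(\cdot,\partial U)$, this gives $\rho_{U^*}\lesssim_K 1/\dist_e(\cdot,\partial Z)$ on $Z\setminus V$. Consequently $d_{V,U}(z,w)$ is bounded by $C(K)$ times the infimum of $k_Z$-lengths of curves in $Z\setminus V$ from $z$ to $w$.

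\textbf{Upper bound, step 2: pushing geodesics out of $V$ (main obstacle).} Take a quasihyperbolic geodesic $\gamma$ in $Z$ from $z$ to $w$. We must replace each excursion of $\gamma$ into $V$ by a curve outside $V$ with comparable $k_Z$-length. The map $g$ is a $K$-quasiconformal (orientation-reversing) self-map of $Z$, so it is not Lipschitz. I would therefore argue coarsely:

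1. Subdivide $\gamma$ into consecutive subarcs of $k_Z$-length about $1/3$ (or a single short arc).
2. For an excursion with endpoints $a,b\in\partial V$ that is long in $k_Z$, use Theorem \ref{theorem:gehring_osgood}: the image $g(\gamma|_{[a,b]})$ lies outside $V$, and $k_Z(g(a_i),g(a_{i+1}))\lesssim_K 1$ for consecutive subdivision points. Replace each image piece by a $k_Z$-geodesic between those image points; each has $k_Z$-length $\lesssim 1$.
3. If such a replacement geodesic itself enters $V$, it is a short piece, handled by the local case below.
4. For short pieces, work inside a Whitney ball $B=B_e(\zeta,\tfrac12\dist_e(\zeta,\partial Z))$. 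Here $k_Z$ is comparable to Euclidean distance divided by $\dist_e(B,\partial Z)$, by Lemma \ref{lemma:qh_estimate}. Since $\widehat\C\setminus V$ is a uniform, hence quasiconvex, domain, two points of $\partial V$ in a slightly smaller concentric ball can be joined outside $V$ by a Euclidean curve of length $\lesssim_K$ their distance, staying in $B$.

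This yields a curve in $Z\setminus V$ of $k_Z$-length $\lesssim_K k_Z(z,w)$, which by step 1 bounds $d_{V,U}(z,w)$. The delicate points, which I expect to be the main work, are:

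1. bookkeeping the small-scale versus large-scale cases so that the constants stay uniform;
2. the endpoints near the possible tangency point in $\bar U\cap\bar V$, where Euclidean distances to $\partial U$ degenerate but all estimates are scale-invariant.
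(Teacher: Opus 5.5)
\textbf{The lower bound is not proved: your two directions are interchanged.} As a result, the direction that actually needs $\Delta_\chi(\partial U,\partial V)\le K$ is left unproved. In your ``soft'' bound you use $\dist_e(\zeta,\partial Z)\le\dist_e(\zeta,\partial U)$ and $\rho_{U^*}(\zeta)\ge (2\dist_e(\zeta,\partial U))^{-1}$. Both of these bound $1/\dist_e(\zeta,\partial Z)$ and $2\rho_{U^*}(\zeta)$ \emph{from below} by the same quantity, so they cannot be chained to give $1/\dist_e(\zeta,\partial Z)\le 2\rho_{U^*}(\zeta)$.

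In fact $k_Z\lesssim d_{V,U}$ is false without the hypothesis. Take $V=\D$, $U=\widehat\C\setminus\bar\D(0,R)$ and $g(z)=1/\bar z$. Then $Z=\mathbb A(0;1/R,R)$ and $k_Z(z,w)\simeq|z-w|$ on $\partial\D$, but $d_{V,U}(z,w)\simeq|z-w|/R$ by Lemma \ref{lemma:parallel}; here $\Delta_\chi(\partial U,\partial V)\simeq R$. More generally, Lemma \ref{lemma:kz_estimate} gives $k_Z\simeq_K\Delta_e(\partial U,\partial V)\,d_{V,U}$ in the relatively separated regime.

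Your Step 1 has the mirror-image defect. Its conclusion $\rho_{U^*}\le 2/\dist_e(\cdot,\partial U)\le 2/\dist_e(\cdot,\partial Z)$ is trivial, since $\partial U\subset\partial Z$, and uses no hypothesis. The reflection estimate you derive there, $\dist_e(\zeta,\partial U)\lesssim_K\dist_e(\zeta,\partial Z)$ for $\zeta\in U^*\setminus V$, instead yields $1/\dist_e(\zeta,\partial Z)\lesssim_K 1/\dist_e(\zeta,\partial U)\le 2\rho_{U^*}(\zeta)$. That is exactly the missing bound $k_Z\lesssim_K d_{V,U}$, and it is how the paper argues.

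For that estimate to be legitimate you need a concrete normalization, which you leave vague. The paper maps $U^*$ conformally onto $\D$; this leaves $d_{V,U}$ invariant and changes $k_Z$ bi-Lipschitzly by Theorem \ref{theorem:gehring_osgood}. It then uses Lemma \ref{lemma:quasimobius:cross} to keep the relative distance bounded and deduce $\diam_e(\partial V)\ge r(K)$. All sets now lie in $\bar\D$, where chordal and Euclidean distances are comparable, and only then is Lemma \ref{lemma:reflection_distance_chi}\ref{lemma:reflection_distance:1_chi} applied to $\widehat\C\setminus\bar V$. Your chordal-to-Euclidean conversion ``via cross ratios'' is not automatic when $U^*$ is unbounded, for example when $\infty\in\partial U$ in the tangent case.

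\textbf{Step 2 is circular as written, though repairable.} It is a different route from the paper's. The $k_Z$-geodesic from $g(a_i)$ to $g(a_{i+1})$ has length only $\lesssim C(K)$ by Theorem \ref{theorem:gehring_osgood}. So its excursions into $V$ need not lie in a single Whitney ball, and they are not ``short.''

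To fix this, subdivide at a scale $\epsilon(K)$ small enough that, again by Theorem \ref{theorem:gehring_osgood}, consecutive image points are at $k_Z$-distance well below the Whitney scale. Then each joining geodesic sits well inside one Whitney ball and your local quasiconvexity replacement applies; the larger number of pieces costs only a factor depending on $K$. You also need $g(Z)=Z$. This requires first redefining $g$ in $V$ as $(g|_{\widehat\C\setminus\bar V})^{-1}$ so that $g\circ g=\id$, as the paper does.

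The paper avoids geodesics altogether. It cuts a curve in $\bar V\cap Z$ into pieces lying in Whitney balls $B$ and replaces each reflected piece by a quasiconvex curve in the quasidisk $g(B)$. It then bounds the $h_{U^*}$-length of the replacement directly, using $\diam_e(g(B))\simeq_K\dist_e(g(B),\partial U)$.
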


See Figure \ref{figure:z} for an illustration of the domain $Z$.

\begin{proof}
We first perform some normalizations that do not affect the conclusion, so that $\diam_\chi(\partial V)$ is large, depending only on $K$. We apply a conformal map $\phi$ that maps $U^*$ onto $\D$ such that $0\in \partial V$. This map does not affect the inequalities in the conclusion since $d_{V,U}$ is invariant under conformal maps and $k_Z$ is quasi-invariant under conformal maps by Theorem \ref{theorem:gehring_osgood}. By Proposition \ref{proposition:extension_by_reflection} and \ref{q:qm_qc}, the conformal map $\phi$ extends to a quasi-M\"obius map of $\widehat \C$, so $\phi(V)$ is a quasidisk. By Lemma \ref{lemma:quasimobius:cross} we have $\Delta_\chi(\phi(\partial U),\phi(\partial V))\leq C(K)$. By Remark \ref{remark:quasimobius:cross}, the same inequality is true with Euclidean distance if we alter the constant. If $\phi( \partial V)\subset \D(0,r)$, then 
$$2r\geq \diam_e(\phi(\partial V))\geq C(K)^{-1} \dist_{e} (\phi(\partial U),\phi(\partial V)) \geq C(K)^{-1}(1-r)$$
which leads to a contradiction if $r=r(K)=\frac{1}{2C(K)+2}$. Thus, $\phi(\partial V)$ is not contained in $\D(0,r)$ and $\diam_e(\phi(\partial V))\geq r(K)$. Summarizing, by performing the above normalization, we may assume that $U^*=\D$ and $\diam_e(\partial V)\geq r(K)$. Since $\partial V\subset \bar \D$, the chordal diameter is also bounded from below depending on $K$, as desired.

Let $g\colon \widehat\C\to \widehat \C$ be a $K$-quasiconformal reflection along $\partial V$. Also, let $Z$ be the domain bounded by $\partial U=\partial \D$ and $g(\partial U)$; see Figure \ref{figure:z} for an illustration. We alter the definition of $g$ inside $V$ and set it equal to $(g|_{\widehat \C\setminus \bar V})^{-1}$. The quasiconformal removability of the quasicircle $\partial V$ implies that the modified map $g$ is still a $K$-quasiconformal reflection along $\partial V$. By construction, it satisfies $g\circ g=\id$. 

We apply Lemma \ref{lemma:reflection_distance_chi} \ref{lemma:reflection_distance:1_chi} to the quasidisk $\widehat \C\setminus \bar V$ and to the sets $E=\{z\}$, where $z\in U^*\setminus V$, and $F=\partial U$ and we obtain $\dist_{\chi}(z,\partial U)\leq C(K)\dist_{\chi}(z, g(\partial U))$. The same inequality with a different constant is true with Euclidean distances because the sets involved are contained in $\bar \D$. Therefore, 
$$\dist_e(z,\partial U)\leq C(K)\dist_e(z, \partial U\cup g(\partial U))=C(K)\dist_e(z,\partial Z).$$
Intuitively, the point $z$ is farther away from the reflection $g(\partial U)$ than from $\partial U$. If $\gamma$ is a rectifiable curve in $U^*\setminus V$, then by Theorem \ref{theorem:hyperbolic_distance} we have
$$\ell_{h_{U^*}}(\gamma) \geq  \frac{1}{2} \int_{\gamma} \frac{|dz|}{\dist_e(z,\partial U)} \geq \frac{1}{2C(K)} \int_{\gamma} \frac{|dz|}{\dist_e(z,\partial Z)}= \frac{1}{2C(K)}\ell_{k_Z}(\gamma).$$
This shows that $d_{V,U}\gtrsim_K k_Z$. The rest of the proof is devoted to the reverse inequality.

We first prove some preparatory results. Let $z\in \bar V\cap Z$ and consider the ball $B= \bar \D(z,2^{-1}\dist_e(z,\partial Z))\subset Z$. We claim that
\begin{align}\label{lemma:hyperbolic_Z:B_chi}
\dist_e(B,\partial Z)\simeq_K \dist_e (B, g(\partial U)) \quad \text{and} \quad \diam_e (g(B)) \simeq_K \dist_e(g(B),\partial U).
\end{align}
For the first inequality suppose that 
$$\dist_e(B,\partial Z)=\dist_e(B,\partial U)\leq \dist_e(B,g(\partial U)).$$
This implies that $\dist_e(z,\partial Z)=\dist_e(z,\partial U)$ and $\dist_e(B,\partial U) = \frac{1}{2}\dist_e(z,\partial U)$.
By Lemma \ref{lemma:reflection_distance_chi} \ref{lemma:reflection_distance:1_chi} (which can be applied with Euclidean distances since the sets involved are contained in $\bar \D$), we have 
$$\dist_e(B,g(\partial U))< \dist_e(z,g(\partial U)) \lesssim_K \dist_e(z,\partial U)\simeq_K \dist_e(B,\partial U);$$
recall here that $g\circ g=\id$. This completes the proof of the first relation in \eqref{lemma:hyperbolic_Z:B_chi}. For the second relation note that $g$ and $g^{-1}$ are quasisymmetric by \ref{q:qm_qc} and \ref{q:qs_qm}. By Lemma \ref{lemma:quasisymmetric_relative_distance} and the fact that $\diam_e (B) \simeq_K \dist(B,g(\partial U))$, we obtain the claimed inequality.

Let $\gamma$ be a curve in $\bar V\cap Z$ with endpoints $z,w$ and suppose that 
$$|\gamma|\subset \bar \D(z,2^{-1}\dist_e(z,\partial Z))\eqqcolon B.$$
By Lemma \ref{lemma:qh_estimate} and \eqref{lemma:hyperbolic_Z:B_chi} we have
\begin{align}\label{lemma:hyperbolic_Z:gamma_in_B_chi}
\ell_{k_{Z}}(\gamma) \simeq \frac{\ell_e(\gamma)}{\dist_e(B,\partial Z)}\simeq_K \frac{\ell_e(\gamma)}{\dist_e(B,g(\partial U))}.
\end{align}
Consider the reflection $g(B)$, which is a quasidisk, so it is quasiconvex. The reflected curve $g(|\gamma|)$ might not be rectifiable. However, we can find a rectifiable curve $\gamma'$ connecting $g(z),g(w)$ inside $g(B)$ and with $\ell_e(\gamma')\simeq_K |g(z)-g(w)|$. By  Theorem \ref{theorem:hyperbolic_distance}, \eqref{lemma:hyperbolic_Z:B_chi}, and Lemma \ref{lemma:qh_estimate}, we obtain
\begin{align}\label{lemma:hyperbolic_Z:gamma_reflected_chi}
\ell_{h_{U^*}}(\gamma')\simeq\ell_{k_{U^*}}(\gamma')\simeq_K \frac{\ell_e(\gamma')}{\dist_e(g(B),\partial U)}\simeq_K\frac{|g(z)-g(w)|}{\dist_e(g(B),\partial U)}.
\end{align}
We have completed the preparation for showing that $d_{V,U}\lesssim_K k_Z$.

Now let $\gamma$ be a curve in $\bar V\cap Z$ with endpoints $z,w\in \partial V$. Suppose that $|\gamma|\subset \bar \D(z,2^{-1}\dist_e(z,\partial Z))\eqqcolon B$.  By \eqref{lemma:hyperbolic_Z:B_chi} and Lemma \ref{lemma:reflection_distance_chi} \ref{lemma:reflection_distance:3_chi}, we have
\begin{align*}
\dist_e(g(B),\partial U)&\simeq _K \diam_e(g(B))\simeq_K \diam_e(B) \\
&\simeq_K\dist_e(B,\partial Z)\simeq_K\dist_e(B, g(\partial U)).
\end{align*}
Combining these inequalities with  \eqref{lemma:hyperbolic_Z:gamma_reflected_chi} and \eqref{lemma:hyperbolic_Z:gamma_in_B_chi}, we have 
$$\ell_{h_{U^*}}(\gamma')\simeq_K \frac{|z-w|}{\dist_e(g(B),\partial U)}\simeq_K \frac{|z-w|}{\dist_e(B, g(\partial U))} \lesssim_K \frac{\ell_e(\gamma)}{\dist_e(B,g(\partial U))}\simeq_K \ell_{k_Z}(\gamma).$$
We conclude that $\ell_{h_{U^*}}(\gamma')\lesssim_K \ell_{k_Z}(\gamma)$. 

Next, suppose that $|\gamma|$ is not contained in the ball $B_0=\bar \D(z,2^{-1}\dist_e(z,\partial Z))$.  Inductively, we can find a partition of the curve $\gamma$ into  subcurves $\gamma_i$ with endpoints $z_{i-1},z_i$, $i\in \{1,\dots,n\}$, such that $z_0=z$, $z_n=w$, $|\gamma_i|\subset {B_{i-1}}$, $B_i=\bar \D(z_i, 2^{-1}\dist_e(z_i,\partial Z))$ for $i\in \{1,\dots,n\}$ and $z_i\in \partial B_{i-1}\cap |\gamma|$ for $i\in \{1,\dots,n-1\}$. Observe that $n\geq 2$ by the assumption that $|\gamma|\not\subset B_0$. By \eqref{lemma:hyperbolic_Z:gamma_in_B_chi}, we have 
$$\ell_{k_Z}(\gamma_i) \simeq_K \frac{\ell_e(\gamma_i)}{\dist_e(B_{i-1},\partial Z)} \gtrsim_K  \frac{|z_i-z_{i-1}|}{\dist_e(B_{i-1},\partial Z)} \simeq_K 1$$
for $i\in \{1,\dots,n-1\}$. Thus,  $\ell_{k_{Z}}(\gamma)\gtrsim_K n$, given that $n\geq 2$. We ``reflect" each curve $\gamma_i$ according to the above procedure and obtain curves $\gamma_i'$, $i\in \{1,\dots,n\}$, whose concatenation gives a curve $\gamma'$. By \eqref{lemma:hyperbolic_Z:gamma_reflected_chi}, for $i\in \{1,\dots,n\}$ we have 
$$\ell_{h_{U^*}}(\gamma'_i) \simeq_K \frac{|g(z_{i}) -g(z_{i-1})|}{\dist_e(g(B_{i-1}),\partial U)}\lesssim_K \frac{\diam_e (g(B_{i-1}))}{\dist_e(g(B_{i-1}), \partial U)} \simeq_K 1,$$
where the last inequality follows from \eqref{lemma:hyperbolic_Z:B_chi}. Therefore, $\ell_{h_{U^*}}(\gamma')\lesssim n$. Altogether we obtain $\ell_{h_{U^*}}(\gamma')\lesssim_K \ell_{k_Z}(\gamma)$ in this case too. 

More generally, if $\gamma$ is an arbitrary rectifiable curve in $Z$ with endpoints  $z,w\in \partial V$ we can ``reflect" the (countably many) pieces of $\gamma$ that are in $V$ to obtain a curve $\gamma'$ in $U^*\setminus V$ that connects $z,w$ and satisfies
$$d_{V,U}(z,w)\leq \ell_{h_{U^*}}(\gamma')\lesssim_K \ell_{k_Z}(\gamma).$$
This implies that $d_{V,U}\lesssim_K k_Z$, as desired. 
\end{proof}

\begin{lemma}\label{lemma:kz_estimate}
Let $K\geq 1$. Let $U,V\subset \widehat \C$ be $K$-quasidisks such that $\bar U\cap \bar V=\emptyset$ and $\infty \in U$. Let $Z$ be the domain bounded by $\partial U$ and some $K$-quasiconformal reflection of $\partial U$ along $\partial V$. If $\Delta_e(\partial U,\partial V)>K^{-1}$, then for all $z,w\in \partial V$ we have
\begin{align}\label{lemma:kz_estimate:1}
\frac{1}{6}\frac{|z-w|}{\diam_e(\partial V)}\leq k_Z(z,w)\leq C(K) \frac{|z-w|}{\diam_e(\partial V)}
\end{align}
and
$$C(K)^{-1}\Delta_e(\partial U,\partial V)d_{V,U}(z,w)\leq k_Z(z,w)\leq C(K)\Delta_e(\partial U,\partial V) d_{V,U}(z,w).$$
\end{lemma}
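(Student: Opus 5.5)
The second chain of inequalities will follow from the first combined with Lemma \ref{lemma:relatively_separated}, so I would prove \eqref{lemma:kz_estimate:1} first. Since $\infty\in U$, we have $\diam_e(\partial V)<\diam_e(\partial U)$, hence $\Delta_e(\partial U,\partial V)=\dist_e(\partial U,\partial V)/\diam_e(\partial V)$. Lemma \ref{lemma:relatively_separated} gives $d_{V,U}(z,w)\simeq_K |z-w|/\dist_e(\partial U,\partial V)$. Multiplying by $\Delta_e(\partial U,\partial V)$ yields $\Delta_e(\partial U,\partial V)\, d_{V,U}(z,w)\simeq_K |z-w|/\diam_e(\partial V)$, and \eqref{lemma:kz_estimate:1} then gives the second claim. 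Throughout, I note that $g(\partial U)\subset V$, because $\partial U\subset \widehat\C\setminus \bar V$ and $g$ interchanges the complementary components of $\partial V$. Thus $Z$ is a ring domain containing $\partial V$.

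\emph{Lower bound.} I would use the standard Gehring--Palka type estimate. For $z\in Z$ and any rectifiable curve $\gamma$ in $Z$ from $z$ to $w$, parametrized by arclength $s$, we have $\dist_e(\gamma(s),\partial Z)\le \dist_e(z,\partial Z)+s$. Integrating over $0\le s\le |z-w|$ gives
$$k_Z(z,w)\ge \log\left(1+\frac{|z-w|}{\dist_e(z,\partial Z)}\right).$$
For $z\in\partial V$, we have $\dist_e(z,\partial Z)\le \dist_e(z,g(\partial U))\le \diam_e(\bar V)=\diam_e(\partial V)$, since $g(\partial U)\subset V$. Writing $t=|z-w|/\diam_e(\partial V)\le 1$, we get $k_Z(z,w)\ge \log(1+t)\ge t/2\ge t/6$.

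\emph{Upper bound.} First I would rescale by a similarity so that $\diam_e(\partial V)=1$; similarities preserve $k_Z$ and the hypothesis. The key claim is that $\dist_e(\partial V,\partial Z)\ge c(K)>0$. For $\partial U$ this is immediate: $\dist_e(\partial V,\partial U)\ge K^{-1}$. For $g(\partial U)$, I would apply Lemma \ref{lemma:quasimobius:cross} to the quasi-M\"obius map $g$ (composed with complex conjugation to make it orientation-preserving; this does not change cross ratios). This gives $\Delta_e(\partial V,g(\partial U))=\Delta_e(g(\partial V),g(\partial U))\ge \widetilde\eta^{-1}(K^{-1})$, using Remark \ref{remark:quasimobius:cross} to pass between chordal and Euclidean metrics. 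It then remains to show $\diam_e(g(\partial U))\gtrsim_K 1$.

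For that, I would normalize by a M\"obius map as in Lemma \ref{lemma:hyperbolic_Z_chi}, so that $\diam_\chi(\partial V)$ is bounded below. Then Lemma \ref{lemma:reflection_distance_chi}\ref{lemma:reflection_distance:2_chi} shows that points of $\partial U$ at distance comparable to $\diam(\partial V)$ from $\partial V$ are reflected to points at comparable distance from $\partial V$. Combined with $g$ being quasisymmetric, this yields the lower bound on $\diam_e(g(\partial U))$. I expect this step (controlling $\dist_e(\partial V,g(\partial U))$ quantitatively) to be the main obstacle.

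Given the claim, I would run the chaining argument of Lemma \ref{lemma:relatively_separated}. The set $\C\setminus V$ is $C_1(K)$-quasiconvex, so points $z,w\in\partial V$ with $|z-w|<c/(4C_1)$ are joined by a curve of length $\le C_1|z-w|$ in $\C\setminus V$ staying within distance $c/4$ of $\partial V$. Such a curve lies in $Z$ at distance $\ge c/2$ from $\partial Z$, so $k_Z(z,w)\lesssim_K |z-w|$. Since $\diam_e(\partial V)=1$, the curve $\partial V$ is covered by $N(K)$ such small balls. Chaining through at most $N(K)$ points then gives $k_Z(z,w)\lesssim_K 1\lesssim_K |z-w|$ for points with $|z-w|\gtrsim_K 1$. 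This completes \eqref{lemma:kz_estimate:1}.
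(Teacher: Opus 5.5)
\paragraph{A genuine gap in the bound $\dist_e(\partial V,g(\partial U))\gtrsim_K 1$.}
The parts of your proposal outside this bound are fine. The lower bound via $k_Z(z,w)\ge\log(1+|z-w|/\dist_e(z,\partial Z))$ and $\dist_e(z,\partial Z)\le\diam_e(\partial V)$ (because $g(\partial U)\subset V$) is correct, and slightly cleaner than the paper's ball argument. The deduction of the second chain from Lemma~\ref{lemma:relatively_separated} is exactly the paper's. The chaining step also works once $\dist_e(\partial V,\partial Z)\ge c(K)$ is known.

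The gap is in how you get $\dist_e(\partial V,g(\partial U))\gtrsim_K 1$. Via Lemma~\ref{lemma:quasimobius:cross} you reduce it to $\diam_e(g(\partial U))\gtrsim_K 1$, and that statement is false. Take $V=\D(0,1/2)$, $U=\{|z|>R\}\cup\{\infty\}$ with $R$ large, and the reflection $g(z)=1/(4\bar z)$. All hypotheses hold with $K=1$, yet $g(\partial U)=\mathbb S(0,1/(4R))$ has diameter $1/(2R)\to 0$. So the relative-distance bound only yields $\dist_e(\partial V,g(\partial U))\gtrsim\diam_e(g(\partial U))$, which can be arbitrarily small. No argument can supply the missing diameter bound.

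Your sketch for that bound also fails on its own terms. When $\dist_e(\partial U,\partial V)\gg\diam_e(\partial V)$, there are no points of $\partial U$ at distance comparable to $\diam_e(\partial V)$ from $\partial V$. Moreover, a M\"obius renormalization (or the Riemann-map normalization of Lemma~\ref{lemma:hyperbolic_Z_chi}) destroys the Euclidean quantity you are trying to control.

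\paragraph{How to repair it.}
Drop relative distances and apply Lemma~\ref{lemma:reflection_distance_chi} directly, after only a translation and scaling so that $0\in V$ and $\diam_e(\partial V)=1$. Then $\bar V\subset\bar\D$, so $\diam_\chi(\partial V)\ge 1$ and no further normalization is needed.
\begin{enumerate}
\item For $u\in\partial U$ and $v\in\partial V$ you have $|u-v|>K^{-1}$ and $|v|\le 1$. Treating $|u|\le 3$ and $|u|>3$ separately gives $\chi(u,v)\gtrsim K^{-1}$.
\item Part \ref{lemma:reflection_distance:2_chi} of that lemma (or part \ref{lemma:reflection_distance:1_chi} applied to the quasidisk $\widehat\C\setminus\bar V$) then gives $\chi(v,g(u))\gtrsim_K 1$.
\item Since $v,g(u)\in\bar\D$, where chordal and Euclidean distances are comparable, this means $|v-g(u)|\gtrsim_K 1$.
\end{enumerate}
This is essentially the paper's argument in its case $\dist_e(\partial U,\partial V)>2$. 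The paper treats the case $\dist_e(\partial U,\partial V)\le 2$ separately, through Lemmas~\ref{lemma:hyperbolic_Z_chi} and \ref{lemma:relatively_separated}. With this replacement, your chaining argument covers both cases at once.
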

\begin{proof}
By using a translation and a scaling we may assume that $0\in  V$ and that $\diam_e(\partial V)=1$, so $\bar V\subset \D$. Note that the quantity $\Delta_{e}(\partial U,\partial V)$ is unaffected by this normalization. Let $g$ be a $K$-quasiconformal reflection along $\partial V$ and $Z$ be the domain bounded by $\partial U$ and $g(\partial U)$.

Let $\gamma$ be a curve in $Z$ connecting two points $z,w\in \partial V$. We consider two cases. Suppose that $\gamma$ is contained in the disk $\bar \D(0,2)$. Then for $\zeta\in |\gamma|$ we have $\dist_e(\zeta,\partial Z)\leq \dist_e(\zeta,g(\partial U))\leq 3$. Thus, 
$$\ell_{k_Z}(\gamma)=\int_\gamma\frac{|d\zeta|}{\dist_e(\zeta,\partial Z)}\geq \frac{1}{3} \ell_e(\gamma)\geq \frac{1}{3} |z-w|.$$
Now, if $\gamma$ is not contained in the disk $\bar {\D}(0,2)$, it has a subcurve $\gamma'$ that is contained in $\bar \D(0,2)$ and connects the boundary components of the annulus $\mathbb A(0;1,2)$. Therefore, 
$$\ell_{k_Z}(\gamma) \geq \ell_{k_Z}(\gamma') \geq \frac{1}{3}\ell_e(\gamma') \geq \frac{1}{3}\geq \frac{1}{6}|z-w|.$$
Therefore, $k_Z(z,w)\geq \frac{1}{6}|z-w|$. 

If $\dist_e(\partial U,\partial V)\leq 2$, then $K^{-1}<\Delta_e(\partial U,\partial V)=\dist_e(\partial U,\partial V)\leq 2$. By Remark \ref{remark:quasimobius:cross}, we have $\Delta_\chi(\partial U,\partial V)\simeq _K1$. By Lemma \ref{lemma:hyperbolic_Z_chi}, we have $k_Z(z,w)\simeq_K d_{V,U}(z,w)$ and by Lemma \ref{lemma:relatively_separated},  we have $d_{V,U}(z,w)\simeq_K \frac{|z-w|}{\dist_e(\partial U,\partial V)}$ for $z,w \in \partial V$. Therefore, 
$$k_Z(z,w)\simeq_K |z-w|.$$

Finally, suppose that $\Delta_e(\partial U,\partial V)=\dist_e(\partial U,\partial V)>2$. In particular, $U$ is disjoint from $\bar \D(0,2)$. Since $\C\setminus  V$ is quasiconvex, for any two points $z,w\in \partial V$ there exists a curve $\gamma$ in $\C\setminus V$ connecting $z$ and $w$ with $\ell_e(\gamma)\lesssim_K |z-w|$. By altering the curve $\gamma$ and replacing parts of it by arcs of $\partial \D$ if necessary, we may assume that $|\gamma|\subset \bar \D\setminus V\subset Z$. 

Since $\partial V\subset \D$ and $\diam_e(\partial V)=1$, we have $\diam_\chi(\partial V)\simeq 1$. Moreover, since $\partial U\cap  \D(0,2)=\emptyset$ and $\partial V\subset \D$, we have $\dist_\chi(\partial V,\partial U)\simeq 1$. By Lemma \ref{lemma:reflection_distance_chi} \ref{lemma:reflection_distance:1_chi} we have $\dist_\chi(\partial V,g(\partial U))\simeq_K \dist_\chi(\partial V,\partial U)\simeq 1$. Thus, $\dist_e(\partial V,g(\partial U))\simeq_K 1$. This implies that for each point $\zeta\in |\gamma|$ we have $\dist_e(\zeta, g(\partial U)) \gtrsim_K 1$. Also, $\dist_e(\zeta, \partial U) \geq \dist_e(\partial \D, \partial U)\geq 1$. Therefore, $\dist_e(\zeta,\partial Z)\gtrsim_K1$ and
$$k_Z(z,w)\leq \int_\gamma \frac{|d\zeta|}{\dist_e(\zeta,\partial Z)} \lesssim_K \ell_e(\gamma)\lesssim_K |z-w|.$$
This completes the proof of \eqref{lemma:kz_estimate:1}. The last inequality in the statement of the lemma follows immediately from \eqref{lemma:kz_estimate:1} and Lemma \ref{lemma:relatively_separated}.
\end{proof}

\begin{corollary}\label{corollary:quasisymmetric_dk}
Let $K\geq 1$. Let $U,V\subset \widehat \C$ be $K$-quasidisks such that $\bar U\cap \bar V$ contains at most one point and $\infty \in \bar U$. Let $Z$ be the domain bounded by $\partial U$ and some $K$-quasiconformal reflection of $\partial U$ along $\partial V$. Then the map
$$\id\colon (\partial V\setminus \bar U,d_{V,U})\to (\partial V\setminus \bar U,k_Z)$$
is quasisymmetric, quantitatively depending only on $K$.
\end{corollary}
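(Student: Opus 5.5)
The plan is to split into two cases according to the size of $\Delta_\chi(\partial U,\partial V)$, using the threshold $K$ (or a suitable constant depending on $K$).

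\emph{Case 1: $\Delta_\chi(\partial U,\partial V)\leq K$.} This includes the tangent case, since then $\dist_\chi(\partial U,\partial V)=0$. Here Lemma \ref{lemma:hyperbolic_Z_chi} applies directly and gives $d_{V,U}\simeq_K k_Z$ on $\partial V\setminus \bar U$. A bi-Lipschitz identity map is quasisymmetric with $\eta(t)=C(K)^2t$ by \ref{q:bilip}, so this case is immediate.

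\emph{Case 2: $\Delta_\chi(\partial U,\partial V)> K$.} Then $\bar U\cap\bar V=\emptyset$ and, since $\infty\in\bar U$, in fact $\infty\in U$. By Remark \ref{remark:quasimobius:cross}, $\Delta_e(\partial U,\partial V)\geq c(K)>0$. We then apply Lemma \ref{lemma:kz_estimate} with $K$ replaced by $\max\{K,c(K)^{-1}\}$ to obtain
$$k_Z(z,w)\simeq_K \Delta_e(\partial U,\partial V)\,d_{V,U}(z,w)$$
for all $z,w\in\partial V$. The identity is therefore bi-Lipschitz up to the multiplicative factor $\Delta_e(\partial U,\partial V)$. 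That factor is unbounded, but it is a single constant, and quasisymmetry is scale invariant. So the identity is $\eta$-quasisymmetric with $\eta(t)=C(K)^2t$.

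The only point needing care is the reduction in Case 2. We must check that Lemma \ref{lemma:kz_estimate} applies with constants depending only on $K$: its hypotheses ($K$-quasidisks, disjoint closures, $\infty\in U$, lower bound on $\Delta_e$) follow from the chordal lower bound via Lemma \ref{lemma:quasimobius:cross} and Remark \ref{remark:quasimobius:cross}. We also note that any $K$-quasiconformal reflection may be used for $Z$ in both lemmas. No real obstacle is expected beyond this bookkeeping.
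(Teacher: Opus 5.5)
Your strategy is the same as the paper's. You split according to the size of $\Delta_\chi(\partial U,\partial V)$ and invoke Lemma \ref{lemma:hyperbolic_Z_chi} in the bounded case; the paper uses the threshold $1$ instead of $K$, which is immaterial. In the separated case you use Lemma \ref{lemma:kz_estimate} and note that the factor $\Delta_e(\partial U,\partial V)$ is a single constant, hence harmless for quasisymmetry.

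There is one false step in Case 2. The conditions $\bar U\cap\bar V=\emptyset$ and $\infty\in\bar U$ do not give $\infty\in U$. For example, $U$ may be the upper half-plane and $V$ a small disk far below the real axis, so that $\infty\in\partial U$. In that situation Lemma \ref{lemma:kz_estimate} does not apply. Moreover, $\Delta_e(\partial U,\partial V)$ and Remark \ref{remark:quasimobius:cross} do not make sense there, since $\partial U$ is unbounded in $\C$.

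You need the normalization the paper performs: compose with a rotation $\psi$ of the sphere (a chordal isometry) that takes some point of $U$ to $\infty$. Then:
\begin{itemize}
\item $\Delta_\chi(\partial U,\partial V)$ and the quasidisk constants are preserved.
\item $d_{V,U}$ is invariant, because $\psi$ is conformal.
\item $\psi\circ g\circ\psi^{-1}$ is a $K$-quasiconformal reflection along $\psi(\partial V)$, and its associated domain is $\psi(Z)$.
\item Since $Z\subset U^*$ contains neither $\infty$ nor $\psi^{-1}(\infty)$, Theorem \ref{theorem:gehring_osgood} with $K=1$, applied to $\psi$ and $\psi^{-1}$, gives $k_{\psi(Z)}(\psi(z),\psi(w))\simeq k_Z(z,w)$ with a universal constant.
\end{itemize}
After this reduction your Case 2 argument goes through as written.
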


\begin{proof}
If $\Delta_{\chi}(\partial U,\partial V)\leq 1$, the claim follows from Lemma \ref{lemma:hyperbolic_Z_chi}. Suppose that $\Delta_{\chi}(\partial U,\partial V)> 1$. In particular $\bar U\cap \bar V=\emptyset$.  Let $g$ be a $K$-quasiconformal reflection along $\partial V$ and $Z$ be the domain bounded by $\partial U$ and $g(\partial U)$. By using a conformal isometry of the sphere, we may assume that $\infty \in U$; note that $d_{V,U}$ remains invariant under a conformal isometry and $k_Z$ is altered in a bi-Lipschitz way by Theorem \ref{theorem:gehring_osgood}. By Remark \ref{remark:quasimobius:cross} we have $\Delta_e(\partial U,\partial V)\gtrsim 1$. By Lemma \ref{lemma:kz_estimate}, the identity map $\id\colon (\partial V,d_{V,U})\to (\partial V,k_Z)$ is quasisymmetric.
\end{proof}

We are now in position to prove Theorem \ref{theorem:relative_hyperbolic_qc}.

\begin{proof}[Proof of Theorem \ref{theorem:relative_hyperbolic_qc}]
Let $U'=f(U)$, $V'=f(V)$, $d=d_{V,U}$, $d'=d_{V',U'}$. We perform some normalizations. By composing $f$ with suitable M\"obius transformations, we assume that  $\infty \in U$ and $f(\infty)=\infty$, so $\infty\in U'$. Let $Z$ be the domain bounded by $\partial U$ and some quasiconformal reflection of $\partial U$ along $\partial V$. If $g\colon \widehat \C \to \widehat \C$ is this quasiconformal reflection, then $h=f\circ g\circ f^{-1}$ is a quasiconformal reflection along $\partial V'$ and the domain $Z'=f(Z)$ is bounded by $\partial U'$ and $h(\partial U')$. By Corollary \ref{corollary:quasisymmetric_dk} the identity map $\id \colon (\partial V\setminus \bar U,d)\to (\partial V\setminus \bar {U},k_Z)$ is quasisymmetric. Theorem \ref{theorem:qc_qs_qh} implies that $f\colon (\partial V\setminus \bar U, k_Z) \to (\partial V'\setminus \bar{U'}, k_{Z'})$ is quasisymmetric. Finally, again by Corollary \ref{corollary:quasisymmetric_dk} the map $\id\colon (\partial V'\setminus \bar{U'},k_{Z'})\to (\partial V'\setminus \bar{U'},d')$ is quasisymmetric. Therefore, $f\colon (\partial V\setminus \bar U,d)\to (\partial V'\setminus \bar{U'},d')$ is quasisymmetric. All implications are quantitative.
\end{proof}

\subsection{Characterizing tangent quasicircles}\label{section:relative:tangent}
In this section we use only  planar topology, so the point $\infty$ does not lie in the closure of unbounded sets.

\begin{lemma}\label{lemma:conformal_strip}
Let $U,V\subset \C$ be unbounded Jordan regions such that $\bar U\cap \bar V=\emptyset$ and let $\Omega=\C\setminus (\bar U\cup \bar V)$. There exists a conformal map $\phi \colon \Omega\to S=\{z\in \C: 0<\im(z)<1\}$ such that $\phi$ has an extension to a homeomorphism  $\phi\colon \bar {\Omega}\to \bar S$ that satisfies $\phi(\partial V)=\R \times \{0\}$ and $\phi(\partial U)=\R\times\{1\}$. 
\end{lemma}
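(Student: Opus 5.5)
Here $\Omega$ is understood in the topology of the sphere. Since $U,V$ are unbounded Jordan regions, the sets $\partial U\cup\{\infty\}$ and $\partial V\cup\{\infty\}$ are Jordan curves in $\widehat\C$. The hypothesis $\bar U\cap\bar V=\emptyset$ in planar topology means that they meet only at $\infty$. The plan is to first map $\Omega$ conformally onto the unit disk and then onto the strip. The key point is that $\Omega$ is a Jordan domain in $\widehat\C$. Its boundary is $\partial U\cup\partial V\cup\{\infty\}$, which is a Jordan curve: it is the union of two arcs, each obtained by traversing $\partial U$ or $\partial V$ through $\infty$, and these arcs share only the point $\infty$. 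To see that this curve bounds $\Omega$, note the following. The complement of the Jordan curve $\partial V\cup\{\infty\}$ consists of $V$ and $V^*$. Since $U$ is connected and disjoint from $\bar V$, it lies in $V^*$. Then $\Omega=V^*\setminus \bar U$, and the region $V^*$ is cut along the crosscut $\partial U$ (an arc with both ends at $\infty$) into two components, one of which is $U$. The other is $\Omega$, which is therefore a Jordan domain with boundary $\partial U\cup\partial V\cup\{\infty\}$.

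By the Riemann mapping theorem and Carathéodory's theorem, there is a conformal map $\psi\colon \Omega\to\D$ that extends to a homeomorphism $\bar\Omega\to\bar\D$, where the closure is taken in $\widehat\C$. Let $p=\psi(\infty)\in\partial\D$. The two open arcs $\psi(\partial V)$ and $\psi(\partial U)$ are then the two complementary arcs of $\partial\D\setminus\{p\}$. To obtain the strip, I would pick a second point $q\in\partial\D$ separating the two arcs. This is impossible as stated, because the arcs only meet at $p$. The correct picture is that the strip $S$ has two ends, at $\pm\infty$, corresponding to two distinct prime ends of $\Omega$ at $\infty$.

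So the boundary of $\Omega$ should be viewed with $\infty$ appearing twice: once where $\partial U$ and $\partial V$ approach $\infty$ in one direction, and once in the other. I would therefore revise the first step. Parametrize $\partial V$ as $v\colon\R\to\C$ and $\partial U$ as $u\colon\R\to\C$, both tending to $\infty$ at $\pm\infty$. Near $\infty$, the domain $\Omega$ has two ``channels'', one between the ends $v(+\infty),u(+\infty)$ and one between $v(-\infty),u(-\infty)$. To make this rigorous, I would apply a Möbius map sending a point of $U$ to $\infty$, so that $\infty$ becomes a finite boundary point $0$. Then $\partial\Omega$ is a closed curve that passes through $0$ twice and is otherwise simple: it is a ``figure eight'' pinched at $0$, with $\Omega$ lying in the region between the two loops. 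Equivalently, $\Omega$ is simply connected and its boundary is locally connected, so the Riemann map $\psi\colon\Omega\to\D$ extends continuously to $\bar\D$. The point $0$ has exactly two preimages $p_1\neq p_2$ under this extension, and elsewhere the extension is injective. The arcs of $\partial\D\setminus\{p_1,p_2\}$ map homeomorphically onto $\partial V$ and $\partial U$.

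Finally, compose with the conformal map $\D\to S$ given by $\zeta\mapsto \frac{1}{\pi}\log\frac{\zeta-p_1}{\zeta-p_2}$, suitably normalized. This map sends $p_1,p_2$ to the ends $\mp\infty$ and the two arcs to $\R\times\{0\}$ and $\R\times\{1\}$, with the labels swapped if necessary. The composition $\phi$ has the required homeomorphic extension $\bar\Omega\to\bar S$ in planar topology, because the only non-injectivity of the boundary extension was at the removed point $\infty$.

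The main obstacle is the topological step: verifying that the boundary extension of $\psi$ identifies exactly two prime ends over $\infty$ and is injective elsewhere. I would derive this from the Jordan arc structure of $\partial U$ and $\partial V$ via Carathéodory's prime end theory, checking that each finite boundary point is accessible by a single prime end because $\partial U$ and $\partial V$ are disjoint Jordan arcs.
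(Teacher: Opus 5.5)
Once you discard your first paragraph, your argument is correct and is essentially the paper's. That paragraph calls $\Omega$ a Jordan domain, which is false, as you notice yourself: $\partial\Omega\cup\{\infty\}$ is two Jordan curves touching at one point. From there you proceed as the paper does: normalize by a M\"obius map so that the tangency point is finite, and extend the Riemann map $\D\to\Omega$ continuously to $\bar\D$ by Carath\'eodory's theorem. You then observe that the tangency point has exactly two preimages and every other boundary point has one, and compose with a fixed map onto the strip. The preimage count is precisely the step you flag as the main obstacle; the paper obtains it from Pommerenke's Proposition 2.5, which compares the number of preimages of $z$ with the number of components of $\partial\Omega\setminus\{z\}$.

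The remaining differences are cosmetic. You write the last map explicitly as $\zeta\mapsto\frac{1}{\pi}\log\frac{\zeta-p_1}{\zeta-p_2}$ (plus a vertical translation), whereas the paper uses a conformal map $\beta$ from $\D$ onto the M\"obius image $\D\setminus\bar{V'}$ of the strip with $\beta(\pm 1)=1$. Also note that it is $\psi^{-1}\colon\D\to\Omega$, not $\psi$, that extends continuously to $\bar\D$.
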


\begin{proof}
Since $U,V$ are unbounded, in the topology of the sphere their boundaries meet at $\infty$. By applying a M\"obius transformation, we may assume that $U$ contains a neighborhood of $\infty$, $V$ is bounded, $V\subset U^*$, and $\partial V\cap \partial U=\{1\}$.  

The domain $\Omega$ is simply connected and has locally connected boundary. By the Riemann mapping theorem, there exists a conformal map $\alpha\colon\D\to \Omega$. By Carath\'eodory's theorem \cite{Pommerenke:conformal}*{Theorem 2.1}, $\alpha$ has an extension to a continuous map $\alpha\colon \bar \D\to \bar{\Omega}$. Note that $\partial \Omega=\partial U\cup \partial V$ and $\partial \Omega \setminus\{1\}$ has two components.  By \cite{Pommerenke:conformal}*{Proposition 2.5},  we conclude that $\alpha^{-1}(1)$ consists of two points in $\partial \D$. On the other hand, for $z\in \partial \Omega\setminus \{1\}$  the set $\partial \Omega \setminus \{z\}$ is connected, so $\alpha^{-1}(z)$ is a single point. By precomposing $\alpha$ with a M\"obius transformation of the unit disk, we may assume that $\alpha(1)=\alpha(-1)=1$ and $\alpha(i)\in \partial V$. 

Similarly, we use a M\"obius transformation that maps the half-plane $\{z\in \C:\im(z)>1\}$ onto $U'=\D^*$ and the lower half-plane $\{z\in \C: \im(z)<0\}$ onto a disk $V'\subset \D$ that is tangent to $\D$ at $1$. There exists a conformal map $\beta\colon \D\to \Omega'=\D\setminus \bar {V'}$ (which can be written explicitly)  that extends to a continuous map of the closures and satisfies $\beta(1)=\beta(-1)=1$ and $\beta(i)\in \partial V'$. 

The map $\phi=\beta\circ \alpha^{-1}\colon \Omega\to \Omega'$ extends to a homeomorphism of the closures that fixes the point $1$ and maps $\partial V$ onto $\partial V'$ and $\partial U$ onto $\partial U'$. With appropriate compositions this map yields the desired map between the domains in the statement of the lemma. 
\end{proof}
 
We now restate and prove Theorem \ref{theorem:intro:characterization_tangent}.

\begin{theorem}\label{theorem:characterization_tangent}
Let $U,V\subset \C$ be unbounded quasidisks such that $\bar U\cap \bar V=\emptyset$. There exists a quasiconformal map $f\colon \C\to \C$ that maps $U$ and $V$ to half-planes if and only if the identity map 
$$\id \colon (\partial V,d_{V,U})\to (\partial V,|\cdot|)$$
is quasisymmetric, quantitatively.  
\end{theorem}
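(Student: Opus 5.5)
Both directions rest on Theorem \ref{theorem:relative_hyperbolic_qc} together with the explicit formula of Lemma \ref{lemma:parallel} \ref{lemma:parallel:1}.

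\emph{Necessity.} Suppose $f\colon\C\to\C$ is $K$-quasiconformal and maps $U,V$ onto half-planes $U',V'$. These half-planes have disjoint closures in $\C$ and are therefore bounded by parallel lines. By Lemma \ref{lemma:parallel} \ref{lemma:parallel:1}, $d_{V',U'}$ is a constant multiple of the Euclidean metric on $\partial V'$. Hence the identity $(\partial V',d_{V',U'})\to(\partial V',|\cdot|)$ is $\eta$-quasisymmetric with $\eta(t)=t$. Theorem \ref{theorem:relative_hyperbolic_qc} shows that $f\colon(\partial V,d_{V,U})\to(\partial V',d_{V',U'})$ is quasisymmetric, quantitatively. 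Since $f$ fixes $\infty$, property \ref{q:qc_qs} (or \ref{q:qm_qc} combined with \ref{q:qm_qs}) shows that $f^{-1}\colon(\partial V',|\cdot|)\to(\partial V,|\cdot|)$ is quasisymmetric, quantitatively. Composing these three maps with \ref{q:inverse} gives that $\id\colon(\partial V,d_{V,U})\to(\partial V,|\cdot|)$ is quasisymmetric. The quasidisk constants of $U,V$ depend only on $K$, so the whole chain is quantitative.

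\emph{Sufficiency.} Assume $U,V$ are $L$-quasidisks and the identity is $\eta$-quasisymmetric. Take the conformal map $\phi\colon\Omega\to S$ from Lemma \ref{lemma:conformal_strip}. By the conformal invariance of $d_{V,U}$, the map $\phi$ is an isometry from $(\partial V,d_{V,U})$ onto $(\R,d_{S^-,S^+})$, where $S^-=\{\im z<0\}$ and $S^+=\{\im z>1\}$. Strictly, invariance requires the hyperbolic metric of $U^*$, which is not the same as that of $\Omega$. To handle this, I would use the Schwarz reflection of $\phi$ across $\partial V$ as a conformal map defined on a neighborhood, or more concretely I would estimate directly that $d_{S^-,S^+}$ on $\R$ is comparable to the Euclidean metric (Lemma \ref{lemma:parallel} \ref{lemma:parallel:1}). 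The goal is that $\psi=\phi|_{\partial V}\colon(\partial V,|\cdot|)\to\R$ is quasisymmetric. I would obtain this by composing the hypothesis (inverted) with the bi-Lipschitz identification of $d_{V,U}$ and $|\cdot|$ on $\R$. The main obstacle is this comparison. Conformal invariance only holds for maps defined on $U^*$, whereas $\phi$ is defined only on $\Omega$. To get around this, I would extend $\phi|_\Omega$ by reflection: $\phi$ extends to a quasiconformal map $\Phi\colon U^*\to\{\im z<1\}$, obtained by setting $\Phi=\rho\circ\phi\circ g$ on $V$, where $g$ is a quasiconformal reflection along $\partial V$ and $\rho(z)=\bar z$. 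The only catch is that $\phi\circ g$ must make sense on $V$, which is where Corollary \ref{corollary:quasisymmetric_dk} with the domain $Z$ enters. Once $\Phi$ is available, Theorem \ref{theorem:relative_hyperbolic_qc} applied to $\Phi$ (extended by reflection across $\partial U$ to all of $\C$, via Proposition \ref{proposition:extension_by_reflection}) shows that $\phi\colon(\partial V,d_{V,U})\to(\R,|\cdot|)$ is quasisymmetric. Combined with the hypothesis, this makes $\phi|_{\partial V}\colon(\partial V,|\cdot|)\to\R$ quasisymmetric. By the symmetry of the roles, or by applying the same argument to $U$ and using that $\phi$ identifies the two boundary parametrizations through the strip, $\phi|_{\partial U}$ is likewise quasisymmetric onto $\R+i$.

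\emph{Construction of $f$.} By Corollary \ref{corollary:beurling_ahlfors_embedding}, the quasisymmetric maps $\phi|_{\partial V}$ and $\phi|_{\partial U}$ extend to quasiconformal maps $F_V\colon V\to S^-$ and $F_U\colon U\to S^+$. The remaining task is to replace $\phi$ on $\Omega$ by a quasiconformal map whose boundary values are the already-fixed quasisymmetric maps; $\phi$ itself is conformal and has exactly these boundary values. Gluing $F_V$, $\phi$ and $F_U$ gives a homeomorphism of $\C$ that is quasiconformal off the quasicircles $\partial U$ and $\partial V$. By removability it is quasiconformal on all of $\C$, and it maps $U$ and $V$ to half-planes. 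Every constant depends only on $L$ and $\eta$.
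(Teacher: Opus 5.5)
Your necessity direction is correct and coincides with the paper's. The sufficiency direction has a genuine gap at its central step, the quasisymmetry of $\phi|_{\partial V}$.

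The map $\rho\circ\phi\circ g$ is defined only on $g^{-1}(\Omega)=V\setminus g^{-1}(\bar U)$. Gluing it to $\phi$ therefore extends $\phi$ only to the domain $Z$ bounded by $\partial U$ and $g^{-1}(\partial U)$, not to $U^*$. Corollary \ref{corollary:quasisymmetric_dk} does nothing to make $\phi\circ g$ meaningful on the rest of $V$. Pushing $\Phi$ further into $V$ would require a quasiconformal extension with prescribed boundary values on $g^{-1}(\partial U)$. That is essentially the quasisymmetry of $\phi|_{\partial U}$, which you do not have.

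Worse, a quasiconformal $\Phi\colon U^*\to\{\im z<1\}$ carrying $V$ onto the lower half-plane would, via Proposition \ref{proposition:extension_by_reflection}, already be the map the theorem asks for. So applying Theorem \ref{theorem:relative_hyperbolic_qc} to it is circular.

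The missing idea is to stay on $Z$ and use $k_Z$ as a proxy for $d_{V,U}$. By removability of $\partial V$, the glued map is a quasiconformal homeomorphism of $Z$ onto $Z'=\{-1<\im z<1\}$. Then Corollary \ref{corollary:quasisymmetric_dk}, Theorem \ref{theorem:qc_qs_qh}, and the identity $k_{Z'}(x,y)=|x-y|$ for $x,y\in\R$ show that $\phi\colon(\partial V,d_{V,U})\to(\R,|\cdot|)$ is quasisymmetric. Your hypothesis then gives quasisymmetry of $\phi|_{\partial V}$ in the Euclidean metric.

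The paper does the same after first normalizing $V$ to the lower half-plane by a global quasiconformal map, transporting the hypothesis with Theorem \ref{theorem:relative_hyperbolic_qc} and property \ref{q:qc_qs}. Schwarz reflection then makes the extension to $Z$ conformal. Lemma \ref{lemma:hyperbolic_Z_chi} gives a bi-Lipschitz comparison of $d_{V,U}$ with $k_Z$, and Theorem \ref{theorem:gehring_osgood} suffices.

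Second, your claim that $\phi|_{\partial U}$ is quasisymmetric ``by symmetry of the roles'' is unsupported. The hypothesis concerns only $d_{V,U}$ on $\partial V$, and nothing is assumed about $d_{U,V}$ on $\partial U$. It is also unnecessary. Once $\phi|_{\partial V}$ is quasisymmetric, extend it into $V$ by Beurling--Ahlfors. Glue with $\phi$ on $\Omega$ to obtain a quasiconformal map of $U^*$ onto $\{\im z<1\}$. Then extend across $\partial U$ with Proposition \ref{proposition:extension_by_reflection}, which only uses that $U$ is a quasidisk. This is how the paper finishes, and quasisymmetry of the boundary values on $\partial U$ then comes for free.
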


\begin{proof}
Let $f\colon \C\to \C$ be a quasiconformal map that maps $U,V$ to a pair of half-planes $U',V'\subset \C$ with disjoint closures, respectively. By property \ref{q:qc_qs}, $f$ and $f^{-1}$ are quasisymmetric maps. Let $d=d_{V,U}$ and $d'=d_{V',U'}$. By Theorem \ref{theorem:relative_hyperbolic_qc}, $f\colon (\partial V,d)\to (\partial V',d')$ is quasisymmetric.  Next, Lemma \ref{lemma:parallel} implies that $\id\colon (\partial V',d')\to (\partial V',|\cdot|)$ is quasisymmetric. Finally, the map $f^{-1}\colon (\partial V',|\cdot|)\to (\partial V,|\cdot|)$ is quasisymmetric. Therefore, the identity map $\id \colon (\partial V,d)\to (\partial V,|\cdot|)$ is quasisymmetric.  

Conversely, suppose that the identity map $\id \colon (\partial V,d_{V,U})\to (\partial V,|\cdot|)$ is quasisymmetric. We perform a normalization. By the definition of a quasidisk, there exists a quasiconformal map  $g\colon \C\to \C$ that maps $V$ onto the lower half-plane $g(V)=\{z\in \C: \im(z)<0\}$. Then $g(U)$ is an unbounded Jordan region contained in the upper half-plane. By Theorem \ref{theorem:relative_hyperbolic_qc}, $g\colon (\partial V,d_{V,U})\to (g(\partial V),d_{g(V),g(U)})$ is quasisymmetric and also $g\colon (\partial V,|\cdot|)\to (g(\partial V),|\cdot|)$ is quasisymmetric by property \ref{q:qc_qs}. Therefore, using our main assumption, we see that the identity map $\id \colon (g(\partial V),d_{g(V),g(U)})\to (g(\partial V),|\cdot|)$ is quasisymmetric.  By replacing $V$ with $g(V)$ and $U$ with $g(U)$ we assume that the identity map $\id \colon (\partial V,d_{V,U})\to (\partial V,|\cdot|)$ is quasisymmetric, $\partial V$ is the real line $\R\times \{0\}$ and $U$ is unbounded and contained in the upper half-plane.

By Lemma \ref{lemma:conformal_strip}, there exists a conformal map $f$ from $\C\setminus (\bar U\cup \bar V)$ onto the strip $S=\{z\in \C: 0<\im(z)<1\}$  that extends to a homeomorphism of the closures and satisfies $f(\partial U)=\R\times\{1\}$ and $f(\R\times\{0\})=\R\times\{0\}$. Let $Z$ be the domain bounded by $\partial U$ and by its reflection along the real line. By Schwarz reflection, we can extend $f$ to a conformal map from $Z$ onto $Z'=\{z\in \C:-1<\im (z)<1\}$. The map $f|_{\R\times\{0\}}$ is increasing. Indeed, if we write $f=u+iv$, the fact that $S\subset \UHP$ implies that $v_y(x,0)\geq 0$, so $u_x(x,0)\geq 0$ for all $x\in \R$. 

We identity $\R\times\{0\}$ with $\R$. Recall that the map $\id \colon (\R,|\cdot|)\to (\R,d_{V,U})$ is quasisymmetric by the normalizations. By Lemma \ref{lemma:hyperbolic_Z_chi}, the map $\id\colon (\R ,d_{V,U})\to(\R,k_Z)$ is bi-Lipschitz. By Theorem \ref{theorem:gehring_osgood}, $f\colon (\R,k_Z)\to (\R,k_{Z'})$ is bi-Lipschitz. Finally, in the strip $Z'$ we have the explicit formula $k_{Z'}(z,w)= |z-w|$  for $z,w\in \R$. By considering the composition of all these maps, we see that $f\colon \R\to \R$ is quasisymmetric and increasing. By the Beurling--Ahlfors extension theorem (Theorem \ref{theorem:beurling_ahlfors}) and an appropriate reflection, we can extend $f$ to a quasiconformal homeomorphism of the lower half-plane $V$. The fact that $\R$ is removable for quasiconformal homeomorphisms implies that $f\colon U^*\to \{z\in \C: \im(z)<1\}$ is a quasiconformal homeomorphism. Note that $f(z)\to \infty$ as $z\to\infty$. Since $U$ is a quasidisk, we can extend $f$ a quasiconformal map of $\C$ by Proposition \ref{proposition:extension_by_reflection}. 
\end{proof}

\subsection{Characterizing quasiannuli}\label{section:relative:quasiannuli}
In this section we use the topology of the sphere and the chordal metric. We restate and prove Theorem \ref{theorem:intro:characterization_quasiannuli}.

\begin{theorem}\label{theorem:characterization_quasiannuli}
Let $U,V\subset \widehat \C$ be quasidisks such that $\bar U\cap \bar V=\emptyset$. There exists a quasiconformal map $f\colon \widehat\C\to \widehat \C$ that maps $U$ and $V$ to disks if and only if the identity map
$$\id \colon (\partial V,d_{V,U})\to (\partial V,\chi)$$
is quasi-M\"obius, quantitatively.  
\end{theorem}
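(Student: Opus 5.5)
The necessity direction mirrors the tangent case. Suppose $f\colon\widehat\C\to\widehat\C$ is $K$-quasiconformal with $f(U)=U'$ and $f(V)=V'$ disks. After a M\"obius normalization, which preserves $d_{V',U'}$ and cross ratios, we may take $U'^*$ and $V'$ to be concentric disks $\D(0,R)$ and $\D(0,r)$. The chain of maps is
$$(\partial V,d_{V,U})\xrightarrow{f}(\partial V',d_{V',U'})\xrightarrow{\id}(\partial V',\chi)\xrightarrow{f^{-1}}(\partial V,\chi).$$
\begin{itemize}
\item The first map is quasisymmetric by Theorem \ref{theorem:relative_hyperbolic_qc}, hence quasi-M\"obius by \ref{q:qs_qm}.
\item The second is quasisymmetric, hence quasi-M\"obius, by Lemma \ref{lemma:parallel}\ref{lemma:parallel:2}: there $d_{V',U'}$ is comparable to a scaled Euclidean metric on $\partial V'$, and Euclidean and chordal cross ratios agree.
\item The third is quasi-M\"obius by \ref{q:qm_qc}.
\end{itemize}
All constants depend only on $K$ and the quasidisk constants, so the composition is quasi-M\"obius quantitatively.

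For sufficiency, assume the identity $(\partial V,d_{V,U})\to(\partial V,\chi)$ is $\eta$-quasi-M\"obius and $U,V$ are $L$-quasidisks.
\begin{enumerate}
\item Choose a quasiconformal $g$ of $\widehat\C$, with constant depending only on $L$, mapping $V$ onto $\D$ and $\infty$ into $U$. By Theorem \ref{theorem:relative_hyperbolic_qc} and \ref{q:qm_qc}, the hypothesis transfers to the pair $(g(U),g(V))$ with controlled $\eta$. So we may assume $V=\D$ and $\infty\in U$, with $U$ a quasidisk.
\item Let $\Omega=\widehat\C\setminus(\bar U\cup\bar V)$, a ring domain. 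Take a conformal map $\phi\colon\Omega\to\mathbb A(0;1,\rho)$ sending $\partial V$ to $\mathbb S(0,1)$.
\item Extend $\phi$ by Schwarz reflection across $\partial V=\partial\D$ to a conformal map $\phi\colon Z\to\mathbb A(0;\rho^{-1},\rho)$. Here $Z$ is bounded by $\partial U$ and its reflection in the unit circle, which is a quasiconformal reflection.
\item Lemma \ref{lemma:hyperbolic_Z_chi} or Corollary \ref{corollary:quasisymmetric_dk} compares $d_{V,U}$ and $k_Z$ on $\partial\D$. Theorem \ref{theorem:gehring_osgood} and Theorem \ref{theorem:qc_qs_qh} transfer $k_Z$ to $k_{\phi(Z)}$. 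On $\mathbb S(0,1)$, $k_{\phi(Z)}$ is comparable to Euclidean distance divided by $\rho-1$, by Lemma \ref{lemma:qh_estimate} locally and a chaining argument as in Lemma \ref{lemma:relatively_separated}.
\item Together with the hypothesis, steps 4 gives that $\phi|_{\partial\D}\colon\partial\D\to\partial\D$ is quasi-M\"obius. The scale factor $(\rho-1)^{-1}$ cancels in cross ratios, so this is quantitative and independent of $\rho$.
\item Extend $\phi|_{\partial\D}$ into $\D$ by Corollary \ref{corollary:beurling_ahlfors}\ref{corollary:beurling_ahlfors:1}. Combined with $\phi$ on $\Omega$ and quasiconformal removability of $\partial\D$, this gives a quasiconformal homeomorphism $U^*\to\D(0,\rho)$ sending $V$ onto $\D$.
\item Extend to $\widehat\C$ using Proposition \ref{proposition:extension_by_reflection} applied to $U^*$. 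This maps $U$ and $V$ to disks.
\end{enumerate}

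The main obstacle is step 4, keeping everything quantitative independently of the modulus $\rho$. Corollary \ref{corollary:quasisymmetric_dk} gives only a quasisymmetric comparison, not a bi-Lipschitz one, so we should work throughout at the level of quasisymmetry and cross ratios. We then check that
$$\id\colon(\mathbb S(0,1),k_{\phi(Z)})\to(\mathbb S(0,1),|\cdot|)$$
is quasisymmetric with a distortion function independent of $\rho$.

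For $\rho$ close to $1$ this is the concentric-circles regime, handled by Lemma \ref{lemma:qh_estimate} with balls of radius comparable to $\rho-1$ plus chaining. For $\rho$ large, $\partial Z$ lies far from the unit circle and $k_{\phi(Z)}$ is comparable to Euclidean distance on the unit circle, as in the proof of Lemma \ref{lemma:kz_estimate}.

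A further subtlety is the quasi-M\"obius versus quasisymmetric passage on the bounded curve $\partial\D$. Here \ref{q:qs_qm} applies, since three well-separated points on $\partial\D$ stay separated under $\phi$. This follows from rotational symmetry of the annulus after composing $\phi$ with a rotation, plus \eqref{definition:qs}.
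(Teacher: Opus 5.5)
Your proposal is correct and follows the paper's proof essentially step for step. Necessity uses the same chain through Theorem \ref{theorem:relative_hyperbolic_qc} and Lemma \ref{lemma:parallel}; sufficiency uses the same normalization $V=\D$, $\infty\in U$, the conformal map onto a round annulus with Schwarz reflection to $Z$, the comparison $d_{V,U}\to k_Z\to k_{Z'}\to|\cdot|$ via Corollary \ref{corollary:quasisymmetric_dk} and Theorem \ref{theorem:gehring_osgood}, then Corollary \ref{corollary:beurling_ahlfors}\ref{corollary:beurling_ahlfors:1} and Proposition \ref{proposition:extension_by_reflection}. Your final paragraph is unnecessary, and its claim is false in general: for nearly tangent non-concentric circles, $\phi|_{\partial\D}$ is a disk automorphism that squeezes most of $\partial\D$ into a small arc, and no rotation fixes this. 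Nothing depends on that claim, though, because part \ref{corollary:beurling_ahlfors:1} of Corollary \ref{corollary:beurling_ahlfors} only needs $\phi|_{\partial\D}$ to be quasi-M\"obius and orientation-preserving (automatic here, as the boundary map of a conformal map of the outer region).
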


The proof follows almost identically the steps of the proof of Theorem \ref{theorem:characterization_tangent}. The main difference occurs at the normalizations that we have to perform.

\begin{proof}
Let $f\colon \widehat \C\to \widehat \C$ be a quasiconformal map that maps $U,V$ to a pair of disks $U',V'$ with disjoint closures. By postcomposing $f$ with a M\"obius transformation, we assume that $f\colon \widehat \C\to \widehat \C$ is a quasiconformal map that maps $U$ onto $\D(0,R)^*$ and $V$ onto $\D(0,r)$ for some $0<r<R$. Let $d=d_{V,U}$  and $d'=d_{V',U'}$. By Theorem \ref{theorem:relative_hyperbolic_qc}, $f\colon (\partial V,d)\to (\partial V',d')$ is quasisymmetric.  Next, Lemma \ref{lemma:parallel} implies that $\id\colon (\partial V',d')\to (\partial V',|\cdot|)$ is quasisymmetric. Also, $\id\colon (\partial V',|\cdot|)\to (\partial V',\chi)$ is quasi-M\"obius because it preserves cross ratios. Finally, the map $f^{-1}\colon (\partial V',\chi)\to  (\partial V,\chi)$ is quasi-M\"obius by \ref{q:qm_qc}. Therefore, the identity map $\id \colon (\partial V,d)\to (\partial V,\chi)$ is quasi-M\"obius.  

Conversely, suppose that the identity map $\id \colon (\partial V,d_{V,U})\to (\partial V,\chi)$ is quasi-M\"obius. We perform a normalization. There exists a quasiconformal map $g\colon \widehat \C\to \widehat \C$ that maps $V$ onto $\D$ such that $\infty \in g(U)$. By \ref{q:qm_qc} the map $g\colon (\partial V,\chi)\to (\partial \D,|\cdot|)$ is quasi-M\"obius.  The set $g(U)$ is a quasidisk contained in $\D^*$. By Theorem \ref{theorem:relative_hyperbolic_qc}, $g\colon (\partial V,d_{V,U})\to (\partial \D, d_{g(V),g(U)})$ is quasisymmetric. Therefore, by performing this normalization, we may assume that $V=\D$, $U$ is a quasidisk containing $\infty$, and $\id\colon (\partial V, d_{V,U})\to (\partial V,|\cdot|)$ is quasi-M\"obius. 

There exists a conformal map $f$ from $\C\setminus (\bar U\cup \bar \D)$ onto the annulus $\mathbb A(0;1,R)$ for some $R>1$ that extends to a homeomorphism of the closures and satisfies $f(\partial \D)=\partial \D$ and $f(\partial U)=\mathbb S(0,R)$; see Theorems 15.7.9 and 15.3.4 in \cite{Conway:complex2}. Let $Z$ be the domain bounded by $\partial U$ and by its reflection along $\partial \D$. By Schwarz reflection, we can extend $f$ to a conformal map from $Z$ onto $Z'=\mathbb A(0;1/R,R)$. Note that the map $f|_{\partial \D}$ is orientation-preserving. 

Recall that the map $\id\colon (\partial \D,|\cdot|)\to (\partial \D, d_{V,U})$ is quasi-M\"obius by our normalization. By Corollary \ref{corollary:quasisymmetric_dk}, the map $\id\colon (\partial \D ,d_{V,U})\to (\partial \D,k_Z)$ is quasisymmetric. By Theorem \ref{theorem:gehring_osgood}, $f\colon (\partial \D,k_Z)\to (\partial \D,k_{Z'})$ is bi-Lipschitz. Finally, in the annulus $Z'$ the map $\id\colon (\partial \D,k_{Z'}) \to (\partial \D,|\cdot|)$ is quasisymmetric, because
$$k_{Z'}(z,w) \simeq \frac{|z-w|}{R-1}$$
for $z,w\in \partial \D$, with uniform constants. Thus, $f\colon (\partial \D,|\cdot|)\to (\partial \D,|\cdot|)$ is quasi-M\"obius and orientation-preserving. By Corollary \ref{corollary:beurling_ahlfors}, there exists an extension of $f$ to a quasiconformal homeomorphism of $V$. The quasiconformal removability of $\partial \D$ implies that $f$ is a quasiconformal map from $U^*$ onto $\D(0,R)$. Since $U$ is a quasidisk, we can extend $f$ to a quasiconformal map of $\widehat \C$ by Proposition \ref{proposition:extension_by_reflection}.
\end{proof} 

It is desirable to have that the map $f$ in Theorem \ref{theorem:characterization_quasiannuli} is quasisymmetric and maps $\partial U,\partial V$ to concentric Euclidean circles. We give a sufficient condition.

\begin{corollary}\label{corollary:characterization_quasiannuli_qs}
Let $K\geq 1$ and $R>r>0$. Let $V$ be a $K$-quasidisk such that $0\in V$ and $$\partial V\subset \{z\in \C: \max\{r-K(R-r),K^{-1}r\} \leq |z|\leq r\}$$ and let $U\subset\widehat \C$ be a $K$-quasidisk such that $\infty \in U$ and $\partial U \subset \{z\in \C: R\leq |z|\leq R+K(R-r)\}$. Then there exists a quasiconformal map of $\C$ that maps $V$ and $U^*$ onto concentric Euclidean disks, quantitatively depending only on $K$.
\end{corollary}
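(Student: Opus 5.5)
The plan is to rerun the sufficiency half of Theorem \ref{theorem:characterization_quasiannuli} without its first normalization, which sent $V$ to $\D$ by a map that is only quasi-M\"obius. That normalization is exactly what destroys concentricity and quasisymmetry, so I avoid it. By Lemma \ref{lemma:quasicircle_concentric}, for $z,w\in\partial V$,
$$d_{V,U}(z,w)\simeq_K \frac{|z-w|}{\dist_e(\partial U,\partial V)}.$$
In other words, $\id\colon(\partial V,d_{V,U})\to(\partial V,|\cdot|)$ is a scaled bi-Lipschitz map with constant depending only on $K$.

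Let $\Omega=\C\setminus(\bar U\cup\bar V)$. There is a conformal map $\phi\colon\Omega\to\mathbb A(0;1,\rho)$ that extends homeomorphically to the closures and sends $\partial V$ to $\partial\D$ and $\partial U$ to $\mathbb S(0,\rho)$. I then compare the boundary values $\phi|_{\partial V}$ through the quasihyperbolic metric of a reflected domain.

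\begin{enumerate}[label=\normalfont(\arabic*)]
\item Take a $K'$-quasiconformal reflection $g$ along $\partial V$ and let $Z$ be the domain bounded by $\partial U$ and $g(\partial U)$. Extend $\phi$ to $Z$ by the formula $z\mapsto 1/\overline{\phi(g(z))}$ on $g(\Omega)$. By removability of $\partial V$, this gives a quasiconformal map $Z\to Z'=\mathbb A(0;1/\rho,\rho)$ with dilatation controlled by $K$.
\item By Corollary \ref{corollary:quasisymmetric_dk}, $\id\colon(\partial V,d_{V,U})\to(\partial V,k_Z)$ is quasisymmetric. By Theorem \ref{theorem:qc_qs_qh}, $\phi\colon(\partial V,k_Z)\to(\partial\D,k_{Z'})$ is quasisymmetric. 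In $Z'$ one has $k_{Z'}(z,w)\simeq |z-w|/(\rho-1)$ for $z,w\in\partial\D$.
\item Composing with the estimate from Lemma \ref{lemma:quasicircle_concentric}, the map $\phi\colon(\partial V,|\cdot|)\to(\partial\D,|\cdot|)$ is quasisymmetric, quantitatively, and orientation-preserving.
\item By Corollary \ref{corollary:beurling_ahlfors_embedding}, $\phi|_{\partial V}$ extends to a quasiconformal map of $\C$. Restrict it to $V$, glue it with $\phi$ on $\Omega$, and use removability of $\partial V$. This produces a quasiconformal map of $U^*$ onto $\D(0,\rho)$ sending $V$ onto $\D$.
\item Extend across $\partial U$ by Proposition \ref{proposition:extension_by_reflection}.
\end{enumerate}

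The main obstacle is to make this quantitative in $K$ alone and genuinely quasisymmetric on $\C$, rather than merely quasiconformal with hidden dependence on $R/r$. I would use two inputs.

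\begin{itemize}
\item The modulus of $\Omega$ should be comparable to $\log(R/r)$ up to bounded distortion. This follows from the annular containments, giving $\log\rho\simeq_K \log(R/r)$, or at least $\log\rho$ is bounded below when $R/r$ is small, since both boundaries are trapped in thin shells of width $\simeq_K(R-r)$.
\item The final map must fix $\infty$ and be quasiconformal on $\widehat\C$, so it is quasisymmetric on $\C$ by \ref{q:qm_qs}.
\end{itemize}

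If the dependence on $\rho$ in Step (2) degenerates, I would fall back on a case split.

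\begin{itemize}
\item When $R/r\ge 1+c(K)$, the boundaries are relatively separated. Herron's theorem, equivalently Lemma \ref{lemma:relatively_separated}, already gives a map to disks. A further quasi-M\"obius normalization with bounded moduli then makes the disks concentric with controlled distortion.
\item When $R/r\le 1+c(K)$, the configuration is locally like the parallel-lines case. There the argument above is uniform, because all the scales are comparable to $R-r$.
\end{itemize}
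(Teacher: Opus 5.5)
\paragraph{Verdict.} Your Steps (1)--(4) are sound, but Step (5) leaves a genuine gap: nothing in the construction makes the final map fix $\infty$.

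\paragraph{Steps (1)--(4) need no case split.} These steps are already quantitative in $K$ alone. Corollary \ref{corollary:quasisymmetric_dk} and Theorem \ref{theorem:qc_qs_qh} carry no dependence on $\rho$ or $R/r$. On $\partial\D$ one actually has $k_{Z'}(z,w)\simeq |z-w|/\min\{\rho-1,1\}$ with uniform constants, so $\id\colon(\partial\D,k_{Z'})\to(\partial\D,|\cdot|)$ is a uniformly scaled bi-Lipschitz map. The modulus comparison and the Herron/thin-annulus case split are therefore unnecessary, and they do not touch the real problem.

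\paragraph{Why Step (5) fails.} Proposition \ref{proposition:extension_by_reflection} extends your map $\Phi\colon U^*\to\D(0,\rho)$ to $U$ as $\iota\circ\Phi\circ\sigma$. Here $\sigma$ is a quasiconformal reflection along $\partial U$ and $\iota(z)=\rho^2/\bar z$. So $\infty$ is sent to $\iota(\Phi(\sigma(\infty)))$, an uncontrolled point of $\D(0,\rho)^*$.
\begin{itemize}
\item What you get is a quasiconformal homeomorphism of $\widehat\C$ onto concentric disks. Theorem \ref{theorem:characterization_quasiannuli}, with Lemma \ref{lemma:quasicircle_concentric} and a M\"obius map, already gives that.
\item The corollary asks for a map of $\C$, i.e.\ one fixing $\infty$ and hence quasisymmetric. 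This is exactly how it is used in the proof of Theorem \ref{theorem:extension:disjoint}.
\item Your bullet ``the final map must fix $\infty$'' is an assertion that your construction does not deliver. Moving the image of $\infty$ back to $\infty$ afterwards costs dilatation that depends on how close that point is to $\mathbb S(0,\rho)$.
\item Your opening diagnosis is misplaced. The normalization on the $V$ side is harmless; the paper keeps the map from Theorem \ref{theorem:characterization_quasiannuli} unchanged on $\overline{U^*}$. The difficulty lies entirely in $U\ni\infty$.
\end{itemize}

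\paragraph{The missing step.} You need a quantitative proof that $\Phi|_{\partial U}\colon(\partial U,|\cdot|)\to(\mathbb S(0,\rho),|\cdot|)$ is quasisymmetric. This is necessary anyway, since a $K'$-quasiconformal map of $\C$ is $\eta(K')$-quasisymmetric.

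\paragraph{How the paper does it.} It uses three ingredients:
\begin{itemize}
\item the $d_{U,V}$ half of Lemma \ref{lemma:quasicircle_concentric}, which says that on $\partial U$ the metric $d_{U,V}$ is $C(K)$-comparable to a constant multiple of $|\cdot|$;
\item Theorem \ref{theorem:relative_hyperbolic_qc}, applied to the global quasiconformal map;
\item Lemma \ref{lemma:parallel}\ref{lemma:parallel:2} on $\mathbb S(0,\rho)$.
\end{itemize}
It then replaces the extension on $U$ by the one from Corollary \ref{corollary:beurling_ahlfors_embedding}. That extension is a quasiconformal map of $\C$, so it fixes $\infty$.

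\paragraph{A shorter route within your construction.} Your map on $V$ is the restriction of a quasisymmetric map of $\C$, and $\dist_e(0,\partial V)\geq K^{-1}r\geq \diam_e(\partial V)/(2K)$. Hence $|\Phi(0)|\le 1-c(K)$, so $\dist_e(\Phi(0),\mathbb S(0,\rho))\gtrsim_K \rho$. Also $|b|\simeq_K|c|$ for $b,c\in\partial U$. Now use the identity
\[
\frac{|\Phi(a)-\Phi(b)|}{|\Phi(a)-\Phi(c)|}=[\Phi(a),\Phi(0),\Phi(b),\Phi(c)]\,\frac{|\Phi(0)-\Phi(b)|}{|\Phi(0)-\Phi(c)|},\qquad a,b,c\in\partial U.
\]
Together with the quasi-M\"obius property \ref{q:qm_qc} of your global extension, it yields quasisymmetry on $\partial U$.

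Either way, this step must be supplied. Without it, your argument proves nothing beyond Theorem \ref{theorem:characterization_quasiannuli}.
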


\begin{proof}
By Lemma \ref{lemma:quasicircle_concentric}, the identity map $\id\colon (\partial V, d_{V,U})\to (\partial V,|\cdot|)$ is quasisymmetric and thus quasi-M\"obius. Since cross ratios in the Euclidean and chordal metrics are the same, we conclude that $\id\colon (\partial V, d_{V,U})\to (\partial V,\chi)$ is quasi-M\"obius. By Theorem \ref{theorem:characterization_quasiannuli} there exists a quasiconformal map $f$ of $\widehat \C$ that maps $V$ onto $\D$, and $U^*$ onto a disk $\D(0,R)$, $R>1$. Note that $\id\colon (\partial U,|\cdot|)\to (\partial U,d_{U,V})$ is quasisymmetric by Lemma \ref{lemma:quasicircle_concentric}. By Theorem \ref{theorem:relative_hyperbolic_qc} the map $f\colon (\partial U,d_{U,V})\to (\mathbb S(0,R), d_{\D(0,R)^*, \D})$ is quasisymmetric.  Finally, the identity map $\id\colon (\mathbb S(0,R), d_{\D(0,R)^*, \D})\to  (\mathbb  S(0,R),|\cdot|)$ is quasisymmetric by Lemma \ref{lemma:parallel} \ref{lemma:parallel:2}. Altogether, $f\colon (\partial U,|\cdot|)\to (\mathbb S(0,R),|\cdot|)$ is quasisymmetric. By Corollary \ref{corollary:beurling_ahlfors_embedding}, it extends to a quasiconformal map from $U\cap \C$ onto $\D(0,R)^*\cap \C$. If we replace the definition of $f$ in $U\cap \C$ with this extension, we obtain the desired map.
\end{proof}

\begin{proof}[Proof of Theorem \ref{theorem:intro:characterization_combined}]
The case that $\bar U\cap \bar V=\emptyset$ follows from Theorem \ref{theorem:characterization_quasiannuli}, so we suppose that $\bar U\cap \bar V\neq \emptyset$. Suppose that there exists a quasiconformal map $f$ of $\widehat \C$ that maps $U,V$ to disks. After composing with suitable M\"obius transformations, we may have that $\bar U\cap \bar V=\{\infty\}$, so $U,V$ are unbounded quasidisks in $\C$ and $f$ maps them to half-planes.  By Theorem \ref{theorem:characterization_tangent}, the identity map $\id \colon (\partial V\setminus \bar U,d_{V,U})\to (\partial V\setminus \bar U,|\cdot|)$ is quasisymmetric. Thus, it is also quasi-M\"obius. Since cross ratios in the Euclidean and spherical metrics are the same, the identity map $\id \colon (\partial V\setminus \bar U,d_{V,U})\to (\partial V\setminus \bar U, \chi)$ is quasi-M\"obius. 

Conversely, suppose that the identity map $\id \colon (\partial V\setminus \bar U,d_{V,U})\to (\partial V\setminus \bar U, \chi)$ is quasi-M\"obius. By applying a M\"obius transformation, we may assume that $\bar U\cap \bar V=\{\infty\}$. Thus, the identity map $\id \colon (\partial V\setminus \bar U,d_{V,U})\to (\partial V\setminus \bar U, |\cdot|)$ and its inverse are quasi-M\"obius. Note that if $w\in \partial V\setminus \bar U$ is a given point, then 
$$\lim_{\substack{z\to\infty\\z\in \partial V\setminus \bar U}}d_{V,U}(z,w)= \infty.$$  
By \ref{q:qm_qs}, the map $\id \colon (\partial V\setminus \bar U, |\cdot|)\to (\partial V\setminus \bar U,d_{V,U})$ is quasisymmetric.
\end{proof}

\section{Graphs of functions and cusps}\label{section:graph}

In this section we prove Theorem \ref{theorem:intro:graph} and Theorem \ref{theorem:intro:graph:characterization}. We use planar topology, so the closure of an unbounded set does not contain the point $\infty$. We first establish a preliminary statement.

\begin{proposition}\label{theorem:graph:generalization}
Let $K\geq 1$ and $L,a>0$. Let $f\colon \R\to (0,\infty)$ be an $L$-Lipschitz function and $U\subset \mathbb H$ be an unbounded $K$-quasidisk such that $\partial U\subset \{ (x,y)\in \R^2: a^{-1}f(x)\leq y\leq af(x)\}$ and each vertical line intersects $\partial U$. There exists a quasiconformal homeomorphism of $\C$ that maps preserves the real line and maps $\partial U$ onto a line if and only if an antiderivative of $1/f$ is quasisymmetric, quantitatively.
\end{proposition}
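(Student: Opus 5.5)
Let $V$ denote the lower half-plane, so $\partial V=\R$ and $\bar U\cap\bar V=\emptyset$ (in planar topology) since $\partial U$ lies above the graph of $a^{-1}f>0$. The plan is to reduce to Theorem \ref{theorem:characterization_tangent}. A quasiconformal homeomorphism of $\C$ preserving $\R$ and mapping $\partial U$ onto a line maps $U,V$ to half-planes (after possibly composing with the reflection $z\mapsto\bar z$, which preserves orientation issues since it maps $\mathbb H$ to itself only up to reflection; we restrict to maps preserving $\mathbb H$). Conversely, the proof of Theorem \ref{theorem:characterization_tangent} with $\partial V=\R$ already produces a map that is the identity-type normalization on $\R$ and sends $\R$ to $\R$. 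So the statement is equivalent, quantitatively, to the identity $(\R,d_{V,U})\to(\R,|\cdot|)$ being quasisymmetric. Thus the whole task is to show
$$d_{V,U}(x_1,x_2)\simeq_{K,L,a} |F(x_1)-F(x_2)|,\qquad F(x)=\int_0^x\frac{dt}{f(t)},$$
since then $\id$ is quasisymmetric iff $F\colon\R\to\R$ is quasisymmetric.

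For the lower bound, take any curve $\gamma$ in $U^*\setminus V$ joining $x_1<x_2$. By Theorem \ref{theorem:hyperbolic_distance}, $\rho_{U^*}(\zeta)\ge (2\dist_e(\zeta,\partial U))^{-1}$. For $\zeta=(x,y)$ in $U^*\setminus V$, the vertical line through $x$ meets $\partial U$ at height at most $af(x)$, and the region below $\partial U$ there has $y\le af(x)$ roughly; so $\dist_e(\zeta,\partial U)\le af(x)$ (using that each vertical line meets $\partial U$, and that $\zeta$ lies below that intersection point because $\partial U$ separates, in the spirit of Lemma \ref{lemma:quasicircle_strip}). Projecting $\gamma$ to the real axis gives $\ell_{h_{U^*}}(\gamma)\ge \frac{1}{2a}\int_{x_1}^{x_2}\frac{dx}{f(x)}$. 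Some care: points of $U^*$ might lie above a point of $\partial U$ on the same vertical line if $\partial U$ folds; I would handle this by noting that $\dist_e(\zeta,\partial U)\le |\zeta-(x,y_0)|$ for any $(x,y_0)\in\partial U$, and $\zeta$ has $y\le$ the max height of $\partial U$ over $x$ only after using quasiconvexity/the $K$-quasicircle bound to control folds; failing that, use $\dist\le af(x)+$ something via the Lipschitz bound.

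For the upper bound, use the curve along $\R$ itself, or slightly above it: $\R\subset U^*\setminus V$ closure, and for $x\in\R$, $\dist_e(x,\partial U)\ge c\, f(x)$ with $c=c(a,L)$, because a point $(t,s)\in\partial U$ has $s\ge a^{-1}f(t)\ge a^{-1}(f(x)-L|t-x|)$, so $|(t,s)-(x,0)|\ge \max\{|t-x|, a^{-1}f(x)-a^{-1}L|t-x|\}\gtrsim_{a,L} f(x)$. Hence $d_{V,U}(x_1,x_2)\le \int_{x_1}^{x_2}\frac{2\,dx}{cf(x)}$ (a curve on $\partial V$ is allowed, or push up by $\varepsilon$ and take a limit). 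This gives $d_{V,U}\lesssim |F(x_1)-F(x_2)|$.

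The main obstacle is the lower bound's geometric claim $\dist_e(\zeta,\partial U)\lesssim f(\re\zeta)$ for all $\zeta\in U^*\setminus V$: one must show that the region between $\R$ and $\partial U$ lies (quantitatively) under height $\lesssim f$, i.e.\ $\partial U$ cannot enclose tall pockets of $U^*$. Here I would use the Lipschitz condition: any $\zeta=(x,y)$ in $U^*$ above height $Caf(x)$ is at distance $\ge$ comparable to $y$ from $\R$, while the point of $\partial U$ on the vertical line through $x$ has height $\le af(x)$; if instead $\zeta$ lies strictly above $\partial U$ on its vertical line yet in $U^*$, the vertical segment up from $\zeta$ must hit $\partial U$ again before escaping (as $U$ contains all sufficiently high points, $\partial U$ being in the region $y\le af(x)\le a(f(0)+L|x|)$), and this hit point has height $\le af(x)$ too, which is below $\zeta$ — contradiction unless $\zeta$ itself has $y\le af(x)$. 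So in fact $\dist_e(\zeta,\partial U)\le af(x)$ directly, and the lower bound follows.
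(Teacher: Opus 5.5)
Your proposal is correct and follows essentially the paper's route. You reduce via Theorem \ref{theorem:characterization_tangent} to showing $d_{V,U}((x,0),(y,0))\simeq_{a,L}|F(y)-F(x)|$. The upper bound comes from the segment on $\R$ together with $\dist_e((t,0),\partial U)\gtrsim_{a,L} f(t)$. The lower bound comes from $U^*\setminus V\subset\{0\le s\le af(t)\}$ and the fact that each vertical line meets $\partial U$, which give $\dist_e(\zeta,\partial U)\le af(\re \zeta)$.

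The differences are cosmetic. You use a continuous projection inequality along $\gamma$, whereas the paper partitions $[x,y]$ into intervals on which $f$ is comparable to a constant. You also give an explicit separation argument for why $U^*$ lies below the graph of $af$, a point the paper simply asserts.
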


\begin{proof}
Let $\graph(f)=\{(x,f(x)):x\in \R\}$. For each $t\in \R$ the ball $\bar \D((t,0), f(t))$ contains the point $(t,f(t))$ and might contain other points of the graph of $f$ as well. Hence, there exists a point $y\in [t-f(t),t+f(t)]$ such that
\begin{align*}
\dist_e((t,0),\graph(f))= |(t,0)-(y,f(y))|.
\end{align*}
If $|t-y|\geq \frac{f(t)}{2L}$, then $\dist_e((t,0),\graph(f))\geq |t-y| \geq \frac{f(t)}{2L}$. If $|t-y|<\frac{f(t)}{2L}$, then $f(y)\geq f(t)-L|t-y|\geq \frac{1}{2} f(t)$. Therefore,
$$ \frac{1}{2}\min\{1,L^{-1}\} f(t)\leq \dist_e((t,0),\graph(f)) \leq f(t)$$
Since  $\partial U\subset \{ (x,y)\in \R^2: a^{-1}f(x)\leq y\leq af(x)\}$, we have
\begin{align}\label{theorem:graph:generalization:1}
\dist_e((t,0),\partial U)\geq \dist_e((t,0),\graph(a^{-1}f))\geq \frac{1}{2a}\min\{1,L^{-1}\} f(t)
\end{align}
for all $t\in \R$. 

Since $U$ is an unbounded Jordan region, the region $U^*$ is also an unbounded Jordan region in $\C$, so Theorem \ref{theorem:hyperbolic_distance} can be applied in order to estimate the hyperbolic metric on $U^*$. Let  $V$ be the lower half-plane. For $(x,0), (y,0)\in \partial V=\R\times \{0\}$ with $x<y$, by \eqref{theorem:graph:generalization:1} we have
$$d_{V,U}((x,0),(y,0))\leq \ell_{h_{U^*}}([x,y]) \leq \int_x^y\frac{2}{\dist_e((t,0),\partial U)}\, dt \leq C(a,L) \int_x^y \frac{1}{f(t)}\, dt.$$

Next, we show an inequality in the reverse direction. By the uniform continuity and positivity of $f$ in $[x,y]$, there exists $\delta>0$ such that if $|s-t|<\delta$ and $s,t\in [x,y]$, then
$$|f(s)-f(t)|<\frac{1}{2}\min \{f(r): r\in [x,y]\}\leq \frac{1}{2}f(t),$$
so $\frac{1}{2}f(t)\leq f(s)\leq \frac{3}{2}f(t)$. Therefore, there exists a partition $x=t_0<t_1<\dots<t_n=y$ of $[x,y]$ such that $\frac{1}{2}f(t_i)\leq f(t)\leq \frac{3}{2}f(t_i)$ for $t\in [t_{i-1},t_i]$, $i\in \{1,\dots,n\}$. 

Now, let $\gamma$ is an arbitrary curve in $U^*\setminus V\subset \{(t,s)\in \R^2: 0\leq s<af(t)\}$ with endpoints $(x,0)$ and $(y,0)$, $x<y$. There exists a collection of non-overlapping subcurves $\gamma_i$ of $\gamma$, $i\in \{1,\dots,n\}$ such that each $\gamma_i$ projects onto the interval $[t_{i-1},t_i]$ on the $x$-axis. Since $\partial U$ intersects each vertical line by assumption, observe that for each point $(t,s)$ on $\gamma_i$ we have 
$$\dist_e((t,s), \partial U) \leq af(t)\leq \frac{3a}{2}f(t_i).$$
Moreover, $\ell_e(\gamma_i)\geq t_i-t_{i-1}$. Therefore, by Theorem \ref{theorem:hyperbolic_distance} we have
\begin{align*}
\ell_{h_{U^*}}(\gamma)&\geq \frac{1}{2}\int_\gamma \frac{1}{\dist_e(\cdot,\partial U)}\, ds \geq \frac{1}{2}\sum_{i=1}^n \int_{\gamma_i} \frac{1}{\dist_e(\cdot,\partial U)}\, ds  \\&\geq \frac{1}{3a}\sum_{i=1}^n \int_{t_{i-1}}^{t_i}\frac{1}{f(t_i)}\, dt
\geq  \frac{1}{6a}\sum_{i=1}^n \int_{t_{i-1}}^{t_i}\frac{1}{f(t)}\, dt=\frac{1}{6a} \int_{x}^{y}\frac{1}{f(t)}\, dt.
\end{align*}
This proves that $d_{V,U}((x,0),(y,0))\geq \frac{1}{6a}\int_x^y \frac{1}{f(t)}\, dt$. 

Summarizing, if $F\colon \R\to \R$ is an antiderivative of $1/f$, we have proved that 
$$C(a,L)^{-1} |F(y)-F(x)|\leq d_{V,U}((x,0),(y,0)) \leq C(a,L) |F(y)-F(x)|$$
for all $x,y\in \R$. That is, the map $F\colon (\R, d_{V,U})\to (\R,|\cdot|)$ is bi-Lipschitz. By Theorem \ref{theorem:characterization_tangent} there exists a quasiconformal homeomorphism of $\C$ that maps $\partial V$ and $\partial U$ to two parallel lines if and only if the identity map $\id\colon (\R,d_{V,U})\to  (\R,|\cdot|)$ is quasisymmetric. Equivalently, the map $F\colon (\R,|\cdot|)\to (\R,|\cdot|)$ is quasisymmetric.
\end{proof}

\begin{proof}[Proof of Theorem \ref{theorem:intro:graph}]
By Proposition \ref{theorem:graph:generalization}, it suffices to show that if  $f\colon \R\to (0,\infty)$ is an $L$-Lipschitz function, then $U=\{(x,y)\in \R^2: y>f(x)\}$ is a quasidisk. For any two points $(x,f(x))$ and $(y,f(y))$ on the graph of $f$, if $A=\{(t,f(t)): t\in [x,y]\}$, then 
$$\diam_e(A)\leq \sqrt{1+L^2}|x-y|\leq \sqrt{1+L^2}|(x,f(x))-(y,f(y))|.$$
This shows that the boundary of $U=\{(x,y)\in \R^2: y>f(x)\}$ is a quasicircle (if we include the point at $\infty$). By Theorem \ref{prop:quasidisk_quasicircle}, $U$ is a quasidisk, as desired.
\end{proof}

\begin{proof}[Proof of Theorem \ref{theorem:intro:graph:characterization}]
First we show the sufficiency of conditions \ref{theorem:graph:characterization:1}, \ref{theorem:graph:characterization:3}, \ref{theorem:graph:characterization:4}. By Proposition \ref{theorem:graph:generalization}, it suffices to verify that $\partial U$ intersects each vertical line. By assumption \ref{theorem:graph:characterization:3}, we have $\partial U\subset \{(x,y)\in \R^2: 0< y\leq  af(x)\}$. By assumption \ref{theorem:graph:characterization:4} and Theorem \ref{theorem:intro:graph}, there exists a quasiconformal map of $\C$ that preserves the real line and maps the curve $y=af(x)$ onto the line $y=1$. The unbounded quasicircle $\partial U$ is mapped onto an unbounded quasicircle inside the strip $\{(x,y)\in \R^2: 0<y\leq 1\}$. By Lemma \ref{lemma:quasicircle_strip}, this quasicircle must separate the line $y=0$ from the half-plane $y>1$. Thus, $\partial U$ separates the line $y=0$ from the set $y>af(x)$. This implies that every vertical line intersects $\partial U$, as desired. 

We prove the necessity. Let $F$ be a $K$-quasiconformal map of $\C$ that  preserves the real line and maps $\partial U$ onto the line $y=1$. Note that $F|_{\R\times \{0\}}$ is increasing since $F$ is orientation-preserving. Condition \ref{theorem:graph:characterization:1} is immediate. Let $G=F^{-1}$, which is also $K$-quasiconformal and increasing on $\R\times \{0\}$. By \ref{q:qc_qs} the maps $F$ and $G$ are $\eta$-quasisymmetric for some $\eta$ depending only on $K$. Let $L>1$ such that $\eta(1/L)=1/3$. For $n\in \Z$ and $R_n=|G(nL,0)-G((n-1)L,0)|$ we have 
\begin{align}\label{graph:i}
|G(nL,1)-G(nL,0)|&\leq \eta(1/L) |G(nL,0)-G((n-1)L,0)|=R_n/3
\end{align}
and similarly $|G((n-1)L,1)-G((n-1)L,0)|\leq R_n/3$. 
Therefore, the $x$-coordinates of the points $G((n-1)L,1)$ and $G(nL,1)$ differ by at least $R_n/3$. We connect these two points with a line segment of slope bounded above by $1$. In this way we obtain a piecewise linear $1$-Lipschitz function $f\colon \bigcup_{n\in \Z}[x_{n-1},x_n]\to (0,\infty)$ whose graph passes through the points $(x_n,y_n)=G(nL,1)$, $n\in \Z$. See Figure \ref{figure:construction_f}.

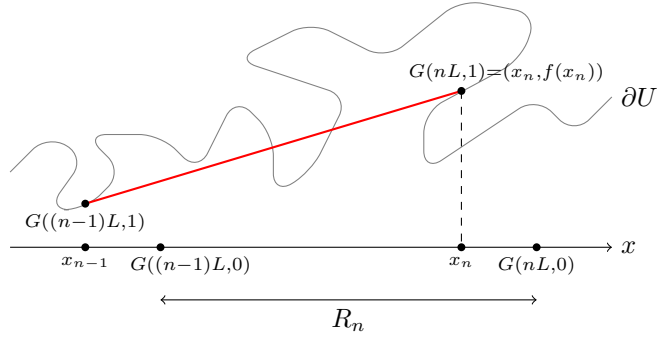
\begin{figure}
	\begin{tikzpicture}
		\draw[->] (-4,0)--(4,0) node[right] {$x$};
		
		\draw[rounded corners=8pt, color=black!50] (-4,1)--(-3.5,1.5)--(-3,1.2)--(-3.5,0.5)--(-3,0.5)--(-2.5,1)--(-3,1.5)--(-2,1.5)--(-1.5,1)--(-1,2)--(-0.5,1.5)--(-0.5,0.7)--(0,1)--(0.5,2)--(-1,2.5)--(0,3)--(1,2.5)--(2,3.3)--(3,3.1)--(2.8,2.5)--(1.5,1.8)--(1.5,1)--(3,2)--(3.5,1.5)--(4,2) node[right] {{\color{black}$\partial U$}};
		
		\draw[red, thick] (-3,0.58)--(2,2.08);
		\fill (3,0) circle (1.5pt) node[below] {$\scriptstyle G(nL,0)$};
		\fill (-2,0) circle (1.5pt) node[below, xshift=0.4cm] {$\scriptstyle G((n-1)L,0)$};
		\fill (2,2.08) circle (1.5pt) node[above, xshift=0.6cm] {$\scriptstyle G(nL,1)=(x_n,f(x_n))$};
		\fill (-3,0.58) circle (1.5pt) node[below,xshift=0cm] {$\scriptstyle G((n-1)L,1)$};
		\draw[dashed](2,2.08)--(2,0);
		\fill (2,0) circle (1.5pt) node[below] {$\scriptstyle x_n$};
		\fill (-3,0) circle (1.5pt) node[below] {$\scriptstyle x_{n-1}$};
		\draw[<->] (-2,-0.7)--(3,-0.7) node[below,pos=0.5] {$R_n$};
	\end{tikzpicture}
	\caption{The construction of the function $f$ in the proof of Theorem \ref{theorem:intro:graph:characterization}.}\label{figure:construction_f}
\end{figure}

Since $G|_{\R\times \{0\}}$ is an increasing homeomorphism onto $\R\times \{0\}$, we conclude that $\sum_{n=1}^\infty R_n=\infty$. As argued above, we have $x_n-x_{n-1}\geq R_n/3$ for each $n\in \Z$, which implies that $x_n\to \infty$ as $n\to\infty$. Similarly $x_n\to-\infty$ as $n\to-\infty$. Therefore, the domain of $f$ is $\R= \bigcup_{n\in \Z}[x_{n-1},x_n]$. Also, $\partial U$ contains the points $(x_n,y_n)$, $n\in \Z$, so a connectivity argument shows that each vertical line intersects $\partial U$. We will verify condition \ref{theorem:graph:characterization:3}, that is, 
$$\partial U\subset \{ (x,y)\in \R^2: a^{-1}f(x)\leq y\leq af(x)\}$$
for some $a=a(K)\geq 1$. Once this is done, then condition \ref{theorem:graph:characterization:4} follows from Proposition \ref{theorem:graph:generalization}.

We claim that, for each $\delta>0$, 
\begin{align}\label{graph:claim}
\text{if $u,v\in \R$ and $|u-v|\leq \delta f(u)$, then $f(u)\leq C(K,\delta)f(v)$.}
\end{align}
We fix $n\in \Z$. We have 
$$\frac{|(nL,0)-(nL,1)|}{|F(x_n,0)-(nL,1)|}\leq 1\,\,\, \text{and} \,\,\, \frac{|((n-1)L,0)-(nL,0)|}{|(nL,0)-(nL,1)|}=L$$
so
\begin{align*}
f(x_n)=y_n=|(x_n,0)-(x_n,y_n)| &\geq \eta(1)^{-1} |G(nL,0)-G(nL,1)|\\
&\geq \eta(1)^{-1} \eta(L)^{-1} |G((n-1)L,0)-G(nL,0)|\\
&= \eta(1)^{-1} \eta(L)^{-1}R_n.
\end{align*}
Also, by \eqref{graph:i}, we have
\begin{align*}
f(x_n)=y_n=|(x_n,0)-(x_n,y_n)|&\leq |G(nL,0)-G(nL,1)| \leq  R_n/3.
\end{align*}
Summarizing, we have $f(x_n)\simeq_K R_n$. With the same argument, one can show that $f(x_{n-1})\simeq_K R_n$. Since $f$ is linear on $[x_{n-1},x_n]$, it follows that 
\begin{align}\label{graph:ii}
C_1(K)^{-1}R_n\leq f(x)\leq C_1(K) R_n\,\,\, \text{for} \,\,\, x\in [x_{n-1},x_n].
\end{align}
Also, by \eqref{graph:i} we have
$$2R_n/3\leq |(x_n,0)- G((n-1)L,0)|\leq 4R_n/3.$$
As a consequence, 
$$|F(x_n,0)-((n-1)L,0)|\leq \eta(4/3)L$$
and
\begin{align*}
|F(x_n,0)-F(x_n,y_n)|&\leq \eta\left(\frac{f(x_n)}{|(x_n,0)-G((n-1)L,0)|}\right)|F(x_n,0)-((n-1)L,0)|\\
&\leq \eta(3C_1(K)/2)|F(x_n,0)-((n-1)L,0)|\\
&\leq \eta(3C_1(K)/2) \eta(4/3)L.
\end{align*}
Thus, 
\begin{align}\label{graph:iii}
1\leq |F(x_n,0)-F(x_n,y_n)| \leq C_2(K).
\end{align}

Suppose that $\delta>0$, $n,m\in \Z$, and $|x_n-x_m|\leq \delta f(x_n)=\delta y_n$. Since $F$ is $\eta$-quasisymmetric, by \eqref{graph:iii} we have 
\begin{align*}
|F(x_n,0)-F(x_m,0)|&\leq \eta(\delta) |F(x_n,0)-F(x_n,y_n)|\\
&\leq \eta(\delta)C_2(K)\\
&\leq \eta(\delta)C_2(K) |F(x_m,0)-F(x_m,y_m)|.
\end{align*}
Now, since $G$ is $\eta$-quasisymmetric, we obtain
$$|x_n-x_m| \leq \eta(\eta(\delta) C_2(K)) y_m.$$ 
Since $f$ is $1$-Lipschitz, this gives 
\begin{align}\label{graph:iv}
y_n\leq C_3(K,\delta) y_m. 
\end{align}

Now, let $u,v\in \R$ such that $|u-v|\leq \delta f(u)$. Suppose that $u\in [x_{n-1},x_n]$ and $v\in [x_{m-1},x_m]$. If $|m-n|\leq 1$, then then \eqref{graph:claim} follows immediately from \eqref{graph:ii}. Thus, we suppose that $|m-n|>1$. Without loss of generality, suppose that $m<n$. By \eqref{graph:ii} we have
$$|x_{n-1}-x_m|\leq |u-v|\leq \delta f(u)\leq \delta C_1(K) f(x_{n-1}).$$
Thus, by \eqref{graph:iv} and \eqref{graph:ii}, for $\delta'=\delta C_1(K)$ we have
$$f(u)\leq  C_1(K) f(x_{n-1})\leq C_1(K)C_3(K,\delta')f(x_m)\leq  C_1(K)^2C_3(K,\delta')f(v).$$
This completes the proof of the claim in \eqref{graph:claim}.

We now continue with the verification of \ref{theorem:graph:characterization:3}. Let $x\in [(n-1)L,nL]$ and note that  
$$k_\UHP((x,1),(nL,1))\leq L.$$
By Theorem \ref{theorem:gehring_osgood}, we have
$$k_\UHP(G(x,1), G(nL,1))\leq r(K).$$
The hyperbolic and quasihyperbolic metrics on $\UHP$ coincide. Hence,
$$\partial U\subset \{(x,y)\in \UHP: \dist_{h_\UHP}((x,y),\graph(f))\leq r(K)\}.$$
We will show the inclusion
\begin{align}\label{graph:final_claim}
\begin{aligned}
\{(x,y)\in \UHP: &\dist_{h_\UHP}((x,y),\graph(f))\leq r(K)\}\\&\subset \{(x,y)\in \R^2: a^{-1}f(x)\leq y\leq af(x)\}
\end{aligned}
\end{align}
for some $a=a(K)\geq 1$ and this will complete the proof of \ref{theorem:graph:characterization:3}.

Let $(u_0,v_0)\in \UHP$ be a point such that $\dist_{h_\UHP}((u_0,v_0),\graph(f))\leq r(K)$. Let $(u_1,f(u_1))\in \graph(f)$ such that $h_\UHP((u_0,v_0),(u_1,f(u_1)))\leq r(K)$. We use the formula
$$h_{\UHP}((u_0,v_0),(u_1,v_1))=2\arcsinh\frac{|(u_1,v_1)-(u_0,v_0)|}{2\sqrt{v_1v_0}}.$$ 
Thus, 
\begin{align}\label{theorem:graph:characterization:eq}
\frac{|(u_1,f(u_1))-(u_0,v_0)|}{\sqrt{f(u_1)v_0}} \leq 2\sinh(r(K)/2)=C_4(K).
\end{align}
We conclude that 
$$|f(u_1)-v_0| \leq C_4(K)\sqrt{f(u_1)v_0}.$$
Equivalently, 
$$\left|\left(\frac{f(u_1)}{v_0}\right)^{1/2}-\left(\frac{v_0}{f(u_1)}\right)^{1/2}\right|\leq C_4(K).$$
Setting $t= \left(\frac{f(u_1)}{v_0}\right)^{1/2}$, we have $|t-t^{-1}|\leq C_4(K)$, which implies that $C_5(K)^{-1}\leq t\leq C_5(K)$. Equivalently, $f(u_1)\simeq_K v_0$. Next, by  \eqref{theorem:graph:characterization:eq} we have
\begin{align*}
|u_1-u_0| \leq C_4(K) \sqrt{f(u_1)v_0} =C_4(K) t^{-1}f(u_1)\leq C_4(K)C_5(K) f(u_1).
\end{align*}
By the claim in \eqref{graph:claim}, we have $f(u_1)\leq C_6(K) f(u_0)$. Also, since $f$ is $1$-Lipschitz, we have $f(u_0)\leq (1+C_4(K)C_5(K))f(u_1)$. Therefore, $f(u_0)\simeq_K f(u_1)\simeq_K v_0$. That is, there exits $a=a(K)\geq 1$ such that
$$ a^{-1}f(u_0)\leq v_0\leq af(u_0).$$
This proves the inclusion in \eqref{graph:final_claim}.
\end{proof}

\begin{example}\label{example:function}
We apply Theorem \ref{theorem:intro:graph} in two fundamental examples that highlight the importance of the rate of decay of $f$ in Theorem \ref{theorem:intro:graph}.

Let $p>-1$ and $f(x)=(|x|+1)^{-p}$ for $x\in \R$. We have $|f'(x)|\leq |p|$ for $x\neq 0$ so $f$ is $|p|$-Lipschitz. We compute an antiderivative of $1/f$. For $x\in \R$ we have
$$F(x)=\int_0^x \frac{dt}{f(t)}=\int_{0}^x (|t|+1)^p \, dt =\begin{cases}(p+1)^{-1}((|x|+1)^{p+1}-1) & x\geq 0\\
-(p+1)^{-1}((|x|+1)^{p+1}-1) & x<0.
\end{cases}.$$
It is an exercise to show that $F$ is quasisymmetric (see \cite{Heinonen:metric}*{Exercise 10.3} and also \cite{LehtoVirtanen:quasiconformal}*{Lemma II.7.1}). By Theorem \ref{theorem:intro:graph}, the curves $y=(|x|+1)^{-p}$ and $y=0$ can be mapped to a pair of parallel lines with a quasiconformal homeomorphism of $\C$.  This example was also investigated independently by Yusheng Luo with different techniques.

Next, let $f(x)=e^{-|x|}$, $x\in \R$, which is $1$-Lipschitz. An antiderivative of $1/f$ is
$$F(x)= \int_0^x e^{|t|}\, dt= \begin{cases}
e^{|x|}-1 & x\geq 0 \\
-(e^{|x|}-1) & x<0
\end{cases}.$$
Note that for $x>0$ we have
$$\frac{F(2x)-F(x)}{F(x)-F(0)}= \frac{e^{2x}-e^x}{e^x-1}=e^x$$
which is not bounded, so $F$ is not quasisymmetric. As a conclusion, by Theorem \ref{theorem:intro:graph} the curves $y=e^{-|x|}$ and $y=0$ cannot be mapped to a pair of parallel lines with a quasiconformal homeomorphism of $\C$.
\end{example}

For $\alpha>1$ consider the polynomial cusp $C_\alpha=\{(x,y)\in \R^2: |y|=x^\alpha, \, 0\leq x\leq 1\}$. We now restate and prove Corollary \ref{corollary:intro:cusps}.
\begin{corollary}
For every $\alpha,\beta>1$ there exists a quasiconformal map $f$ of $\C$ such that $f(C_\alpha)=C_\beta$. 
\end{corollary}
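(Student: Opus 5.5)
The plan is to reduce the statement to Theorem \ref{theorem:intro:graph} by an inversion that sends the tip of the cusp to $\infty$. Write $C_\alpha=A_\alpha^+\cup A_\alpha^-$, where $A_\alpha^\pm=\{(x,\pm x^\alpha):0\leq x\leq 1\}$. The arcs $A_\alpha^+$ and $A_\alpha^-$ are mirror images under complex conjugation, and each is tangent to $\R$ at $0$. We apply $\iota(z)=1/z$; it preserves $\R$ and commutes with conjugation. For $z=x+ix^\alpha$ we have $1/z=(x-ix^\alpha)/(x^2+x^{2\alpha})$. For small $x$ this gives $u=\re(1/z)\approx 1/x$ and $v=\im(1/z)\approx -x^{\alpha-2}=-u^{-(\alpha-2)}$. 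Hence $\iota(A_\alpha^\pm)\setminus\{\infty\}$ is an arc over $u\in[1/2,\infty)$, with $|v|\simeq_\alpha u^{-p}$ and $p=\alpha-2>-1$. This is precisely the regime of Example \ref{example:function}.

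First, we complete $\iota(A_\alpha^-)$ to an unbounded Jordan curve $J_\alpha$ in the upper half-plane (after conjugating if needed). For $u\geq 1$ we use the exact image curve. For smaller $u$ we add a simple curve joining the endpoint $\iota(1-i)$ to a curve comparable to $y=(|u|+1)^{-p}$ on $u\leq 0$. Then we verify the three conditions of Theorem \ref{theorem:intro:graph:characterization} with $f(u)=(|u|+1)^{-p}$. This $f$ is Lipschitz; after rescaling the $u$-variable by a constant depending on $\alpha$ it is $1$-Lipschitz, which does not affect quasisymmetry of the antiderivative. The curve $J_\alpha$ is a quasicircle, because near $\infty$ it is a graph of a function with bounded slope in the sense of the proof of Theorem \ref{theorem:intro:graph}, and the glued compact piece is a smooth arc. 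It lies between $a^{-1}f$ and $af$. Finally, the antiderivative of $1/f$ is quasisymmetric by Example \ref{example:function}. This yields a quasiconformal $H_\alpha$ of $\C$ that preserves $\R$ and sends $J_\alpha$ onto the line $\{y=1\}$. We extend $H_\alpha$ to the lower half-plane by $\bar{H_\alpha(\bar z)}$. Since $H_\alpha$ preserves $\R$, the glued map is a homeomorphism, and it is quasiconformal by removability of $\R$. It sends $J_\alpha\cup\bar{J_\alpha}$ onto $\{|y|=1\}$.

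Second, we form $\Phi=\iota^{-1}\circ H_\beta^{-1}\circ H_\alpha\circ\iota$, which is a quasiconformal map of $\widehat\C$ fixing $0$. It maps $\iota^{-1}(J_\alpha\cup\bar{J_\alpha})$ onto $\iota^{-1}(J_\beta\cup\bar{J_\beta})$, and it maps a neighborhood of $0$ in $C_\alpha$ onto a neighborhood of $0$ in $C_\beta$. In other words, it matches the germs of the cusps at the tip. We may compose $H_\alpha$ with a horizontal translation, which preserves both lines, so that the point $\iota(1+ i)$ lands on a prescribed point of $\{y=1\}$. We do the same for $\beta$. Then $\Phi$ maps the endpoint $(1,1)$ of $C_\alpha$ to $(1,1)$, and by the conjugation symmetry it maps $(1,-1)$ to $(1,-1)$. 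Hence $\Phi(C_\alpha)=C_\beta$ exactly, because the arcs of the two Jordan curves up to these endpoints correspond.

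The main obstacle is the first step: checking rigorously that the inverted, completed curve satisfies condition \ref{theorem:graph:characterization:3} of Theorem \ref{theorem:intro:graph:characterization}, namely that $|v|$ is comparable to $f(u)$ uniformly, including near the junction of the glued piece. We also need the quasicircle property with constants that do not matter, since only the existence of some quasiconformal map is required. If the comparability near $u\approx 1/2$ is awkward, the fallback is to choose $J_\alpha$ directly as the graph of a Lipschitz function equal to $|\im\iota(z)|$ along the arc for $u\geq u_0$ and to $(|u|+1)^{-p}$ for $u\leq 0$, interpolated linearly in between. Then Theorem \ref{theorem:intro:graph} applies directly, with the antiderivative comparable to that in Example \ref{example:function} up to bi-Lipschitz change on compact sets.
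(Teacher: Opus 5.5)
Your proposal is correct and is essentially the paper's argument. The paper also inverts at the tip, using $1/\bar z$ so that the upper arc stays in $\UHP$, and it checks directly (via $|s'(t)|\le 4s(t)^2$) that the image arc is the graph of a Lipschitz function $h$ on $[1/2,\infty)$ comparable to $(1+|t|)^{2-\alpha}$. It then extends $h$ evenly, with value $1/2$ on $(-1/2,1/2)$, exactly as in your fallback, and applies Theorem \ref{theorem:intro:graph} together with Example \ref{example:function} and reflection across $\R$. So the heavier Theorem \ref{theorem:intro:graph:characterization} is not needed, and passing through the model cusp $A\cup\bar A$ of two tangent semicircles is equivalent to your direct composition $H_\beta^{-1}\circ H_\alpha$.

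Two small fixes, the first of which the paper also glosses over. The point $\Phi(\infty)=\iota^{-1}(H_\beta^{-1}(H_\alpha(0)))$ need not be $\infty$, so to get a map of $\C$ you should also arrange $H_\alpha(0)=H_\beta(0)=0$. A bi-Lipschitz shear of $\{0\le y\le 1\}$ that is a translation on $\R$ and the identity on $\{y=1\}$ does this without disturbing your other normalization. Also, the point to normalize is $\iota(1-i)=(1+i)/2\in J_\alpha$, not $\iota(1+i)$, which lies in the lower half-plane.
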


\begin{proof}
We show that there exists a quasiconformal homeomorphism $f$ of $\C$ that maps the real line onto itself and maps $\{(x,y)\in \R^2: y=x^\alpha,\, 0\leq x\leq 1\}$, onto the semicircle $A=\{(x,y)\in \R^2:x^2+(y-1/2)^2=1/4,\, x\geq 0\}$, which is tangent to the real line at $(0,0)$. By reflection, we can define a quasiconformal homeomorphism of $\C$ that is equal to $f$ in the upper half-plane and maps $\{(x,y)\in \R^2: y=-x^\alpha,\, 0\leq x\leq 1\}$, onto the reflection of $A$. Since any polynomial cusp $C_\alpha$ can be mapped to two copies of $A$, this is sufficient to give the conclusion of the theorem. We now focus on proving our claim.

Consider the map
$$\phi(z)=\frac{1}{\bar z}= \frac{x}{x^2+y^2}+i \frac{y}{x^2+y^2}.$$
Note that for $0<x\leq1$ we have
$$\phi(x+ix^\alpha)= \frac{x}{x^2+x^{2\alpha}}+i \frac{x^\alpha}{x^2+x^{2\alpha}}= \frac{1}{x(1+x^{2\alpha-2})}+ i \frac{x^{\alpha-1}}{x(1+x^{2\alpha-2})}.$$
Let 
$$t(x)= \re(\phi(x+ix^\alpha))=\frac{1}{x(1+x^{2\alpha-2})},\quad 0<x\leq 1. $$ 
Note that $t$ is strictly decreasing, and takes values in $[1/2,\infty)$. Let $s\colon [1/2,\infty)\to (0,1]$ be the inverse of $t$. We have
$$ \im(\phi(x+ix^\alpha)) = t(x)x^{\alpha-1}=t(x)s(t(x))^{\alpha-1}.$$
Thus, $\{\phi(x+ix^\alpha): 0<x\leq 1\}$ is the graph of the function $h(t)=ts(t)^{\alpha-1}$, $t\geq 1/2$. We extend $h$ continuously by defining $h(t)=|t|s(|t|)^{\alpha-1}$ for $|t|\geq 1/2$ and letting $h(t)=1/2$ for $t\in (-1/2,1/2)$. 

We show that $h$ is Lipschitz continuous. We have $\frac{1}{2x}\leq t(x)\leq \frac{1}{x}$ for $0<x\leq 1$, so $\frac{1}{2t}\leq s(t)\leq \frac{1}{t}$ for $t\geq 1/2$.  Note that 
$$|t'(x)|=(x+x^{2\alpha-1})^{-2}(1+(2\alpha-1)x^{2\alpha-2}) \geq t(x)^2$$
for $x\in (0,1)$, so $|s'(t)|\leq t^{-2}\leq 4s(t)^2$ for $t\in (1/2,\infty)$. Thus,
\begin{align*}
|h'(t)|&=|s(t)^{\alpha-1}+t(\alpha-1)s(t)^{\alpha-2} s'(t)|\leq 1+4(\alpha-1)ts(t)s(t)^{\alpha-1} \leq 1+4(\alpha-1)
\end{align*}
for $t\in (1/2,\infty)$. The same bound applies for $t\in (-\infty,-1/2)$ and we have $h'(t)=0$ for $t\in (-1/2,1/2)$. Thus, $h$ is Lipschitz. 

Since $\frac{1}{2t}\leq s(t)\leq \frac{1}{t}$ for $t\geq 1/2$, we have
$$ \frac{1}{2^{\alpha-1}}|t|^{2-\alpha}\leq h(t) \leq |t|^{2-\alpha}$$
for $|t|\geq 1/2$. Also, for $|t|\geq 1/2$ we have  $|t|\leq 1+|t|\leq 3|t|$. 
Hence, for $C=2^{\alpha-1}\max\{3^{\alpha-2},3^{2-\alpha}\}$ we have
$$C^{-1}(1+|t|)^{2-\alpha}\leq h(t) \leq C(1+|t|)^{2-\alpha}.$$
The inequality is also true for $|t|\leq 1/2$. We set $g(t)=(1+|t|)^{2-\alpha}$, $t\in \R$, and we have $C^{-1}g(t)\leq h(t)\leq Cg(t)$ for all $t\in \R$. 

By Example \ref{example:function}, an antiderivative $G$ of $1/g$ is quasisymmetric. Also, if $H$ is an antiderivative of $1/h$, then for $t_1,t_2\in \R$ we have
$$ C^{-1} |G(t_1)-G(t_2)|\leq |H(t_1)-H(t_2)|\leq C |G(t_1)-G(t_2)|,$$
so $H$ is also quasisymmetric. By Theorem \ref{theorem:intro:graph}, there exists a quasiconformal map $F$ of $\C$ that maps the line $y=0$ to itself and the graph of $h$ onto the line $y=1$. We may assume that $F(1/2,1/2)=(1,0)$. The composition $\phi^{-1}\circ F\circ \phi$ is a quasiconformal map of $\C$ that maps the real line onto itself and maps $\{(x,y):y=x^\alpha,\, 0\leq x\leq 1\}$, onto the semicircle $A$.  
\end{proof}

\section{Quasiconformal extension results}\label{section:extension_general}

We state a variant of the Beurling--Ahlfors extension theorem  for locally quasisymmetric homeomorphisms of $\R$. 

\begin{theorem}\label{theorem:qc_extension}
Let $\eta$ be a distortion function. Let $h\colon \R\to \R$ be an increasing homeomorphism that is $\eta$-quasi\-symmetric in $[t,t+1]$ for each $t\in \R$ and  satisfies $h(n)=n$ for each $n\in \Z$. Then there exists an extension of $h$ to a homeomorphism $H\colon \R\times [0,1]\to \R\times [0,1]$ such that $H|_{\R\times \{1\}}$ is the identity map and $H|_{\R\times (0,1)}$ is $K(\eta)$-quasiconformal.  In addition, if $h$ commutes with $z\mapsto z+N$  for some $N\in \Z$, then the extension $H$ may be taken to have the same property.
\end{theorem}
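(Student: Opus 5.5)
I would use an explicit Beurling--Ahlfors type construction with averaging window of height $y$, interpolated so that at height $1$ the map is the identity. For $x\in\R$ and $0<y\le 1$ consider the Beurling--Ahlfors averages
$$\alpha(x,y)=\frac{1}{y}\int_0^y h(x+t)\,dt,\qquad \beta(x,y)=\frac{1}{y}\int_0^y h(x-t)\,dt,$$
and set $H_0(x,y)=\frac12(\alpha+\beta)+\frac{i}{2}(\alpha-\beta)$, with $H_0(x,0)=h(x)$. Since $y\le 1$, only the values of $h$ on intervals $[x-1,x+1]$, which are covered by two unit intervals on which $h$ is $\eta$-quasisymmetric, enter. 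The classical Beurling--Ahlfors computation of the dilatation is purely local: at the point $(x,y)$ it uses only the quasisymmetry ratios of $h$ on $[x-y,x+y]$. Hence it gives that $H_0$ is $K_1(\eta)$-quasiconformal on $\R\times(0,1]$ and extends $h$. One has to check that $h$ is quasisymmetric on $[x-1,x+1]$ quantitatively. This follows because $h(n)=n$ and $h$ is $\eta$-quasisymmetric on each unit interval: images of adjacent unit intervals have lengths comparable to $1$ (indeed the image of each $[n,n+1]$ is itself, and $[t,t+1]$ straddles at most two such), so ratios across the midpoint are controlled.

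Next I would correct the top boundary. On $\R\times\{1\}$ the map $x\mapsto H_0(x,1)$ is a curve close to the line $\im=1/2+\dots$. More precisely, $\im H_0(x,1)=\frac12(\alpha-\beta)\simeq_\eta 1$ by quasisymmetry and $h(n)=n$, and $\re H_0(x,1)$ is within a bounded distance of $x$. Rather than straightening that curve, I would instead construct $H$ on the strip $\R\times[0,1/2]$ via the averages with window $y$, and on $\R\times[1/2,1]$ interpolate linearly in $y$ between the map $H_0(\cdot,1/2)$ and the identity. Concretely, set
$$H(x,y)=(2-2y)\,H_0(x,1/2)+(2y-1)(x+i)$$
for $y\in[1/2,1]$, possibly after first rescaling $H_0$ so that its image at height $1/2$ is a graph. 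The key facts to verify are that $x\mapsto H_0(x,1/2)$ is bi-Lipschitz onto a graph $\{(u,\psi(u))\}$ with $\psi$ Lipschitz and bounded between two positive constants, and that its horizontal displacement from $x$ is bounded. Both depend only on $\eta$, by differentiating the averages: $\partial_x\alpha=(h(x+y)-h(x))/y$, which is comparable to $1$ when $y=1/2$. Then the top piece is a bi-Lipschitz map of the strip $\R\times[1/2,1]$ onto the region between the graph and the line $\im=1$, hence quasiconformal by \ref{q:bilip}. The two pieces glue along a line, which is removable, so $H$ is $K(\eta)$-quasiconformal.

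The periodicity claim is immediate: if $h(x+N)=h(x)+N$, then $\alpha,\beta$ and hence $H_0$ commute with translation by $N$, and so does the interpolation with the identity.

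The main obstacle is the middle region, specifically the quantitative bi-Lipschitz estimate for the interpolation. I must show that the Jacobian of the linear interpolation stays bounded below, i.e.\ that the curve $H_0(\cdot,1/2)$ lies strictly below $\im=1$ with positive gap and is a Lipschitz graph over $x$. I expect that $\im H_0(x,1/2)\le \frac12\cdot\sup(h(x+1/2)-h(x-1/2))$ may exceed $1$. If so, I would first apply a vertical affine scaling $(u,v)\mapsto(u,cv)$ with $c=c(\eta)$ small so that the image lies in $\{0<v<1/2\}$, and then interpolate.
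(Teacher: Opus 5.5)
Your route differs from the paper's. The paper follows Astala--Iwaniec--Martin and uses the dyadic piecewise-linear extension $H(k2^{-m},2^{-m})=(h(k2^{-m}),h((k+1)2^{-m})-h(k2^{-m}))$, affine on three triangles per dyadic cell. Because $h(k)=k$, the level-$0$ vertices go to $(k,1)$, so the identity on $\R\times\{1\}$ and the $N$-periodicity come for free. Quasiconformality then reduces to a lower bound on the angles of the image triangles.

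You instead use the integral Beurling--Ahlfors extension up to height $1/2$ and then a straight-line collar to the identity. This can work. The BA dilatation at $(x,y)$ only involves $h$ on $[x-y,x+y]$, which for $y\le 1/2$ is a unit interval, so the hypothesis applies directly and your $[x-1,x+1]$ discussion is unnecessary. Periodicity is also clear. The price of your route is the collar, and there your argument has a gap.

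The criterion you give for the collar is not sufficient. A Lipschitz graph with a positive gap below $\im=1$ and bounded horizontal displacement does not make the homotopy $(1-s)\gamma_0(x)+s(x+i)$ injective. For instance, the segments from $(0,\tfrac1{10})$ to $(1,1)$ and from $(\tfrac1{10},\tfrac9{10})$ to $(\tfrac{11}{10},1)$ cross, although the bottom points lie on a graph of slope $8$ at height in $[\tfrac1{10},\tfrac9{10}]$ and the displacements equal $1$.

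In coordinates, set $s=2y-1$, $u+iv=H_0(\cdot,1/2)$, and let $c\in(0,1]$ be your vertical factor. Then $X=(1-s)u(x)+sx$ and $Y=(1-s)c\,v(x)+s$, and
\[
J=(1-s)\bigl[u'(1-cv)-c\,(x-u)\,v'\bigr]+s(1-cv).
\]
The cross term $c(x-u)v'$ must be dominated, and a gap plus a Lipschitz bound with unspecified constants does not do this.

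The gap closes with the explicit formulas at height $1/2$. We have $u(x)=\int_{x-1/2}^{x+1/2}h$, $u'=h(x+\tfrac12)-h(x-\tfrac12)$, and $v'=(h(x+\tfrac12)-h(x))-(h(x)-h(x-\tfrac12))$. Hence $|v'|\le u'$ and $0<v\le u'/2<1$; incidentally, the curve never reaches $\im=1$, contrary to your worry.

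Since $\lfloor t\rfloor\le h(t)\le\lceil t\rceil$,
\[
-\int_{x-1/2}^{x+1/2}(t-\lfloor t\rfloor)\,dt\le u(x)-x\le\int_{x-1/2}^{x+1/2}(\lceil t\rceil-t)\,dt,
\]
and both integrals equal $1/2$, so $|u(x)-x|\le 1/2$. Also $u'\ge 1/(2\eta(2))$: the interval $[x-\tfrac12,x+\tfrac12]$ contains an integer $n$ and a piece of length at least $\tfrac12$ of $[n-1,n]$ or $[n,n+1]$, each of which $h$ maps onto itself.

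Taking $c=1/2$ then gives $J\ge\min\{u'/4,1/2\}\gtrsim_\eta 1$, and all partial derivatives are bounded. So the collar map has dilatation bounded in terms of $\eta$. It is proper and restricts to homeomorphisms of the two boundary lines, so a degree argument shows it is a homeomorphism onto the region between the curve and $\R\times\{1\}$. With this, your gluing along $\R\times\{1/2\}$ and the periodicity argument go through. The real role of the factor $c$ is to kill the cross term, not to keep the curve below $\im=1/2$.
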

The construction is given in \cite{AstalaIwaniecMartin:quasiconformal}*{Theorem 5.8.1}, under slightly different assumptions. We only provide a sketch of the proof. 

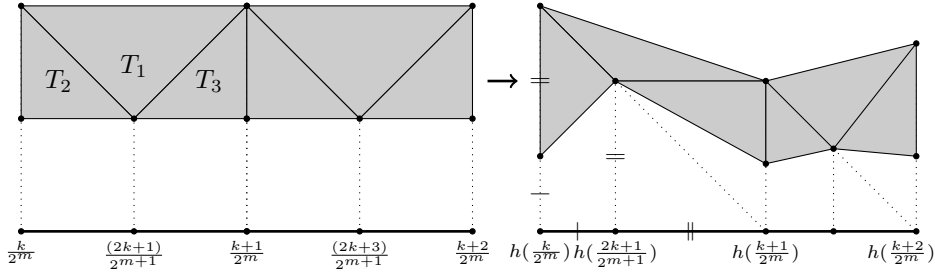
\begin{figure}
	\centering
	\begin{tikzpicture}
	
	\draw[->, line width=1pt] (6.2,2)--(6.6,2);	
	
		\begin{scope}
			\draw[line width=1pt] (0,0)--(3,0);
			\draw[dotted] (0,0)--(0,3);
			\draw[dotted] (3,0)--(3,3);
			\draw[dotted] (1.5,0)--(1.5,1.5);			
			
			\draw[fill=black!20!white] (0,3)--(3,3)--(1.5,1.5)--cycle;
			\draw[fill=black!20!white] (0,3)--(0,1.5)--(1.5,1.5)--cycle;
			\draw[fill=black!20!white] (1.5,1.5)--(3,1.5) --(3,3)--cycle;
			\node at (1.5,2.2) {$T_1$};
			\node at(0.5, 2) {$T_2$};		
			\node at(2.5, 2) {$T_3$};	
			
			\draw[fill=black] (0,3) circle (1pt);
			\draw[fill=black] (3,3) circle (1pt);
			\draw[fill=black] (0,1.5) circle (1pt);
			\draw[fill=black] (1.5,1.5) circle (1pt);
			\draw[fill=black] (3,1.5) circle (1pt);
			
			\draw[fill=black] (0,0) circle (1pt) node[anchor=north] {$\scriptstyle \frac{k}{2^{m}}$};
			\draw[fill=black] (1.5,0) circle (1pt) node[anchor=north] {$\scriptstyle \frac{(2k+1)}{2^{m+1}}$};
			\draw[fill=black] (3,0) circle (1pt) node[anchor=north] {$\scriptstyle \frac{k+1}{2^{m}}$};
			
		\end{scope}
		
		\begin{scope}[shift={(3,0)}]
			\draw[line width=1pt] (0,0)--(3,0);
			\draw[dotted] (0,0)--(0,3);
			\draw[dotted] (3,0)--(3,3);
			\draw[dotted] (1.5,0)--(1.5,1.5);			
			
			\draw[fill=black!20!white] (0,3)--(3,3)--(1.5,1.5)--cycle;
			\draw[fill=black!20!white] (0,3)--(0,1.5)--(1.5,1.5)--cycle;
			\draw[fill=black!20!white] (1.5,1.5)--(3,1.5) --(3,3)--cycle;

			\draw[fill=black] (0,3) circle (1pt);
			\draw[fill=black] (3,3) circle (1pt);
			\draw[fill=black] (0,1.5) circle (1pt);
			\draw[fill=black] (1.5,1.5) circle (1pt);
			\draw[fill=black] (3,1.5) circle (1pt);
			
			\draw[fill=black] (0,0) circle (1pt);
			\draw[fill=black] (1.5,0) circle (1pt) node[anchor=north] {$\scriptstyle \frac{(2k+3)}{2^{m+1}}$};
			\draw[fill=black] (3,0) circle (1pt) node[anchor=north] {$\scriptstyle \frac{k+2}{2^{m}}$};
			
		\end{scope}

		\begin{scope}[shift={(6.9,0)}]
			\draw[line width=1pt] (0,0)--(3,0);
			\draw[dotted] (0,0)--(0,3);
			\draw[dotted] (3,0)--(3,2);
			\draw[dotted] (1,0)--(1,2);			
			
			\draw[fill=black!20!white] (0,3)--(3,2)--(1,2)--cycle;
			\draw[dotted] (0,3)--(3,0);
			
			\draw[fill=black!20!white] (0,3)--(0,1)--(1,2)--cycle;
			\draw[fill=black!20!white] (1,2)--(3,0.9) --(3,2)--cycle;

			\draw[fill=black] (0,0) circle (1pt) node[anchor=north] {$\scriptstyle  h(\frac{k}{2^{m}})$};
			\draw[fill=black] (1,0) circle (1pt) node[anchor=north] {$\scriptstyle  h(\frac{2k+1}{2^{m+1}})$};
			\draw[fill=black] (3,0) circle (1pt) node[anchor=north] {$\scriptstyle  h(\frac{k+1}{2^{m}})$};

			\draw [draw,decoration={markings, mark=at position .5  with {\node[transform shape] {$\scriptstyle|$};}}]
  (0,0) edge[decorate] (1,0)
  (0,0) edge[decorate] (0,1);
  			\draw [draw,decoration={markings, mark=at position .5  with {\node[transform shape] {$\scriptstyle \parallel$};}}]
  (1,0) edge[decorate] (3,0)
  (1,0) edge[decorate] (1,2)
  (0,1) edge[decorate] (0,3);

  		\draw[line width=1pt] (3,0)--(5,0);
  		\draw[dotted] (5,0)--(5,2);
  		\draw[fill=black!20!white] (3,2)--(5,2.5)-- (5,1)--(3.9,1.1)--(3,0.9)--cycle;
  		\draw[dotted] (3.9,1.1)--(5,0);
  		\draw[dotted] (3.9,1.1)--(3.9,0);
  		\draw (3.9,1.1)--(3,2);
  		\draw (3.9,1.1)--(5,2.5);
  		\draw[fill=black] (5,0) circle (1pt);
  		\node[anchor=north] at (4.8,0)  {$\scriptstyle h( \frac{k+2}{2^m})$};
  		\draw[fill=black] (3.9,0) circle (1pt);
  		\draw[fill=black] (3.9,1.1) circle (1pt);
  		\draw[fill=black] (5,2.5) circle (1pt);
  		\draw[fill=black] (5,1) circle (1pt);
  		
  		\draw[fill=black] (0,3) circle (1pt);
		\draw[fill=black] (3,2) circle (1pt);
		\draw[fill=black] (0,1) circle (1pt);
		\draw[fill=black] (1,2) circle (1pt);
		\draw[fill=black] (3,0.9) circle (1pt);
		\end{scope}
	\end{tikzpicture}
	\caption{The quasiconformal extension constructed in Theorem \ref{theorem:qc_extension}. Marks show congruent line segments.}\label{fig:linear_extension}
\end{figure}

\begin{proof}
Consider the map
$$H(k2^{-m}, 2^{-m})= (h(k2^{-m}), h((k+1)2^{-m})-h(k2^{-m}))$$
for $k\in \Z$ and $m\in \N\cup \{0\}$. At each level $m\in \N\cup \{0\}$ there are three different types of triangles with vertex sets
\begin{align*}
V_1&=\{(k2^{-m},2^{-m}), ((k+1)2^{-m}, 2^{-m}), ((2k+1)2^{-m-1}, 2^{-m-1})\},\\
V_2&= \{(k2^{-m}, 2^{-m}),(k2^{-m}, 2^{-m-1}),((2k+1)2^{-m-1}, 2^{-m-1})\},\\
V_3&= \{((k+1)^{-m},2^{-m}),((2k+1)2^{-m-1}, 2^{-m-1}),((k+1)2^{-m}, 2^{-m-1})\}.
\end{align*}
Denote by $T_i$ the triangle that is the convex hull of $V_i$, $i=1,2,3$. The map $H$ is extended linearly to each triangle. By construction and using the fact that $h$ is an increasing homeomorphism, the image of each triangle is non-degenerate, and distinct triangles are mapped to distinct triangles. See Figure \ref{fig:linear_extension} for an illustration. Hence, $H$ extends to a homeomorphism from $\R\times [0,1]$ onto itself. Note that $H$ is the identity map on $\R\times \{1\}$ because $h(n)=n$ for $n\in \Z$. Also, if $h$ commutes with $z\mapsto z+N$ for some $N\in \Z$, then $H$ has the same property. In order to justify that $H$ is quasiconformal in $\R\times (0,1)$ it suffices to show that the triangles $H(T_i)$, $i=1,2,3$, have angles uniformly bounded from below away from $0$. We direct the reader to \cite{AstalaIwaniecMartin:quasiconformal}*{Theorem 5.8.1} for further details.
\end{proof}

\subsection{Extension in a strip}
The next result provides sufficient conditions so that a homeomorphism of the lines $y=0$ and $y=1$ extends quasiconformally to the strip $0<y<1$. 

\begin{proposition}\label{theorem:extension_strip}
Let $\eta$ be a distortion function and $a\geq 1$. Let $h\colon \R\times \{0,1\}\to \R\times \{0,1\}$ be a homeomorphism such that $h(\R\times \{0\})=\R\times \{0\}$ and $h|_{\R\times \{0\}}$ is increasing. Suppose that for each $t\in \R$  
\begin{enumerate}[label=\normalfont(\arabic*)]
\item the map $h$ is quasisymmetric in $[t,t+1]\times \{0\}$ and in $[t,t+1]\times \{1\}$ and
\item the points $(t,0),(t,1),(t+1,0), (t+1,1)$ are mapped to points with mutual distances in the interval $[a^{-1},a]$.
\end{enumerate}
Then there exists an extension of $h$ to a homeomorphism $H\colon \R\times [0,1]\to \R\times [0,1]$ that is $K(\eta,a)$-quasiconformal in $\R\times (0,1)$. In addition, if $h$ commutes with $z\mapsto z+(N,0)$ for some $N\in \Z$, then the extension $H$ may be taken to have the same property.
\end{proposition}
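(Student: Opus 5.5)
Write $h_0,h_1\colon\R\to\R$ for the maps defined by $h(t,0)=(h_0(t),0)$ and $h(t,1)=(h_1(t),1)$. Since $h$ is a homeomorphism of $\R\times\{0,1\}$ preserving $\R\times\{0\}$, it also preserves $\R\times\{1\}$. We assume (as the statement implicitly requires for an orientation-preserving extension) that $h_1$ is increasing as well. The plan is to reduce to Theorem \ref{theorem:qc_extension} by composing with explicit piecewise affine maps. Two normalizations are needed: the integer points on both lines should be fixed, and the strip should be cut into unit squares, each mapped onto a uniformly nondegenerate quadrilateral.

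\textbf{Step 1: cut the target strip.} For $n\in\Z$ consider the segment $\sigma_n$ joining $(h_0(n),0)$ and $(h_1(n),1)$. By hypothesis (2), $|h_0(n+1)-h_0(n)|\in[a^{-1},a]$ and $|h_1(n+1)-h_1(n)|\in[a^{-1},a]$, and $|h_0(n)-h_1(n)|\le a$. Hence the segments $\sigma_n$ are pairwise disjoint, since both endpoint sequences increase. Each quadrilateral $Q_n$ bounded by $\sigma_n,\sigma_{n+1}$ and the two lines has sides of length comparable to $1$, with constants depending on $a$.

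\textbf{Step 2: straighten the target strip.} Define a homeomorphism $\Phi$ of $\R\times[0,1]$ that on each horizontal line $y=s$ is the piecewise affine map sending $(n,s)$ to the point of $\sigma_n$ at height $s$. Equivalently, on the square $[n,n+1]\times[0,1]$ it is the bilinear map onto $Q_n$. One checks that $\Phi$ is $L(a)$-bi-Lipschitz: its partial derivatives are bounded because the widths $h_i(n+1)-h_i(n)$ lie in $[a^{-1},a]$ and the slopes of the $\sigma_n$ are bounded, and its Jacobian is bounded below because the width at every height is at least $a^{-1}$. Setting $g=\Phi^{-1}\circ h$ on $\R\times\{0,1\}$, the maps $g_0,g_1$ fix every integer and are quasisymmetric on each $[t,t+1]$, quantitatively. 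This uses hypothesis (1), property \ref{q:bilip}, and the fact that a quasisymmetric map on $[t,t+2]$ restricts well to unit windows. Note that $g_0$ and $g_1$ are in general different maps.

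\textbf{Step 3: extend on each half of the strip.} Split the strip into the two strips $\R\times[0,1/2]$ and $\R\times[1/2,1]$, rescaled to unit height. Apply Theorem \ref{theorem:qc_extension} to $g_0$ on the lower half, after scaling by $2$ so that the integers become $2\Z$. This is harmless: either refine by inserting $g_0(n+1/2)$, which is at distance $\simeq_\eta 1$ from $n$ and $n+1$, or simply apply the theorem to the map $t\mapsto g_0(2t)/2$ after checking that it fixes $\Z/2$ up to a bi-Lipschitz correction. The extension is the identity on the middle line $y=1/2$. Apply the same construction to $g_1$ on the upper half, reflected via $z\mapsto \bar z+i$. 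Gluing along $y=1/2$, where both extensions are the identity, gives a homeomorphism $G$ of the strip. By removability of lines, $G$ is $K(\eta,a)$-quasiconformal in the open strip. Then $H=\Phi\circ G$ is the desired extension.

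\textbf{Periodicity.} If $h$ commutes with translation by $N$, then so do $\sigma_n$, $\Phi$, $g$, and the extensions by Theorem \ref{theorem:qc_extension}; hence $H$ commutes with $z\mapsto z+(N,0)$.

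\textbf{Main obstacle.} I expect the difficulty to be Step 3: getting the half-integer normalization right so that Theorem \ref{theorem:qc_extension} applies verbatim. The fallback is to rescale so that the strip has height $1$ on each half, choosing the vertical scale equal to the integer spacing. Concretely, map $\R\times[0,1/2]$ to $\R\times[0,1]$ by $(x,y)\mapsto(x,2y)$, which is $2$-bi-Lipschitz. Then $g_0$ already fixes $\Z$ and is locally quasisymmetric, and the theorem applies directly with only a bounded distortion of the conformal structure.
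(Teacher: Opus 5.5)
Your proposal is correct and is essentially the paper's proof. You straighten the strip by a bi-Lipschitz map built from the quadrilaterals $Q_n$ (you use bilinear maps, while the paper splits each trapezoid into two triangles); you apply Theorem \ref{theorem:qc_extension} on each half-strip after the vertical compression $(x,y)\mapsto(x,2y)$ from your fallback, which is exactly the paper's normalization, so the isotropic-rescaling detour in Step 3 is unnecessary; and you glue along $y=1/2$ by removability.

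One small correction: you need not assume that $h_1$ is increasing. Hypothesis (2) gives $|h_0(t)-h_1(t)|\le a$ for every $t$, which is incompatible with $h_1$ being decreasing, so this follows from the hypotheses.
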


\begin{proof}
Since $|h(t,0)-h(t,1)|\leq a$ for each $t\in \R$, we conclude that $h|_{\R\times \{1\}}$ is increasing. Thus, $h$ is increasing on both lines. For $t\in \R$ let $Q_t=[t,t+1]\times [0,1]$ and denote by $T_t$ the closed trapezoid whose bases are $h([t,t+1]\times \{0\})$ and $h([t,t+1]\times \{1\})$; see Figure \ref{fig:composition}. The vertices of $T_t$ have mutual distances in $[a^{-1},a]$, by assumption. 

For each $n\in \Z$, the trapezoid $T_n$ can be mapped to the square $Q_n=[n,n+1]\times [0,1]$ via an $L(a)$-bi-Lipschitz map $\psi$ that is linear on each edge of $T_n$; moreover, we may have that $\psi$ maps the bottom and top sides of $T_n$ to the bottom and top sides of $Q_n$, respectively, in an increasing way. To see this, we split $T_n$ into two triangles $A_n,B_n$ by a diagonal from the lower left to the upper right vertex. Note that the height of $A_n$ and $B_n$ is equal to $1$, so their area is comparable to $1$, depending on $a$. This implies that the angles of $A_n$ and $B_n$ are uniformly bounded from below away from $0$. Thus, $A_n$ and $B_n$ can be mapped with a linear bi-Lipschitz map to two right triangles arising by dividing a unit square with a diagonal; the bi-Lipschitz constant depends only on $a$.  

By pasting these bi-Lipschitz maps, we construct an $L(a)$-bi-Lipschitz map $\psi\colon \R^2\times [0,1]\to \R^2\times [0,1]$ such that $\psi(\R\times \{0\})=\R\times \{0\}$, $\psi(\R\times \{1\})=\R\times \{1\}$, and $\psi(T_n)=[n,n+1]\times [0,1]$ for each $n\in \Z$. 

\begin{figure}
	\centering
	\begin{tikzpicture}
		\begin{scope}		
		\draw (0,0)--(3,0);
		\draw (0,1)--(3,1);
		\draw[line width=1.5pt] (1,0)--(2,0);
		\draw[line width=1.5pt] (1,1)--(2,1);
		\draw[dotted] (1,0)--(1,1);
		\draw[dotted] (2,0)--(2,1);
		\node at (1.5, 0.5) {$Q_n$};
		\fill (1,0) circle (1.5pt) node[below,yshift=-2pt] {$n$};
		\fill (2,0) circle (1.5pt) node[below] {$n+1$};
		\end{scope}
		
		\begin{scope}[shift={(4.5,0)}]		
		\draw (0,0)--(3,0);
		\draw (0,1)--(3,1);
		\draw[line width=1.5pt] (1,0)--(2,0);
		\draw[line width=1.5pt] (1.7,1)--(2.5,1);
		\draw[dotted] (1,0)--(1.7,1);
		\draw[dotted] (2,0)--(2.5,1);
		\node at (1.8, 0.5) {$T_n$};
		\fill (1,0) circle (1.5pt);
		\fill (2,0) circle (1.5pt);
		\end{scope}
		
		\begin{scope}[shift={(9,0)}]
		\draw (0,0)--(3,0);
		\draw (0,1)--(3,1);
		\draw[line width=1.5pt] (1,0)--(2,0);
		\draw[line width=1.5pt] (1,1)--(2,1);
		\draw[dotted] (1,0)--(1,1);
		\draw[dotted] (2,0)--(2,1);
		\fill (1,0) circle (1.5pt) node[below,yshift=-2pt] {$n$};
		\fill (2,0) circle (1.5pt) node[below] {$n+1$};
		\end{scope}
		
		\draw[->] (3.5,0.5)--(4,0.5) node[pos=0.5,anchor=south] {$h$};
		\draw[->] (8,0.5)--(8.5,0.5) node[pos=0.5,anchor=south] {$\psi$};
	\end{tikzpicture}
		\caption{The map $\omega= \psi\circ h$.}\label{fig:composition}
\end{figure}
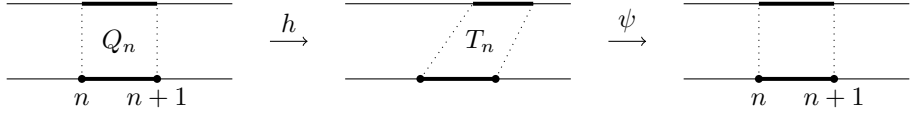

Consider the map $\omega=\psi\circ h$ on $\R\times \{0,1\}$.  The map $\omega$ is quasisymmetric on $[t,t+1]\times \{0\}$ for each $t\in \R$. Moreover,  $\omega$ is increasing on $\R\times \{0\}$ and $\omega(n,0)=(n,0)$, $n\in \Z$; see Figure \ref{fig:composition}. By Theorem \ref{theorem:qc_extension}, the map $\omega|_{\R\times \{0\}}$  has an extension to a homeomorphism $(u,v)$ of $\R\times [0,1]$ that is quasiconformal in $\R\times (0,1)$, so that $(u,v)|_{\R\times \{1\}}$ is the identity map. The map $\omega_0(x,y)=(u(x,2y), 2^{-1}v(x,2y))$ is a homeomorphism from $\R\times[0,1/2]$ onto itself that extends $\omega|_{\R\times \{0\}}$, is quasiconformal in $\R\times (0,1/2)$, and is the identity map on $\R\times \{1/2\}$.  Similarly, $\omega|_{\R\times \{1\}}$ has an extension to a  homeomorphism $\omega_1$ of $\R\times [1/2,1]$ that is quasiconformal in $\R\times (1/2,1)$, and this extension is also the identity map on $\R\times \{1/2\}$. 

Therefore, by the quasiconformal removability of lines, $\omega|_{\R\times \{0,1\}}$ has an extension to a homeomorphism of $\R\times [0,1]$ that is quasiconformal in $\R\times (0,1)$, quantitatively. Thus, $h=\psi^{-1}\circ \omega$ has an extension to a homeomorphism of $\R\times [0,1]$ that is quasiconformal in $\R\times (0,1)$, quantitatively.  

Finally, we show that if the map $h$ commutes with $z\mapsto z+N$ for some $N\in \Z$, the extension may be taken to have the same property. Indeed, in this case the trapezoids $T_n$ satisfy $T_{n+N}=T_n+(N,0)$, $n\in \Z$, so the piecewise linear map $\psi$ commutes with $z\mapsto z+(N,0)$. Also, the extensions given by Theorem \ref{theorem:qc_extension} have this property as well.
\end{proof}

\begin{corollary}\label{corollary:extension_strip}
Let $\eta$ be a distortion function. Let $h\colon \R\times \{0,1\}\to \R\times \{0,1\}$ be an $\eta$-quasi\-symmetric homeomorphism such that $h(\R\times \{0\})=\R\times \{0\}$ and $h|_{\R\times \{0\}}$ is increasing. Then there exists an extension of $h$ to a $K(\eta)$-quasiconformal homeomorphism of $\C$.
\end{corollary}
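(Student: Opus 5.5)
The plan is to reduce everything to Proposition~\ref{theorem:extension_strip} inside the strip $\R\times[0,1]$, and then fill in the two complementary half-planes using the Beurling--Ahlfors theorem. The pieces are glued by removability of lines.

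\emph{Step 1: $h$ preserves both lines.} Since $h$ is a homeomorphism of $\R\times\{0,1\}$ with $h(\R\times\{0\})=\R\times\{0\}$, it must map the other component onto the other component, so $h(\R\times\{1\})=\R\times\{1\}$.

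\emph{Step 2: local quasisymmetry.} Hypothesis (1) of Proposition~\ref{theorem:extension_strip} is immediate: the restriction of the $\eta$-quasisymmetric map $h$ to any subset, in particular to $[t,t+1]\times\{0\}$ or $[t,t+1]\times\{1\}$, is still $\eta$-quasisymmetric.

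\emph{Step 3: hypothesis (2), the only point needing an argument.} Fix $t$. The four points $(t,0),(t,1),(t+1,0),(t+1,1)$ have mutual distances $1$ or $\sqrt2$. Hence, by quasisymmetry, any two of the image distances have ratio at most $\eta(\sqrt2)$, and it suffices to bound a single image distance above and below by constants depending only on $\eta$.

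For the lower bound, $|h(t,0)-h(t,1)|\geq 1$, because the two image points lie on different lines.

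For the upper bound, write $h(t,1)=(s,1)$ and choose $p$ with $h(p,0)=(s,0)$, which is possible since $h$ maps $\R\times\{0\}$ onto itself. Then $|h(t,1)-h(p,0)|=1$. Since $|(t,1)-(t,0)|=1\le|(t,1)-(p,0)|$, quasisymmetry gives
\[
|h(t,1)-h(t,0)|\leq \eta(1)\,|h(t,1)-h(p,0)|=\eta(1).
\]
Thus all four image distances lie in $[a^{-1},a]$ with $a=a(\eta)$. Proposition~\ref{theorem:extension_strip} then yields a homeomorphism $H_0$ of $\R\times[0,1]$ extending $h$ that is $K(\eta)$-quasiconformal in the open strip. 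Its proof also shows that $h|_{\R\times\{1\}}$ is increasing.

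\emph{Step 4: the half-planes.} The map $x\mapsto h(x,1)-(0,1)$ is an increasing $\eta$-quasisymmetric homeomorphism of $\R$. Theorem~\ref{theorem:beurling_ahlfors}, applied after translating by $i$, extends it to a $K(\eta)$-quasiconformal map of $\{y>1\}$ onto itself.

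On the bottom line, $h|_{\R\times\{0\}}$ is increasing and $\eta$-quasisymmetric. Conjugating by $z\mapsto\bar z$, Theorem~\ref{theorem:beurling_ahlfors} gives a $K(\eta)$-quasiconformal self-map of $\{y<0\}$ extending it.

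Gluing these two maps with $H_0$ produces a homeomorphism of $\C$ that is $K(\eta)$-quasiconformal off the lines $y=0$ and $y=1$. By quasiconformal removability of lines, it is $K(\eta)$-quasiconformal on all of $\C$. I expect the main obstacle to be the upper bound in Step~3, which fixes the absolute scale; the auxiliary point $(p,0)$ handles it.
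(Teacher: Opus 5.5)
Your proof is correct and follows essentially the same route as the paper: you verify the hypotheses of Proposition~\ref{theorem:extension_strip} via quasisymmetry with an auxiliary point on the opposite line, then fill the two half-planes by Beurling--Ahlfors and glue using removability of lines. Your auxiliary point $(p,0)$ lies below $h(t,1)$, whereas the paper's lies above $h(t,0)$, so you get the slightly better bound $\eta(1)$ instead of $\eta(1)^2$. One harmless slip: for two image distances with no common endpoint (e.g.\ $|h(t,0)-h(t,1)|$ and $|h(t+1,0)-h(t+1,1)|$), quasisymmetry has to be applied twice through a shared vertex, so the ratio bound is $\eta(\sqrt2)^2$ rather than $\eta(\sqrt2)$.
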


\begin{proof}
For each $t\in \R$ we have 
$$ \frac{|h(t,1)-h(t,0)|}{|h(t,0)-h(t+1,0)|} \leq  \eta(1).$$
If $h(t,0)=(y,0)$, then 
$$|h(t,0)-h(t+1,0)| = \frac{|h(t,0)-h(t+1,0)|}{|(y,0)-(y,1)|} \leq \eta\left( \frac{1}{|(t,0)-h^{-1}(y,1)|}\right) \leq \eta(1).$$
Therefore, $1\leq |h(t,1)-h(t,0)|\leq \eta(1)^2$. This implies that $h|_{\R\times \{1\}}$ is increasing as well. Moreover, 
$$|h(t,0)-h(t+1,0)|\geq \eta(1)^{-1} |h(t,1)-h(t,0)|  \geq \eta(1)^{-1}.$$
By symmetry, the above bounds remain true if we switch the roles of $0$ and $1$ in the $y$-coordinates. Thus, there exists $a=a(\eta)>0$ such that for each $t\in \R$ the vertices of the square $Q_t=[t,t+1]\times [0,1]$ are mapped under $h$ to points with mutual distances in the interval $[a^{-1},a]$. Proposition \ref{theorem:extension_strip} implies that $h$ has a $K(\eta)$-quasiconformal extension $H$ in the strip $\R\times (0,1)$. Using the Beurling--Ahlfors extension theorem (Theorem \ref{theorem:beurling_ahlfors}) and the quasiconformal removability of lines, we may extend $H$ quasiconformally to the entire plane. 
\end{proof}

\subsection{Extension in an annulus}
We give the analogues of the extension results of the preceding section for annuli. First we need an elementary lemma about lifts of circle homeomorphisms.

\begin{lemma}\label{lemma:lift_close}
Let $f,g\colon \mathbb S(0,1)\to \mathbb S(0,1)$ be orientation-preserving homeomorphisms. Then there exist lifts $F,G$ of $f,g$, respectively, under the map $z\mapsto e^{2\pi i z}$ such that 
$$\|F-G\|_\infty\leq \frac{1}{2} \|f-g\|_\infty.$$
\end{lemma}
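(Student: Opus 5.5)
The plan is to compare the lifts through the chordal distance on the circle. Throughout, a lift of an orientation-preserving homeomorphism $f$ of $\mathbb S(0,1)$ means an increasing homeomorphism $F\colon \R\to \R$ with
$$e^{2\pi i F(x)}=f(e^{2\pi i x})\quad\text{and}\quad F(x+1)=F(x)+1.$$
Such a lift exists and is unique up to adding an integer. Fix lifts $F_0,G_0$ of $f,g$ and set $D=F_0-G_0$. Then $D$ is continuous and $1$-periodic. The lifts we seek are $F=F_0$ and $G=G_0+k$ for a suitable $k\in\Z$, so the task is to choose an integer $k$ with $\sup|D-k|\le \frac12\|f-g\|_\infty$. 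Write $\delta=\|f-g\|_\infty\in[0,2]$.

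\textbf{Pointwise bound modulo $\Z$.} For each $x$ we have
$$|f(e^{2\pi ix})-g(e^{2\pi ix})|=|e^{2\pi iF_0(x)}-e^{2\pi i G_0(x)}|=2|\sin(\pi D(x))|.$$
Let $t=\dist(D(x),\Z)\in[0,1/2]$. The elementary inequality $|\sin \pi t|\ge 2t$ on this range gives
$$\dist(D(x),\Z)\le \delta/4 \quad\text{for every } x.$$

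\textbf{Case $\delta<2$.} Then $\dist(D(x),\Z)<1/2$ everywhere, so the continuous function $D$ never takes a value in $\frac12+\Z$. Hence the nearest integer $k$ to $D(x)$ is the same for all $x\in\R$. With $G=G_0+k$ we get $|F-G|=\dist(D,\Z)\le \delta/4\le\delta/2$, which is even better than claimed.

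\textbf{Case $\delta=2$.} Here we need $|F-G|\le 1$, and I would use monotonicity instead. For $x\le y< x+1$, both $F_0(y)-F_0(x)$ and $G_0(y)-G_0(x)$ lie in $[0,1)$. Therefore $|D(y)-D(x)|<1$. By periodicity this holds for all $x,y$, so the compact range of $D$ is an interval of length $<1$. Choose $k$ with $\min D-k\in[-1,0)$. Then $\max D-k<\min D-k+1<1$, so $|D-k|\le 1$ everywhere. Taking $G=G_0+k$ finishes this case.

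\textbf{Main obstacle.} The only delicate point is the choice of a single global integer $k$. In the first case it is handled by the intermediate value theorem, since the continuous function $D$ cannot jump across a half-integer. In the degenerate case $\delta=2$ it is handled by the oscillation bound coming from monotonicity of the lifts.
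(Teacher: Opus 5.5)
Your proof is correct and follows essentially the same route as the paper. You use the pointwise estimate $\dist(D,\Z)\le \frac14\|f-g\|_\infty$ coming from $|e^{2\pi i\theta}-1|\ge 4\dist(\theta,\Z)$. When $\|f-g\|_\infty<2$, continuity of $D$ selects a single integer. In the degenerate case $\|f-g\|_\infty=2$, monotonicity of the lifts bounds the oscillation of the $1$-periodic function $D$ by $1$; your version of this last step, with the strict bound and the explicit choice of $k$, is slightly more careful than the paper's.
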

\begin{proof}
Let $F,G\colon \R\to \R$ be arbitrary lifts of $f,g$, respectively. The maps $F, G$ are increasing homeomorphisms satisfying $F(\theta+1)=F(\theta)+1$ and $G(\theta+1)=G(\theta)+1$ for $\theta\in \R$. We will correct the map $G$ by an integer. Let $D(\theta)=F(\theta)-G(\theta)$, $\theta\in \R$. Observe that $|e^{2\pi i \theta}-1| \geq 4 \dist_e(\theta,\Z)$ for $\theta \in \R$. Thus, for each $\theta\in \R$,
\begin{align*}
\|f-g\|_\infty &\geq |f(e^{2\pi i\theta})-g(e^{2\pi i \theta})|=|e^{2\pi i F(\theta)}-e^{2\pi i G(\theta)}|= |e^{2\pi i D(\theta)}-1| \\
&\geq 4\dist_e(D(\theta),\Z).
\end{align*}
If $\|f-g\|_\infty<2$, then $\dist_e(D(\theta),\Z)<1/2$ for each $\theta\in \R$. By continuity, there exists $k\in \Z$ such that $\dist_e(D(\theta),\Z)= |D(\theta)-k|$ for each $\theta\in \R$. In particular,
$$\|F-(G+k)\|_\infty =\|D-k\|_\infty\leq  \frac{1}{4} \|f-g\|_\infty.$$
Next, suppose that $\|f-g\|_\infty\geq 2$, so in fact $\|f-g\|_\infty=2$. For $0\leq \theta_1\leq \theta_2\leq 1$ we have
\begin{align*}
|D(\theta_2)-D(\theta_1)|&=|(F(\theta_2)-F(\theta_1))- (G(\theta_2)-G(\theta_1 ))| \\
&\leq \max \{ F(\theta_2)-F(\theta_1), G(\theta_2)-G(\theta_1 )\}\leq 1.
\end{align*}
Thus, given that $D$ is $1$-periodic, the oscillation of $D$ is at most $1$. This implies that there exists $k\in \Z$ such that 
$$\|F-(G+k)\|_\infty =\|D-k\|_\infty \leq 1= \frac{1}{2}\|f-g\|_\infty.$$ 
This completes the proof. 
\end{proof}

\begin{proposition}\label{theorem:extension_annulus}
Let $\eta$ be a distortion function and let $N\in \N$ with $N\geq 2$ and $a\geq 1$. Let $h\colon  \mathbb S(0,1)\cup \mathbb S(0,e^{2\pi/N}) \to \mathbb S(0,1)\cup \mathbb S(0,e^{2\pi/N}) $ be a homeomorphism that is orientation-pre\-serving and preserves each of the two circles. Suppose that for each $t\in \R$
\begin{enumerate}[label=\normalfont(\arabic*)]
\item the map $h$ is $\eta$-quasisymmetric in $B_r(t)=\{re^{i\theta}: t \frac{2\pi}{N}\leq \theta\leq t\frac{2\pi}{N}+ \frac{2\pi}{N}\}$ for $r=1$ and $r=e^{2\pi/N}$ and
\item the four endpoints of the arcs $B_1(t), B_{e^{2\pi/N}}(t)$ are mapped to points with mutual distances in the interval $[a^{-1} N^{-1},a N^{-1}]$. 
\end{enumerate}
Then there exists an extension of $h$ to a homeomorphism $H\colon \bar{\mathbb A}(0;1,e^{2\pi/N})\to \bar{\mathbb A}(0;1,e^{2\pi/N})$ that is $K(\eta,a)$-quasiconformal in ${\mathbb A}(0;1,e^{2\pi/N})$.
\end{proposition}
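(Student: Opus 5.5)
The plan is to transfer the problem to the strip via a logarithm and apply Proposition \ref{theorem:extension_strip} with periodicity $N$. Consider the covering map $E(z)=\exp(\tfrac{2\pi}{N} z)$, which maps the strip $\R\times[0,1]$ onto $\bar{\mathbb A}(0;1,e^{2\pi/N})$. It sends $\R\times\{0\}$ to $\mathbb S(0,1)$ and $\R\times\{1\}$ to $\mathbb S(0,e^{2\pi/N})$. A horizontal translation by $(N,0)$ is a deck transformation, and $E$ maps $[t,t+1]\times\{s\}$ onto the arc $B_{e^{2\pi s/N}}(t)$ after rotating coordinates so that $z=x+iy$ corresponds to angle $\tfrac{2\pi}{N}x$. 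Since $h$ is orientation-preserving and preserves each circle, it lifts to a homeomorphism $\tilde h$ of $\R\times\{0,1\}$. This lift preserves each line, is increasing on each, and commutes with $z\mapsto z+(N,0)$.

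The lifts on the two lines are chosen independently. To obtain condition (2) of Proposition \ref{theorem:extension_strip}, I would pick them as close as possible, in the spirit of Lemma \ref{lemma:lift_close}. Concretely, I would fix the lift on the bottom line and then choose the integer multiple of $N$ for the top lift so that $\tilde h(0,1)$ lies within horizontal distance $N/2$ of $\tilde h(0,0)$. The actual choice will be pinned down by hypothesis (2).

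Next I would verify the hypotheses of Proposition \ref{theorem:extension_strip} for $\tilde h$. Each arc $B_r(t)$ has angular length $2\pi/N\le \pi$, so $E$ restricted to a unit segment $[t,t+1]\times\{s\}$ is bi-Lipschitz onto its image after rescaling by the factor $\tfrac{2\pi}{N}e^{2\pi s/N}$, with absolute constants since $e^{2\pi s/N}\le e^{\pi}$. The same holds on slightly longer segments and on the union of the four relevant points. Scaling does not affect quasisymmetry, so by \ref{q:bilip} and \ref{q:inverse} the lift is $\eta'$-quasisymmetric on each $[t,t+1]\times\{s\}$. The image arc $h(B_r(t))$ has endpoints at mutual distance at most $a/N$, which is small relative to the radius unless $N$ is small. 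I expect to need to argue that $h(B_r(t))$ is the short arc, so that the inverse of $E$ is locally bi-Lipschitz there. This follows because quasisymmetry on $B_r(t)$ together with chord lengths $\simeq_a N^{-1}$ forces the image arc to have diameter $\lesssim_{\eta,a} N^{-1}$. For small $N$ this is handled by the bounded constants.

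The point I expect to be the main obstacle is condition (2): the four image points must have mutual distances in $[a'^{-1},a']$ after lifting. The Euclidean distances $\simeq N^{-1}$ between the images of the four endpoints on the annulus scale to $\simeq 1$ in the strip, because $|E'|\simeq N^{-1}$ there. The difficulty is the consistent choice of lifts. The vertical pair $h(e^{2\pi i t/N})$ and $h(e^{2\pi(1+ i t)/N})$ are within $a/N$ of each other, so the angular positions of their images differ by $\lesssim a/N$ modulo $2\pi$. With the lift normalized at one $t$, continuity in $t$ keeps the horizontal offset between $\tilde h(t,0)$ and $\tilde h(t,1)$ bounded by $C(a)$ for all $t$. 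This gives the upper bounds. The lower bounds come from the distance $\ge a^{-1}N^{-1}$, where the distance between the lifted points is $\ge$ the chordal distance times $N/(2\pi e^{\pi})$. With all hypotheses verified, Proposition \ref{theorem:extension_strip} yields an $N$-periodic extension $\tilde H$ that is $K(\eta,a)$-quasiconformal on the open strip. Then $H=E\circ\tilde H\circ E^{-1}$ is well defined because of the periodicity, and it is $K(\eta,a)$-quasiconformal because $E$ is conformal.
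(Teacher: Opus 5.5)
Your proposal follows the paper's proof. You lift $h$ under the covering $\psi(z)=e^{-2\pi i z/N}$ of $\bar{\mathbb A}(0;1,e^{2\pi/N})$ by $\R\times[0,1]$, choose the two lifts with bounded horizontal offset, check the local quasisymmetry and four-point hypotheses of Proposition~\ref{theorem:extension_strip}, and push the $N$-periodic extension back down.

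Two small repairs are needed. First, your $E(z)=\exp(\frac{2\pi}{N}z)$ as written sends horizontal lines to rays rather than circles, so use $\psi$ instead. Second, your continuity argument bounding $D(t)=\re\tilde h(t,0)-\re\tilde h(t,1)$ only works when $C(a)<N/2$. For $N\le 2C(a)$ you must add that both lifts are increasing and commute with $x\mapsto x+N$, so $D$ oscillates by at most $N$. This is the second case of Lemma~\ref{lemma:lift_close}. The paper avoids the issue by applying that lemma directly to $h|_{\mathbb S(0,1)}$ and $e^{-2\pi/N}h(e^{2\pi/N}\,\cdot\,)$, whose uniform distance is $\lesssim_a N^{-1}$.
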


Note that the quasiconformal dilatation of the extension does not depend on $N$ and in particular on the distance of the circles $\mathbb S(0,1),\mathbb S(0,e^{2\pi/N})$. 

\begin{proof}
Let $L=e^{2\pi/N} \in (1,e^\pi]$ and observe that $L-1\simeq N^{-1}$ with uniform constants. Let $f(z)=h(z)$ and $g(z)=L^{-1}h(Lz)$ for $z\in \mathbb S(0,1)$. We have
\begin{align*}
|f(z)-g(z)|&= |h(z)- L^{-1} h(Lz)|\leq |h(z)- L^{-1}h(z)|+|L^{-1}h(z)- L^{-1} h(Lz)|\\
&\leq \frac{L-1}{L}+\frac{aN^{-1}}{L}\leq C_1(a)N^{-1}. 
\end{align*}
By Lemma \ref{lemma:lift_close}, there exist lifts $F,G$ of $f,g$, respectively, under the map $z\mapsto e^{-2\pi i z}$ such that 
\begin{align}\label{theorem:extension:annulus:bound}
|F(t)-G(t)|\leq \frac{1}{2}C_1(a)N^{-1}\,\,\, \text{for each $t\in \R$.}
\end{align}

We define $\widetilde h(t,0)=(NF(t/N),0)\equiv NF(t/N)$, which is a lift of $h|_{\mathbb S(0,1)}$ under $\psi(z)=e^{-2\pi i z/N}$. We also define $\widetilde h(t,1)=(NG(t/N),1)\equiv NG(t/N)+i$, which is a lift of $h|_{\mathbb S(0,L)}$ under $\psi$. We will verify the assumptions of Proposition \ref{theorem:extension_strip} for the map $\widetilde h$; assuming this, we can extend $\widetilde h$ to a $K(\eta,a)$ quasiconformal homeomorphism $\widetilde H$ of the strip $\R\times (0,1)$ that commutes with $z\mapsto z+(N,0)$ and thus descends to a $K(\eta,a)$-quasiconformal homeomorphism $H$ of the annulus $\mathbb A(0;1,L)$ that extends $h$.

By the definition of $\widetilde h$ and \eqref{theorem:extension:annulus:bound}, for each $t\in \R$ we have 
$$|\widetilde h(t,0)- \widetilde h(t,1)|\leq 1+ N|F(t/N)-G(t/N)|\leq 1+ \frac{1}{2}C_1(a).$$
For $s=0,1$ we will show that $\widetilde h([t,t+1]\times \{s\})$ has length comparable to $1$, depending on $a$, and that $\widetilde h|_{[t,t+1]\times \{s\}}$ is quasisymmetric, depending only on $\eta$ and $a$. These claims verify the assumptions of Proposition \ref{theorem:extension_strip}. We now prove the claims. 

Let $t\in \R$. Note that $\psi([t,t+1]\times \{0\})=B_1(-t-1)$. Since $N\geq 2$, we have
\begin{align}\label{theorem:extension_annulus:ineq:1}
C_1^{-1}N^{-1}|z-w|\leq |\psi(z)-\psi(w)|\leq C_1 N^{-1}|z-w|,\,\,\, z,w \in [t,t+1]\times \{0\},
\end{align}
where $C_1\geq 1$ is a uniform constant. 

By assumption, the distance of the endpoints of the arc $h(B_1(-t-1))$ lies in the interval $[a^{-1}N^{-1},aN^{-1}]$. If this arc has length greater than $\pi$, then the complementary arc has length at most $aN^{-1}\pi/2$. Yet, it is equal to the union of the arcs $h(B_1(-t-k))$, $k=2,\dots,N$, each of which has length at least $a^{-1}N^{-1}$. We conclude that
$$a^{-1}N^{-1}\cdot (N-1) \leq aN^{-1} \frac{\pi}{2}$$
so $2\leq N\leq 1+a^2\pi/2$. Thus, the length of $h(B_1(-t-1))$ is comparable to $1$, depending on $a$. This implies that the length of $\widetilde h([t,t+1]\times \{0\})$ is also comparable to $1$ and $\psi$ is bi-Lipschitz on $\widetilde h([t,t+1]\times \{0\})$, depending only on $a$. 

Now, if the arc $h(B_1(-t-1))$ has length less than $\pi$, then its length is comparable to $N^{-1}$, depending on $a$, and we have
\begin{align}\label{theorem:extension_annulus:ineq:2}
 C_2 N^{-1} |z-w|\leq |\psi(z)-\psi(w)|\leq C_2 N^{-1}|z-w|,\,\,\, z,w\in \widetilde h([t,t+1]\times \{0\}),
\end{align}
where $C_2\geq 1$ is a uniform constant. Therefore, \eqref{theorem:extension_annulus:ineq:2} holds in both cases if the constant $C_2$ is allowed to depend on $a$. Also, note that the length of $\widetilde h([t,t+1]\times \{0\})$ is comparable to $1$, depending on $a$, in both cases.  

By \eqref{theorem:extension_annulus:ineq:1},  \eqref{theorem:extension_annulus:ineq:2}, and the relation $\psi\circ \widetilde h=h\circ \psi$ we conclude that $\widetilde h$ is $\widetilde \eta$-quasi\-symmetric on $[t,t+1]\times \{0\}$ for some $\widetilde \eta$ that depends only on $\eta$ and $a$.  With the same argument one shows that $\widetilde h$ is $\widetilde \eta$-quasisymmetric on $[t,t+1]\times \{1\}$ and the length of $\widetilde h ([t,t+1]\times \{1\})$ is comparable to $1$, depending on $a$.
\end{proof}

\begin{proposition}\label{corollary:extension_annulus}
Let $\eta$ be a distortion function and let $L,L'>1$ such that $L\in (1,M)$ for some $M>1$. Let $h\colon  \mathbb S(0,1)\cup \mathbb S(0,L) \to \mathbb S(0,1)\cup \mathbb S(0,L') $ be an $\eta$-quasisymmetric homeomorphism that is orien\-tation-preserving and preserves each of the two circles. Then there exists an extension of $h$ a $K(\eta,M)$-quasi\-conformal quasiconformal homeomorphism of $\C$.
\end{proposition}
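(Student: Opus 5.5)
The plan is to reduce to Proposition \ref{theorem:extension_annulus} by rescaling so that the modulus of the annulus is of the form $2\pi/N$, after first controlling $L'$ in terms of $L$ and $\eta$. Since $h$ is $\eta$-quasisymmetric and maps $\mathbb S(0,1)$ onto $\mathbb S(0,1)$ and $\mathbb S(0,L)$ onto $\mathbb S(0,L')$, comparing, via \eqref{definition:qs} and Lemma \ref{lemma:quasisymmetric_relative_distance}, the distance between the two circles with their diameters gives $(L'-1)\simeq_\eta (L-1)$ whenever $L-1$ is small. It also gives $L'\lesssim_{\eta,M} 1$ and $L'-1\gtrsim_{\eta,M}1$ when $L-1$ is bounded below. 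In particular $\log L'\simeq_{\eta,M}\log L$, which is the necessary modulus compatibility.

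I would first dispose of the case $L\geq 1+c$, for a small constant $c=c(\eta)$. In that case both pairs of circles are uniformly relatively separated, $\Delta\simeq_{\eta,M}1$. Here I would extend $h$ separately on each circle by Corollary \ref{corollary:beurling_ahlfors}/\ref{corollary:beurling_ahlfors_embedding} to $\D$ and to the exterior of $\mathbb S(0,L)$, and then fill the annulus. A simple route is to choose $N\in\{2,3,\dots\}$ bounded in terms of $M$, change the radii by the radial map $z\mapsto |z|^{\alpha-1}z$ with $\alpha\simeq_{\eta,M}1$ so that the modulus is exactly $2\pi/N$, and then apply the general argument below. Radial stretches $re^{i\theta}\mapsto r^\alpha e^{i\theta}$ are $\max\{\alpha,1/\alpha\}$-quasiconformal, so this costs only bounded dilatation.

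In general, I choose $N\geq 2$ with $e^{2\pi/N}$ comparable to $L$, so that $N\simeq (L-1)^{-1}$, and apply the radial stretches $\sigma(z)=|z|^{\alpha-1}z$ and $\sigma'(z)=|z|^{\alpha'-1}z$. These send $\mathbb S(0,L)$ and $\mathbb S(0,L')$ to $\mathbb S(0,e^{2\pi/N})$, and $\alpha,\alpha'\simeq_{\eta,M}1$ by the modulus comparison. On each circle they act as rotations-free scalings, so $\tilde h=\sigma'\circ h\circ\sigma^{-1}$ is still quasisymmetric on the union, quantitatively. The main step is verifying hypotheses (1) and (2) of Proposition \ref{theorem:extension_annulus} for $\tilde h$. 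Hypothesis (1) is immediate, since quasisymmetry restricts to arcs. For (2), the four endpoints of $B_1(t)$ and $B_{e^{2\pi/N}}(t)$ have mutual distances $\simeq N^{-1}$ and lie in a set of diameter $\simeq N^{-1}$. Comparing with a reference pair of points at distance $\simeq N^{-1}$, which are at radial distance across the annulus and are mapped to points at distance at least the width $\simeq N^{-1}$ of the target annulus, quasisymmetry gives that all mutual image distances are $\simeq_\eta N^{-1}$. I expect this to be the delicate point. The lower bound for two points on the same circle follows from quasisymmetry relative to the radial pair, whose image distance is at least $e^{2\pi/N}-1$. The upper bound uses the radial pair $z$, $e^{2\pi/N}z$. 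Its image distance must be bounded above by $C N^{-1}$, and this needs $h$ to map radial neighbors to nearby points. That in turn follows from quasisymmetry compared with a far point on the same circle whose image distance is at most $2L'$, together with the smallness of $N^{-1}$, using $\eta(t)\to0$.

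Proposition \ref{theorem:extension_annulus} then gives a $K(\eta,M)$-quasiconformal extension on the closed annulus. I would extend inside $\D$ by Corollary \ref{corollary:beurling_ahlfors} applied to $\tilde h|_{\mathbb S(0,1)}$, and outside by Corollary \ref{corollary:beurling_ahlfors} conjugated by inversion. Gluing uses the quasiconformal removability of circles. Finally, conjugating back, $\sigma'^{-1}\circ\tilde H\circ\sigma$ is the desired extension, with dilatation depending only on $\eta$ and $M$.
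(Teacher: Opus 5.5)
There is a genuine gap. Your architecture is the paper's: normalize both annuli to $\mathbb A(0;1,e^{2\pi/N})$ by radial maps, check hypothesis (2) of Proposition \ref{theorem:extension_annulus}, and fill $\D$ and the exterior by Corollary \ref{corollary:beurling_ahlfors}. But the step everything rests on is not justified. You assert that \eqref{definition:qs} and Lemma \ref{lemma:quasisymmetric_relative_distance} give $L'-1\simeq L-1$ for small $L-1$. Applied to $E=\mathbb S(0,1)$, $F=\mathbb S(0,L)$, and then to $h^{-1}$, they give only $L'-1\le 2\eta(L-1)$ and $L-1\le 2\eta'(L'-1)$, where $\eta'(t)=1/\eta^{-1}(1/t)$. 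A single quasisymmetric comparison between scale $L-1$ and scale $1$ never produces a linear relation. Without linearity your exponent $\alpha'=2\pi/(N\log L')$ is uncontrolled. Then $\sigma'$ has unbounded dilatation and $\tilde h$ is not quantitatively quasisymmetric on the union of the circles. So the reduction fails exactly in the only nontrivial regime $L\to 1$.

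The missing idea is the counting argument the paper uses. Cut $\mathbb S(0,1)$ into $n+1\simeq_M (L-1)^{-1}$ arcs $A_i$ of length $2\pi/(n+1)\lesssim_M L-1$, with endpoints $z_{i-1},z_i$. Compare with $u_i=h^{-1}(L'h(z_i))\in\mathbb S(0,L)$: here $|h(z_i)-h(u_i)|=L'-1$ while $|z_i-u_i|\geq L-1$, which gives $|h(z_{i-1})-h(z_i)|\lesssim_{\eta,M}L'-1$. The complementary arc has image of diameter at least $\eta(1)^{-1}$ by \eqref{definition:qs}, so also $\ell(h(A_i))\lesssim_\eta|h(z_{i-1})-h(z_i)|$. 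Summing over $i$ yields $2\pi\lesssim_{\eta,M}(L'-1)/(L-1)$. The reverse bound follows by running the same argument for $h^{-1}$, once one checks $L'\le 2\eta(M)$.

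There is a second, more local error in your verification of hypothesis (2). Bounding the radial pair by comparison with a far point on the same circle gives only $|\tilde h(z)-\tilde h(e^{2\pi/N}z)|\lesssim \eta(C/N)$. That is $o(1)$, but not $O(N^{-1})$ unless $\eta$ is linear near $0$. The right comparison point is again the preimage of the radial projection. Once $L=L'$, set $u=h^{-1}(Lh(z))$; then $|z-u|\geq L-1=|z-Lz|$ and $|h(z)-h(u)|=L-1$, so $|h(z)-h(Lz)|\le\eta(1)(L-1)$. Likewise $|h(z)-h(w)|\le\eta(\log L/(L-1))(L-1)\le \eta(1)(L-1)$ for same-circle vertices. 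Your lower bounds and your treatment of the case $L\geq 1+c$ are fine. Using power stretches $r\mapsto r^{\alpha}$ instead of the paper's affine-in-$r$ maps $\sigma,\tau$ is also fine, but only once $\alpha,\alpha'\simeq_{\eta,M}1$ has actually been proved.
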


\begin{proof}
Note that $h$ extends quasiconformally to $\D$ and to $\C\setminus \bar \D(0,L)$ by Corollary \ref{corollary:beurling_ahlfors}. Thus, it suffices to extend it in the annulus $\mathbb A(0;1,L)$. Our goal is to modify the map $h$ in a bi-Lipschitz way so that Proposition \ref{theorem:extension_annulus} can be applied. 

To begin with, we show that $L-1\simeq_{\eta,M} L'-1$. Let $n\in \N$ such that 
$$\frac{M-1}{n+1}\leq  L-1 <  \frac{M-1}{n}.$$
Consider a partition  $\{z_0,\dots,z_n\}$ of $\mathbb S(0,1)$ such that the arc $A_i$ from $z_{i-1}$ to $z_i$ has length $2\pi/(n+1)$ for $i\in \{0,\dots,n\}$, where $z_{-1}\coloneqq z_n$. Thus, for $i\in \{0,\dots,n\}$, we have
$$|z_{i-1}-z_i|\leq \frac{2\pi}{n+1}.$$
Fix $0\in \{1,\dots,n\}$. Denote by $B_i$ be the complementary arc of $A_i$ and note that $\diam_e(B_i)=2=\diam_e(\mathbb S(0,1))$. Thus, $\diam_e(h(B_i)) \geq \eta(1)^{-1}$ by \eqref{definition:qs}. This implies that $\ell(h(A_i))\leq 2\pi -\eta(1)^{-1}$, so
\begin{align*}
\ell(h(A_i))\leq C_1(\eta)|h(z_{i-1})-h(z_i)|.
\end{align*}
Also,
\begin{align*}
\frac{|h(z_{i-1})-h(z_i)|}{L'-1}&=\frac{|h(z_{i-1})-h(z_i)|}{|h(z_{i})- L'h(z_i)|}\\
&\leq \eta\left( \frac{|z_{i-1}-z_i|}{|z_i- h^{-1}(Lh(z_i))|}\right)\leq \eta \left(\frac{ 2\pi(n+1)^{-1}}{L-1}\right)\leq \eta\left(\frac{2\pi}{M-1}\right). 
\end{align*}
Therefore, $\ell(h(A_i))\leq C_2(\eta,M) (L'-1)$. We sum over $i\in \{0,\dots,n\}$ and we obtain
$$2\pi \leq C_2(\eta,M) \cdot (n+1) (L'-1)\leq C_2(\eta,M) \cdot 2n  (L'-1) \leq C_2(\eta,M) 2(M-1) \frac{L'-1}{L-1}.$$
If we show that $L'\lesssim_{\eta,M}1$, then we can apply the same argument to $h^{-1}$ in order to obtain an inequality in the reverse direction. Observe that by applying \eqref{definition:qs} to $A=\mathbb S(0,1)$ and $B=\mathbb S(0,1)\cup \mathbb S(0,L)$, we have
$$\frac{2}{2L'} \geq \frac{1}{2\eta\left(\frac{2L}{2}\right)}> \frac{1}{2\eta(M)}.$$ 
Therefore, $L'< 2\eta(M)\eqqcolon M'$, as desired. This completes the proof of the claim that $L-1\simeq_{\eta,M} L'-1$.

Next, we modify the map $h$ in a bi-Lipschitz way. Let $N\in \N$, $N\geq 2$, such that $\frac{M-1}{N}\leq  L-1<\frac{M-1}{N-1}$, so $L-1\simeq_M\frac{1}{N}$. Note that $\frac{1}{N}\simeq e^{2\pi/N}-1$, so  
$$L-1\simeq_{M} e^{2\pi/N}-1\quad \text{and}\quad L'-1\simeq_{\eta,M}e^{2\pi/N}-1.$$
Consider the maps
\begin{align*}
\sigma(re^{i\theta}) &= \bigg(1+ \frac{e^{2\pi/N}-1}{L-1}(r-1)\bigg) e^{i\theta}\,\,\, \text{and} \,\,\, \tau(re^{i\theta}) &= \bigg(1+ \frac{e^{2\pi/N}-1}{L'-1}(r-1) \bigg) e^{i\theta}.
\end{align*}
The map $\sigma$ (resp.\ $\tau$) interpolates between the identity map on $\mathbb S(0,1)$ and a scaling on $\mathbb S(0,L)$ (resp.\ $\mathbb S(0,L')$). The map $\sigma$ is  bi-Lipschitz on $\bar{ \mathbb A}(0;1,L)$, depending only on $M$, and maps it onto $\bar{\mathbb A}(0;1,e^{2\pi/N})$. The map $\tau$ is bi-Lipschitz on $\bar{ \mathbb A}(0;1,L')$, depending only on $\eta,M$, and maps it onto $\bar{\mathbb A}(0;1,e^{2\pi/N})$. If we can extend the map $\tau \circ h\circ \sigma^{-1}$ to a $K(\eta,M)$-quasiconformal map of the annulus $\mathbb A(0;1,e^{2\pi/N})$, then we can also extend the original map $h$ to a $K'(\eta,M)$-quasiconformal map from $\mathbb A(0;1,L)$ onto $\mathbb A(0;1,L')$. Therefore, by replacing the map $h$ with $\tau \circ h\circ \sigma^{-1}$, we may assume that $L=L'=e^{2\pi/N}\in (1,e^{\pi}]$ and $h$ is $\eta$-quasisymmetric.

We will verify the assumptions of Proposition \ref{theorem:extension_annulus}. Consider the curvilinear square $S(t)=\{z\in \bar {\mathbb A}(0;1,L): t\log L\leq   \arg(z)\leq (t+1)\log L\}$. It suffices to show that its vertices are mapped to points with mutual distances in the interval $[a^{-1}\log L, a\log L]$ for some $a=a(\eta)\geq 1$.

Let $z,w\in \mathbb S(0,1)$ be two vertices of $S(t)$. The other two vertices are $Lz$ and $Lw$. We will bound the distances $|h(z)-h(w)|$, $|h(z)-h(Lz)|$, $|h(z)-h(Lw)|$, $|h(Lz)-h(Lw)|$. Then the bounds for $|h(w)-h(Lw)|$ and $|h(w)-h(Lz)|$ follow by symmetry. 

Note that the arc of $S(t)$ between $z$ and $w$ has length $\log L\leq \pi$. Thus, $ \frac{2}{\pi}\log L \leq |z-w|\leq \log L$. Moreover, since $L\in (1,e^{\pi}]$, we have $c_0^{-1}(L-1)\leq \log L \leq L-1$ for $c_0=\pi^{-1}(e^\pi-1)$. Therefore, we have
$$\frac{L-1}{|h(z)-h(w)|}\leq \frac{|h(Lz)-h(z)|}{|h(z)-h(w)|}\leq \eta\left( \frac{|Lz-z|}{|z-w|}\right) \leq \eta\left(\frac{L-1}{2\pi^{-1}\log L}\right)\leq \eta\left(\frac{c_0\pi}{2}\right) $$ 
and
\begin{align*}
\frac{|h(z)-h(w)|}{L-1}&=\frac{|h(z)-h(w)|}{|h(z)- Lh(z)|}\leq \eta\left( \frac{|z-w|}{|z- h^{-1}(Lh(z))|}\right)\leq \eta \left(\frac{\log L}{L-1}\right)\leq \eta(1). 
\end{align*}
Thus $L-1\leq |h(z)-h(Lz)|\leq \eta(\frac{c_0\pi}{2})\eta(1)(L-1)$ and $\eta(\frac{c_0\pi}{2})^{-1}(L-1)\leq  |h(z)-h(w)|\leq \eta(1)(L-1)$.  These bounds give immediately bounds for $|h(z)-h(Lw)|$.  Similarly, 
\begin{align*}
\frac{|h(Lz)-h(Lw)|}{L-1}& = \frac{|h(Lz)-h(Lw)|}{|h(z)-Lh(z)|} \leq \eta \left(\frac{L|z-w|}{L-1}\right)\leq \eta(L)\leq \eta(e^\pi)
\end{align*}
and
\begin{align*}
\frac{L-1}{|h(Lz)-h(Lw)|}\leq \frac{|h(Lz)-h(z)|}{|h(Lz)-h(Lw)|} \leq \eta\left(\frac{L-1}{2\pi^{-1}L\log L}\right)\leq \eta\left(\frac{c_0\pi}{2L}\right)\leq \eta\left(\frac{c_0\pi}{2}\right).
\end{align*}
So, $\eta(\frac{c_0\pi}{2})^{-1}(L-1)\leq |h(Lz)-h(Lw)|\leq \eta(e^\pi)(L-1)$. This completes the proof.
\end{proof}

\begin{proposition}\label{corollary:extension_annulus:large}
Let $\eta$ be a distortion function and let $L,L'>1$ such that
\begin{align}\label{corollary:extension_annulus:large:assumption}
c_0^{-1}\leq  \frac{\log L'}{\log L}\leq c_0
\end{align}
for some $c_0\geq 1$. Let $h\colon  \mathbb S(0,1)\cup \mathbb S(0,L) \to \mathbb S(0,1)\cup \mathbb S(0,L') $ be an $\eta$-quasisymmetric homeomorphism that is orien\-tation-preserving and preserves each of the two circles. Then there exists an extension of $h$ to a $K(\eta,c_0)$-quasi\-conformal homeomorphism of $\C$.
\end{proposition}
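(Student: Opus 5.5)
The plan is to split into two regimes according to the size of $L$. In the bounded regime the statement reduces directly to Proposition \ref{corollary:extension_annulus}. In the large regime I would glue three maps: two thin collar annuli near the boundary circles, handled by Proposition \ref{corollary:extension_annulus}, and a radial power stretch in the middle. Throughout, the extensions into $\D$ and into $\C\setminus\bar\D(0,L)$ come from Corollary \ref{corollary:beurling_ahlfors}, exactly as at the start of the proof of Proposition \ref{corollary:extension_annulus}. The pieces are then glued using the quasiconformal removability of circles.

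\emph{Bounded case.} Fix the threshold $M=e^{4c_0}$. If $L<M$, then Proposition \ref{corollary:extension_annulus} applies verbatim and gives a $K(\eta,c_0)$-quasiconformal extension. Assumption \eqref{corollary:extension_annulus:large:assumption} is not needed here.

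\emph{Large case, $L\geq e^{4c_0}$.} By \eqref{corollary:extension_annulus:large:assumption} we have $\log L'\geq \log L/c_0\geq 4$, so both annuli are thick.
\begin{enumerate}[label=\normalfont(\roman*)]
\item On $\mathbb S(0,1)\cup\mathbb S(0,e)$, define $g_1$ to be $h$ on $\mathbb S(0,1)$ and the identity on $\mathbb S(0,e)$. I claim that $g_1$ is $\eta_1(\eta)$-quasisymmetric. Each piece is quasisymmetric, and $h|_{\mathbb S(0,1)}$ is an $\eta$-quasisymmetric self-map of the unit circle, so it moves points by at most $2$. The two circles are at distance comparable to their diameters, so cross-circle triples are controlled by comparing distances to within bounded factors. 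Proposition \ref{corollary:extension_annulus} with $M=e$ then extends $g_1$ to a $K(\eta)$-quasiconformal map of $\mathbb A(0;1,e)$ onto itself.
\item Symmetrically, on $\mathbb S(0,L/e)\cup\mathbb S(0,L)$, define $g_2$ to be $h$ on $\mathbb S(0,L)$ and $z\mapsto (L'/L)z$ on $\mathbb S(0,L/e)$. After conjugating by the scalings $z\mapsto z/L$ and $w\mapsto w/L'$, this is the same situation as in (i), with the roles of the inner and outer circles exchanged. It therefore extends to a $K(\eta)$-quasiconformal map $\mathbb A(0;L/e,L)\to\mathbb A(0;L'/e,L')$.
\item In the middle annulus $\mathbb A(0;e,L/e)$, use the radial map
\[
z\mapsto e\,(|z|/e)^{\alpha}\,\frac{z}{|z|},\qquad \alpha=\frac{\log L'-2}{\log L-2}.
\]
It is the identity on $\mathbb S(0,e)$ and agrees with $z\mapsto (L'/L)z$ on $\mathbb S(0,L/e)$. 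Since $\log L,\log L'\geq 4$, we get $\alpha\simeq_{c_0}1$; for instance, using $\log L'\geq \log L/c_0$ and $x-2\geq x/2$ for $x\geq4$, one obtains $\alpha\in[(2c_0)^{-1},2c_0]$. The map is $\max\{\alpha,\alpha^{-1}\}$-quasiconformal.
\end{enumerate}
Gluing (i), (ii) and (iii) with the two outer extensions gives a homeomorphism of $\C$ that is quasiconformal off finitely many circles. Removability of circles then yields a $K(\eta,c_0)$-quasiconformal extension.

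The main obstacle I expect is step (i): proving that the spliced map $g_1$ (and likewise $g_2$) is quasisymmetric with control depending only on $\eta$. This needs a short case analysis on whether a triple lies on one circle or straddles both, together with checking that the orientations match. If the direct quasisymmetry check turns out to be messy, my fallback is to verify the hypotheses of Proposition \ref{theorem:extension_annulus} for $g_1$ directly, as was done at the end of the proof of Proposition \ref{corollary:extension_annulus}.
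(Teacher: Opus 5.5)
Your proposal is correct and follows essentially the paper's argument: the same three-annulus decomposition, with the two collars handled by Proposition \ref{corollary:extension_annulus} (applied to the spliced map that is $h$ on one circle and the identity or a scaling on the other, which the paper likewise treats as trivially quasisymmetric) and a radial power stretch in the middle; the only difference is that you set the threshold at $e^{4c_0}$ and use collars of ratio $e$, so the hypothesis makes both annuli thick and the bound $\alpha\in[(2c_0)^{-1},2c_0]$ is immediate, whereas the paper thresholds at $L\geq 2$ and must first prove $L'\geq (1-\eta(4)^{-1})^{-1}$, pick an $\eta$-dependent collar ratio $R$, and run a case analysis on the exponent. One trivial correction: Proposition \ref{corollary:extension_annulus} requires $L\in(1,M)$ strictly, so in steps (i) and (ii) invoke it with, say, $M=3$ rather than $M=e$.
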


\begin{proof}
By Proposition \ref{corollary:extension_annulus}, we may assume that $L\geq 2$. Note that $h$ extends quasiconformally to $\D$ and to $\C\setminus \bar \D(0,L)$ by Corollary \ref{corollary:beurling_ahlfors}. Thus, it suffices to extend it in the annulus $\mathbb A(0;1,L)$. 

Let $z\in \mathbb S(0,1)$ and $w\in \mathbb S(0,L)$ such that $|h(z)-h(w)|=L'-1$. By \eqref{definition:qs}, we have
$$\frac{L'-1}{2L'}=\frac{|h(z)-h(w)|}{2L'}\geq \frac{1}{2\eta\left(\frac{2L}{|z-w|}\right)} \geq \frac{1}{2\eta\left(\frac{2L}{L-1}\right )} \geq \frac{1}{2\eta(4)}.$$ 
Therefore, $L'\geq L_0'\coloneqq (1-\eta(4)^{-1})^{-1}>1$. Let  $R=\frac{1}{2}(1+\min\{2, L_0'\}^{1/2})$, so 
\begin{align}\label{corollary:extension_annulus:large:1}
1<R^2<(2R-1)^2=\min\{2,L_0'\}\leq \min\{L,L'\}.
\end{align}
Note that $1<R<L/R<L$ and $1<R<L'/R<L'$. We split the annulus $\mathbb A(0;1,L)$ into the annuli $A_1=\mathbb A(0;1,R)$, $A_2=\mathbb A(0; R, L/R)$, and $A_3=\mathbb A(0;L/R,L)$. Similarly, we partition $\mathbb A(0;1,L')$ into the annuli $A_1'=A_1$, $A_2'=\mathbb A(0;R,L'/R)$, $A_3'=\mathbb A(0;L'/R,L')$. The map that is equal to $h$ on $\mathbb S(0,1)$ and equal to the identity map on $\mathbb S(0,R)$ is trivially quasisymmetric, depending only on $\eta$. Therefore, by Proposition \ref{corollary:extension_annulus}, there exists an extension of $h|_{\mathbb S(0,1)}$ to a $K(\eta)$-quasiconformal homeomorphism $H$ of the annulus $A_1$ that is the identity map in the outer circle. Similarly, there exists an extension of $h|_{\mathbb S(0,L)}$ to a $K(\eta)$-quasiconformal homeomorphism $H\colon A_3\to A_3'$ that is a scaling $z \mapsto L'L^{-1}z$ in the inner circle. 

It remains extend $H$ to a quasiconformal map from $A_2$ onto $A_2'$. Equivalently, if we compose with appropriate scalings, it suffices to extend quasiconformally the map $g\colon \mathbb S(0,1)\cup \mathbb S(0,L/R^2)\to \mathbb S(0,1)\cup \mathbb S(0,L'/R^2)$, which is equal to the identity in the inner circle and it is equal to a scaling in the outer circle. Let $\beta>0$ such that  $(R^{-2} L)^\beta =R^{-2} L'$, so $L'=R^{-2(\beta-1)} L^\beta$. We show that $c_1^{-1}\leq \beta\leq c_1$ for some constant $c_1=c_1(\eta,c_0)\geq 1$. Assuming this, note that the map $re^{i\theta} \mapsto r^\beta e^{i\theta}$ is $\max\{\beta,\beta^{-1}\}$-quasiconformal, so it is $c_1$-quasiconformal and has the desired boundary behavior in the annulus $\mathbb A(0;1,L/R^2)$. 

We now prove the claim. By the assumption \eqref{corollary:extension_annulus:large:assumption}, we have
$$c_0^{-1}\leq \frac{\beta\log L-2(\beta-1)\log R}{\log L}\leq c_0.$$
Equivalently, 
\begin{align}\label{corollary:extension_annulus:large:2}
c_0^{-1} \leq \beta\left( 1 -\lambda \right)+\lambda \leq c_0,
\end{align}
where $\lambda=2\frac{\log R}{\log L}$. By \eqref{corollary:extension_annulus:large:1}, we have  
$$0< \lambda < \lambda_0\coloneqq \frac{\log R}{\log (2R-1)}<1.$$
By \eqref{corollary:extension_annulus:large:2}, we conclude that $\beta\leq c_0(1-\lambda)^{-1}\leq c_0(1-\lambda_0)^{-1}$. If $\lambda<\frac{1}{2}c_0^{-1}$, then \eqref{corollary:extension_annulus:large:2} gives $\beta\geq c_0^{-1}-\lambda\geq \frac{1}{2}c_0^{-1}$.  Finally, if $\lambda\geq \frac{1}{2}c_0^{-1}$, then $L\leq R^{4c_0}$. Thus, by \eqref{corollary:extension_annulus:large:1},
$$R^{(4c_0-2)\beta} \geq (R^{-2}L)^\beta = R^{-2}L'\geq  \left(\frac{2R-1}{R}\right)^2.$$
Thus,
$$\beta \geq (4c_0-2)^{-1} (\log R)^{-1}2\log\left(\frac{2R-1}{R}\right)$$
and the latter is a positive number depending only on $\eta$ and $c_0$. 
This completes the proof. 
\end{proof}

\section{Extension of embeddings of pairs of circles}\label{section:extension_disks}

In this section we prove Theorems \ref{theorem:intro:extension} and \ref{theorem:extension:disjoint}. First we consider the case that $U$ and $V$ are tangent disks as in Theorem \ref{theorem:intro:extension}. We establish a preliminary result.

\begin{lemma}\label{lemma:strip_bounds}
Let $f\colon \R\times \{0,1\}\to \C$ be an $\eta$-quasisymmetric embedding for some distortion function $\eta$. Suppose that $f|_{\R\times \{0\}}$ is the identity map and $f(\R\times \{1\})\subset \UHP$. Then $f(\R\times \{1\})\subset \R\times [\eta(1)^{-1},\eta(1)]$.
\end{lemma}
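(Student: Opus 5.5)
Fix a point $(t,1)$ and write $w=f(t,1)=(u,v)$ with $v>0$, since $f(\R\times\{1\})\subset \UHP$. The statement is just that $\eta(1)^{-1}\le v\le \eta(1)$. Both bounds come from comparing triples involving $(t,1)$ and points of $\R\times\{0\}$, where $f$ is the identity. At each step I choose the reference point so that the ratio in the domain is at most $1$.

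\emph{Upper bound.} Take the triple $a=(t,0)$, $b=(t,1)$, $c=(t+1,0)$. In the domain $|a-b|=1=|a-c|$, so quasisymmetry gives
$$|f(t,0)-f(t,1)|\le \eta(1)\,|f(t,0)-f(t+1,0)|=\eta(1)\cdot 1.$$
Hence $v\le |(t,0)-w|\le \eta(1)$.

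\emph{Lower bound.} Let $p=(u,0)$ be the foot of the vertical from $w$; it lies in $f(\R\times\{0\})$, and $|p-w|=v$. The natural triple is $a=p=(u,0)$, $b=(u+1,0)$, $c=(t,1)$, all points of the domain with $f(a)=a$ and $f(b)=b$.
\begin{itemize}
\item If $(u,0)\ne (t,0)$, the triple is distinct. In the domain $|a-c|=\sqrt{(u-t)^2+1}\ge 1=|a-b|$, so the domain ratio $|a-b|/|a-c|$ is at most $1$.
\item Quasisymmetry then gives $1=|f(a)-f(b)|\le \eta(1)\,|f(a)-f(c)|=\eta(1)\,v$, that is, $v\ge \eta(1)^{-1}$.
\item If $u=t$, the same triple still has $|a-c|=1=|a-b|$, so the same inequality holds.
\end{itemize}

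There is no real obstacle here; the only thing to check is that each comparison is oriented so the domain ratio is at most $1$, so that only $\eta(1)$ appears. If $\eta$ is applied to the inverse ratio, one gets constants involving $\eta'$ instead.
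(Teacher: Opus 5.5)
Your proof is correct and matches the paper's argument: the upper bound uses the triple $(t,0),(t,1),(t+1,0)$, and the lower bound uses $(u,0),(u+1,0),(t,1)$ with $u$ the real part of $f(t,1)$, exactly as in the paper. The case split on whether $u=t$ is unnecessary, since $(t,1)\notin\R\times\{0\}$ and so the triple is always distinct.
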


\begin{proof}
Let $x\in \R$ and $f(x,1)=(z,w)\in \UHP$. Then 
$$w\leq |f(x,1)-(x,0)|=|f(x,1)-f(x,0)|=  \frac{|f(x,1)-f(x,0)|}{|f(x,0)-f(x+1,0)|} \leq \eta(1).$$
For an inequality in the reverse direction we have
\begin{align*}
w=|f(x,1)-f(z,0)|&=  \frac{|f(x,1)-f(z,0)|}{|f(z,0)-f(z+1,0)|} \\
&\geq  \eta\left(\frac{|(z,0)-(z+1,0)|}{|(x,1)-(z,0)|} \right)^{-1} \geq \eta(1)^{-1}.
\end{align*} 
This completes the proof.
\end{proof}

\begin{proof}[Proof of Theorem \ref{theorem:intro:extension}]
All claims in the proof are quantitative. Let $U,V$ be disjoint tangent disks in the sphere and $f\colon \partial U\cup \partial V\to \widehat \C$ be a quasi-M\"obius embedding. By using suitable M\"obius transformations we may assume that $V$ is the lower half-plane $\{z\in \C:\im(z)<0\}$, $U$ is the half-plane $\{z\in \C: \im(z)>1\}$ and $f$ fixes $\infty$. Henceforth we use planar topology. Therefore, $f\colon \R\times \{0,1\}\to \C$ is a quasi-M\"obius embedding such that $f(z)\to \infty$ as $z\to\infty$. By \ref{q:qm_qs}, $f$ is quasisymmetric. By Corollary \ref{corollary:beurling_ahlfors_embedding}, the map $f|_{\R\times \{0\}}$ has an extension to a quasiconformal, and thus quasisymmetric, homeomorphism $F$ of $\C$. Thus, the map $g=F^{-1}\circ f\colon \R\times \{0,1\}\to \C$ is quasisymmetric and is the identity map on $\R\times \{0\}$. By postcomposing with a reflection, we may assume that $g$ maps $\R\times \{1\}$ into the upper half-plane. 

By Lemma \ref{lemma:strip_bounds}, $g(\R\times \{1\})$ is contained in a strip $\R\times [a^{-1},a]$. By Lemma \ref{lemma:quasicircle_parallel} we conclude that the condition of Theorem \ref{theorem:characterization_tangent} is satisfied. Hence, there exists a quasiconformal homeomorphism $G$ of $\C$ that maps the lower half-plane $V$ onto itself and $g(U)$ onto the half-plane $\{z\in \C: \im(z)>1\}$. We consider the map $h=G\circ g$. The map $h$ is a quasisymmetric homeomorphism from $\R\times \{0,1\}$ onto itself that preserves each of the two lines and is increasing on $\R\times \{0\}$.  By Corollary \ref{corollary:extension_strip}, $h$ has a quasiconformal extension in $\C$. With appropriate compositions this extension gives the desired extension of the original map $f$. 
\end{proof}

Next we treat the case of disks with disjoint closures as in Theorem \ref{theorem:extension:disjoint}. We include a preliminary result in the spirit of Lemma \ref{lemma:strip_bounds}.

\begin{lemma}\label{lemma:annuli_identity}
Let $\eta$ be a distortion function and $L>1$. Let $f\colon \mathbb S(0,1)\cup \mathbb S(0,L)  \to \C$ be an $\eta$-quasisymmetric embedding such that $f|_{\mathbb S(0,1)}$ is the identity map and $f(\mathbb S(0,L))\subset \D^*$ . Then there exists $L'>1$ and $K=K(\eta)>0$ such that for each $z\in \mathbb S(0,L)$ we have
$$ L'\leq |f(z)|\leq  L'+K(L'-1).$$ 
\end{lemma}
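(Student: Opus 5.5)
Set $L'=\min_{z\in \mathbb S(0,L)}|f(z)|$; this is $>1$ because $f(\mathbb S(0,L))$ is a compact subset of $\D^*$ disjoint from $\mathbb S(0,1)=f(\mathbb S(0,1))$. The lower bound is then automatic. The whole task is the upper bound $\max_{z}|f(z)|-1\le K(L'-1)$, that is, $\max_z |f(z)|-1\lesssim_\eta \min_z|f(z)|-1$. The argument mimics Lemma \ref{lemma:strip_bounds}, with the vertical direction replaced by the radial one. Since $f$ fixes $\mathbb S(0,1)$ pointwise, for $w\in\mathbb S(0,L)$ we have
$$|f(w)|-1=\dist_e(f(w),\mathbb S(0,1))=\min_{u\in \mathbb S(0,1)}|f(w)-f(u)|.$$

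\textbf{Step 1 (comparability of $|f(w)|-1$ with a distance at scale $L-1$).} Fix $w\in \mathbb S(0,L)$ and put $u_w=w/L\in \mathbb S(0,1)$, so $|w-u_w|=L-1=\dist_e(w,\mathbb S(0,1))$. Let $u\in \mathbb S(0,1)$ be a point with $f(u)=u$ closest to $f(w)$. Then $|u_w-u|\le |u_w-w|+|w-u|\le 2|w-u|$. Applying the three-point condition to the triple $(u,w,u_w)$ gives $|f(u)-f(u_w)|\le \eta(2)|f(u)-f(w)|$, hence
$$|f(w)-f(u_w)|\le (1+\eta(2))\bigl(|f(w)|-1\bigr).$$
Trivially $|f(w)|-1\le |f(w)-f(u_w)|$. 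Therefore $|f(w)|-1\simeq_\eta |f(w)-f(u_w)|$.

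\textbf{Step 2 (comparing different $w$).} Now let $w_1,w_2\in\mathbb S(0,L)$ with $|f(w_1)|=L'$ and $|f(w_2)|$ maximal, and write $u_i=u_{w_i}$. I would compare $|f(w_2)-f(u_2)|$ with $|f(w_1)-f(u_1)|$ using quasisymmetry, together with the fact that $f$ is the identity on $\mathbb S(0,1)$, so distances between the $u_i$ are preserved exactly.

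If $L-1\ge c$ for a fixed constant, say $L\ge 2$, the scales are macroscopic. Consider the triple $(w_2,u_2,u_2')$ with $u_2'=-u_2$. Then $|w_2-u_2|=L-1$ and $|u_2-u_2'|=2$, so
$$|f(w_2)-f(u_2)|\le \eta\bigl(\tfrac{L-1}{2}\bigr)\cdot 2.$$
This bound depends on $L$, so instead I would compare $w_2$ with $w_1$ directly. Since $|w_1-w_2|\le 2L\le 4(L-1)$, applying \eqref{definition:qs} or the triple $(u_1,w_1,w_2)$ gives
$$|f(w_1)-f(w_2)|\lesssim_\eta |f(u_1)-f(w_1)|.$$
Then
$$|f(w_2)|-1\le |f(w_2)-f(u_1)|\le |f(w_2)-f(w_1)|+|f(w_1)-f(u_1)|\lesssim_\eta L'-1.$$
This triangle-inequality argument works uniformly for every $L$, because $|w_1-w_2|\le 2L$ is controlled only in terms of $L-1$ when $L$ is not close to $1$.

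The case $L$ close to $1$ is the main obstacle, since then $|w_1-w_2|$ can be much larger than $L-1$. To handle it I would chain. Take $N\simeq 1/(L-1)$ points $v_0=w_1,\dots,v_N=w_2$ along $\mathbb S(0,L)$ with consecutive gaps at most $L-1$. Naive chaining loses a factor at each step, so instead I would use quasisymmetry at a single scale. For any $w$, the triple $(u_w,w,u_w+ (L-1)\text{-arc neighbor})$ gives $|f(w)-f(u_w)|\simeq_\eta$ the image of a distance $\simeq L-1$ on $\mathbb S(0,1)$, which equals $L-1$ itself because $f$ is the identity there. Concretely, pick $u'\in\mathbb S(0,1)$ with $|u'-u_w|=L-1$. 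Applying the three-point condition to $(u_w,w,u')$ and to $(u_w,u',w)$ yields
$$\eta(1)^{-1}(L-1)\le |f(w)-f(u_w)|\le \eta(1)(L-1).$$
Combined with Step 1, this gives $|f(w)|-1\simeq_\eta L-1$ for every $w$, which settles the case $L$ near $1$ and in fact gives the result for all $L\le 3$, say. For large $L$ the direct comparison above applies, completing the bound with $K=K(\eta)$.
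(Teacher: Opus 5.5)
Your proof is correct and follows the paper's structure. You take $L'$ to be the minimal modulus. For $L$ near $1$ you argue as the paper does, showing $|f(w)|-1\simeq_\eta L-1$ for every $w\in\mathbb S(0,L)$ by comparing with a point of $\mathbb S(0,1)$ at distance $L-1$ from $w/L$. For $L\ge 2$ you use quasisymmetry at macroscopic scale. The one difference is in that last case: the paper uses base point $1$ to get $(R-1)/(r+1)\le\eta(3)$ and then needs the absolute bound $L'-1\ge\eta(1)^{-1}$, whereas your scale-invariant Step 1 plus the triangle inequality avoids that bound. There the triple should be based at $w_1$, i.e.\ compare $|f(w_1)-f(w_2)|$ with $|f(w_1)-f(u_1)|$, which gives the factor $\eta(4)$.
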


\begin{proof}
Let $z_1,z_2\in \mathbb S(0,L)$ such that $|f(z_1)|=\sup\{|f(z)|:z\in \mathbb S(0,L)\}\eqqcolon R$ and $|f(z_2)|= \inf\{|f(z)|:z\in \mathbb S(0,L)\}\eqqcolon r$. Note that $r>1$ because $f(\mathbb S(0,L))\subset \D^*$. Suppose that $L\geq 2$. Then 
$$\frac{R-1}{r+1}\leq \frac{|f(z_1)-f(1)|}{|f(z_2)-f(1)|}\leq \eta\left(\frac{L+1}{L-1}\right)\leq \eta(3).$$
Thus, 
\begin{align}\label{lemma:annuli_identity:1}
R\leq 1+\eta(3)(r+1)\leq (1+2\eta(3))r.
\end{align}
For $w=f(z_2)/|f(z_2)|$ we have $|z_2-w|\geq L-1\geq 1$. Let $u\in \mathbb S(0,1)$ such that $|u-w|=1$. Then  
$$r-1=|f(z_2)-w|=\frac{|f(z_2)-f(w)|}{|f(w)-f(u)|} \geq {\eta\left(\frac{1}{|z_2-w|}\right)}^{-1} \geq {\eta(1)}^{-1}.$$ 
We conclude that $r\geq 1+{\eta(1)}^{-1}$, which is equivalent to 
$$(1+2\eta(3))r\leq r+(\eta(1)+1)2\eta(3)(r-1).$$ 
Combining this with \eqref{lemma:annuli_identity:1}, we conclude that 
$$R\leq (1+2\eta(3))r\leq  r+(\eta(1)+1)2\eta(3)(r-1).$$
The desired conclusion follows for $L'=r$ and $K=(\eta(1)+1)2\eta(3)$.

Next, suppose that $1<L<2$. Let $w\in  \mathbb S(0,1)$ such that $|L^{-1}z_1-w|=L-1=|L^{-1}z_1-z_1|$; such a point exists by the restriction on $L$. Then
$$\frac{R-1}{L-1}\leq   \frac{|f(z_1)-f(L^{-1}z_1)|}{|f(L^{-1}z_1)-f(w)|}\leq \eta\left( \frac{|z_1-L^{-1}z_1|}{|L^{-1}z_1-w|}\right)= \eta(1).$$
This, $R\leq 1+\eta(1)(L-1)$. Next, consider the point $w=f(z_2)/|f(z_2)|$ and let $u\in \mathbb S(0,1)$ such that $|w-u|=L-1$. We have
$$\frac{r-1}{L-1}=\frac{\left|f(z_2)- f(w)\right|}{|f(w)-f(u)|} \geq  {\eta\left(\frac{|w-u|}{|z_2-w|} \right)}^{-1}\geq {\eta(1)}^{-1}.$$
Thus, $r\geq 1+\eta(1)^{-1}(L-1)$, which is equivalent to $$1+\eta(1)(L-1)\leq r+ (\eta(1)^2-1)(r-1).$$
Thus, 
$$  R\leq 1+\eta(1)(L-1)\leq r+(\eta(1)^2-1)(r-1).$$
The claim follows for $L'=r$ and $K=\eta(1)^2-1$. 
\end{proof}

\begin{proof}[Proof of Theorem \ref{theorem:extension:disjoint}]
By applying suitable M\"o\-bius trans\-formations, we assume that $V=\D$, $U^*=\D(0,L)$ for some $L>1$, $f(\partial V),f(\partial U)$ are Jordan curves in $\C$, $f(\partial U)$ lies in the exterior of $f(\partial V)$, and the maps $f|_{\partial V}$, $f|_{\partial U}$ are orientation-preserving. The curve $f(\partial U)$ is a quasicircle; see \eqref{quasicircle:crossratio} and the preceding discussion.

We perform a further normalization. Consider three points $z_1,z_2,z_3\in \partial U$ with mutual distances at least $L$. We consider a quasiconformal homeomorphism $\phi$ of $\widehat \C$ that preserves the Jordan region bounded by the quasicircle $f(\partial U)$ and the points $\phi(f(z_1))$, $\phi(f(z_2))$, $\phi(f(z_3))$ have mutual distances at least $\diam_e(f(\partial U))/2$; the existence of $\phi$ can be justified by Theorem \ref{prop:quasidisk_quasicircle} and the three-point transitivity of M\"obius transformations of $\D$. The composition $\phi\circ  f$ is quasi-M\"obius and satisfies the normalization in \ref{q:qs_qm}, so it is quasisymmetric. Thus, by replacing $f$ with $\phi\circ f$ we may assume, in addition, that $f$ is quasisymmetric. 

By Corollary \ref{corollary:beurling_ahlfors_embedding}, the map $f|_{\partial V}$ has an extension to a quasiconformal, and thus quasisymmetric,  homeomorphism $F$ of $\C$. The map $g=F^{-1}\circ f\colon \partial  U\cup \partial V\to \C$ is quasisymmetric and is equal to the identity map on $\partial V$. Moreover, $g$ maps $\partial U$ to a Jordan curve in the exterior of $V=\D$. 

By Lemma \ref{lemma:annuli_identity}, there exists $L_1>1$ such that $g(\partial U)$ is contained in the annulus $\{z\in \C: L_1\leq |z|\leq  L_1+K(L_1-1)\}$, where $K$ depends only on the data.  By Corollary \ref{corollary:characterization_quasiannuli_qs}, there exists a quasiconformal homeomorphism $G$ of $\C$ that maps $\D$ onto $\D$ and the interior of the Jordan curve $g(\partial U)$ onto a disk $\D(0,L')$. Note that $G$ is also quasisymmetric.  Consider the map $h=G\circ g\colon  \mathbb S(0,1)\cup \mathbb S(0,L)\to \mathbb S(0,1)\cup\mathbb S(0,L')$, which is quasisymmetric.  

Suppose that condition \ref{theorem:extension:disjoint:1} is true. Since all of the above normalizations are implemented with quasiconformal maps of $\widehat \C$, condition \ref{theorem:extension:disjoint:1} implies that 
$$c_0^{-1}\leq \frac{\log L'}{\log L}\leq c_0$$
for some $c_0$ that depends only on the data. By Proposition \ref{corollary:extension_annulus:large}, the map $h$ extends quasiconformally to $\C$. If instead \ref{theorem:extension:disjoint:2} is true, then we can obtain an extension using Proposition \ref{corollary:extension_annulus}. With appropriate compositions this extension gives the desired extension of the original map $f$. 
\end{proof}

\bibliography{../../biblio} 
\end{document}